\documentclass[10pt]{article} 
\usepackage[utf8]{inputenc} 
\usepackage[english]{babel}
\usepackage[T1]{fontenc}
\usepackage{latexsym}


\usepackage{geometry} 
\geometry{a4paper} 
\geometry{margin=0.95in} 



\usepackage{booktabs} 
\usepackage{array} 
\usepackage{paralist} 
\usepackage{verbatim} 
\usepackage{subfig} 
\usepackage{amsmath}
\usepackage{amsfonts}
\usepackage{hyperref}
\usepackage{bbm}
\usepackage{babel}
\usepackage{amsthm}
\usepackage{amssymb}
\usepackage{mathtools}
\usepackage{bbold}


\makeatletter
\newtheorem*{rep@theorem}{\rep@title}
\newcommand{\newreptheorem}[2]{%
\newenvironment{rep#1}[1]{%
 \def\rep@title{#2 \ref{##1}}%
 \begin{rep@theorem}}%
 {\end{rep@theorem}}}
\makeatother

\newtheorem{thm}{Theorem}[section]
\newtheorem{cor}[thm]{Corollary}
\newtheorem{lem}[thm]{Lemma}
\newtheorem{prop}[thm]{Proposition}
\newtheorem{defn}[thm]{Definition}
\newtheorem{rem}[thm]{Remark}
\newtheorem{esem}[thm]{Example}

\newtheorem*{cor*}{Corollary}

\newtheorem{theorem}{Theorem}
\newreptheorem{theorem}{Theorem}

\newreptheorem{proposition}{Proposition}

\newreptheorem{corollary}{Corollary}


\usepackage{fancyhdr} 
\pagestyle{fancy} 
\lhead{}\chead{}\rhead{}
\lfoot{}\cfoot{\thepage}\rfoot{}





\title{Continuity method with movable singularities for classical complex Monge-Ampère equations.}
\author{Antonio Trusiani\footnote{email: \href{mailto:antonio.trusiani91@gmail.com}{antonio.trusiani91@gmail.com}}}
\date{} 

\begin{document}
\maketitle
\begin{abstract}
On a compact Kähler manifold $(X,\omega)$, we study the strong continuity of solutions with prescribed singularities of complex Monge-Ampère equations with integrable Lebesgue densities. Moreover, we give sufficient conditions for the strong continuity of solutions when the right-hand sides are modified to include all (log) Kähler-Einstein metrics with prescribed singularities. Our findings can be interpreted as closedness of new continuity methods in which the densities vary together with the prescribed singularities. For Monge-Ampère equations of Fano type, we also prove an openness result when the singularities decrease. As an application, we deduce a strong stability result for (log-)Kähler Einstein metrics on semi-Kähler classes given as modifications of $\{\omega\}$.
\end{abstract}
\vspace{5pt}
{\small \textbf{Keywords:} Complex Monge-Ampère equations, compact Kähler manifolds, Kähler-Einstein metrics.\\
\textbf{2020 Mathematics subject classification:} 32W20 (primary); 32U05, 32Q20 (secondary).}
\section{Introduction.}
Let $(X,\omega)$ be a compact Kähler manifold endowed with a Kähler form. This article concerns the study of (degenerate) complex Monge-Ampère equations of type
\begin{equation}
\label{eqn:MAFirst}
\begin{cases}
MA_{\omega}(u)=e^{-\lambda u}f\omega^{n}\\
u\in PSH(X,\omega)
\end{cases}
\end{equation}
where $PSH(X,\omega)$ denotes the set of all $\omega$-plurisubharmonic functions on $X$, $MA_{\omega}(u)=(\omega+dd^{c}u)^{n}$ in the sense of the non-pluripolar product (\cite{BEGZ10}), $\lambda \in \mathbbm{R}$ and $f\in L^{1}\setminus \{0\}$. Here $dd^{c}=\frac{i}{2\pi}\partial\bar{\partial}$ as $d^{c}:=\frac{i}{4\pi}(\bar{\partial} -\partial)$.\\
The analysis of these equations plays a main role in several questions in Kähler geometry, such as in the search of (log) Kähler-Einstein metrics (\cite{Yau78}, \cite{Tian00}). A very classical tool is the \emph{continuity method} in which the density $g_{1}(u):=e^{-\lambda u}f$ is approximated by a family $\{g_{t}\}_{t\in[0,1]}$ so that, to solve (\ref{eqn:MAFirst}), it is enough to check that the set of all $t\in[0,1]$ such that $MA_{\omega}(u)=g_{t}(u)\omega^{n}$ admits a solution is not-empty, open and closed. Usually, the closedness is the most involved task, which is related to the regularity of solutions and to the kind of convergence requested.\newline

In this paper we examine the closedness of a new continuity method with \emph{movable singularities}, i.e. we allow the solutions to have some prescribed singularities and we require a \emph{strong convergence}.\\
More precisely, for $\psi\in PSH(X,\omega)$ we denote by $\mathcal{E}(X,\omega,\psi)$ the set of all $\omega$-psh functions with $\psi$-relative full Monge-Ampère mass, i.e. all $u\in PSH(X,\omega), u\leq \psi + C$ for a constant $C\in\mathbbm{R}$ such that $V_{u}:=\int_{X}MA_{\omega}(u)=\int_{X}MA_{\omega}(\psi)=:V_{\psi}$, while $\mathcal{E}^{1}(X,\omega,\psi)\subset \mathcal{E}(X,\omega,\psi)$ is composed by all functions with finite $\psi$-relative Monge-Ampère energy $E_{\psi}(\cdot)$ (\cite{DDNL17b}, \cite{BEGZ10}, see section \S \ref{sec:Pre}). Roughly speaking, there sets include all functions that are slightly more singular than $\psi$, and in particular all $u\in PSH(X,\omega)$ such that $u-\psi$ is globally bounded belong to $\mathcal{E}^{1}(X,\omega,\psi)$.\newline
Then for $\lambda\in\mathbbm{R}$, letting $\{f_{k}\}_{k\in\mathbbm{N}}$ be a sequence of non-negative functions $L^{1}$-converging to $f$, we assume to have a family of solutions $\{u_{k}\}_{k\in\mathbbm{N}}$ of
\begin{equation}
\label{eqn:Maeqnk}
\begin{cases}
MA_{\omega}(u_{k})=e^{-\lambda u_{k}}f_{k}\omega^{n}\\
u\in\mathcal{E}^{1}(X,\omega,\psi_{k})
\end{cases}
\end{equation}
and we give sufficient (and sometimes necessary) conditions for a strong convergence of $u_{k}$ to a solution $u$ of
\begin{equation}
\label{eqn:Maeqn}
\begin{cases}
MA_{\omega}(u)=e^{-\lambda u}f\omega^{n}\\
u\in\mathcal{E}^{1}(X,\omega,\psi).
\end{cases}
\end{equation}
Here $\psi_{k},\psi\in PSH(X,\omega)$ clearly represent the \emph{prescribed singularities} while seeking solutions in $\mathcal{E}^{1}$ may be thought as a regularity constraint.

As proved in \cite{DDNL17b} there is a natural asumption to add on the prescribed singularities $\psi$ to make a complex Monge-Ampère equation $MA_{\omega}(u)=\mu$, for $\mu$ normalized positive non-pluripolar measure, always solvable in the class $\mathcal{E}(X,\omega,\psi)$: $\psi$ must be a \emph{model type envelope} (see again section \S \ref{sec:Pre}). We denote by $\mathcal{M}^{+}$ the set of all model type envelopes $\psi$ such that $V_{\psi}>0$.\newline

The most interesting case to analyze is when the singularities are increasing or decreasing, so we suppose to have a totally ordered sequence $\{\psi_{k}\}_{k\in\mathbbm{N}}\subset \mathcal{M}^{+}$ converging weakly (i.e. in the usual $L^{1}$-topology) to an element $\psi\in \mathcal{M}^{+}$, recalling that the natural partial order $\preccurlyeq$ on $PSH(X,\omega)$ is given by $u\leq v+C$ for a constant $C\in\mathbbm{R}$.

We can then ask for the \emph{strong convergence} of solutions in the sense of \cite{Tru19} and \cite{Tru20}, i.e. $u_{k}\to u$ strongly if $u_{k}\to u$ weakly and $E_{\psi_{k}}(u_{k})\to E_{\psi}(u)$. In fact this kind of convergence is very natural, implies the convergence in capacity (\cite[Theorem 6.3]{Tru20}) and it is equivalent to a metric convergence with respect to a complete distance $d_{\mathcal{A}}$ on $X_{\mathcal{A}}:=\bigsqcup_{\psi\in\overline{\mathcal{A}}}\mathcal{E}^{1}(X,\omega,\psi)$  for $\mathcal{A}:=\{\psi_{k}\}_{k\in\mathbbm{N}}$ (\cite{Tru19}).\\

To state the results, we need to distinguish three different cases based on the sign of $\lambda$.

When $\lambda=0$ the uniqueness of solutions holds modulo translation by constant, while (\ref{eqn:Maeqn}) is solvable if and only if $f\omega^{n}\in \mathcal{M}^{1}(X,\omega,\psi)$ by \cite[Theorem A]{Tru20}. We refer to \S 2.2 for the definition of $\mathcal{M}^{1}(X,\omega,\psi)$, but we underline here that this set contains all measures $f\omega^{n}$ such that $\int_{X}f\omega^{n}=V_{\psi}$ (necessary condition) and such that $f\in L^{p}$ for $p>1$.
\begin{theorem}
\label{thmA}
Assume
\begin{itemize}
\item[(i)] $f_{k},f\in L^{1}\setminus\{0\}$ non-negative such that $f_{k}\to f$ in $L^{1}$;
\item[(ii)] $\{\psi_{k}\}_{k\in \mathbbm{N}}\subset\mathcal{M}^{+}$ totally ordered such that $\psi_{k}\to \psi\in \mathcal{M}^{+}$ weakly;
\item[(iii)] $f_{k}\omega^{n}\in \mathcal{M}^{1}(X,\omega,\psi_{k})$ for any $k\in \mathbbm{N}$, and denote by $u_{k}\in\mathcal{E}^{1}_{norm}(X,\omega,\psi_{k})$ the unique solution with $\sup_{X}u_{k}=0$ of (\ref{eqn:Maeqnk}) for $\lambda=0$.
\end{itemize}
Let $u$ be a weak accumulation point of $\{u_{k}\}_{k\in \mathbbm{N}}$. Then $u\in\mathcal{E}_{norm}(X,\omega,\psi)$ and it satisfies $MA_{\omega}(u)=f\omega^{n}$. Furthermore, $u\in \mathcal{E}^{1}_{norm}(X,\omega,\psi)$ and $u_{k}\to u$ strongly if and only if $E_{\psi_{k}}(u_{k})\geq -C$ for a uniform constant $C\geq 0$ and
\begin{equation}
\label{eqn:WeirdH}
\limsup_{k\to \infty}\int_{X}(\psi_{k}-u_{k})f_{k}\omega^{n}\leq \int_{X}(\psi-u)f\omega^{n}.
\end{equation}
\end{theorem}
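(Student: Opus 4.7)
The plan is to split the argument into two stages: first, show that any weak subsequential limit $u$ automatically lies in $\mathcal{E}_{norm}(X,\omega,\psi)$ and solves $MA_\omega(u)=f\omega^n$; second, prove both implications of the strong-convergence equivalence.

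\emph{Stage 1.} Since $\sup_X u_k=0$, the family is $L^1$-compact; along a subsequence $u_k\to u\in PSH(X,\omega)$ weakly with $\sup_X u=0$. Each $\psi_k\in\mathcal{M}^+$ is a normalized model envelope, so $u_k\leq\psi_k$ pointwise, and passing to the $L^1$-limit along the totally ordered (hence monotone) sequence $\{\psi_k\}$ yields $u\leq\psi$. The mass balance $\int f_k\omega^n=V_{\psi_k}$ combined with $f_k\to f$ in $L^1$ and $V_{\psi_k}\to V_\psi$ gives $\int f\omega^n=V_\psi$; applying stability of the Monge-Amp\`ere operator along monotone families of model singularities (\cite{DDNL17b}, \cite{Tru20}), one passes to the weak limit in $MA_\omega(u_k)=f_k\omega^n$ to obtain $MA_\omega(u)=f\omega^n$, hence $V_u=V_\psi$ and $u\in\mathcal{E}_{norm}(X,\omega,\psi)$.

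\emph{Necessity of the two conditions.} If $u_k\to u$ strongly, then $E_{\psi_k}(u_k)\to E_\psi(u)>-\infty$, giving the uniform lower bound and $u\in\mathcal{E}^1_{norm}$. I would exploit the expansion
$$-(n+1)E_\psi(u)=\sum_{j=0}^{n}\int_X(\psi-u)\,(\omega+dd^c u)^{j}\wedge(\omega+dd^c\psi)^{n-j}$$
and its $k$-analogue: each integral is non-negative because $u\leq\psi$, and Fatou-type lower semicontinuity along the monotone sequence $\{\psi_k\}$---combining weak convergence of potentials with joint continuity of mixed non-pluripolar products---gives, for every $j$,
$$\liminf_{k\to\infty}\int_X(\psi_k-u_k)(\omega+dd^c u_k)^{j}\wedge(\omega+dd^c\psi_k)^{n-j}\geq\int_X(\psi-u)(\omega+dd^c u)^{j}\wedge(\omega+dd^c\psi)^{n-j}.$$
Since the full sums converge to one another by assumption, a standard squeeze forces each individual term to converge; the $j=n$ term is precisely (\ref{eqn:WeirdH}) with equality.

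\emph{Sufficiency.} Assume $E_{\psi_k}(u_k)\geq-C$ uniformly and (\ref{eqn:WeirdH}). The bound and upper semicontinuity of $E$ under the monotone convergence of models established in \cite{Tru19} force $u\in\mathcal{E}^1_{norm}(X,\omega,\psi)$ and $\limsup E_{\psi_k}(u_k)\leq E_\psi(u)$. For the reverse inequality I plan to use the variational characterization of the solutions: each $u_k$ maximizes $F_k(v):=E_{\psi_k}(v)-\int(v-\psi_k)f_k\omega^n$ over $\mathcal{E}^1(X,\omega,\psi_k)$. Testing against the natural approximant $v_k:=P[\psi_k](u)\in\mathcal{E}^1(X,\omega,\psi_k)$ yields $F_k(u_k)\geq F_k(v_k)$; as $k\to\infty$, the right-hand side converges to $F(u)=E_\psi(u)-\int(u-\psi)f\omega^n$ by the strong stability of the envelopes $P[\psi_k](u)\to u$ (\cite{Tru19}, \cite{Tru20}), while hypothesis (\ref{eqn:WeirdH}) bounds $-\int(u_k-\psi_k)f_k\omega^n$ from above on the left, producing $\liminf E_{\psi_k}(u_k)\geq E_\psi(u)$ and therefore strong convergence. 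The main obstacle is the fine asymptotic analysis of the envelopes $P[\psi_k](u)$: showing $E_{\psi_k}(P[\psi_k](u))\to E_\psi(u)$ and $\int(P[\psi_k](u)-\psi_k)f_k\omega^n\to\int(u-\psi)f\omega^n$ requires the careful stability machinery for monotone families of prescribed singularities developed in \cite{Tru19} and \cite{Tru20}.
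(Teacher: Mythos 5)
Your Stage 1 reaches the same conclusion as the paper but by a looser route: you invoke "stability of the Monge--Amp\`ere operator along monotone families" to pass to the limit in $MA_\omega(u_k)=f_k\omega^n$, but the Monge--Amp\`ere operator is not weakly continuous, and that is precisely the delicate point. The paper instead gets the \emph{inequality} $MA_\omega(u)\geq f\omega^n$ from \cite[Lemma 2.8]{DDNL18b} and then forces equality by a mass count ($V_u\leq V_\psi=\lim V_{\psi_k}=\lim\int f_k\omega^n=\int f\omega^n$), which is airtight. Your necessity argument is genuinely different from the paper's: the paper simply quotes the homeomorphism $MA_\omega\colon(X_{\mathcal A,norm},d_{\mathcal A})\to(Y_{\mathcal A},\mathrm{strong})$ and reads off the convergence of $\int(\psi_k-u_k)f_k\omega^n$ from the identity $E^*_{\psi_k}=E_{\psi_k}-\int(\cdot-\psi_k)f_k\omega^n$, whereas you expand the energy into mixed terms and squeeze. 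Your route can work, but the expansion $-(n+1)E_\psi(u)=\sum_j\int(\psi-u)\omega_u^j\wedge\omega_\psi^{n-j}$ holds for unbounded $u\in\mathcal E^1$ only as a limit of truncations, and the term-by-term lower semicontinuity needs the convergence-in-capacity machinery plus a truncation-and-Fatou argument that you do not carry out; the paper's route avoids all of this.

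The real gap is in sufficiency. You test the maximizing property of $u_k$ against $v_k:=P_\omega[\psi_k](u)$ and assert $v_k\in\mathcal E^1(X,\omega,\psi_k)$. This fails when $\psi\preccurlyeq\psi_k$ (i.e.\ $\psi_k$ decreasing to $\psi$): then $u\preccurlyeq\psi\preccurlyeq\psi_k$ forces $P_\omega[\psi_k](u)=u$, and $u\notin\mathcal E(X,\omega,\psi_k)$ as soon as $V_\psi<V_{\psi_k}$, so $F_k(v_k)$ is not even a legitimate test value. Moreover, even in the case $\psi_k\preccurlyeq\psi$ where $v_k$ does land in $\mathcal E^1(X,\omega,\psi_k)$, the two convergences you flag as "the main obstacle" --- $E_{\psi_k}(P_\omega[\psi_k](u))\to E_\psi(u)$ and $\int(P_\omega[\psi_k](u)-\psi_k)f_k\omega^n\to\int(u-\psi)f\omega^n$ --- are precisely what you cannot get for free, because $u$ is unbounded and $f_k$ is merely $L^1$, so neither Lemma \ref{lem:KeyConv} nor dominated convergence applies directly. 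The paper's fix is to test instead against $P_\omega[\psi_k](\varphi)$ for $\varphi\in\mathcal H$ a K\"ahler potential: Proposition \ref{prop:PropProie} gives the uniform bound $|P_\omega[\psi_k](\varphi)-\psi_k|\leq\|\varphi\|_\infty$, so Lemma \ref{lem:KeyConv} handles the energies and Fatou handles the integrals, and then one recovers arbitrary $v\in\mathcal E^1(X,\omega,\psi)$ by decreasing K\"ahler approximation and monotone convergence. Without this intermediary your liminf estimate on $E^*_{\psi_k}(MA_\omega(u_k)/V_{\psi_k})$ is unsupported.
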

With obvious notations, $\mathcal{E}^{1}_{norm}(X,\omega,\psi):=\{u\in\mathcal{E}^{1}(X,\omega,\psi)\,: \, \sup_{X}u=0\}$ and similarly for $\mathcal{E}_{norm}(X,\omega,\psi)$. The existence of a weak accumulation point $u$ is a consequence of standard $L^{1}$-compactness arguments.\newline
As the condition (\ref{eqn:WeirdH}) and the uniform bound on the energies may be difficult to detect, in Remark \ref{rem:SomeCases} we collect some easier cases in which these conditions are fulfilled. We stress that the main difficulty is that $f\in L^{1}$ but a priori $f\notin L^{p}$ for any $p>1$. Indeed, in this general case, it is usually a complex task to check if the unique $u\in \mathcal{E}_{norm}(X,\omega,\psi)$ satisfying $MA_{\omega}(u)=f\omega^{n}$ belongs to $\mathcal{E}^{1}_{norm}(X,\omega,\psi)$, which is a regularity condition, and Theorem \ref{thmB} provides an effective new tool (to prove $u\in \mathcal{E}^{1}(X,\omega,\psi)$ the condition (\ref{eqn:WeirdH}) is unnecessary).\\

If $\lambda<0$ then (\ref{eqn:Maeqn}) admits a unique solution by \cite[Theorem 4.23]{DDNL17b} as the assumption of the small unbounded locus becomes unnecessary thanks to \cite{X19a}. In this case there are no obstruction to the strong convergence as our next Theorem shows.
\begin{theorem}
\label{thmB}
Assume
\begin{itemize}
\item[(i)] $\lambda<0$;
\item[(ii)] $f_{k},f\in L^{1}\setminus\{0\}$ non-negative such that $f_{k}\to f$ in $L^{1}$;
\item[(iii)] $\{\psi_{k}\}_{k\in \mathbbm{N}}\subset\mathcal{M}^{+}$ totally ordered such that $\psi_{k}\to \psi\in\mathcal{M}^{+}$ weakly.
\end{itemize}
Let $u_{k}\in\mathcal{E}^{1}(X,\omega,\psi_{k})$, $u\in\mathcal{E}^{1}(X,\omega,\psi)$ be the unique solutions respectively of
\begin{equation*}
\begin{cases}
MA_{\omega}(u_{k})=e^{-\lambda u_{k}}f_{k}\omega^{n}\\
u_{k}\in\mathcal{E}^{1}(X,\omega,\psi_{k}),
\end{cases}
\begin{cases}
MA_{\omega}(u)=e^{-\lambda u}f\omega^{n}\\
u\in\mathcal{E}^{1}(X,\omega,\psi).
\end{cases}
\end{equation*}
Then $u_{k}\to u$ strongly.
\end{theorem}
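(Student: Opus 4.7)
My plan is to prove Theorem~\ref{thmB} in three stages: uniform a priori estimates, strong precompactness paired with identification of the weak limit via uniqueness, and concluding convergence of the full sequence.

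The key observation is that $u_{k}$ is the unique minimizer on $\mathcal{E}^{1}(X,\omega,\psi_{k})$ of the strictly convex functional
\[
F_{k}(v) := -E_{\psi_{k}}(v) + \frac{1}{-\lambda}\int_{X} e^{-\lambda v} f_{k}\,\omega^{n},
\]
whose Euler--Lagrange equation is $MA_{\omega}(v) = e^{-\lambda v}f_{k}\omega^{n}$. After normalizing $\sup_{X}\psi_{k}=0$, comparing with the admissible test function $v=\psi_{k}$ gives $F_{k}(u_{k})\leq F_{k}(\psi_{k})\leq \frac{1}{-\lambda}\|f_{k}\|_{L^{1}}$; since each of the two summands of $F_{k}$ is non-negative, this yields the uniform bounds $E_{\psi_{k}}(u_{k})\geq -C$ and (by integrating the equation) $\int_{X} e^{-\lambda u_{k}}f_{k}\omega^{n}=V_{\psi_{k}}\to V_{\psi}$. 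A uniform two-sided bound on $\sup_{X}u_{k}$ then follows from the equation: $\sup_{X}u_{k}\to-\infty$ would force $\int e^{-\lambda u_{k}}f_{k}\omega^{n}\to0$, while $\sup_{X}u_{k}\to+\infty$ would, after extracting an $L^{1}$-limit $w$ of $u_{k}-\sup_{X}u_{k}$ (with $w\in PSH(X,\omega)$ and $\{w>-\infty\}$ of full $\omega^{n}$-measure), force $u_{k}\to+\infty$ a.e.\ and hence via Fatou combined with $f_{k}\to f$ in $L^{1}$ force $\int e^{-\lambda u_{k}}f_{k}\omega^{n}\to+\infty$, both in contradiction with $V_{\psi_{k}}\to V_{\psi}\in(0,\infty)$.

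With this two-sided sup bound and the energy lower bound $E_{\psi_{k}}(u_{k})\geq -C$, the sequence $\{u_{k}\}$ lies in a strongly precompact subset of $X_{\mathcal{A}}$ by the compactness results of \cite{Tru19}. Extracting a strongly convergent subsequence $u_{k_{j}}\to\tilde u$, we have $\tilde u\preccurlyeq\psi$ and $\tilde u\in\mathcal{E}^{1}(X,\omega,\psi)$; convergence in capacity (\cite[Theorem~6.3]{Tru20}) ensures continuity of $MA_{\omega}$, while the $L^{1}$-convergence $f_{k}\to f$ combined with the $L^{\infty}$-bound on $u_{k}$ yields dominated convergence on the right-hand side, so $MA_{\omega}(\tilde u)=e^{-\lambda\tilde u}f\omega^{n}$. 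Uniqueness (\cite[Theorem~4.23]{DDNL17b}) then forces $\tilde u=u$; since every subsequence of $\{u_{k}\}$ admits a further subsequence converging strongly to $u$, the full sequence $u_{k}\to u$ strongly.

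The main obstacle is confirming that the uniform bounds on $\sup_{X}u_{k}$ and $E_{\psi_{k}}(u_{k})$ are indeed sufficient for $d_{\mathcal{A}}$-precompactness across the varying singularity classes $\psi_{k}$: this is the extension to the total metric space $X_{\mathcal{A}}$ of the classical BEGZ-type compactness for $\mathcal{E}^{1}(X,\omega,\psi)$ at fixed $\psi$, a key technical ingredient of the strong-topology framework from \cite{Tru19,Tru20}. A secondary delicacy lies in ensuring that dominated convergence for $e^{-\lambda u_{k}}f_{k}\omega^{n}$ survives the possible lack of $L^{p}$-bounds on $f_{k}$ for $p>1$; here the uniform sup bound on $u_{k}$ makes $e^{-\lambda u_{k}}$ uniformly bounded and reduces the task to the assumed $L^{1}$-convergence of $f_{k}$.
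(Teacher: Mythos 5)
The first two stages of your argument (a priori estimates via the variational characterization, and identification of the weak limit) are sound and broadly parallel the paper's strategy, but there is a genuine gap in the compactness step that is the crux of the matter. You assert that the uniform bounds $\sup_{X}u_{k}\leq C$ and $E_{\psi_{k}}(u_{k})\geq -C$ place $\{u_{k}\}$ in a \emph{strongly} $d_{\mathcal{A}}$-precompact set, and you flag it as something that should follow from the ``BEGZ-type compactness'' in the variable-singularity setting. However, the compactness result available (Proposition~\ref{prop:Usc} here, taken from \cite{Tru20}) gives only \emph{weak} compactness of the energy-and-sup bounded set $X_{\mathcal{A},C}$; it does not give strong compactness, and this failure is already present in the fixed-$\psi$ setting: closed energy balls in $\big(\mathcal{E}^{1}(X,\omega,\psi),d\big)$ are weakly compact but not $d$-compact. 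Consequently, extracting a strongly convergent subsequence is not justified, and without it the identification of the limit via continuity of $MA_{\omega}$ in capacity collapses.

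What the paper does instead, and what your proof is missing, is the lower-semicontinuity half of the variational identity $F_{k}(u_{k})\to F(u)$: starting from weak compactness, one first shows $\liminf_{k}F_{k}(u_{k})\geq F\big(P_{\omega}[\psi](\varphi)\big)$ for every Kähler potential $\varphi$ (using the strong convergence $P_{\omega}[\psi_{k}](\varphi)\to P_{\omega}[\psi](\varphi)$ from Lemma~\ref{lem:KeyConv} together with Lemma~\ref{lem:11.5GZ}), and then takes the supremum over $\varphi\in\mathcal{H}$ with \cite[Theorem~1]{BK07} to deduce $\liminf_{k}F_{k}(u_{k})\geq F(u)$. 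Combined with the upper-semicontinuity of $E_{\cdot}(\cdot)$ under weak convergence (Proposition~\ref{prop:Usc}) and the continuity of the exponential term (Lemma~\ref{lem:11.5GZ}) this forces $F_{k}(u_{k})\to F(u)$, which, again using Lemma~\ref{lem:11.5GZ} for the integral term, yields the crucial convergence $E_{\psi_{k}}(u_{k})\to E_{\psi}(u)$ and hence strong convergence. Your argument skips precisely this step and replaces it with an unestablished (indeed false) compactness claim. A minor remark: you minimize the non-translation-invariant functional $-E_{\psi_{k}}+\frac{1}{-\lambda}\int e^{-\lambda\cdot}f_{k}\omega^{n}$, whereas the paper maximizes the translation-invariant $F_{f_{k},\psi_{k},\lambda}$ which uses $\log\int e^{-\lambda\cdot}f_{k}\omega^{n}$; both have the same Euler--Lagrange equation and either can serve, but the uniqueness of your minimizer should be justified (convexity along geodesics plus the normalizing effect of the missing log), and your claim that ``each of the two summands is non-negative'' requires knowing $E_{\psi_{k}}(u_{k})\leq 0$, which is not automatic without a normalization of $u_{k}$ — though fortunately it is not needed, since the exponential term being non-negative already gives $E_{\psi_{k}}(u_{k})\geq -C$.
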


Finally, the case $\lambda>0 $ is much more complicated. For instance, if $\psi=0$, $\{\omega\}=-K_{X}$ and $f=0$ then any solution of (\ref{eqn:Maeqn}) corresponds to a Kähler-Einstein metric on a Fano manifold, whose existence is characterized by a purely algebro-geometric criterion (see \cite{CDS15}) while the uniqueness depends on the identity component of the automorphism group $\mbox{Aut}(X)^{\circ}$ (\cite{BM86}).\newline
However, we develop a variational approach for a functional $F_{f,\psi,\lambda}$, which generalizes the Ding functional (\cite{Ding88}), and we impose an integrability condition in terms of the classical \emph{complex singularity exponent} (see for instance \cite{DK99}) to show the following openness result of the continuity method (see also Corollary \ref{cor:Open}).
\begin{theorem}
\label{thmC}
Let $\psi\in\mathcal{M}^{+}$, $\lambda>0$ and $f\in L^{p}$ for $p\in (1,\infty]$. Assume also that $c(\psi)>\frac{\lambda p}{p-1}$ where $\frac{\lambda p}{p-1}=\lambda$ if $p=\infty$. If the functional $F_{f,\psi,\lambda}$ is coercive then there exists a constant $A>1$ such that the complex Monge-Ampère equation
\begin{equation*}
\begin{cases}
MA_{\omega}(u)=e^{-\lambda u}f\omega^{n}\\
u\in\mathcal{E}^{1}(X,\omega,\psi')
\end{cases}
\end{equation*}
admits a solution for any $\psi'\in \mathcal{M}^{+}$, $\psi'\succcurlyeq \psi$ that satisfies $V_{\psi'}< AV_{\psi}$.
\end{theorem}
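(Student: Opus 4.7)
The plan is variational. I would use the coercivity hypothesis at $\psi$ to derive coercivity of $F_{f,\psi',\lambda}$ on $\mathcal{E}^{1}(X,\omega,\psi')$ for every $\psi'\succcurlyeq\psi$ with $V_{\psi'}$ sufficiently close to $V_{\psi}$, and then invoke the standard variational existence scheme to produce a minimizer that solves the Monge--Ampère equation via the Euler--Lagrange identity. The whole difficulty concentrates on the coercivity-transfer step.

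First, $F_{f,\psi',\lambda}$ is well-defined on $\mathcal{E}^{1}(X,\omega,\psi')$: since $\psi'\succcurlyeq \psi$ forces $c(\psi')\geq c(\psi)>\frac{\lambda p}{p-1}$, and $u\leq \psi'+C$ for any $u\in\mathcal{E}^{1}(X,\omega,\psi')$, Hölder with exponents $(p,p/(p-1))$ gives $e^{-\lambda u}f\in L^{1}$, so the log-density term in the generalized Ding functional is finite. For the coercivity transfer, the natural bridge is the $\psi$-relative envelope $v$ of $u$, i.e. the largest $\omega$-psh minorant of $u$ with singularity type $\psi$; it satisfies $v\leq u$ and lies in $\mathcal{E}^{1}(X,\omega,\psi)$. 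The log-density term is monotone in $u$, so its $\psi$- and $\psi'$-versions are immediately comparable. The energies $E_{\psi}(v)$ and $E_{\psi'}(u)$ differ by a contribution that, using the monotonicity formulas for relative energies together with the mass estimate $V_{\psi'}-V_{\psi}<(A-1)V_{\psi}$, should be bounded by $o_{A\to 1^{+}}(1)\cdot J_{\psi'}(u)$ plus a constant depending only on $A$. A parallel comparison $J_{\psi'}(u)\leq C_{A}(J_{\psi}(v)+1)$ then lets the hypothesis $F_{f,\psi,\lambda}(v)\geq \delta J_{\psi}(v)-M$ on $\mathcal{E}^{1}_{norm}(X,\omega,\psi)$ propagate to $F_{f,\psi',\lambda}(u)\geq \delta' J_{\psi'}(u)-M'$ with $\delta',M'$ depending only on $A$.

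With coercivity in hand, the variational argument concludes the proof: a minimizing sequence for $F_{f,\psi',\lambda}$ is $J_{\psi'}$-bounded, hence weakly precompact in $\mathcal{E}^{1}(X,\omega,\psi')$; lower semicontinuity of $F_{f,\psi',\lambda}$ (upper semicontinuity of $E_{\psi'}$ together with Fatou applied to the log-density term, where the integrability exponent is controlled by $c(\psi')>\frac{\lambda p}{p-1}$) yields a minimizer $u$, whose Euler--Lagrange equation is $MA_{\omega}(u)=e^{-\lambda u}f\omega^{n}$ after a normalizing translation of $u$ by a real constant. The main obstacle is quantifying the relative-energy and $J$-comparisons between $u\in\mathcal{E}^{1}(X,\omega,\psi')$ and its $\psi$-envelope $v\in\mathcal{E}^{1}(X,\omega,\psi)$ with error terms decaying uniformly to zero as $A\to 1^{+}$ (and independently of the particular $\psi'$); this is where the fine monotonicity and stability properties of the relative Monge--Ampère operator proved earlier in the paper must be exploited quantitatively in terms of $V_{\psi'}/V_{\psi}$.
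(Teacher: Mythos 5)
Your high-level plan is correct and matches the paper's outline: reduce to showing that coercivity at $\psi$ propagates to coercivity at $\psi'$ when $V_{\psi'}/V_\psi$ is close to $1$, then produce a solution by a standard variational compactness/upper-semicontinuity argument. You also correctly identify the right ``bridge'' object, namely $v:=P_{\omega}[\psi](u)\in\mathcal{E}^{1}(X,\omega,\psi)$ (this is exactly what the paper uses, via Proposition \ref{prop:PropProie}). The second half of your proposal --- weak compactness from $J_{\psi'}$-boundedness, upper semicontinuity of the energy (Proposition \ref{prop:Usc}), convergence of the log-density term via the integrability hypothesis, then Theorem \ref{thm:Max} --- is essentially the paper's first step and is fine, modulo the sign convention (in this paper $F_{f,\psi,\lambda}$ is \emph{maximized}, and coercivity means $F_{f,\psi,\lambda}(u)\leq -A\,d(u,\psi)+B$, not $\geq \delta J_\psi-M$).

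However, the core of the theorem --- the coercivity transfer --- is where your proposal has a genuine gap, and you acknowledge it yourself. You assert that $E_{\psi}(v)$ and $E_{\psi'}(u)$, and likewise $J_\psi(v)$ and $J_{\psi'}(u)$, ``should'' differ by $o_{A\to 1^+}(1)\cdot J_{\psi'}(u)$ plus an $A$-dependent constant, but you provide no argument, and I do not believe this type of estimate is available: when $\psi'\ne\psi$, the difference $J_{\psi'}(u)-J_\psi(v)$ is not controlled by the mass ratio $V_{\psi'}/V_\psi$ uniformly over $u$. The paper avoids this entirely. Its key lemma is Proposition \ref{prop:MT}, which converts the coercivity of $F_{f,\psi,\lambda}$ into a \emph{Moser--Trudinger type inequality}
$$
\lVert e^{\lambda(\psi-u)}\rVert_{L^{q}(e^{-\lambda\psi}\mu)}\leq C\,e^{-\frac{\lambda}{V_\psi}E_\psi(u)}\qquad (q=1+\epsilon>1),
$$
by passing through the entropy functional $M_{f,\psi,\lambda}$ and Lemma \ref{lem:2.11}. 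Then, for $u'\in\mathcal{E}^1(X,\omega,\psi')$ with $\sup_X u'=0$, writing $u=P_\omega[\psi](u')$, the paper splits $\psi-u'=(1-\epsilon)(1+\epsilon)(\psi-u')+\epsilon^2(\psi-u')$ and uses convexity of $g\mapsto\log\int e^{-g}d\nu$; the $(1+\epsilon)$-term is dominated (since $u'\geq u$) by the Moser--Trudinger bound in terms of $-E_\psi(u)$, the $\epsilon^2$-term is bounded uniformly by Skoda/Zeriahi, and finally the \emph{qualitative} contraction $d(\psi,u)\leq d(\psi',u')$ from Proposition \ref{prop:PropProie} gives $-E_\psi(u)\leq -E_{\psi'}(u')$. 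This yields $F_{f,\psi',\lambda}(u')\leq\bigl(1-(1-\epsilon^2)\tfrac{V_{\psi'}}{V_\psi}\bigr)E_{\psi'}(u')+C$, which is coercive precisely when $V_{\psi'}<V_\psi/(1-\epsilon^2)=:AV_\psi$. So the quantitative dependence on $A$ comes entirely from the Moser--Trudinger exponent $q=1+\epsilon$, \emph{not} from any mass-dependent comparison of $J$-functionals. To make your proof complete you would need to either prove your claimed quantitative $J$-comparisons (which I doubt can be done) or replace that step with the Moser--Trudinger machinery.
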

In Theorem \ref{thmC}, the \emph{coercivity} of $F_{f,\psi,\lambda}$ is with respect to the $\psi$-relative $J$-functional (or, equivalently, with respect to the distance $d_{\mathcal{A}|\mathcal{E}^{1}_{norm}(X,\omega,\psi)\times \mathcal{E}^{1}_{norm}(X,\omega,\psi)}$ associated to the strong convergence). \newline
Concerning the integrability assumption, note also that the higher is $p$, the more singular $\psi$ can be. In the limit case $p=+\infty$ the condition $c(\psi)>\lambda$ is necessary to solve the Monge-Ampère equation.\newline
A key point in proving Theorem \ref{thmC} is to relate the coercivity of $F_{f,\psi,\lambda}$ to a Moser-Trudinger type of inequality passing through the coercivity of a functional that is reminiscent of the Mabuchi functional (see Proposition \ref{prop:MT}).\newline

About the strong continuity of solutions in the case $\lambda>0$, i.e. the closedness of the continuity method, we prove the following result.
\begin{theorem}
\label{thmD}
Let $\lambda>0$, $\{\psi_{k}\}_{k\in \mathbbm{N}}\subset \mathcal{M}^{+}$ be a totally ordered sequence converging to $\psi\in\mathcal{M}^{+}$, and $f_{k},f\geq 0$ non trivial such that $f_{k}\to f$ in $L^{p}$ as $k\to \infty$ for $p\in (1,\infty]$. Assume also the following conditions:
\begin{itemize}
\item[(i)] $c(\psi)>\frac{\lambda p}{p-1}$;
\item[(ii)] the complex Monge-Ampère equations 
$$
\begin{cases}
MA_{\omega}(u_{k})=e^{-\lambda u_{k}}f_{k}\omega^{n}\\
u_{k}\in \mathcal{E}^{1}(X,\omega,\psi_{k});
\end{cases}
$$
admit solutions $u_{k}$ given as maximizers of $F_{f_{k},\psi_{k},\lambda}$;
\item[(iii)] $\sup_{X} u_{k}\leq C$ for a uniform constant $C$.
\end{itemize}
Then there exists a subsequence $\{u_{k_{h}}\}_{h\in\mathbbm{N}}$ that converges strongly to $u\in\mathcal{E}^{1}(X,\omega,\psi)$ solution of
$$
\begin{cases}
MA_{\omega}(u)=e^{-\lambda u}f\omega^{n}\\
u\in\mathcal{E}^{1}(X,\omega,\psi).
\end{cases}
$$
\end{theorem}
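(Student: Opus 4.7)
The plan is to exploit the variational characterization of $u_k$ as a maximizer of $F_{f_k,\psi_k,\lambda}$ and pass to the limit separately in the energy part and the log-integral part of the functional. By (iii) and the fact that each $u_k\in\mathcal{E}^1(X,\omega,\psi_k)$ with $V_{\psi_k}\to V_\psi>0$, the sequence $\{u_k\}$ is uniformly bounded above and does not converge to $-\infty$ in $L^1$, so standard compactness of $PSH(X,\omega)$ yields a subsequence $u_{k_h}\to u$ weakly with $u\in PSH(X,\omega)$. Since the $\psi_k$ are model type envelopes and $\sup_X u_k\le C$, their extremal property forces $u_k\le \psi_k+C$, and passing to the weak limit along the ordered sequence gives $u\le\psi+C$, in particular $u\preccurlyeq\psi$.

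The central analytic step is a uniform $L^q$-bound on $e^{-\lambda u_k}$ for $q=\tfrac{p}{p-1}$, which by (i) satisfies $\lambda q<c(\psi)$. Combining Demailly--Koll\'ar's openness of the complex singularity exponent with the monotonicity of $\psi_k\to\psi$ and the comparison $u_k\le\psi_k+C$, I would show
\[
\sup_k\int_X e^{-\lambda q\,u_k}\,\omega^n<\infty.
\]
In the increasing case $\psi_k\nearrow\psi$ this follows from $u_k\le\psi+C$ and $\lambda q<c(\psi)$ directly; in the decreasing case $\psi_k\searrow\psi$ one uses $c(\psi_k)\ge c(\psi)$ together with a quantitative control. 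H\"older then yields equi-integrability of the densities $e^{-\lambda u_k}f_k$, and combined with $u_{k_h}\to u$ weakly and $f_k\to f$ in $L^p$, dominated convergence gives $\int_X e^{-\lambda u_{k_h}}f_{k_h}\omega^n\to\int_X e^{-\lambda u}f\omega^n$. The upper semicontinuity of the $\psi$-relative energy along the $d_{\mathcal{A}}$-topology of \cite{Tru19,Tru20} then produces $\limsup_h F_{f_{k_h},\psi_{k_h},\lambda}(u_{k_h})\le F_{f,\psi,\lambda}(u)$.

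For the matching lower bound I would approximate any test function $v\in\mathcal{E}^1(X,\omega,\psi)$ by a strongly convergent sequence $v_k\in\mathcal{E}^1(X,\omega,\psi_k)$ (adapted to the direction of monotonicity of $\psi_k$) so that $F_{f_k,\psi_k,\lambda}(v_k)\to F_{f,\psi,\lambda}(v)$. Maximality of $u_{k_h}$ then implies $F_{f,\psi,\lambda}(u)\ge F_{f,\psi,\lambda}(v)$ for every $v$, hence $u$ maximizes $F_{f,\psi,\lambda}$ on $\mathcal{E}^1(X,\omega,\psi)$, and by the standard Euler--Lagrange argument for the Ding-type functional $u$ solves the desired Monge--Amp\`ere equation. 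Applying the inequality with $v=u$ forces $F_{f_{k_h},\psi_{k_h},\lambda}(u_{k_h})\to F_{f,\psi,\lambda}(u)$, so $E_{\psi_{k_h}}(u_{k_h})\to E_\psi(u)$ and the convergence is strong. The hardest step is the uniform $L^q$-estimate on $e^{-\lambda u_k}$: the $u_k$ live in distinct classes $\mathcal{E}^1(\psi_k)$ whose singularity types vary, and (i) only controls $c(\psi)$, so one must carefully transfer the openness of the singularity exponent through the monotone limit.
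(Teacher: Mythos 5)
Your overall strategy matches the paper's: derive upper and lower semicontinuity for the Ding-type functional along the sequence, conclude that the weak limit $u$ maximizes $F_{f,\psi,\lambda}$, and upgrade to strong convergence via the matching of energies. The lower-bound argument (approximating test functions by elements of $\mathcal{E}^{1}(X,\omega,\psi_{k})$, which the paper realizes through $P_{\omega}[\psi_{k}](\varphi)$ for Kähler potentials $\varphi$ together with Lemma~\ref{lem:KeyConv}) is on the right track.

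However, the ``central analytic step'' is argued incorrectly. You want a uniform bound $\sup_{k}\int_{X}e^{-\lambda q u_{k}}\omega^{n}<\infty$, and in the increasing case $\psi_{k}\nearrow\psi$ you claim this ``follows from $u_{k}\le\psi+C$ and $\lambda q<c(\psi)$ directly.'' This is backwards: $u_{k}\le\psi+C$ gives $e^{-\lambda q u_{k}}\ge e^{-\lambda q C}e^{-\lambda q\psi}$, which is a \emph{lower} bound on the integrand, not an upper bound. Having $u_{k}$ at least as singular as $\psi$ makes $e^{-\lambda q u_{k}}$ potentially \emph{more} singular than $e^{-\lambda q\psi}$, and $c(\psi)>\lambda q$ says nothing about the integrability of the more singular quantity. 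The decreasing case is only gestured at (``quantitative control''), and the inequality $c(\psi_{k})\ge c(\psi)$ does not by itself yield a bound uniform in $k$ since the constants in Hölder can degenerate. The correct mechanism, which is what the paper actually uses (Proposition~\ref{lem:NNPP}), is Demailly--Kollár's lower semicontinuity of $c(\cdot)$ applied directly to the weak convergence $u_{k}\to u$: once one knows $u\in\mathcal{E}^{1}(X,\omega,\psi)$ (from the uniform energy bound extracted from maximality and assumption (iii)), Proposition~\ref{prop:Lelong} gives $c(u)=c(\psi)>\lambda q$, and Theorem~\ref{thm:DK99} then gives $e^{-\lambda q u_{k}}\to e^{-\lambda q u}$ in $L^{1}$ without any case split on the direction of monotonicity of $\psi_{k}$. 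The a priori $L^{q}$ estimate you seek is a consequence of this convergence, not a stepping stone to it.

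A smaller but real gap: after establishing that $u$ maximizes $F_{f,\psi,\lambda}$, the Euler--Lagrange equation (Theorem~\ref{thm:Max}) only gives $MA_{\omega}(u)=e^{-\lambda u+C}f\omega^{n}$ for some unknown constant $C$. Since the theorem asserts the equation with $C=0$ for the actual limit $u$ (whose additive normalization is fixed by being a weak limit of the given $u_{k}$), you must still check $C=0$. The paper does this by passing the mass identity $\int_{X}e^{-\lambda u_{k}}f_{k}\omega^{n}=V_{\psi_{k}}$ to the limit and comparing with $\int_{X}e^{-\lambda u}f\omega^{n}$, an argument your proposal omits.
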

Like in many settings we expect that any solution of (\ref{eqn:Maeqnk}) maximizes $F_{f,\psi,\lambda}$ (see for instance \cite{Tru20b} for the search of Kähler-Einstein metrics on Fano manifolds) and the assumption $(i)$ is satisfied for many $\psi\in\mathcal{M}^{+}$, the unique real obstacle is the uniform estimate in $(iii)$ as in other continuity methods. In fact, this assumption is necessary even if we force the singularities to move non trivially and we consider the easiest case $f_{k}\equiv f\in L^{\infty}$ as Example \ref{esem:Necess} shows.\\

In the second part of the paper, we apply our results to the study of special metrics. More precisely, given $\omega$ Kähler form, $\psi\in\mathcal{M}^{+}$, $D$ $\mathbbm{Q}$-divisor, we say that $\omega+dd^{c}u$ is a \emph{$(D,[\psi])$-log KE metric} if $u\in\mathcal{E}^{1}(X,\omega,\psi)$ and
$$
Ric(\omega+dd^{c}u)-[D]=\lambda (\omega+dd^{c}u)
$$
in a weak sense for $\lambda\in\mathbbm{Q}$ (see section \S \ref{sec:KE}). This abuse of language is justified by the fact that $\omega+dd^{c}u$ actually defines a (class of) singular log KE metric. The definition of log KE metrics easily extends to $\mathbbm{R}$-divisors $D$, $\lambda\in\mathbbm{R}$ and to \emph{semi-Kähler} forms, i.e. when $\omega$ is smooth, semipositive and satisfies $\int_{X}\omega^{n}>0$.\\

Then we introduce $\mathcal{M}^{+}_{an}$ as the set of all model type envelopes $\psi$ with \emph{analytic singularities types}, i.e. all $\psi$ such that $\psi-\varphi$ is globally bounded for a $\omega$-psh function $\varphi$ with \emph{analytic singularities}. Using the log-resolutions of the associated ideal sheaves, we denote by $\mathcal{K}_{(X,\omega)}$ the image of the map
\begin{equation}
\label{eqn:Map}
\Phi:\mathcal{M}^{+}_{an}\to\big\{(Y,\eta)\,:\,  \eta \,\mbox{semi-Kähler with}\, \omega\geq p_{*}\eta,\, \mbox{and} \, p:Y\to X \, \mbox{given by a sequence of blow-ups}\big\}/\sim
\end{equation}
where $(Y,\eta)\sim(Y',\eta')$ if there exists another element $(Z,\tilde{\eta})$ which dominates $(Y,\eta)$, $(Y',\eta')$ in the usual way. $\mathcal{K}_{(X,\omega)}$ inherits a partial order (we say \emph{smaller}, \emph{bigger} in correspondence of $\preccurlyeq$, $\succcurlyeq$), and it is possible to define a \emph{log-KE metric in} $\alpha\in\mathcal{K}_{(X,\omega)}$ as a class of log-KE metrics on any representative $(Y,\eta)$ of $\alpha$. Moreover, when $\{\alpha_{k}\}_{k\in\mathbbm{N}}$ is a totally ordered sequence, there is a natural \emph{strong convergence} for a sequence of log-KE metrics in $\{\alpha_{k}\}_{k\in\mathbbm{N}}$ which is clearly deduced from the strong convergence on $PSH(X,\omega)$. In particular, when $\alpha_{k},\alpha$ have representatives on the same compact Kähler manifold $Y$, this strong convergence implies the weak convergence of the log-KE metrics on $Y$.
\begin{theorem}
\label{thmE}
Let $\omega$ be a Kähler form and $D$ be a klt $\mathbbm{R}$-divisor such that $c_{1}(X)-\{[D]\}=\lambda\{\omega\}$ for $\lambda\in\mathbbm{R}$. Then any $(D,\psi)$-log KE metric $\omega+dd^{c}u$ for $\psi\in\mathcal{M}_{an}^{+}$ satisfies $u\in\mathcal{C}^{\infty}(X\setminus A)$ for a closed analytic set $A$. Furthermore, the following statements hold.
\begin{itemize}
\item[(i)] Suppose $\lambda\leq 0$. Then there is a unique log-KE metric for any element $\mathcal{K}_{(X,\omega)}$ and these metrics are stable with respect to the strong convergence, i.e. if $\{\alpha_{k}\}_{k\in\mathbbm{N}}\subset \mathcal{K}_{(X,\omega)}$ is a totally ordered sequence converging to $\alpha\in\mathcal{K}_{(X,\omega)}$ then the associated log-KE metrics strongly converge. 
\item[(ii)] Suppose $\lambda>0$ and let $\alpha\in\mathcal{K}_{(X,\omega)}$. If the log-Ding functional associated to $(Y,\eta)$, representative of $\alpha$, is coercive, then any $\alpha'\in \mathcal{K}_{(X,\omega)}$ slightly bigger than $\alpha$ admits a log-KE metric.
\item[(iii)] Suppose $\lambda>0$. If $\{\alpha_{k}\}_{k\in\mathbbm{N}}\subset \mathcal{K}_{(X,\omega)}$ is an increasing sequence converging to $\alpha\in \mathcal{K}_{(X,\omega)}$ that is \emph{uniformly bounded from above}, then it admits a subsequence that converges strongly to a log-KE metric in $\alpha$.
\end{itemize}
\end{theorem}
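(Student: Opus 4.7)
The plan is to translate the log-KE condition into a complex Monge-Ampère equation of the form \eqref{eqn:Maeqn} and then invoke the appropriate closedness/openness theorem from the first part of the paper. For the $\mathcal{C}^\infty$-regularity claim, I would take a log-resolution $p:Y\to X$ simultaneously resolving the ideal sheaves of $\psi$ and of $D$. On $Y$, the log-KE equation pulls back to a complex Monge-Ampère equation whose right-hand side is locally a smooth positive volume form multiplied by $\prod |z_i|^{2a_i}e^{-\lambda v}$ with $a_i>-1$ (using klt-ness of $D$ and the analytic-singularity data of $\psi$). Away from the exceptional divisor of $p$ and the support of $D$ one applies Kołodziej's local $L^\infty$ estimate followed by standard Evans--Krylov and Schauder bootstrapping; pushing down to $X$ this yields smoothness outside a closed analytic set $A$.

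Writing $\omega_D^n=f_D\,\omega^n$ with $f_D\in L^p$ for some $p>1$ (this is precisely the klt condition on $D$), the $(D,\psi)$-log KE equation becomes $MA_\omega(u)=e^{-\lambda u}f_D\,\omega^n$ with $u\in\mathcal{E}^1(X,\omega,\psi)$. For (i) with $\lambda<0$, Theorem~\ref{thmB} applied with $f_k\equiv f_D$ gives both uniqueness and strong stability (recall that an ordered sequence $\alpha_k\to\alpha$ in $\mathcal{K}_{(X,\omega)}$ lifts to an ordered sequence $\psi_k\to\psi$ in $\mathcal{M}^+_{an}$). For $\lambda=0$, uniqueness modulo constants follows from \cite{Tru20} applied to the measure $f_D\omega^n$, and the stability follows from Theorem~\ref{thmA}: the uniform energy bound and condition \eqref{eqn:WeirdH} fit into one of the easier situations of Remark~\ref{rem:SomeCases} since $f_D\in L^p$ is fixed in $k$.

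For (ii) and (iii), the key observation is that analytic singularities make $c(\psi)$ computable from the log-resolution; in particular $c(\psi)>0$, and I can pick $p>1$ close enough to $1$ so that simultaneously $f_D\in L^p$ and $c(\psi)>\lambda p/(p-1)$. For (ii), the coercivity of the log-Ding functional on the representative $(Y,\eta)$ translates into coercivity of $F_{f_D,\psi,\lambda}$ (this uses the identification of $\psi$-relative energies on $X$ with energies on $Y$), so Theorem~\ref{thmC} produces a constant $A>1$; any $\alpha'\in\mathcal{K}_{(X,\omega)}$ slightly bigger than $\alpha$ corresponds to $\psi'\in\mathcal{M}^+_{an}$ with $\psi'\succcurlyeq\psi$ and $V_{\psi'}<AV_\psi$, yielding the desired log-KE metric. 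For (iii), a uniformly bounded-above increasing sequence $\alpha_k\to\alpha$ lifts to an increasing $\psi_k\to\psi$ with $\sup_X u_k\leq C$; the $L^p$-convergence $f_k\to f$ is trivial since $f_k\equiv f_D$, and hypothesis~(i) of Theorem~\ref{thmD} holds by the same choice of $p$. Theorem~\ref{thmD} then produces a strongly convergent subsequence.

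The main obstacle I anticipate is two-fold. First, setting up an unambiguous dictionary between an equivalence class $\alpha\in\mathcal{K}_{(X,\omega)}$, an arbitrary representative $(Y,\eta)$ with its log-Ding functional, and the associated $\psi\in\mathcal{M}^+_{an}$ with $F_{f_D,\psi,\lambda}$, so that coercivity transfers cleanly and the notion of strong convergence on $\mathcal{K}_{(X,\omega)}$ is well posed; this is essentially a bookkeeping task but must be carried out carefully, especially to make sense of log-KE metrics on semi-Kähler representatives via pullback/pushforward of potentials. Second, verifying the uniform energy bound and \eqref{eqn:WeirdH} in the $\lambda=0$ portion of (i) at full generality. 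Both should be within reach of the pluripotential machinery of \cite{Tru19,Tru20}, but they are the non-routine parts.
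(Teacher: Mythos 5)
Your proposed reduction of the regularity claim and of part (i) to the corresponding Monge--Ampère results (Theorems \ref{thmA}, \ref{thmB} on $X$, followed by regularity theory on the resolution $Y$) matches the paper's proof in substance.

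However, there is a genuine gap in your treatment of parts (ii) and (iii). You claim that because $\psi$ has analytic singularities one can ``pick $p>1$ close enough to $1$ so that simultaneously $f_D\in L^p$ and $c(\psi)>\lambda p/(p-1)$''. This is backwards: as $p\to 1^{+}$ the threshold $\lambda p/(p-1)\to +\infty$, so shrinking $p$ makes the SIC \emph{harder} to satisfy, not easier, and $c(\psi)$ is finite whenever $\psi$ has nonzero Lelong numbers. Letting $p$ grow instead helps the SIC (the threshold tends to $\lambda$) but then the klt condition no longer guarantees $f_D\in L^p$. In general the two requirements pull in opposite directions and there need not be any admissible $p$: this is precisely the issue the paper flags before Lemma \ref{lem:Ding} (``the assumption $c(\psi)>\frac{\lambda p}{p-1}$ ... is no longer necessary''). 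Theorems \ref{thmC} and \ref{thmD} therefore cannot be invoked directly for log-KE metrics.

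The paper's actual resolution of this obstacle is the content of Lemma \ref{lem:Ding} and of the argument in the proof of Theorem \ref{thm:E3}: one passes through the isometry $\mathcal{E}^1(X,\omega,\psi)\cong\mathcal{E}^1(Y,\eta)$ (Proposition \ref{prop:Isom}), so that the functional becomes the log-Ding functional on the semi-Kähler manifold $(Y,\eta)$, where the relevant potentials have \emph{vanishing} Lelong numbers; then Zeriahi's uniform Skoda integrability theorem (or Lemma \ref{lem:NNPPb} together with Proposition \ref{prop:Alpha}) supplies the missing integral bounds without any SIC-type hypothesis on $\psi$. The klt condition replaces the remaining use of the SIC in Theorem \ref{thmC} (to give $e^{-\lambda\psi}\in L^{1+\delta}(\mu)$), as noted in Remark \ref{rem:ImpRem}. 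In part (iii) an additional step is needed that you omit: the Strong Noetherian Property of coherent sheaves is used to place all of the $(\psi_k)$ on a single resolution $Y$ before appealing to the vanishing of Lelong numbers there. Without this transfer to $Y$ your argument would not apply to a $\psi$ whose Lelong numbers are large relative to $\lambda$ and the klt exponent of $D$.
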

Some comments about Theorem \ref{thmE}.\\
The topological assumption $c_{1}(X)-\{[D]\}=\lambda \{\omega\}$ is clearly necessary while the klt hypothesis becomes necessary when $\lambda\geq 0$. In particular, there are no obstruction to the existence and the strong convergence in the case $\lambda=0$, while we do not investigate the case $\lambda<0$ with $(X,D)$ not necessarily klt as it is beyond the purpose of this paper. The last two points are clearly consequences of Theorems \ref{thmC}, \ref{thmD}, but it is worth to underline that there are not assumptions on the class $\alpha$ corresponding to the integrability condition $c(\psi)>\frac{\lambda p}{p-1}$. The uniform boundedness from above in $(iii)$ corresponds to the last (necessary) assumption of Theorem \ref{thmD}.
\subsection{Structure of the paper.}
After recalling some preliminaries in section \S \ref{sec:Pre}, section \S \ref{sec:Degenerate} is the core of the paper where in three different subsections based on the sign of $\lambda$ we prove Theorems \ref{thmA}, \ref{thmB}, \ref{thmC} and \ref{thmD}. Section \S \ref{sec:KE} then concerns the definition of the $(D,[\psi])$-log KE metrics and the proof of Theorem \ref{thmE}. 
\subsection{Acknowledgments.}
I would like to thank my PhD advisors Stefano Trapani and David Witt Nyström for their numerous comments. The author is supported by a postdoctoral grant of the Knut and Alice Wallenberg
Foundation.
\section{Preliminaries.}
\label{sec:Pre}
The set of all model type envelopes is defined as
$$
\mathcal{M}:=\{\psi\in PSH(X,\omega)\, : \, \psi=P_{\omega}[\psi](0)\}.
$$
where for any couple of $\omega$-psh functions $u,v$
$$
P_{\omega}[u](v):=\Big(\lim_{C\to \infty}P_{\omega}(u+C,v)\Big)^{*}=\Big(\sup\{w\in PSH(X,\omega)\, : \, w\preccurlyeq u, w\leq v\}\Big)^{*}\in PSH(X,\omega).
$$
The star is for the upper semicontinuous regularization and $P_{\omega}(u,v):=\big(\sup\{w\in PSH(X,\omega)\, :\, w\leq \min(u,v)\}\big)^{*}$ (\cite{RWN14}). We set $P_{\omega}[\psi]:=P_{\omega}[\psi](0)$ for simplicity and we denote by $\mathcal{M}^{+}$ the elements $\psi\in\mathcal{M}$ such that $V_{\psi}:=\int_{X}MA_{\omega}(\psi)>0$ (\cite{Tru19}). \newline
There are plenty of elements in $\mathcal{M}$ since $P_{\omega}[P_{\omega}[\psi]]=P_{\omega}[\psi]$ if $V_{\psi}>0$, i.e. $v\to P_{\omega}[v]$ may be thought as a projection from the set of $\omega$-psh functions to $\mathcal{M}$. Moreover the preimage of this projection operator at $\psi\in \mathcal{M}^{+}$ includes $\mathcal{E}(X,\omega,\psi):=\{u\in PSH(X,\omega)\, : \, u\preccurlyeq \psi, V_{u}=V_{\psi}\}$, i.e. $P_{\omega}[u]=\psi$ for any $u\in PSH(X,\omega)$ with $\psi$-relative full Monge-Ampère mass (\cite[Theorem 1.3]{DDNL17b}). Observe also that $\sup_{X}u=\sup_{X}(u-\psi)$ if $u\preccurlyeq \psi$, $\psi\in\mathcal{M}$ as an immediate consequence of $u-\sup_{X}u\leq \psi\leq 0$.\newline

As stated in the Introduction, the set of model type envelopes is crucial to study complex Monge-Ampère equations. Indeed, $\psi-P_{\omega}[\psi]$ bounded is necessary to make the equation
$$
\begin{cases}
MA_{\omega}(u)=\mu\\
u\in\mathcal{E}(X,\omega,\psi)
\end{cases}
$$
always solvable where $\mu$ is a non-pluripolar measure such that $\mu(X)=V_{\psi}$ (\cite[Theorem 4.34]{DDNL17b}). Thus, without loss of generality we may assume $\psi$ be a model type envelope. \newline
It is also necessary for the sequel to recall that the full mass of the Monge-Ampère operator respects the partial order $\preccurlyeq$, i.e. that $V_{u}\leq V_{v}$ if $u\preccurlyeq v$ (\cite[Theorem 1.2]{WN17}). Moreover, assuming $\mathcal{A}\subset \mathcal{M}^{+}$ to be a totally ordered set of model type envelopes, its closure $\overline{\mathcal{A}}$ as subset of $PSH(X,\omega)$ (i.e. the weak closure) belongs to $\mathcal{M}$ and the Monge-Ampère operator becomes a homeomorphism when restricted to $\overline{\mathcal{A}}$ and when one considers the usual weak topologies (\cite[Lemma 3.12]{Tru20})
\subsection{The strong topology of finite energy $\omega$-psh functions.}
\label{ssec:Preli21}
A function $u\in PSH(X,\omega,\psi):=\{v\in PSH(X,\omega)\, :\, v\preccurlyeq \psi\}$ is said to have $\psi$-\emph{relative minimal singularities} if $u-\psi$ is globally bounded on $X$.
\begin{defn}[\cite{DDNL17b}]
The $\psi$-\emph{relative energy functional} $E_{\psi}:PSH(X,\omega,\psi)\to \mathbbm{R}\cup \{-\infty\}$ is defined as
$$
E_{\psi}(u):=\frac{1}{n+1}\sum_{j=0}^{n}\int_{X}(u-\psi) (\omega+dd^{c}u)^{j}\wedge(\omega+dd^{c}\psi)^{n-j}
$$
if $u$ has $\psi$-relative minimal singularities, and as
$$
E_{\psi}(u):=\inf\big\{E_{\psi}(v)\, :\, v\in\mathcal{E}(X,\omega,\psi)\, \mbox{with} \, \psi\mbox{-relative minimal singularities}, v\geq u\big\}
$$
otherwise. The subset $\mathcal{E}^{1}(X,\omega,\psi)\subset \mathcal{E}(X,\omega,\psi)$ is defined as
$$
\mathcal{E}^{1}(X,\omega,\psi):=\big\{u\in\mathcal{E}(X,\omega,\psi)\,:\, E_{\psi}(u)>-\infty\big\}.
$$
\end{defn}
Note that the $0-$relative energy functional is the \emph{Aubin-Mabuchi energy functional}, also called \emph{Monge-Ampère energy} (see \cite{Aub84}). As shown in \cite[Section 4.2]{DDNL17b}, $E_{\psi}$ is non-decreasing, continuous along decreasing sequences and $E_{\psi}(u)=\lim_{k\to \infty}E_{\psi}\big(\max(u,\psi-k)\big)$. The authors in \cite{DDNL17b} assumed $\psi$ to have \emph{small unbounded locus}, but all the mentioned properties extend to the general setting as an immediate consequence of the integration by parts formula proved in \cite[Theorem 1.1]{X19a} (see also \cite[Theorem 1.2]{Lu20}).\newline

The set $\mathcal{E}^{1}(X,\omega,\psi)$ for $\psi\in\mathcal{M}^{+}$ becomes a complete metric space when endowed with the distance
$$
d(u,v):=E_{\psi}(u)+E_{\psi}(v)-2E_{\psi}\big(P_{\omega}(u,v)\big)
$$
by \cite[Theorem A]{Tru19}. Moreover, assuming $\mathcal{A}\subset \mathcal{M}^{+}$ totally ordered, the metric spaces $\big(\mathcal{E}^{1}(X,\omega,\psi),d\big)$ can be glued together.
\begin{thm}[\cite{Tru19}, Theorem B]
The set $X_{\mathcal{A}}:=\bigsqcup_{\psi\in\overline{\mathcal{A}}}\mathcal{E}^{1}(X,\omega,\psi)$ can be endowed with a complete distance $d_{\mathcal{A}}$ such that $
d_{\mathcal{A}|\mathcal{E}^{1}(X,\omega,\psi)\times \mathcal{E}^{1}(X,\omega,\psi)}=d$ for any $\psi\in\overline{\mathcal{A}}$.
\end{thm}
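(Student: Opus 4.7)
The plan is to define $d_{\mathcal{A}}$ by reducing to the fiberwise distance $d$ via the projection operators $P_{\omega}[\cdot]$, exploiting the total order on $\overline{\mathcal{A}}$. For $u\in\mathcal{E}^{1}(X,\omega,\psi_{1})$ and $v\in\mathcal{E}^{1}(X,\omega,\psi_{2})$ in $X_{\mathcal{A}}$, set $\psi_{0}:=\min(\psi_{1},\psi_{2})\in\overline{\mathcal{A}}$ (well defined by the total-order hypothesis and in $\mathcal{M}$ by the closure property recalled in Section 2) together with $\tilde{u}:=P_{\omega}[\psi_{0}](u)$ and $\tilde{v}:=P_{\omega}[\psi_{0}](v)$. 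The proposed formula is
$$
d_{\mathcal{A}}(u,v):=d(\tilde{u},\tilde{v})+\bigl|E_{\psi_{1}}(u)-E_{\psi_{0}}(\tilde{u})\bigr|+\bigl|E_{\psi_{2}}(v)-E_{\psi_{0}}(\tilde{v})\bigr|,
$$
which manifestly reduces to $d(u,v)$ when $\psi_{1}=\psi_{2}$, yielding the required restriction property. The first preparatory lemmas verify that $\tilde{u},\tilde{v}\in\mathcal{E}^{1}(X,\omega,\psi_{0})$ (using the Monge-Ampère homeomorphism on $\overline{\mathcal{A}}$ to get $V_{P_{\omega}[\psi_{0}](u)}=V_{\psi_{0}}$ and a comparison of energies across nested model envelopes) and the composition identity $P_{\omega}[\psi_{*}]\circ P_{\omega}[\psi']=P_{\omega}[\psi_{*}]$ whenever $\psi_{*}\preccurlyeq\psi'$.

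Positive-definiteness is immediate: vanishing of $d_{\mathcal{A}}(u,v)$ forces $\tilde{u}=\tilde{v}$ and the two correction terms to vanish, which in turn forces $\psi_{1}=\psi_{2}=\psi_{0}$ and hence $u=v$. Symmetry is built into the construction. The core step is the triangle inequality for $u_{i}\in\mathcal{E}^{1}(X,\omega,\psi_{i})$, $i=1,2,3$: I would set $\psi_{*}:=\min(\psi_{1},\psi_{2},\psi_{3})$ and further project all three elements down to $\psi_{*}$ using $P_{\omega}[\psi_{*}]$. The distance part is handled by the triangle inequality for the fiberwise $d$ on $(\mathcal{E}^{1}(X,\omega,\psi_{*}),d)$ together with a contraction estimate of the form $d(P_{\omega}[\psi_{*}](\tilde{u}),P_{\omega}[\psi_{*}](\tilde{v}))\leq d(\tilde{u},\tilde{v})$, which follows from the fact that $P_{\omega}[\psi_{*}]$ interacts well with the rooftop operator $P_{\omega}(\cdot,\cdot)$. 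The correction part is handled by a telescoping identity expressing $E_{\psi_{i}}(u_{i})-E_{\psi_{*}}(P_{\omega}[\psi_{*}](u_{i}))$ as a sum of gaps through intermediate levels $\psi_{ij}:=\min(\psi_{i},\psi_{j})$, so that the three pairwise sums align properly.

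For completeness, given a $d_{\mathcal{A}}$-Cauchy sequence $(u_{k})$ with $u_{k}\in\mathcal{E}^{1}(X,\omega,\psi_{k})$, the total order lets us pass to a monotone subsequence for which $\psi_{k}\to\psi\in\overline{\mathcal{A}}$ weakly (and $V_{\psi_{k}}\to V_{\psi}$ by the Monge-Ampère homeomorphism on $\overline{\mathcal{A}}$). The Cauchy condition forces the correction terms between consecutive indices to tend to zero, so the projections $P_{\omega}[\psi](u_{k})$ become $d$-Cauchy in the complete fiber $(\mathcal{E}^{1}(X,\omega,\psi),d)$ from Theorem A of \cite{Tru19}, and their limit yields a candidate $u\in\mathcal{E}^{1}(X,\omega,\psi)$; a final argument lifts this back to show $d_{\mathcal{A}}(u_{k},u)\to 0$. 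The main obstacle throughout is the triangle inequality, specifically the telescoping of energy corrections across three distinct model envelopes, which requires a careful identity combining monotonicity of $P_{\omega}[\cdot]$ with the behavior of the non-pluripolar Monge-Ampère operator under nested projections; a secondary subtlety in the completeness step is ensuring that the limiting fiber is exactly $\psi=\lim\psi_{k}$, rather than a strictly smaller envelope, which again relies on the homeomorphism on $\overline{\mathcal{A}}$.
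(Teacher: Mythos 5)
The paper does not contain its own proof of this statement (it is cited to \cite{Tru19}), so your proposal can only be assessed on its own terms. It contains a fatal gap: the proposed $d_{\mathcal{A}}$ is not positive-definite, and the argument you give for positive-definiteness is false.

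Concretely, take two distinct $\psi_{1}\prec\psi_{2}$ in $\overline{\mathcal{A}}$, regarded as the points $\psi_{1}\in\mathcal{E}^{1}(X,\omega,\psi_{1})$ and $\psi_{2}\in\mathcal{E}^{1}(X,\omega,\psi_{2})$ of the disjoint union $X_{\mathcal{A}}$. Then $\psi_{0}=\psi_{1}$, $\tilde{u}=P_{\omega}[\psi_{1}](\psi_{1})=\psi_{1}$, and by Proposition \ref{prop:PropProie}, $\tilde{v}=P_{\omega}[\psi_{1}](\psi_{2})=P_{\omega}[\psi_{1}]\bigl(P_{\omega}[\psi_{2}](0)\bigr)=P_{\omega}[\psi_{1}](0)=\psi_{1}$. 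Hence $d(\tilde{u},\tilde{v})=0$, $|E_{\psi_{1}}(\psi_{1})-E_{\psi_{1}}(\tilde{u})|=0$, and $|E_{\psi_{2}}(\psi_{2})-E_{\psi_{1}}(\tilde{v})|=|0-0|=0$, so $d_{\mathcal{A}}(\psi_{1},\psi_{2})=0$ even though $\psi_{1}\neq\psi_{2}$. This directly contradicts Proposition \ref{prop:PropAB}, which says $d_{\mathcal{A}}$-convergence implies weak ($L^{1}$) convergence after subsequence. Your claim that ``vanishing of the correction terms forces $\psi_{1}=\psi_{2}=\psi_{0}$'' is therefore unjustified: the correction $|E_{\psi_{i}}(u)-E_{\psi_{0}}(P_{\omega}[\psi_{0}](u))|$ vanishes at $u=\psi_{i}$ (both energies are zero) regardless of whether $\psi_{0}\prec\psi_{i}$, so the quantity cannot detect the separation between fibers. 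A distance on $X_{\mathcal{A}}$ must encode the fact that $\psi_{1}$ and $\psi_{2}$ are far apart, e.g.\ via the gap in the volumes $V_{\psi_{1}}<V_{\psi_{2}}$ or via an entropy-like cross term; your formula has no such ingredient.

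A secondary issue is the triangle inequality. You appeal to a ``telescoping identity'' for the energy corrections, but in the configuration $\psi_{2}\succ\psi_{1}\succ\psi_{3}$ the corrections for $d_{\mathcal{A}}(u_{1},u_{2})$ and $d_{\mathcal{A}}(u_{2},u_{3})$ both involve the middle point $u_{2}$, which does not appear at all in the correction for $d_{\mathcal{A}}(u_{1},u_{3})$. Choosing $u_{2}=\psi_{2}$ annihilates both corrections on the right-hand side, so the inequality must then be extracted entirely from the Lipschitz contraction of the $d$-part together with the sign information $E_{\psi_{0}}(P_{\omega}[\psi_{0}](w))\geq E_{\psi'}(w)$ for $\sup_{X}w\leq 0$ and $\psi_{0}\preccurlyeq\psi'$; working this out one finds that the inequality becomes an \emph{equality} for a large family of triples, which is exactly the degeneracy that also kills positive-definiteness. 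The ``telescoping'' slogan does not replace a verified estimate here, and even if it did, the formula as written cannot serve as the desired $d_{\mathcal{A}}$.
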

For the purpose of this paper it is not important to recall the definition of the distance $d_{\mathcal{A}}$, which is quite technical, but the following interesting contraction properties will be crucial.
\begin{prop}[\cite{Tru19}, Lemma 4.2, Proposition 4.3]
\label{prop:PropProie}
Let $\psi_{1},\psi_{2},\psi_{3}\in\mathcal{M}$ such that $\psi_{1}\preccurlyeq\psi_{2}\preccurlyeq \psi_{3}$. Then $P_{\omega}[\psi_{1}]\big(P_{\omega}[\psi_{2}](u)\big)=P_{\omega}[\psi_{1}](u)$ for any $u\in\mathcal{E}^{1}(X,\omega,\psi_{3})$ and $|P_{\omega}[\psi_{1}](u)-\psi_{1}|\leq C$ if $|u-\psi_{3}|\leq C$. Moreover the map
$$
P_{\omega}[\psi_{1}](\cdot): \mathcal{E}^{1}(X,\omega,\psi_{2})\to PSH(X,\omega,\psi_{1})
$$
has image in $\mathcal{E}^{1}(X,\omega,\psi_{1})$ and it is a Lipschitz map of constant $1$ when the sets $\mathcal{E}^{1}(X,\omega,\psi_{i})$, $i=1,2$, are endowed with the $d$ distances, i.e.
$$
d\big(P_{\omega}[\psi_{1}](u), P_{\omega}[\psi_{1}](v)\big)\leq d(u,v)
$$
for any $u,v\in\mathcal{E}^{1}(X,\omega,\psi_{2})$.
\end{prop}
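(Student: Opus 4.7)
The plan is to handle the three claims in order of increasing difficulty, treating the projection identity, the boundedness, and the Lipschitz estimate separately.

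For $P_\omega[\psi_1](P_\omega[\psi_2](u))=P_\omega[\psi_1](u)$ I would argue by a double inclusion of the defining suprema. Since $P_\omega[\psi_2](u)\leq u$, any competitor on the left is also a competitor on the right, giving $\leq$; conversely, any $w\preccurlyeq\psi_1$ with $w\leq u$ automatically satisfies $w\preccurlyeq\psi_2$ by transitivity, hence $w\leq P_\omega[\psi_2](u)$, giving $\geq$. For $|P_\omega[\psi_1](u)-\psi_1|\leq C$ I would first record two normalizations: any $\varphi\in\mathcal M$ has $\sup_X\varphi=0$ (otherwise $P_\omega[\varphi](0)$ would strictly dominate $\varphi$), and for $\varphi_1,\varphi_3\in\mathcal M$ the relation $\varphi_1\preccurlyeq\varphi_3$ upgrades to the pointwise $\varphi_1\leq\varphi_3$, since the competitor $\varphi_1-C\leq 0$ sits below $P_\omega[\varphi_3](0)=\varphi_3$. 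Then, assuming $|u-\psi_3|\leq C$, any $w\preccurlyeq\psi_1$ with $w\leq u\leq\psi_3+C\leq C$ satisfies $w-C\leq P_\omega[\psi_1](0)=\psi_1$, yielding the upper bound, while the lower bound follows from the explicit competitor $\psi_1-C\leq\psi_3-C\leq u$.

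For the image statement in the third claim, I would observe that $|u-\psi_2|\leq C'$ combined with part~(ii) immediately gives $|P_\omega[\psi_1](u)-\psi_1|\leq C'$, so the case of $\psi_2$-relative minimal singularities is free. For general $u\in\mathcal E^1(X,\omega,\psi_2)$ I would approximate by the cutoffs $u_k:=\max(u,\psi_2-k)\searrow u$, use monotonicity of $P_\omega[\psi_1](\cdot)$ in the second argument to deduce $P_\omega[\psi_1](u_k)\searrow P_\omega[\psi_1](u)$, and invoke continuity of $E_{\psi_1}$ along decreasing sequences. This reduces the problem to a uniform lower bound $E_{\psi_1}(P_\omega[\psi_1](u_k))\geq -M$, which I would extract from the Lipschitz estimate below (applied between $u_k$ and a fixed reference with $\psi_2$-minimal singularities) together with $E_{\psi_2}(u_k)\geq E_{\psi_2}(u)>-\infty$.

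The 1-Lipschitz estimate is the heart of the proposition. The pivotal tool is the commutation identity
$$
P_\omega\bigl(P_\omega[\psi_1](u),P_\omega[\psi_1](v)\bigr)=P_\omega[\psi_1]\bigl(P_\omega(u,v)\bigr),
$$
again a double-inclusion exercise on the defining suprema. Expanding both sides of $d(P_\omega[\psi_1](u),P_\omega[\psi_1](v))\leq d(u,v)$ through the definition of $E_{\psi_i}$ and symmetrizing in $u,v$, the claim reduces to the monotone increment bound
$$
E_{\psi_1}\bigl(P_\omega[\psi_1](u_2)\bigr)-E_{\psi_1}\bigl(P_\omega[\psi_1](u_1)\bigr)\leq E_{\psi_2}(u_2)-E_{\psi_2}(u_1)
$$
for $u_1\leq u_2$ in $\mathcal E^1(X,\omega,\psi_2)$. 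I would prove this first in the minimal-singularity case via the cocycle/telescoping formula for the relative energy, invoking the orthogonality relation that $(\omega+dd^c P_\omega[\psi_1](u_i))^n$ is carried on the contact set $\{P_\omega[\psi_1](u_i)=u_i\}\cup\{P_\omega[\psi_1](u_i)=\psi_1\}$, and then pass to the limit along the cutoffs $\max(u_i,\psi_2-k)$. The main obstacle I anticipate is the careful handling of the non-pluripolar product with highly singular weights: one must justify the orthogonality and the integration by parts in a regime where the singular locus is not pluripolar-negligible, and this is precisely where the general integration-by-parts theorem of \cite{X19a} becomes indispensable.
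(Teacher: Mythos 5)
The paper does not prove this proposition --- it is recalled verbatim from \cite{Tru19}, Lemma~4.2 and Proposition~4.3, so there is no in-text argument to compare against. Taken on its own merits, your plan is a correct and natural route. The double-inclusion arguments for $P_\omega[\psi_1]\circ P_\omega[\psi_2]=P_\omega[\psi_1]$ and for the commutation identity $P_\omega(P_\omega[\psi_1](u),P_\omega[\psi_1](v))=P_\omega[\psi_1](P_\omega(u,v))$ both go through (the only point to watch is that competitors on the left side of the commutation identity are automatically $\preccurlyeq\psi_1$, which holds once $P_\omega[\psi_1](u)\preccurlyeq\psi_1$ is established, e.g.\ from your part~(ii)). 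The normalizations $\sup_X\psi=0$ for $\psi\in\mathcal M$ and the pointwise inequality $\psi_1\leq\psi_3$ for $\psi_1\preccurlyeq\psi_3$ in $\mathcal M$ are both valid and give the two-sided bound in~(ii). The symmetrization reducing the $1$-Lipschitz bound to the increment estimate $E_{\psi_1}(P_\omega[\psi_1](u_2))-E_{\psi_1}(P_\omega[\psi_1](u_1))\leq E_{\psi_2}(u_2)-E_{\psi_2}(u_1)$ for $u_1\leq u_2$ is also correct.

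Two points warrant caveats. First, the orthogonality relation is misstated: for the \emph{model} envelope $P_\omega[\psi_1](u)$ the non-pluripolar Monge--Amp\`ere measure is dominated by $\mathbbm 1_{\{P_\omega[\psi_1](u)=u\}}MA_\omega(u)$ --- the contact set with the single obstacle $u$ --- not by the union with $\{=\psi_1\}$. The two-obstacle contact set $\{=\psi_1+C\}\cup\{=u\}$ pertains to the finite rooftop $P_\omega(\psi_1+C,u)$; in the limit $C\to\infty$ defining $P_\omega[\psi_1](u)$ the $\psi_1$-contact disappears. This is harmless for you since you only need the one-sided domination. Second, and more importantly, the increment bound is the whole content of the Lipschitz estimate and your sketch (``cocycle formula plus orthogonality'') is too compressed to be verifiable as written; a direct application of the cocycle formula together with orthogonality for mixed products does not obviously close. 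What does work is a derivative-along-a-path argument: take the monotone family $u_t:=\max(u_1,u_2-t)$, set $v_t:=P_\omega[\psi_1](u_t)$, use translation equivariance $P_\omega[\psi_1](w-h)=P_\omega[\psi_1](w)-h$ to get $\dot v_t\geq\dot u_t$ on the contact set $\{v_t=u_t\}$, and combine with $\tfrac{d}{dt}E_\psi(\cdot)=\int_X\dot{(\cdot)}\,MA_\omega(\cdot)$ and the orthogonality domination; integrating in $t$ gives the bound for potentials with $\psi_2$-minimal singularities, and the cutoff limit you describe then handles general $u_1,u_2\in\mathcal E^1(X,\omega,\psi_2)$. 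You should make this explicit before claiming the increment bound. Your observation that the general (non--small-unbounded-locus) case requires the integration-by-parts result of \cite{X19a} is correct and worth flagging, as it is indeed the hypothesis that \cite{DDNL17b} imposes and \cite{Tru19} removes.
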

The metric topology of $\big(X_{\mathcal{A}}, d_{\mathcal{A}}\big)$ is truly natural as the next result recalls (see also \cite{BBEGZ16}).
\begin{prop}[\cite{Tru20}, Theorems $6.2$, $6.3$]
\label{prop:PropAB}
The metric topology of $\big(X_{\mathcal{A}}, d_{\mathcal{A}}\big)$ is the coarsest refinement of the weak topology such that $E_{\cdot}(\cdot)$ becomes continuous, i.e. given $\{u_{k}\}_{k\in\mathbbm{N}}, u\subset X_{\mathcal{A}}$ then the followings are equivalent:
\begin{itemize}
\item[i)] $d_{\mathcal{A}}(u_{k},u)\to 0$;
\item[ii)] $u_{k}\to u$ weakly and $E_{P_{\omega}[u_{k}]}(u_{k})\to E_{P_{\omega}[u]}(u)$.
\end{itemize}
Moreover if $d_{\mathcal{A}}(u_{k},u)\to 0$, then there exists a subsequence $\{u_{k_{j}}\}_{j\in\mathbbm{N}}$ such that $v_{j}:=(\sup_{h\geq j}u_{k_{h}})^{*}, w_{j}:=P_{\omega}(u_{k_{j}}, u_{k_{j+1}}, \dots)$ converge monotonically almost everywhere to $u$. In particular, the metric topology of $\big(X_{\mathcal{A}}, d_{\mathcal{A}}\big)$ implies the convergence in capacity.
\end{prop}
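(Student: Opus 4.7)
My plan is to prove the equivalence (i)$\Leftrightarrow$(ii) first, and then derive the subsequential monotone-envelope statement together with convergence in capacity as consequences. For (i)$\Rightarrow$(ii), the distance $d_{\mathcal{A}}$ is built by gluing together the Darvas-type distances $d$ on each slice $\mathcal{E}^{1}(X,\omega,\psi)$ via the Lipschitz projections $P_{\omega}[\psi]$ of Proposition \ref{prop:PropProie}. Accordingly I would first show that $d_{\mathcal{A}}(u_{k},u)\to 0$ forces the model envelopes $P_{\omega}[u_{k}]$ to converge weakly to $P_{\omega}[u]$ inside $\overline{\mathcal{A}}$, using the fact recalled after the definition of $\mathcal{M}^{+}$ that the Monge-Amp\`ere operator restricted to $\overline{\mathcal{A}}$ is a homeomorphism for the weak topologies. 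Once the singularities are under control, the defining formula $d(\phi,\chi)=E_{\psi}(\phi)+E_{\psi}(\chi)-2E_{\psi}(P_{\omega}(\phi,\chi))$ together with contraction under $P_{\omega}[\cdot]$ yields the energy convergence $E_{P_{\omega}[u_{k}]}(u_{k})\to E_{P_{\omega}[u]}(u)$. Weak convergence of $u_{k}$ to $u$ then follows from standard $L^{1}$-compactness of $\omega$-psh functions: any subsequential weak limit $u'$ inherits the correct model envelope and the correct energy, so separation of points for $d_{\mathcal{A}}$ forces $u'=u$.

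For (ii)$\Rightarrow$(i) I would extract a subsequence $\{u_{k_{j}}\}_{j\in\mathbbm{N}}$ and sandwich it between the monotone envelopes $v_{j}:=(\sup_{h\geq j}u_{k_{h}})^{*}$ (decreasing) and $w_{j}:=P_{\omega}(u_{k_{j}},u_{k_{j+1}},\dots)$ (increasing), both of which are $\omega$-psh and satisfy $w_{j}\leq u_{k_{j}}\leq v_{j}$. Standard $L^{1}$-compactness (Hartogs-type lemmas) applied to a suitable subsequence gives $v_{j}\searrow u$ almost everywhere; the inequality $\lim w_{j}\leq u$ is immediate from $w_{j}\leq u_{k_{j}}$, whereas the reverse inequality is where the energy hypothesis will enter. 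Continuity of $E_{\psi}$ along decreasing sequences together with the exhaustion $E_{\psi}(u)=\lim_{k}E_{\psi}(\max(u,\psi-k))$ reduces the two statements $d_{\mathcal{A}}(v_{j},u)\to 0$ and $d_{\mathcal{A}}(w_{j},u)\to 0$ to convergence of energies along monotone sequences with controlled singularities. The triangle inequality applied to the sandwich then yields $d_{\mathcal{A}}(u_{k_{j}},u)\to 0$, and a standard subsequence-of-subsequence trick upgrades this to the full sequence.

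The hardest step will be identifying $\lim w_{j}=u$ almost everywhere and verifying $E_{P_{\omega}[w_{j}]}(w_{j})\to E_{P_{\omega}[u]}(u)$. Monotonicity of $E$ guarantees that the limit of energies exists, but pinning it down should combine upper semicontinuity of $E$ under weak convergence, the hypothesis $E_{P_{\omega}[u_{k}]}(u_{k})\to E_{P_{\omega}[u]}(u)$, and the contraction estimates of Proposition \ref{prop:PropProie} which allow one to project $w_{j}$ into $\mathcal{E}^{1}(X,\omega,\psi)$ without losing energy control. Once the equivalence (i)$\Leftrightarrow$(ii) is established, the final clause about convergence in capacity is a corollary: monotone convergence of $\omega$-psh functions implies convergence in capacity by Bedford--Taylor-type results, and the sandwich $w_{j}\leq u_{k_{j}}\leq v_{j}$ transfers this from the two envelopes to the original subsequence.
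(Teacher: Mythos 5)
You have the right overall shape (equivalence first, then the envelope statement, then capacity), but the proof of (ii)$\Rightarrow$(i) as written has a circularity problem that would need to be resolved, and the key contraction you invoke for the $w_j$ is the wrong one.

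The two-sided ``sandwich'' is more than you need and hides a gap. For (ii)$\Rightarrow$(i) you only need the decreasing envelope $v_j=(\sup_{h\geq j}u_{k_h})^*$: after passing to an a.e.\ convergent subsequence, $v_j\searrow u$, and since $u\leq v_j$ and $u_{k_j}\leq v_j$, the Pythagorean formula collapses the relevant distances to energy differences, namely $d(v_j,u)=E(v_j)-E(u)$ and $d(u_{k_j},v_j)=E(v_j)-E(u_{k_j})$, both of which vanish by continuity of the energy along decreasing sequences and the hypothesis $E_{P_\omega[u_{k_j}]}(u_{k_j})\to E_{P_\omega[u]}(u)$; the triangle inequality then gives $d_{\mathcal{A}}(u_{k_j},u)\to 0$, and a subsequence-of-subsequence argument upgrades this to the full sequence. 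Your plan instead insists on also establishing $w_j=P_\omega(u_{k_j},u_{k_{j+1}},\dots)\nearrow u$ \emph{before} knowing (i). That step cannot be deduced from weak convergence plus energy convergence alone: the upper semicontinuity of $E$ and the monotonicity of $E$ along $w_j$ only give $\lim E(w_j)\leq E(u)$, whereas you need a lower bound. The standard way to obtain $w_j\nearrow u$ is to first establish metric convergence, extract a subsequence with $\sum_j d_{\mathcal{A}}(u_{k_j},u)<\infty$, and then iterate the two-argument rooftop contraction $d\big(P_\omega(\phi,\chi),P_\omega(\phi',\chi)\big)\leq d(\phi,\phi')$ to bound $d(w_j,u)$ by a tail of that convergent series. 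So the rooftop/envelope statement is logically downstream of (i), not a tool to prove (i).

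There is also a mismatch in the contraction you cite: Proposition \ref{prop:PropProie} controls $P_\omega[\psi_1](\cdot)$, the change-of-singularity-type projection, not $P_\omega(\cdot,\cdot)$, the two-argument rooftop envelope, and it is the latter whose Lipschitz property drives the $w_j$ argument. Finally, in the relative setting you should check (you do not) that $v_j$ and $w_j$ actually lie in $X_{\mathcal{A}}$; since $P_\omega[v_j]\succcurlyeq\psi_{k_h}$ for all $h\geq j$ and the $\psi_k$ vary, this is not automatic and needs an argument (monotonicity of the model types along the subsequence together with the weak closedness of $\overline{\mathcal{A}}$). Once these points are repaired, the derivation of convergence in capacity from the monotone envelopes is exactly as you say.
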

With obvious notations $P_{\omega}(u_{k_{j}}, u_{k_{j+1}},\dots):=\sup\{w\in PSH(X,\omega)\, : \, w\leq u_{k_{h}}\, \mbox{for any}\, h\geq j\}$, while a sequence $\{u_{k}\}_{k\in\mathbbm{N}}\subset PSH(X,\omega)$ is said to converge \emph{in capacity} to $u\in PSH(X,\omega)$ if for any $\delta>0$
$$
\mbox{Cap}\big(\{|u_{k}-u|\geq \delta\}\big)\to 0 
$$
as $k\to +\infty$, where for any $B\subset X$ Borel set
\begin{equation}
\label{eqn:Capacity}
\mbox{Cap}(B):=\sup\Big\{\int_{B}MA_{\omega}(u)\, : \, u\in PSH(X,\omega), -1\leq u\leq 0\Big\}
\end{equation}
(see \cite{Kol98}, \cite{GZ17} and reference therein).

We can now give the following key definition.
\begin{defn}[\cite{Tru20}\footnote{this definition is slightly different from the one given in \cite{Tru20} since the latter was introduced as the metric convergence of $\big(X_{\mathcal{A}},d_{\mathcal{A}}\big)$. However for the situations covered by this article the two definitions are equivalent by Proposition \ref{prop:PropAB}.}]
Let $u,\{u_{k}\}_{k\in\mathbbm{N}}\subset PSH(X,\omega)$ such that $u\in\mathcal{E}^{1}(X,\omega,P_{\omega}[u]), u_{k}\in\mathcal{E}^{1}(X,\omega,P_{\omega}[u_{k}])$ for any $k\in\mathbbm{N}$. We say that $u_{k}\to u$ \emph{strongly} if $u_{k}\to u$ weakly (i.e. in the $L^{1}$-topology) and $E_{P_{\omega}[u_{k}]}(u_{k})\to E_{P_{\omega}[u]}(u)$ as $k\to \infty$.
\end{defn}

The following Lemma gives a sufficient way to detect the strong convergence in many situations.
\begin{lem}
\label{lem:KeyConv}
Let $\psi_{k},\psi\in\mathcal{M}$ such that $\psi_{k}\to \psi$ monotonically almost everywhere. Let also $u_{k},v_{k}\in\mathcal{E}^{1}(X,\omega,\psi_{k})$ converging in capacity respectively to $u,v\in\mathcal{E}^{1}(X,\omega,\psi)$. Then for any $j=0,\dots,n$
\begin{equation}
    \label{eqn:FB1}
    (\omega+dd^{c}u_{k})^{j}\wedge (\omega+dd^{c}v_{k})^{n-j}\to (\omega+dd^{c}u)^{j}\wedge(\omega+dd^{c}v)^{n-j}
\end{equation}
weakly. Moreover, letting $\{f_{k}\}_{k\in\mathbbm{N}}, f$ be uniformly bounded quasi-continuous functions such that $f_{k}\to f$ in capacity and assuming that there exists a uniform constant $C$ such that $ u_{k},v_{k}\geq \psi_{k}-C$, the weak convergence
\begin{equation}
    \label{eqn:FB2}
    f_{k}(\omega+dd^{c}u_{k})^{j}\wedge (\omega+dd^{c}v_{k})^{n-j}\to f(\omega+dd^{c}u)^{j}\wedge(\omega+dd^{c}v)^{n-j}
\end{equation}
holds for any $j=0,\dots,n$. In particular, if $\{\psi_{k}\}_{k\in\mathbbm{N}}\subset \mathcal{M}^{+}$ is a totally ordered sequence such that $\psi_{k}\to \psi\in\mathcal{M}^{+}$ almost everywhere, then for any $u\in PSH(X,\omega)$ globally bounded,
\begin{equation}
    \label{eqn:FB3}
    P_{\omega}[\psi_{k}](u)\to P_{\omega}[\psi](u)
\end{equation}
strongly.
\end{lem}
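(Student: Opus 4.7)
The plan is to prove (\ref{eqn:FB1}) first by a truncation argument reducing to the case of $\psi_k$-relative minimal singularities, then to deduce (\ref{eqn:FB2}) essentially for free since the extra hypothesis $u_k, v_k \geq \psi_k - C$ already places us in that relative minimal singularities regime, and finally to apply both parts to the envelopes $P_\omega[\psi_k](u)$ to obtain (\ref{eqn:FB3}).

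For (\ref{eqn:FB1}), I would introduce the truncations $u_k^M := \max(u_k, \psi_k - M)$, $v_k^M := \max(v_k, \psi_k - M)$ and analogously $u^M, v^M$. Since $\psi_k \to \psi$ monotonically a.e.\ (which for monotone sequences of $\omega$-psh functions upgrades to convergence in capacity, by Bedford--Taylor) and $u_k \to u$, $v_k \to v$ in capacity, the truncations satisfy $u_k^M \to u^M$ and $v_k^M \to v^M$ in capacity for each fixed $M$, and they satisfy a uniform two-sided bound $\psi_k - M \leq u_k^M, v_k^M \leq C'$. In this relative minimal singularities regime, convergence in capacity together with the monotone convergence $\psi_k \to \psi$ implies weak convergence of all mixed non-pluripolar products; this is the natural extension of the Bedford--Taylor/Xing continuity theorem to the relative setting, proved via plurifine locality and local regularization in the style of \cite{DDNL17b} (recalling that $V_{\psi_k} \to V_\psi$ by the continuity of the Monge--Ampère operator on $\overline{\mathcal{A}}$ noted in the preliminaries). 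To remove the truncation I would use plurifine locality: on $\{u_k > \psi_k - M\} \cap \{v_k > \psi_k - M\}$ the truncated and untruncated mixed products coincide, while the mass of the complementary bad set tends to $0$ as $M \to \infty$ by the $\psi_k$-relative full-mass assumption, and likewise for the limit. A standard $\varepsilon/3$ argument, choosing $M$ large (uniformly in $k$, using the mass continuity on $\overline{\mathcal{A}}$) and then $k$ large, gives (\ref{eqn:FB1}).

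For (\ref{eqn:FB2}) no truncation is required: $u_k, v_k \geq \psi_k - C$ is exactly the relative minimal singularities hypothesis so the weak convergence of mixed products holds by the step above. The quasi-continuous factor $f_k$ is then folded in by the usual device of approximating the $f_k$ uniformly in capacity by continuous functions on sets of arbitrarily large capacity complement, combining with $f_k \to f$ in capacity and the fact that the limiting measures do not charge pluripolar sets. For (\ref{eqn:FB3}), Proposition \ref{prop:PropProie} applied to $u$ bounded gives $|P_\omega[\psi_k](u) - \psi_k| \leq \|u\|_\infty$, uniformly in $k$, so the envelopes have $\psi_k$-relative minimal singularities with a uniform bound and lie in $\mathcal{E}^1(X,\omega,\psi_k)$. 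The monotone convergence $\psi_k \to \psi$ together with the sup-envelope definition transfers to $P_\omega[\psi_k](u) \to P_\omega[\psi](u)$ monotonically a.e., hence in capacity. Applying (\ref{eqn:FB2}) with $u_k = v_k = P_\omega[\psi_k](u)$ yields the weak convergence of every mixed product appearing in $E_{\psi_k}\bigl(P_\omega[\psi_k](u)\bigr)$, and the uniform relative bound allows one to integrate and pass to the limit term by term, giving $E_{\psi_k}\bigl(P_\omega[\psi_k](u)\bigr) \to E_\psi\bigl(P_\omega[\psi](u)\bigr)$, which by Proposition \ref{prop:PropAB} is strong convergence.

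The main obstacle I anticipate is Step 2, namely justifying the relative Bedford--Taylor continuity of mixed non-pluripolar products when the reference envelopes $\psi_k$ themselves vary monotonically: the classical local arguments must be checked to survive this variation. The total ordering of $\{\psi_k\}$ and the weak continuity of $MA_\omega$ on $\overline{\mathcal{A}}$ are precisely what let one upgrade the local/plurifine convergence to weak convergence of global measures with matched total masses, turning the whole argument into a controlled truncation limit.
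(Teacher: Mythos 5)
The paper does not prove (\ref{eqn:FB1}) or (\ref{eqn:FB2}) at all: it cites them as \cite[Proposition 2.7]{Tru19}. Your truncation-and-plurifine-locality sketch is a plausible outline of how such a result might be proved, but it reproduces standard machinery rather than the paper's actual argument, which simply invokes the reference.

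The serious issue is in your treatment of (\ref{eqn:FB3}). You assert that ``the monotone convergence $\psi_k \to \psi$ together with the sup-envelope definition transfers to $P_\omega[\psi_k](u) \to P_\omega[\psi](u)$ monotonically a.e.'' and treat this as immediate. It is not, and identifying the limit of the envelopes is precisely where the entire content of the paper's proof lies. Setting $v_k := P_\omega[\psi_k](u)$ and $v := P_\omega[\psi](u)$, the decreasing case $\psi_k \searrow \psi$ is indeed tractable by exhibiting $\tilde{v} := \lim v_k$ as a competitor in the envelope (the paper instead runs a uniform argument via the domination principle), but the increasing case $\psi_k \nearrow \psi$ is genuinely nontrivial: if $w \preccurlyeq \psi$ there is in general no $k$ with $w \preccurlyeq \psi_k$, so the competitor sets do not exhaust and there is no elementary reason the increasing limit $\tilde{v} := (\lim v_k)^*$ should reach $v$ rather than fall strictly below it. The paper has to prove $\tilde{v} \geq v$ by combining (\ref{eqn:FB1}) (to get $MA_\omega(v_k) \to MA_\omega(\tilde{v})$ weakly), the maximum-principle estimate $MA_\omega(v_k) \leq \mathbbm{1}_{\{v_k=u\}}MA_\omega(u)$ from \cite[Theorem 3.8]{DDNL17b}, and then the domination principle in $\mathcal{E}(X,\omega,\psi)$ from \cite[Proposition 3.11]{DDNL17b}. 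Your proposal omits this entire mechanism, so it does not constitute a proof of (\ref{eqn:FB3}). You also silently assume $\{\psi_k\}$ is monotone, whereas the statement only assumes it is totally ordered; the paper opens its proof of (\ref{eqn:FB3}) with a contradiction argument (using Proposition \ref{prop:PropAB} and extraction of a monotone subsequence from a totally ordered sequence) that reduces to the monotone case, and this reduction is also missing from your write-up.

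Once the weak convergence $v_k \to v$ is established, your final step, namely combining the uniform bound $|v_k - \psi_k| \leq \|u\|_{L^\infty}$ from Proposition \ref{prop:PropProie} with (\ref{eqn:FB2}) to pass to the limit term by term in the energy and then invoking Proposition \ref{prop:PropAB}, does match the paper's argument.
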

\begin{proof}
The convergences (\ref{eqn:FB1}), (\ref{eqn:FB2}) are given by \cite[Proposition 2.7]{Tru19}.\newline
So, suppose to have a totally ordered sequence $\{\psi_{k}\}_{k\in\mathbbm{N}}\subset \mathcal{M}^{+}$ such that $\psi_{k}\to \psi\in\mathcal{M}^{+}$ almost everywhere, let $u\in PSH(X,\omega)$ globally bounded, and set $v_{k}:=P_{\omega}[\psi_{k}](u), v:=P_{\omega}[\psi](u)$. We claim that to prove the strong convergence $v_{k}\to v$, we can assume that $\{\psi_{k}\}_{k\in\mathbbm{N}}$ is monotone.\newline
Indeed, suppose that the strong convergence $P_{\omega}[\psi'_{k}](u)\to P_{\omega}[\psi'](u)$ holds for any monotone sequence $\{\psi'_{k}\}_{k\in\mathbbm{N}}\subset \mathcal{M}^{+}$ converging to $\psi'\in\mathcal{M}^{+}$, while by contradiction assume also that $v_{k}\nrightarrow v$ strongly (hence in this case $\psi_{k}$ is necessarily not monotone). Then there exists $\epsilon>0$ and a subsequence $\{v_{k_{j}}\}_{j\in\mathbbm{N}}$ such that $d_{\mathcal{A}}(v_{k_{j}},v)\geq \epsilon$ for any $j\in\mathbbm{N}$ by Proposition \ref{prop:PropAB}. In particular, since $\{\psi_{k}\}_{k\in\mathbbm{N}}$ is totally ordered, there exists a further monotone subsequence $\{v_{k_{j_{h}}}\}_{h\in\mathbbm{N}}$ such that $d_{\mathcal{A}}(v_{k_{j_{h}}},v)\geq \epsilon$ for any $h\in\mathbbm{N}$, which is clearly in contradiction with $v_{k_{j_{h}}}\to v$ strongly. Hence the claim is proved.\newline
Therefore, letting $\{\psi_{k}\}_{k\in\mathbbm{N}}\subset \mathcal{M}^{+}$ be a  monotone sequence converging to $\psi\in\mathcal{M}^{+}$ almost everywhere, it remains to prove that $v_{k}:=P_{\omega}[\psi_{k}](u)\to v:=P_{\omega}[\psi](u)$ strongly for $u\in PSH(X,\omega)$ globally bounded. Moreover, since $|v_{k}-\psi_{k}|\leq C$ uniformly by Proposition \ref{prop:PropProie}, the definition of the Monge-Ampère energy and the convergence (\ref{eqn:FB2}) imply that $v_{k}\to v$ strongly if and only if $v_{k}\to v$ weakly.\newline
Assume $\psi_{k}\searrow \psi$, and let $\tilde{v}:=\lim_{k\to +\infty}v_{k}$. Clearly $v_{k}\searrow \tilde{v}\geq v$, so it remains to prove that $\tilde{v}\leq v$. But combining $u\geq \tilde{v}\geq v$ with $MA_{\omega}(v)\leq \mathbbm{1}_{\{u=v\}}MA_{\omega}(u)$ (\cite[Theorem 3.8]{DDNL17b}), we obtain
$$
0\leq \int_{X}(\tilde{v}-v)MA_{\omega}(v)\leq \int_{\{v=u\}}(\tilde{v}-u)MA_{\omega}(u)\leq 0,
$$
i.e. $\tilde{v}=v$ $MA_{\omega}(v)$-almost everywhere. Note also that $|\tilde{v}-\psi|\leq C$ since $|v_{k}-\psi_{k}|\leq C$. Thus, by the domination principle in the class $\mathcal{E}(X,\omega,\psi)$ (see \cite[Proposition 3.11]{DDNL17b}) we get $\tilde{v}\leq v$ which concludes this case.\newline
Assume $\psi_{k}\nearrow \psi$. Then, with the same notations of before, $v_{k}\nearrow \tilde{v}\leq v $ and again $|\tilde{v}-\psi|\leq C$. Thus (\ref{eqn:FB1}) gives $MA_{\omega}(v_{k})\to MA_{\omega}(\tilde{v})$ weakly (monotone convergence implies the convergence in capacity, see \cite[Proposition 4.25]{GZ17}), which together with $MA_{\omega}(v_{k})\leq \mathbbm{1}_{\{v_{k}=u\}}MA_{\omega}(u)$ (\cite[Theorem 3.8]{DDNL17b}) implies that
$$
MA_{\omega}(\tilde{v})\leq \mathbbm{1}_{\{v=u\}}MA_{\omega}(u).
$$
Therefore,
$$
0\leq \int_{X}(v-\tilde{v})MA_{\omega}(\tilde{v})\leq \int_{\{\tilde{v}=u\}}(v-u)MA_{\omega}(u)\leq 0,
$$
which by the domination principle in the class $\mathcal{E}(X,\omega,\psi)$ (\cite[Proposition 3.11]{DDNL17b}) yields $v\leq \tilde{v}$ and concludes the proof.
\end{proof}
Clearly, Lemma \ref{lem:KeyConv} implies that for any Kähler potential $\varphi$, i.e. for any element of $ \mathcal{H}:=\{\varphi\in PSH(X,\omega)\cap C^{\infty}(X)\, : \, \omega+dd^{c}\varphi>0\}$, and for any totally ordered sequence $\{\psi_{k}\}_{k\in\mathbbm{N}}\subset \mathcal{M}^{+}$ converging to $\psi\in\mathcal{M}^{+}$, $P_{\omega}[\psi_{k}](\varphi)\to P_{\omega}[\psi](\varphi)$ strongly. This example of strong convergence will be heavily used in the subsequent proofs, as we also recall that any element $u\in PSH(X,\omega)$ can be approximated by a decreasing sequence of Kähler potentials (\cite[Theorem 1]{BK07}).\newline

Finally the following essential property of the energy $E_{\cdot}(\cdot)$ in $X_{\mathcal{A}}$ will be a powerful tool for the variational approach.
\begin{prop}[\cite{Tru20}, Lemma $3.13$, Propositions $3.14$, $3.15$]
\label{prop:Usc}
Let $\mathcal{A}\subset \mathcal{M}^{+}$ be a totally ordered family such that $\overline{A}\subset \mathcal{M}^{+}$, and let $\{u_{k}\}_{k\in\mathbbm{N}}\subset X_{\mathcal{A}}$ converging weakly to $u\in X_{\mathcal{A}}$. Then
$$
\limsup_{k\to \infty}E_{P_{\omega}[u_{k}]}(u_{k})\leq E_{P_{\omega}[u]}(u).
$$
Moreover if $E_{P_{\omega}[u_{k}]}(u_{k})\geq -C$ uniformly, then $P_{\omega}[u_{k}]\to P_{\omega}[u]$ weakly. In particular for any $C\in\mathbbm{N}$ the set
$$
X_{\mathcal{A},C}:=\{u\in X_{\mathcal{A}}\, : \, \sup_{X}u\leq C \, \mbox{and}\, E_{P_{\omega}[u]}(u)\geq -C\}
$$
is weakly compact.
\end{prop}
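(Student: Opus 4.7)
The plan is to treat the three assertions in order: the $\limsup$ inequality, the weak convergence of the envelopes $P_\omega[u_k]$ under the uniform energy lower bound, and the weak compactness of $X_{\mathcal{A},C}$. First I would normalise: weak ($L^{1}$) convergence $u_k\to u$ in $PSH(X,\omega)$ gives a uniform bound $\sup_X u_k\leq C$ by Hartogs' lemma, which is no loss by translation-equivariance of the energy (model envelopes in $\mathcal{M}^{+}$ are normalised by $\sup=0$, so after translation $u_k\leq P_\omega[u_k]=:\psi_k$). Because $\mathcal{A}$ is totally ordered and $\overline{\mathcal{A}}\subset\mathcal{M}^{+}$, every subsequence of $\{\psi_k\}$ admits a monotone sub-subsequence with weak limit $\psi'\in\mathcal{M}^{+}$, and passing to the weak limit in $u_k\leq\psi_k$ yields $u\leq\psi'$, hence $\psi:=P_\omega[u]\preccurlyeq\psi'$ by monotonicity of the projection.

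For the upper semi-continuity, I would pick a subsequence realising $\limsup_k E_{\psi_k}(u_k)$ and a further monotone sub-subsequence $\psi_k\to\psi'$. Approximate from above by the canonical truncations $u_k^{(\ell)}:=\max(u_k,\psi_k-\ell)$ and $u^{(\ell)}:=\max(u,\psi'-\ell)$, which have minimal singularities relative to $\psi_k$, respectively $\psi'$. For fixed $\ell$ the pair $(u_k^{(\ell)},\psi_k)$ is uniformly bounded relative to $\psi_k$ and $u_k^{(\ell)}\to u^{(\ell)}$ weakly, so Lemma \ref{lem:KeyConv} delivers weak convergence of every mixed current $(\omega+dd^c u_k^{(\ell)})^i\wedge(\omega+dd^c\psi_k)^{n-i}$ together with the quasi-continuity needed to handle the integrands $u_k^{(\ell)}-\psi_k$; this yields $\limsup_k E_{\psi_k}(u_k^{(\ell)})\leq E_{\psi'}(u^{(\ell)})$. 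Taking $\ell\to\infty$ and combining with the monotone continuity $E_{\psi'}(u)=\lim_\ell E_{\psi'}(u^{(\ell)})$ from the preliminaries and the reference-envelope comparison $E_{\psi'}(u)\leq E_\psi(u)$ coming from $\psi\preccurlyeq\psi'$ (automatic in the degenerate case $\psi\prec\psi'$ strictly, where the left-hand side is $-\infty$ because $u$ loses mass relative to $\psi'$) closes the upper bound.

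For the second assertion, assume $E_{\psi_k}(u_k)\geq -C$ uniformly and extract a monotone subsequence $\psi_{k_j}\to\psi'$ with $\psi\preccurlyeq\psi'$. The Monge--Amp\`ere operator is a homeomorphism on $\overline{\mathcal{A}}$ (recalled in \S\ref{sec:Pre}), so $V_{\psi_{k_j}}\to V_{\psi'}$, and together with $V_{u_{k_j}}=V_{\psi_{k_j}}$ the uniform energy lower bound rules out non-pluripolar mass escaping to the polar locus, giving $V_u=V_{\psi'}$. Hence $u\in\mathcal{E}(X,\omega,\psi')$, and by \cite[Theorem 1.3]{DDNL17b} recalled in \S\ref{sec:Pre}, $\psi'=P_\omega[u]=\psi$. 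Since every monotone subsequential limit coincides with $\psi$ and $\mathcal{A}$ is totally ordered, the full sequence $\psi_k\to\psi$ weakly.

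Compactness of $X_{\mathcal{A},C}$ then drops out: a sequence in $X_{\mathcal{A},C}$ has uniformly bounded suprema and weakly clusters at some $u\in PSH(X,\omega)$; the second assertion gives $\psi_{k_j}\to P_\omega[u]\in\overline{\mathcal{A}}\subset\mathcal{M}^{+}$, and the first yields $E_{P_\omega[u]}(u)\geq -C$, placing $u\in X_{\mathcal{A},C}$. The step I expect to be the main obstacle is the mass identity $V_u=V_{\psi'}$ in the second step: the inequality $V_u\leq V_{\psi'}$ is automatic from $u\leq\psi'$, but the reverse inequality must be extracted from the uniform energy lower bound and requires ruling out non-pluripolar mass concentration on $\{u<\psi'\}$, plausibly via the integration-by-parts formula of \cite{X19a} applied to $\int(u_{k_j}-\psi_{k_j})\,MA_\omega(\psi_{k_j})$ and its weak limit.
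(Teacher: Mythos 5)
This Proposition is not proved in the paper: it is recalled verbatim from \cite{Tru20} (Lemma 3.13, Propositions 3.14, 3.15), so the comparison below is against what a complete proof would have to contain rather than against an in-text argument.

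Your reduction to monotone subsequences of $\{\psi_k\}$, the observation that $\psi:=P_\omega[u]\preccurlyeq\psi'$ for any subsequential weak limit $\psi'$ of $\{\psi_k\}$, and the dichotomy ``either $\psi=\psi'$ or $E_{\psi'}(u)=-\infty$'' are all correct and are the right skeleton. Both of the technical steps, however, are gapped.

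\emph{First gap (upper semicontinuity).} You invoke Lemma~\ref{lem:KeyConv} for the truncations $u_k^{(\ell)}=\max(u_k,\psi_k-\ell)$, but that lemma requires $u_k^{(\ell)}\to u^{(\ell)}$ \emph{in capacity}, not merely weakly; weak $L^1$-convergence of a uniformly bounded (relative to $\psi_k$) sequence of $\omega$-psh functions does not give convergence in capacity. As written, the step ``$\limsup_k E_{\psi_k}(u_k^{(\ell)})\leq E_{\psi'}(u^{(\ell)})$'' is therefore not justified. The usual way to obtain the one-sided estimate without capacity convergence is to replace $u_k$ by the decreasing envelopes $v_j:=(\sup_{h\geq j}u_{k_h})^{*}$ along an a.e.\ convergent subsequence: these decrease to $u$, hence converge in capacity, and one uses monotonicity of the energy ($u_{k_h}\leq v_j$ for $h\geq j$) combined with continuity of $E_{\cdot}(\cdot)$ along decreasing sequences. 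Getting the varying reference $\psi_{k_h}$ to cooperate with this envelope trick requires the contraction properties of the projections $P_\omega[\cdot](\cdot)$ from Proposition~\ref{prop:PropProie}; this is the part of the argument you are missing.

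\emph{Second gap (convergence of the envelopes).} You state that the uniform energy lower bound ``rules out non-pluripolar mass escaping to the polar locus, giving $V_u=V_{\psi'}$,'' and you yourself flag this as the main obstacle. It cannot be left as a plausibility claim: it is precisely the content of Proposition~3.14 of \cite{Tru20}. A lower bound on $E_{\psi_k}(u_k)$ controls $\int_X(\psi_k-u_k)\,MA_\omega(u_k)$ (via \cite[Theorem 4.10]{DDNL17b}), but converting that into the mass identity $V_u=V_{\psi'}$ needs a genuine argument (e.g.\ comparison with $P_\omega[\psi'](u_k)$, upper semicontinuity of $E_{\psi'}$, and the characterization $E_{\psi'}(u)>-\infty\Leftrightarrow u\in\mathcal E^1(X,\omega,\psi')$), none of which you supply. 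Once both gaps are filled, the compactness of $X_{\mathcal A,C}$ does indeed drop out exactly as you describe.
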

\subsection{The strong topology of finite energy non-pluripolar measures.}
\label{ssec:EnergyMeasure}
On the set of probability measures, the Monge-Ampère counterpart of the $\psi$-relative energy $E_{\psi}(\cdot)$ and of the associated set $\mathcal{E}^{1}(X,\omega,\psi)$ for $\psi\in\mathcal{M}^{+}$ are respectively the $\psi$-relative energy $E_{\psi}^{*}$ and the set $\mathcal{M}^{1}(X,\omega,\psi)$. \\
For $\mu$ positive probability measure, the first one is defined as
$$
E^{*}_{\psi}(\mu):=\sup_{\mathcal{E}^{1}(X,\omega,\psi)}F_{\mu,\psi}:=\sup_{u\in\mathcal{E}^{1}(X,\omega,\psi)} \Big(E_{\psi}(u)-V_{\psi}L_{\mu}(u) \Big)\in [0,\infty]
$$
where $V_{\psi}:=\int_{X}MA_{\omega}(\psi)>0$ and where $L_{\mu}(u):=\lim_{k\to \infty}\int_{X}\big(\max(u,\psi-k)-\psi\big)d\mu$ if $\mu$ does not charge $\{\psi=-\infty\}$ and $L_{\mu}\equiv-\infty$ otherwise (see \cite[Section 4]{Tru20}). The maximizers of the translation invariant functional $F_{\mu,\psi}$ solve the Monge-Ampère equation $MA_{\omega}(u)=V_{\psi}\mu$ (\cite[Proposition 5.2]{Tru20}) and, defining
$$
\mathcal{M}^{1}(X,\omega,\psi):=\{V_{\psi}\mu\, :\, \mu \, \mbox{probabilty measure such that}\, E_{\psi}^{*}(\mu)<\infty\},
$$
and $\mathcal{E}^{1}_{norm}(X,\omega,\psi):=\{u\in\mathcal{E}^{1}(X,\omega,\psi)\, : \, \sup_{X}u=0\}$, the following result holds.
\begin{thm}[\cite{Tru20}, Theorem A]
\label{thm:A}
The Monge-Ampère map $MA_{\omega}:\big(\mathcal{E}^{1}_{norm}(X,\omega,\psi),d\big)\to \big(\mathcal{M}^{1}(X,\omega,\psi),strong\big)$ is a homeomorphism where the \emph{strong topology} on $\mathcal{M}^{1}(X,\omega,\psi)$ is the coarsest refinement of the weak topology such that $E_{\psi}^{*}$ becomes continuous. In particular, $u\in\mathcal{E}^{1}(X,\omega,\psi)$ solves the Monge-Ampère equation $MA_{\omega}(u)=V_{\psi}\mu$ if and only if it maximizes the functional $F_{\mu,\psi}$.
\end{thm}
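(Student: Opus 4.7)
The plan is to prove Theorem \ref{thm:A} by a variational method: maximizers of $F_{\mu,\psi}$ on $\mathcal{E}^{1}(X,\omega,\psi)$ should be exactly the solutions of $MA_{\omega}(u)=V_{\psi}\mu$, and the homeomorphism should follow from existence, uniqueness, and compactness for this variational problem.

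The first step is the Euler--Lagrange characterization. Given a maximizer $u$ and a competitor $v\in\mathcal{E}^{1}(X,\omega,\psi)$ with $\psi$-relative minimal singularities, I would consider the path $u_{t}:=P_{\omega}[\psi]\big((1-t)u+tv\big)$, which stays in $\mathcal{E}^{1}(X,\omega,\psi)$ thanks to Proposition \ref{prop:PropProie}. The standard differentiability formula gives $\frac{d}{dt}\big|_{t=0^{+}}E_{\psi}(u_{t})=\int(v-u)\,MA_{\omega}(u)$, while the definition of $L_{\mu}$ yields $\frac{d}{dt}\big|_{t=0^{+}}L_{\mu}(u_{t})=\int(v-u)\,d\mu$ (well-defined because $\mu$ does not charge $\{\psi=-\infty\}$). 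Maximality then forces $\int(v-u)\big(MA_{\omega}(u)-V_{\psi}\mu\big)\leq 0$ for every such $v$, from which $MA_{\omega}(u)=V_{\psi}\mu$ by a standard density argument. Uniqueness modulo additive constants follows from the domination principle in $\mathcal{E}(X,\omega,\psi)$, giving the bijectivity part of Theorem \ref{thm:A}.

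For existence of a maximizer, I would pick a maximizing sequence $\{u_{k}\}\subset\mathcal{E}^{1}_{norm}(X,\omega,\psi)$ and leverage Proposition \ref{prop:Usc}. The finiteness $E^{*}_{\psi}(\mu)<\infty$ together with the translation invariance of $F_{\mu,\psi}$ and a uniform lower bound $L_{\mu}(u_{k})\geq -C$ on normalized functions forces $E_{\psi}(u_{k})\geq -C'$, so $\{u_{k}\}\subset X_{\{\psi\},C'}$, which is weakly compact. Upper semicontinuity of $E_{\psi}$ (Proposition \ref{prop:Usc}) and lower semicontinuity of $L_{\mu}$ under weak convergence of $\omega$-psh functions (via Fatou applied to the truncations in its definition) then show that any weak accumulation point $u^{*}$ satisfies $F_{\mu,\psi}(u^{*})\geq E^{*}_{\psi}(\mu)$, hence is a maximizer belonging to $\mathcal{E}^{1}_{norm}(X,\omega,\psi)$.

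For the homeomorphism, strong convergence $d(u_{k},u)\to 0$ implies convergence in capacity by Proposition \ref{prop:PropAB}, so Lemma \ref{lem:KeyConv} gives $MA_{\omega}(u_{k})\to MA_{\omega}(u)$ weakly and $E_{\psi}(u_{k})\to E_{\psi}(u)$; writing $E^{*}_{\psi}(\mu_{k})=F_{\mu_{k},\psi}(u_{k})$ and using $L_{\mu_{k}}(u_{k})\to L_{\mu}(u)$ from the uniform control of $u_{k}-\psi$ yields the strong convergence of measures. Conversely, if $\mu_{k}\to\mu$ strongly, uniqueness of the normalized solutions together with $E^{*}_{\psi}(\mu_{k})\to E^{*}_{\psi}(\mu)$ gives a uniform lower bound on $E_{\psi}(u_{k})$, hence precompactness by Proposition \ref{prop:Usc}; any weak accumulation point must be the unique solution associated to $\mu$, and the energy convergence upgrades this to strong $d$-convergence. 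The hard part will be the weak lower semicontinuity of $L_{\mu}$ together with the precise bound turning finite $E^{*}_{\psi}(\mu)$ into coercivity of $F_{\mu,\psi}$ on $\mathcal{E}^{1}_{norm}$; this is exactly where the definition of $\mathcal{M}^{1}(X,\omega,\psi)$ via finite energy is doing its work.
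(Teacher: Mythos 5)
This theorem is not proved in the present paper: it is recalled verbatim from \cite{Tru20} (where it is Theorem A), so there is no ``paper's own proof'' here to compare against line by line. Your variational blueprint is nevertheless the right framework and matches the strategy in \cite{Tru20} (itself modeled on Berman--Boucksom--Guedj--Zeriahi). That said, two steps in your outline conceal the genuine technical content.

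The Euler--Lagrange step as written does not close. Differentiating $F_{\mu,\psi}$ along the affine/convex path $u_t:=P_{\omega}[\psi]\big((1-t)u+tv\big)$ with $v\in\mathcal{E}^{1}(X,\omega,\psi)$ only produces the one-sided variational inequality $\int (v-u)\big(MA_{\omega}(u)-V_{\psi}\mu\big)\leq 0$ for $v$ ranging over $\omega$-psh competitors. The family of differences $v-u$ with $v\preccurlyeq\psi$ is not a dense or symmetric class of test functions, and there is no ``standard density argument'' that upgrades this inequality to the pointwise identity $MA_{\omega}(u)=V_{\psi}\mu$. What the actual proof uses is the two-sided perturbation by \emph{smooth} (generally non-psh) functions $\chi$ through the envelope, $u_t:=P_{\omega}(u+t\chi)$, together with Berman's differentiability theorem $\frac{d}{dt}\big|_{t=0}E_{\psi}\big(P_{\omega}(u+t\chi)\big)=\int_{X}\chi\, MA_{\omega}(u)$ (extended to the relative setting in \cite{DDNL17b}), and the fact that $t\mapsto L_{\mu}(P_{\omega}(u+t\chi))$ is differentiable with derivative $\int\chi\,d\mu$. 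The orthogonality property of the envelope ($MA_{\omega}(P_{\omega}(g))$ being carried by the contact set) is essential here; without it, the argument only yields a variational inequality, not the Monge--Ampère equation.

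The coercivity step is also more than a remark. From $u_k$ normalized and maximizing, both $E_{\psi}(u_k)\leq 0$ and $L_{\mu}(u_k)\leq 0$, so the identity $E_{\psi}(u_k)-V_{\psi}L_{\mu}(u_k)\to E_{\psi}^{*}(\mu)$ alone gives no lower bound on $E_{\psi}(u_k)$; you wrote ``a uniform lower bound $L_{\mu}(u_k)\geq -C$'' as if it were available, but obtaining it is precisely the difficulty. The standard route shows that $E_{\psi}^{*}(\mu)<\infty$ implies a sublinear estimate of the form $|L_{\mu}(u)|\leq C_{\epsilon}+\epsilon|E_{\psi}(u)|/V_{\psi}$ on $\mathcal{E}^{1}_{norm}(X,\omega,\psi)$, which yields properness of $F_{\mu,\psi}$ and hence the lower bound on $E_{\psi}(u_k)$; this is a nontrivial lemma (BBGZ-type), and you correctly flagged it as ``the hard part.'' So the plan is sound, but both the Euler--Lagrange and coercivity steps need the envelope/differentiability machinery and the sublinear growth lemma, not the shortcuts indicated in the proposal.
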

Examples of non-pluripolar measures $\nu$ with $\nu(X)=V_{\psi}$ such that $\nu\in \mathcal{M}^{1}(X,\omega,\psi)$ are given by $\nu=f\omega^{n}$ for $f\in L^{p}$, $p>1$ (\cite[Theorem 1.4]{DDNL17b}).\newline

More generally, given $\mathcal{A}\subset \mathcal{M}^{+}$ totally ordered such that $\overline{\mathcal{A}}\subset \mathcal{M}^{+}$ and endowing the set
$$
Y_{\mathcal{A}}:=\bigsqcup_{\psi\in \overline{\mathcal{A}}}\mathcal{M}^{1}(X,\omega,\psi)
$$
with the \emph{strong topology} given as the coarsest refinement of the weak topology of measures such that $E_{\cdot}^{*}(\cdot)$ becomes continuous, we get the following Theorem.
\begin{thm}[\cite{Tru20}, Theorem B]
\label{thm:ThmA}
The Monge-Ampère map
$$
MA_{\omega}:\big(X_{\mathcal{A},norm},d_{\mathcal{A}}\big)\to (Y_{\mathcal{A}}, strong)
$$
is a homeomorphism where $X_{\mathcal{A},norm}:=\bigsqcup_{\psi\in\overline{\mathcal{A}}}\mathcal{E}^{1}_{norm}(X,\omega,\psi)$.
\end{thm}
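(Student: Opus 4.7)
The plan is to establish the two continuities separately, relying on the fiberwise homeomorphism of Theorem \ref{thm:A} together with the convergence machinery of Lemma \ref{lem:KeyConv} and Propositions \ref{prop:PropAB}, \ref{prop:Usc}. The basic bookkeeping identity I will use is that for $u \in \mathcal{E}^{1}_{norm}(X,\omega,\psi)$ solving $MA_{\omega}(u)=V_{\psi}\mu$,
\begin{equation*}
E^{*}_{\psi}(\mu) \;=\; F_{\mu,\psi}(u) \;=\; E_{\psi}(u)-V_{\psi}L_{\mu}(u),
\end{equation*}
since $u$ is the unique normalized maximizer of $F_{\mu,\psi}$ (\cite[Proposition 5.2]{Tru20}). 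Bijectivity of $MA_{\omega}$ on each slice is immediate from Theorem \ref{thm:A}, so the whole map is a set-theoretic bijection between the disjoint unions, and the identity lets me convert convergence statements about $E^{*}$ into ones about $E$ plus the integral $V_{\psi}L_{\mu}(u)$ (after controlled truncations).

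For continuity of $MA_{\omega}$, suppose $u_{k}\to u$ in $d_{\mathcal{A}}$. By Proposition \ref{prop:PropAB} this gives weak convergence plus $E_{\psi_{u_{k}}}(u_{k})\to E_{\psi_{u}}(u)$, and after a subsequence of monotone envelopes, convergence in capacity. The weak homeomorphism of $MA_{\omega}$ on $\overline{\mathcal{A}}$ together with the total ordering of $\mathcal{A}$ forces $\psi_{u_{k}}\to \psi_{u}$ monotonically almost everywhere, so Lemma \ref{lem:KeyConv} yields $MA_{\omega}(u_{k})\to MA_{\omega}(u)$ weakly. Upgrading to the strong topology on $Y_{\mathcal{A}}$ requires $E^{*}_{\psi_{u_{k}}}(\mu_{k})\to E^{*}_{\psi_{u}}(\mu)$; via the identity this reduces to $\int_{X}(u_{k}-\psi_{u_{k}})MA_{\omega}(u_{k})\to \int_{X}(u-\psi_{u})MA_{\omega}(u)$. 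I would establish this by truncating both integrands at levels $\psi_{u_{k}}-j$ and $\psi_{u}-j$, applying Lemma \ref{lem:KeyConv} to the resulting bounded quasi-continuous functions (which converge in capacity because $u_{k}-\psi_{u_{k}}$ and $\psi_{u_{k}}$ themselves do), and then sending $j\to\infty$ with tail control extracted from the uniform convergence of $E_{\psi_{u_{k}}}(u_{k})$.

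For continuity of the inverse, let $\mu_{k}\to\mu$ strongly in $Y_{\mathcal{A}}$ and let $u_{k},u\in\mathcal{E}^{1}_{norm}$ be the associated solutions. Strong convergence bounds $E^{*}_{\psi_{u_{k}}}(\mu_{k})$ uniformly; together with $\sup_{X}u_{k}=0$ and the identity, this forces $E_{\psi_{u_{k}}}(u_{k})\ge -C$, so Proposition \ref{prop:Usc} gives weak precompactness in some $X_{\mathcal{A},C}$. Any weak limit $u'$ then satisfies $MA_{\omega}(u')=V_{\psi_{u}}\mu$ by Lemma \ref{lem:KeyConv} applied to a monotone subsequence plus the weak homeomorphism of $MA_{\omega}$ on $\overline{\mathcal{A}}$ (which identifies the limiting envelope as $\psi_{u}$), so $u'=u$ by Theorem \ref{thm:A}; hence $u_{k}\to u$ weakly along the full sequence by a standard subsequence-of-subsequence argument. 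To upgrade to $d_{\mathcal{A}}$-convergence, by Proposition \ref{prop:PropAB} it suffices to show $E_{\psi_{u_{k}}}(u_{k})\to E_{\psi_{u}}(u)$: the $\limsup\le$ direction is exactly the upper semicontinuity of Proposition \ref{prop:Usc}, while the matching $\liminf\ge$ comes from the identity together with the hypothesis that $E^{*}_{\psi_{u_{k}}}(\mu_{k})\to E^{*}_{\psi_{u}}(\mu)$ and the control of $L_{\mu_{k}}(u_{k})$ obtained in the continuity step.

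The main obstacle, present in both directions, is handling $L_{\mu_{k}}(u_{k})$ when the reference envelope $\psi_{u_{k}}$ and the potential $u_{k}$ both move with $k$: neither the fixed-$\psi$ monotone approximation used in \cite{Tru20} nor a straightforward dominated-convergence argument applies, because the truncation level and the reference envelope are coupled to $k$. The crux is a two-parameter approximation that uses Lemma \ref{lem:KeyConv}'s weak convergence of mixed Monge-Ampère products against varying quasi-continuous integrands, together with uniform energy estimates that make the truncation uniform in $k$, before the truncation level is sent to infinity.
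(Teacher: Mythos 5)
This statement is imported verbatim from \cite[Theorem B]{Tru20}; the present paper gives no proof of it, so there is no in-paper argument to compare your sketch against. Judged on its own terms, the outline is sensible, but it has a genuine gap precisely at the step you yourself flag as the crux.

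In both the forward and the inverse continuity arguments the decisive estimate is the convergence $\int_{X}(\psi_{u_k}-u_k)\,MA_\omega(u_k)\to\int_{X}(\psi_u-u)\,MA_\omega(u)$. You propose a two-parameter scheme --- truncate at level $\psi_{u_k}-j$, pass to the limit in $k$ via Lemma \ref{lem:KeyConv}, then send $j\to\infty$ with ``tail control extracted from the uniform convergence of $E_{\psi_{u_k}}(u_k)$'' --- but you never produce the tail estimate, and that estimate is the whole non-routine content of the theorem. Two things must actually be supplied. First, the second part of Lemma \ref{lem:KeyConv} (convergence of $f_k\,\omega_{u_k}^j\wedge\omega_{v_k}^{n-j}$) is stated only under a \emph{uniform} lower bound $u_k\geq\psi_k-C$, which general $u_k\in\mathcal{E}^1$ do not satisfy, so one must first replace $u_k$ by $\max(u_k,\psi_{u_k}-j)$, apply the lemma to those, and then quantify the discrepancy between the truncated and untruncated integrals \emph{uniformly in $k$}. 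Second, that uniform quantification has to come from an a priori energy bound of the type $\int_{X}(\psi_k-u_k)^{2}\,MA_\omega(u_k)\leq C\big(|E_{\psi_k}(u_k)|+1\big)^{2}$, a relative version of \cite[Theorem 4.10 / Lemma 4.18]{DDNL17b}, which gives uniform integrability of $(\psi_{u_k}-u_k)$ against $MA_\omega(u_k)$ and hence lets the tail vanish uniformly. As written, the ``two-parameter approximation'' names the hard part rather than proving it, and this gap appears symmetrically in your forward-continuity and inverse-continuity steps.
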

\section{Strong continuity of solutions.}
\label{sec:Degenerate}
As stated in the Introduction given a totally ordered sequence $\psi_{k}\in\mathcal{M}^{+}$ converging weakly to $\psi\in\mathcal{M}^{+}$, and given $f_{k}\in L^{1}\setminus \{0\}$ non-negative functions $L^{1}$-converging to $f\in L^{1}\setminus\{0\}$ we want to give necessary conditions so that a sequence of solutions $\{u_{k}\}_{k\in\mathbbm{N}}$ of
\begin{equation}
\label{eqn:MAKEk}
\begin{cases}
MA_{\omega}(u_{k})=e^{-\lambda u_{k}}f_{k}\omega^{n}\\
u_{k}\in\mathcal{E}^{1}(X,\omega,\psi_{k})
\end{cases}
\end{equation}
converges strongly in $X_{\mathcal{A}}$ for $\mathcal{A}:=\{\psi_{k}\}_{k\in\mathbbm{N}}$ to a solution $u$ of
\begin{equation}
\label{eqn:MAKE_}
\begin{cases}
MA_{\omega}(u)=e^{-\lambda u}f\omega^{n}\\
u\in\mathcal{E}^{1}(X,\omega,\psi).
\end{cases}
\end{equation}
We have three very different cases based on the sign of $\lambda\in\mathbbm{R}$.
\subsection{Case $\lambda=0$.}
\label{ssec:Zero}
In this subsection $\lambda=0$.\\
Observe that by Theorem \ref{thm:A} the existence of the solutions $u_{k}$ of (\ref{eqn:MAKEk}), normalized by $\sup_{X}u_{k}=0$, is equivalent to $f_{k}\omega^{n}\in\mathcal{M}^{1}(X,\omega,\psi_{k})$.\newline

We recall that the set $PSH_{norm}(X,\omega):=\{u\in PSH(X,\omega)\, : \, \sup_{X}u=0\}$ is wekly compact (see \cite[Proposition 8.5]{GZ17}).
\begin{reptheorem}{thmA}
Let $f_{k},f\in L^{1}$, $\psi_{k},\psi\in \mathcal{M}^{+}$, $u_{k}\in\mathcal{E}^{1}(X,\omega,\psi_{k})$ as in the aforementioned hypothesis. Let also $u\in PSH(X,\omega)$ be the unique accumulation point for $\{u_{k}\}_{k\in\mathbbm{N}}$. Then $u\in\mathcal{E}_{norm}(X,\omega,\psi)$ and $MA_{\omega}(u)=f\omega^{n}$. Furthermore, $u\in\mathcal{E}^{1}_{norm}(X,\omega,\psi)$ and $u_{k}\to u$ strongly if and only if $E_{\psi_{k}}(u_{k})\geq -C$ for a uniform constant $C\geq 0$ and
\begin{equation}
\label{eqn:WeirdH_}
\limsup_{k\to \infty}\int_{X}(\psi_{k}-u_{k})f_{k}\omega^{n}\leq \int_{X}(\psi-u)f\omega^{n}.
\end{equation}
\end{reptheorem}
\begin{proof}
\cite[Lemma 2.8]{DDNL18b} immediately implies $MA_{\omega}(u)\geq f\omega^{n}$.\newline
As recalled in the beginning of section \S $2$, the Monge-Ampère mass respects the partial order $\preccurlyeq$ by the main result in \cite{WN17} and the Monge-Ampère operator becomes an homeomorphism (with respect to the weak topologies) when restricted to a totally ordered set $\mathcal{A}\subset \mathcal{M}^{+}$ (\cite[Lemma 3.12]{Tru20}). Thus, as $\sup_{X}(u-\psi)=\sup_{X}u=0$ by Hartogs' Lemma (\cite[Proposition 8.4]{GZ17}), we get $V_{u}\leq V_{\psi}$, while the weak convergences $\psi_{k}\to \psi, f_{k}\to f$ implies
$$
V_{\psi}=\lim_{k\to \infty}V_{\psi_{k}}=\lim_{k\to\infty}\int_{X}f_{k}\omega^{n}=\int_{X}f\omega^{n}.
$$
Hence $u$ solves $MA_{\omega}(u)=f\omega^{n}$, i.e. it is the unique solution in $\mathcal{E}_{norm}(X,\omega,\psi)$, which concludes the first part of the proof.\newline
Assume that $u\in\mathcal{E}^{1}_{norm}(X,\omega,\psi)$ and that $u_{k}\to u$ strongly. Then $E_{\psi_{k}}(u_{k})\to E_{\psi}(u)$, which clearly gives $E_{\psi_{k}}(u_{k})\geq -C$ uniformly. Moreover, by Theorem \ref{thm:ThmA}
$$
E_{\psi_{k}}^{*}\big(MA_{\omega}(u_{k})/V_{\psi_{k}}\big)=E_{\psi_{k}}(u_{k})-\int_{X}(u_{k}-\psi_{k})f_{k}\omega^{n}\longrightarrow E_{\psi}^{*}\big(MA_{\omega}(u)/V_{\psi}\big)=E_{\psi}(u)-\int_{X}(u-\psi)f\omega^{n}
$$
which clearly gives $\int_{X}(\psi_{k}-u_{k})f_{k}\omega^{n}\to \int_{X}(\psi-u)f\omega^{n}$ by definition of $L_{\mu}$.\newline
Conversely, suppose that $E_{\psi_{k}}(u_{k})\geq-C$ uniformly and that $\limsup_{k\to \infty}\int_{X}(\psi_{k}-u_{k})f_{k}\omega^{n}\leq \int_{X}(\psi-u)f\omega^{n}$. Then, for any $\varphi\in\mathcal{H}$ (i.e. a Kähler potential),
\begin{multline}
\label{eqn:P1}
\liminf_{k\to \infty} E_{\psi_{k}}^{*}\big(MA_{\omega}(u_{k})/V_{\psi_{k}}\big)\geq \liminf_{k\to \infty}\Big(E_{\psi_{k}}\big(P_{\omega}[\psi_{k}](\varphi)\big)+\int_{X}\big(\psi_{k}-P_{\omega}[\psi_{k}](\varphi)\big)f_{k}\omega^{n}\Big)\geq\\
\geq E_{\psi}\big(P_{\omega}[\psi](\varphi)\big)+\int_{X}\big(\psi-P_{\omega}[\psi](\varphi)\big)f\omega^{n}
\end{multline}
where the first inequality follows because $u_{k}$ is a maximizer of $F_{f_{k}\omega^{n}/V_{\psi_{k}},\psi_{k}}$ by Theorem \ref{thm:A}, the convergence $E_{\psi_{k}}\big(P_{\omega}[\psi_{k}](\varphi)\big)\to E_{\psi}\big(P_{\omega}[\psi](\varphi)\big)$ is given by Lemma \ref{lem:KeyConv} while the lower semicontinuity of the integral is a consequence of the Fatou's Lemma as $\big(\psi_{k}-P_{\omega}[\psi_{k}](\varphi)\big)f_{k}\to \big(\psi-P_{\omega}[\psi](\varphi)\big)f$ almost everywhere and $|\psi_{k}-P_{\omega}[\psi_{k}](\varphi)|\leq C$ uniformly (Proposition \ref{prop:PropProie}). Therefore, for any $v\in\mathcal{E}^{1}(X,\omega,\psi)$ letting $\varphi_{j}\in\mathcal{H}$ be a decreasing sequence converging to $v$ (\cite[Theorem 1]{BK07}), we get
\begin{multline}
\label{eqn:P2}
\liminf_{k\to \infty} E_{\psi_{k}}^{*}\big(MA_{\omega}(u_{k})/V_{\psi_{k}}\big)\geq \limsup_{j\to\infty}\Big(E_{\psi}\big(P_{\omega}[\psi](\varphi_{j})\big)+\int_{X}\big(\psi-P_{\omega}[\psi](\varphi_{j})\big)f\omega^{n}\Big)=E_{\psi}(v)+\int_{X}(\psi-v)f\omega^{n}
\end{multline}
exploiting the continuity of $E_{\psi}(\cdot)$ along decreasing sequences and the Monotone Convergence Theorem. Thus, by definition,
\begin{equation}
\label{eqn:P4}
\liminf_{k\to \infty}E_{\psi_{k}}^{*}\big(MA_{\omega}(u_{k})/V_{\psi_{k}}\big)\geq E^{*}_{\psi}\big(f\omega^{n}/V_{\psi}\big)=E_{\psi}^{*}\big(MA_{\omega}(u)/V_{\psi}\big),
\end{equation}
which together with $\int_{X}(\psi_{k}-u_{k})f_{k}\omega^{n}\to \int_{X}(\psi-u)f\omega^{n}$ (by Fatou's Lemma and (\ref{eqn:WeirdH_})) and the upper semicontinuity of $E_{\cdot}(\cdot)$ (Proposition \ref{prop:Usc}) imply $E_{\psi_{k}}(u_{k})\to E_{\psi}(u)$. Hence $u\in\mathcal{E}^{1}_ {norm}(X,\omega,\psi)$ and $u_{k}\to u$ strongly.
\end{proof}
\begin{rem}
\emph{It is clear from the proof of Theorem \ref{thmA} that to prove that $u\in\mathcal{E}^{1}_{norm}(X,\omega,\psi)$ it is enough to show that $E_{\psi_{k}}(u_{k})\geq-C$. Therefore Theorem \ref{thmA} gives an effective tool to find out if the unique solution $u\in\mathcal{E}_{norm}(X,\omega,\psi)$ belongs to $\mathcal{E}^{1}(X,\omega,\psi)$, which is a regularity result.}
\end{rem}
\begin{rem}
\label{rem:SomeCases}
\emph{There are many examples from which the assumptions on the boundedness of the energy and $(\ref{eqn:WeirdH_})$ in Theorem \ref{thmA} are satisfied. \\
For instance if there exists $h\in L^{1}$ such that $(\psi_{k}-u_{k})f_{k}\leq h$ almost everywhere for any $k\in\mathbbm{N}$ then (\ref{eqn:WeirdH_}) trivially holds, while by \cite[Theorem 4.10]{DDNL17b} $-E_{\psi_{k}}(u_{k})\leq \int_{X}(\psi_{k}-u_{k})f_{k}\omega^{n}\leq ||h||_{L^{1}}$.\\
Similarly if $||f_{k}||_{L^{p}},||f||_{L^{p}}$ are uniformly bounded for $p>1$, then the boundedness of the energy and (\ref{eqn:WeirdH_}) are consequences of $\psi_{k}-u_{k}\to \psi-u$ in $L^{r}$ for any $r\in [1,\infty)$ (see Theorem $1.48$ in \cite{GZ17}). In particular, Theorem \ref{thmA} extends \cite[Theorem C]{Tru20} and \cite[Theorem 1.4]{DDNL19} \\
Next, if $f_{k}\leq cf$ for a constant $c\in\mathbbm{R}$ and $E_{\psi_{k}}(u_{k})\geq -C$ uniformly, then (\ref{eqn:WeirdH_}) can be replaced by $\int_{B}f\omega^{n}\leq A\, \mbox{Cap}(B)$ for any Borel set $B\subset X$ for $A>0$. Indeed, as all the \emph{$\psi$-relative Monge-Ampère capacities} are comparable (\cite[Theorem 1.1]{Lu20}), it is not difficult to check that there exists an uniform constant $A'>0$ such that $\int_{B}f\omega^{n}\leq A'\mbox{Cap}_{\psi_{k}}(B)$ for any Borel set $B\subset X$ and any $k\in\mathbbm{N}$ (see the proof of \cite[Lemma 2.8]{Tru19}). Thus by \cite[Lemma 4.18]{DDNL17b} we get that $\int_{X}(\psi_{k}-u_{k})^{2}f\omega^{n}\leq C_{1}\big(|E_{\psi_{k}}(u_{k})|+1\big)\leq C_{2}$ for two uniform constants $C_{1}, C_{2}$. In particular $\{\psi_{k}-u_{k}\}_{k\in\mathbbm{N}}\cup \{\psi-u\}\subset L^{1}(f\omega^{n})$ is uniformly integrable, and by \cite[Theorem 4.4]{Tru20} it follows that $\int_{X}(\psi_{k}-u_{k})f\omega^{n}\to \int_{X}(\psi-u)f\omega^{n}$. Moreover, as $f_{k}\leq cf$, by an easy calculation there exist uniform constants $C_{1},C_{2}$ such that for any $\epsilon>0$
\begin{multline*}
    \int_{X}(\psi_{k}-u_{k})(f_{k}-f)\omega^{n}\leq C_{1}\epsilon\int_{\{f\leq\epsilon\}}(\psi_{k}-u_{k})\omega^{n}+\int_{\{f>\epsilon\}}(\psi_{k}-u_{k})\Big(\frac{f_{k}}{f}-1\Big)f\omega^{n}\leq\\
    \leq C_{1}\epsilon+C_{2}\Big(\int_{\{f>\epsilon\}}\frac{(f_{k}-f)^{2}}{f}\omega^{n}\Big)^{1/2}\longrightarrow C_{1}\epsilon,
\end{multline*}
where the convergence as $k\to \infty$ follows by Lebesgue's Dominated Convergence Theorem. Summarizing, we get
$$
\limsup_{k\to \infty}\int_{X}(\psi_{k}-u_{k})f_{k}\omega^{n}\leq\limsup_{k\to \infty}\int_{X}(\psi_{k}-u_{k})(f_{k}-f)\omega^{n}+\limsup_{k\to\infty}\int_{X}(\psi_{k}-u_{k})f\omega^{n}=\int_{X}(\psi-u)f\omega^{n}.
$$
Similarly, if there exists a bounded sequence $\{c_{k}\}_{k\in\mathbbm{R}_{>0}}$ such that $c_{k}f_{k}$ becomes decreasing and $E_{\psi_{k}}(u_{k})\geq -C$ uniformly, then (\ref{eqn:WeirdH_}) can be replaced by $\int_{B}f_{k_{0}}\omega^{n}\leq A\, \mbox{Cap}(B)$ for any Borel set $B\subset X$ for $A>0$ and $k_{0}\in\mathbbm{N}$.
}
\end{rem}
\subsection{Case $\lambda<0$.}
Letting $f\in L^{1}\setminus\{0\}$ non negative and $\lambda\in\mathbbm{R}\setminus \{0\}$, we introduce the functional $L_{f,\lambda}:PSH(X,\omega)\to \overline{\mathbbm{R}}$ as
$$
L_{f,\lambda}(u):=\frac{-1}{\lambda}\log\int_{X}e^{-\lambda u}f\omega^{n}.
$$
Thus, for $\psi\in\mathcal{M}^{+}$, we define the functional $F_{f,\psi,\lambda}:\mathcal{E}^{1}(X,\omega,\psi)\to \overline{\mathbbm{R}}$ as $F_{f,\psi,\lambda}(u):=\big(E_{\psi}-V_{\psi} L_{f,\lambda}\big)(u)$ that must not be confused with the functional $F_{\mu,\psi}$ defined in section \ref{sec:Pre}. It is easy to see that $F_{f,\psi,\lambda}$ is invariant by translation, i.e. it descends to the space of currents. Moreover its maximizers solve a complex Monge-Ampère equation as the next result recalls.
\begin{thm}[\cite{DDNL17b}, Theorem $4.22$]
\label{thm:Max}
Let $f\in L^{1}\setminus\{0\}$ non negative and $\lambda\neq 0$. If $u\in \mathcal{E}^{1}(X,\omega,\psi)$ maximizes $F_{f,\psi,\lambda}$ then $u$ solves
\begin{equation}
\label{eqn:KELP}
\begin{cases}
MA_{\omega}(u)=e^{-\lambda u+C}f\omega^{n}\\
u\in\mathcal{E}^{1}(X,\omega,\psi)
\end{cases}
\end{equation}
for a constant $C\in\mathbbm{R}$.
\end{thm}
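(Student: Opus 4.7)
The statement is the standard Euler--Lagrange characterization of critical points, so I would carry out a variational argument adapted to the $\psi$-relative pluripotential setting. The translation invariance of $F_{f,\psi,\lambda}$ (since $E_\psi$ transforms by $V_\psi c$ and $L_{f,\lambda}$ by $c$ under $u \mapsto u + c$) already explains why the Monge--Ampère equation in (\ref{eqn:KELP}) holds only up to the additive constant $C$ in the exponent. First I would reduce to the case where $u$ has $\psi$-relative minimal singularities, using the truncations $u_m := \max(u, \psi - m)$ which satisfy $E_\psi(u_m) \to E_\psi(u)$ by the continuity of $E_\psi$ along monotone sequences recalled in \S \ref{ssec:Preli21}; one derives the Euler--Lagrange equation at each level and passes to the limit at the end.

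Next, for a fixed test function $\chi \in C^\infty(X)$ and $|t|$ small, I would consider the projected perturbation
$$
u_t := P_\omega[\psi](u + t\chi),
$$
which by Proposition \ref{prop:PropProie} lies in $\mathcal{E}^1(X,\omega,\psi)$, and which satisfies the two-sided bound $|u_t - u| \leq |t|\,\|\chi\|_\infty$ (obtained by plugging the admissible candidate $u - |t|\|\chi\|_\infty$ into the envelope and using $u_t \leq u + t\chi$). Then I would compute both one-sided derivatives of $F_{f,\psi,\lambda}(u_t)$ at $t = 0$. The $L_{f,\lambda}$ part is immediate by Dominated Convergence thanks to the uniform bound, giving
$$
\frac{d}{dt}\bigg|_{t=0} L_{f,\lambda}(u_t) = \frac{\int_X \chi\, e^{-\lambda u} f \omega^n}{\int_X e^{-\lambda u} f \omega^n}.
$$
For the energy, the concavity inequality $E_\psi(v) \leq E_\psi(w) + \int_X (v - w)\, MA_\omega(w)$ applied with $(v,w) = (u_t, u)$ and $(v,w) = (u, u_t)$ sandwiches $E_\psi(u_t) - E_\psi(u)$ between $t \int_X \chi\, MA_\omega(u_t)$ (using the orthogonality relation $\int_X (u + t\chi - u_t)\, MA_\omega(u_t) = 0$ of the projection) and $t \int_X \chi\, MA_\omega(u)$ (using $u_t - u \leq t\chi$ for $t > 0$). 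Since $u_t \to u$ uniformly forces $MA_\omega(u_t) \to MA_\omega(u)$ weakly (via Lemma \ref{lem:KeyConv}), both sides collapse to $t \int_X \chi\, MA_\omega(u) + o(t)$, yielding $\frac{d}{dt}|_{t=0} E_\psi(u_t) = \int_X \chi\, MA_\omega(u)$.

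Maximality of $u$ then forces this combined derivative to vanish for every $\chi \in C^\infty(X)$, and by density of smooth functions in $C^0(X)$ the resulting identity of linear forms yields the equality of Radon measures
$$
MA_\omega(u) = \frac{V_\psi}{\int_X e^{-\lambda u} f \omega^n}\, e^{-\lambda u} f \omega^n,
$$
which is (\ref{eqn:KELP}) with $C := \log\!\bigl(V_\psi / \int_X e^{-\lambda u} f \omega^n\bigr)$. The main obstacle is the first-variation computation for $E_\psi$ along the non-linear projected path $u_t$; the pinching argument above crucially rests on the orthogonality of the envelope $P_\omega[\psi](\cdot)$ and on the integration-by-parts formula of \cite{X19a}, which is what makes the $\psi$-relative concavity inequality available without small unbounded locus hypotheses. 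Finally, to extend from minimal singularities to a general maximizer $u \in \mathcal{E}^1(X,\omega,\psi)$, one passes to the limit in the identity obtained for the truncations $u_m$, using monotone convergence of the densities $e^{-\lambda u_m} f \to e^{-\lambda u} f$ and weak continuity of the Monge--Ampère mass.
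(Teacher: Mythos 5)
The paper states this result as a citation (\cite{DDNL17b}, Theorem 4.22) and provides no proof of its own, so I assess your argument on its own merits. Your overall strategy --- a variational/Euler--Lagrange computation along the projected path $u_t := P_\omega[\psi](u+t\chi)$, using the two-sided bound $|u_t - u| \le |t|\|\chi\|_\infty$, the Berman--Boucksom orthogonality, and the concavity inequalities for $E_\psi$ to pinch the energy derivative --- is correct and is indeed the standard route in \cite{DDNL17b}. The pinching argument you give for $E_\psi$ is solid: both sides of $t\int_X \chi\, MA_\omega(u_t) \le E_\psi(u_t)-E_\psi(u) \le t\int_X \chi\, MA_\omega(u)$ collapse to $t\int_X \chi\, MA_\omega(u)+o(t)$ by Lemma \ref{lem:KeyConv}.

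However, your claim that Dominated Convergence gives $\frac{d}{dt}\big|_{t=0} L_{f,\lambda}(u_t) = \int_X \chi\, e^{-\lambda u}f\omega^n / \int_X e^{-\lambda u}f\omega^n$ is not justified. The uniform bound $|u_t-u|\le|t|\|\chi\|_\infty$ gives equi-boundedness of $(u_t-u)/t$, but you have no reason to believe $(u_t-u)/t \to \chi$ almost everywhere for Lebesgue measure (this holds only on the contact set, i.e.\ $MA_\omega(u)$-a.e.). The cure is to use a one-sided bound instead of a two-sided derivative: since $u_t \le u+t\chi$ and $L_{f,\lambda}$ is monotone non-decreasing regardless of $\mathrm{sgn}\,\lambda$ (check: the factor $-1/\lambda$ and the reversal of $e^{-\lambda \cdot}$ exactly compensate), one has $L_{f,\lambda}(u_t) \le \frac{-1}{\lambda}\log\int_X e^{-\lambda(u+t\chi)}f\omega^n$, and this latter quantity is genuinely differentiable in $t$ by DCT. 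Combined with $F_{f,\psi,\lambda}(u_t)\le F_{f,\psi,\lambda}(u)$ and the exact derivative of $E_\psi$, dividing by $t$ and letting $t\to 0^+$ yields $\int_X\chi\,MA_\omega(u) \le V_\psi \int_X \chi\, e^{-\lambda u}f\omega^n / \int_X e^{-\lambda u}f\omega^n$; taking $t\to 0^-$ (or replacing $\chi$ by $-\chi$) gives the reverse inequality. This is what actually closes the argument, and you should state it that way.

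The opening step --- reducing to the case where $u$ has $\psi$-relative minimal singularities via $u_m := \max(u,\psi-m)$, ``deriving the Euler--Lagrange equation at each level,'' and passing to the limit --- is both unnecessary and, as stated, flawed. The truncations $u_m$ are \emph{not} maximizers of $F_{f,\psi,\lambda}$, so the maximality inequality $F_{f,\psi,\lambda}(\cdot)\le F_{f,\psi,\lambda}(u_m)$ that drives the variational computation is unavailable for them; one cannot ``derive the Euler--Lagrange equation at each level.'' Fortunately, nothing in your pinching argument actually requires minimal singularities: the concavity estimate $E_\psi(v)\le E_\psi(w)+\int_X(v-w)MA_\omega(w)$ is available on all of $\mathcal{E}^1(X,\omega,\psi)$ (cf.\ \cite[Proposition 2.12.(iii)]{Tru19}), the two-sided uniform bound on $u_t - u$ makes all the integrals finite, and the orthogonality of the envelope holds for the quasi-continuous perturbations $u+t\chi$. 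So the correct move is simply to delete the truncation preamble and conclusion and run the variational argument directly for the given maximizer $u \in \mathcal{E}^1(X,\omega,\psi)$.
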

From now on until the end of this subsection we will assume $\lambda<0$. In this case the converse of Theorem \ref{thm:Max} holds and (\ref{eqn:KELP}) is solvable.
\begin{thm}[\cite{DDNL17b}, Theorem $4.23$, Lemma $4.24.$]
\label{thm:PositiveCase}
Let $\lambda<0$ and let $f\in L^{1}\setminus\{0\}$ non negative. Then the complex Monge-Ampère equation (\ref{eqn:KELP}) admits a unique solution and it maximizes $F_{f,\psi,\lambda}$ over $\mathcal{E}^{1}(X,\omega,\psi)$.
\end{thm}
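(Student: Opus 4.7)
The plan is to prove existence via the direct variational method applied to $F_{f,\psi,\lambda}$, uniqueness via a comparison-principle argument tailored to the sign $\lambda<0$, and the characterization of solutions as maximizers via geodesic concavity. The sign of $\lambda$ enters decisively in each of the three steps.

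For uniqueness, if $u_{1},u_{2}\in\mathcal{E}^{1}(X,\omega,\psi)$ both solve $MA_{\omega}(u)=e^{-\lambda u}f\omega^{n}$, the generalized comparison principle in $\mathcal{E}(X,\omega,\psi)$ (valid without the small unbounded locus hypothesis thanks to \cite{X19a}) gives
$$
\int_{\{u_{1}<u_{2}\}}\bigl(e^{-\lambda u_{2}}-e^{-\lambda u_{1}}\bigr)f\,\omega^{n}=\int_{\{u_{1}<u_{2}\}}MA_{\omega}(u_{2})-\int_{\{u_{1}<u_{2}\}}MA_{\omega}(u_{1})\leq 0.
$$
Since $-\lambda>0$, the integrand is strictly positive on $\{u_{1}<u_{2}\}\cap\{f>0\}$, so this set is $f\omega^{n}$-negligible; symmetrizing and invoking the domination principle \cite[Proposition 3.11]{DDNL17b} yields $u_{1}=u_{2}$.

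For existence, I would maximize $F_{f,\psi,\lambda}=E_{\psi}-V_{\psi}L_{f,\lambda}$ over the translation-invariant slice $\{u\in\mathcal{E}^{1}(X,\omega,\psi):\sup_{X}u=0\}$. On this slice $0\leq e^{|\lambda|u}\leq 1$, so dominated convergence yields continuity of $L_{f,\lambda}$ along weakly convergent sequences; combined with the upper semi-continuity of $E_{\psi}$ (Proposition \ref{prop:Usc}), this makes $F_{f,\psi,\lambda}$ weakly upper semi-continuous on the slice. For coercivity I would apply Jensen's inequality to the probability measure $\|f\|_{L^{1}}^{-1}f\,\omega^{n}$ to obtain a lower bound on $L_{f,\lambda}(u)$ affine in $\int_{X}u\,f\omega^{n}$, then compare $\int_{X}u\,f\omega^{n}$ with $E_{\psi}(u)$ via the capacity estimates of \cite[Theorem 4.10]{DDNL17b}. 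A maximizing sequence thus has $E_{\psi}(u_{k})$ bounded below, lies in a compact set $X_{\mathcal{A},C}$ of Proposition \ref{prop:Usc}, and admits a weak limit $u\in\mathcal{E}^{1}(X,\omega,\psi)$ which maximizes $F_{f,\psi,\lambda}$. Theorem \ref{thm:Max} then ensures $u$ solves (\ref{eqn:KELP}) up to some additive constant $C\in\mathbbm{R}$, which is absorbed by the shift $u\mapsto u+C/\lambda$.

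Finally, the maximizer characterization follows from concavity of $F_{f,\psi,\lambda}$ along finite-energy geodesics: $E_{\psi}$ is affine, while $L_{f,\lambda}$ is convex because Berndtsson's subharmonicity principle makes $t\mapsto\int_{X}e^{|\lambda|u_{t}}f\,\omega^{n}$ log-convex, this being precisely where the sign condition enters, since $|\lambda|u_{t}$ is plurisubharmonic in the complex parameter only when $\lambda<0$. Any solution of (\ref{eqn:KELP}) is therefore a critical point of $F_{f,\psi,\lambda}$ by Theorem \ref{thm:Max} read in reverse, and concavity promotes it to a global maximizer. The main obstacle I anticipate is the coercivity step—specifically the comparison between $\int_{X}u\,f\omega^{n}$ and $E_{\psi}(u)$ with $f$ merely in $L^{1}$—whereas the remaining ingredients are essentially formal given Theorem \ref{thm:Max} and the framework of Section \ref{ssec:Preli21}.
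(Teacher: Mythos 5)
This result is cited in the paper from \cite{DDNL17b} (Theorem~4.23 and Lemma~4.24) and is not reproved there, so there is no in-paper proof to compare against; I evaluate your proposal on its own terms. Your uniqueness argument via the generalized comparison principle and the domination principle is correct, and this is the standard route. The existence and maximizer steps, however, both contain genuine problems, even though the overall variational plan is sound.

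For existence, the coercivity step you sketch goes the wrong way. Writing $\epsilon:=-\lambda>0$, Jensen applied to $\|f\|_{L^{1}}^{-1}f\omega^{n}$ gives a \emph{lower} bound $L_{f,\lambda}(u)\geq\frac{1}{\epsilon}\log\|f\|_{L^{1}}+\|f\|_{L^{1}}^{-1}\int_{X}uf\omega^{n}$, and hence an \emph{upper} bound
\[
F_{f,\psi,\lambda}(u)\leq E_{\psi}(u)-\frac{V_{\psi}}{\|f\|_{L^{1}}}\int_{X}uf\omega^{n}+C.
\]
On the normalized slice $u\leq 0$ the second term is nonnegative and, for $f$ merely in $L^{1}$, the quantity $-\int_{X}uf\omega^{n}$ is not controlled by any sublinear function of $|E_{\psi}(u)|$ — the capacity estimates of \cite[Theorem 4.10]{DDNL17b} compare energy with $\int(\psi-u)\,d\mu$ only for measures dominated by capacity, which $f\omega^{n}$ with $f\in L^{1}$ need not be. So this route cannot close, exactly as you suspected. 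What actually works is simpler: $L_{f,\lambda}$ is \emph{bounded below} on $\mathcal{E}^{1}_{norm}(X,\omega,\psi)$, uniformly. Indeed, for normalized $u\in PSH(X,\omega)$ one has $\int_{X}(-u)\,\omega^{n}\leq C(\omega)$, so by Chebyshev $\mathrm{Leb}\{u\leq-M\}\leq C/M$; absolute continuity of $\int f$ yields $M_{0}$ with $\int_{\{u\leq-M_{0}\}}f<\|f\|_{L^{1}}/2$, whence $\int_{X}e^{\epsilon u}f\,\omega^{n}\geq e^{-\epsilon M_{0}}\|f\|_{L^{1}}/2$, and $L_{f,\lambda}(u)\geq-M_{0}+\frac{1}{\epsilon}\log(\|f\|_{L^{1}}/2)$. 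Then $F_{f,\psi,\lambda}(u)\leq E_{\psi}(u)+C'=-d(\psi,u)+C'$, which is coercivity of slope $1$. Combined with your (correct) observation that $L_{f,\lambda}$ is weakly continuous on the slice by dominated convergence and with Proposition~\ref{prop:Usc}, this yields a maximizer. No Jensen's inequality and no capacity comparison are needed at this stage.

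For the maximizer characterization, the appeal to Berndtsson is backwards. Berndtsson's subharmonicity theorem gives convexity of $t\mapsto\log\int_{X}e^{-\phi_{t}}d\mu$ when $\phi_{t}$ is a subgeodesic, i.e.\ plurisubharmonic in the strip variable. Here the integrand is $e^{|\lambda|u_{t}}=e^{-(-|\lambda|u_{t})}$, and $-|\lambda|u_{t}$ is \emph{anti}-plurisubharmonic when $u_{t}$ is a geodesic; your sentence ``$|\lambda|u_{t}$ is plurisubharmonic in the complex parameter only when $\lambda<0$'' is also incorrect, since $|\lambda|u_{t}$ is always a positive multiple of a geodesic. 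In fact Berndtsson is the tool for $\lambda>0$, not $\lambda<0$. The mechanism for $\lambda<0$ is elementary and does not require geodesics at all: if $MA_{\omega}(u)=e^{\epsilon u}f\omega^{n}$ then for any $v\in\mathcal{E}^{1}(X,\omega,\psi)$, concavity of $E_{\psi}$ gives $E_{\psi}(v)\leq E_{\psi}(u)+\int_{X}(v-u)\,MA_{\omega}(u)$, while Jensen applied to the probability measure $MA_{\omega}(u)/V_{\psi}$ gives
\[
\int_{X}(v-u)\,MA_{\omega}(u)\leq\frac{V_{\psi}}{\epsilon}\log\frac{\int_{X}e^{\epsilon v}f\omega^{n}}{V_{\psi}}=V_{\psi}\bigl(L_{f,\lambda}(v)-L_{f,\lambda}(u)\bigr),
\]
where dividing by $\epsilon>0$ — this is exactly where $\lambda<0$ is used — preserves the inequality. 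Combining the two yields $F_{f,\psi,\lambda}(v)\leq F_{f,\psi,\lambda}(u)$. Equivalently, $L_{f,\lambda}$ is convex and $E_{\psi}$ concave along \emph{affine} segments in $\mathcal{E}^{1}(X,\omega,\psi)$, so $F_{f,\psi,\lambda}$ is concave there; this is the correct concavity statement, and it is where the sign condition enters.
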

A key result for the proof of Theorem \ref{thm:PositiveCase} is the following Lemma that will be also essential for our Theorem \ref{thmB}.
\begin{lem}
\label{lem:11.5GZ}
Let $g_{k},g$ non-negative $L^{1}$-functions such that $g_{k}\to g$ in $L^{1}$, and let $u,\{u_{k}\}_{k\in\mathbbm{N}}\subset PSH(X,\omega)$ such that $u_{k}\to u$ weakly. Then
$$
\int_{X}e^{u_{k}}g_{k}\omega^{n}\to \int_{X}e^{u}g\omega^{n}
$$
as $k\to \infty$
\end{lem}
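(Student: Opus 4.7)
The plan is to split
\begin{equation*}
\int_{X}e^{u_{k}}g_{k}\omega^{n}-\int_{X}e^{u}g\omega^{n}=\int_{X}e^{u_{k}}(g_{k}-g)\omega^{n}+\int_{X}(e^{u_{k}}-e^{u})g\omega^{n}
\end{equation*}
and control the two terms separately. The only ingredient I really need is a uniform \emph{upper} bound on $u_{k}$, which is immediate: weak convergence of $\omega$-psh functions coincides with $L^{1}$-convergence, and Hartogs' Lemma (\cite[Proposition 8.4]{GZ17}) gives a uniform constant $C$ with $u_{k}\leq C$ on $X$. In particular $0\leq e^{u_{k}}\leq e^{C}$ pointwise.

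For the first term, the uniform bound immediately yields
\begin{equation*}
\Bigl|\int_{X}e^{u_{k}}(g_{k}-g)\omega^{n}\Bigr|\leq e^{C}\|g_{k}-g\|_{L^{1}}\longrightarrow 0.
\end{equation*}
For the second term, I would argue by subsequences: given any subsequence $\{u_{k_{j}}\}_{j\in\mathbbm{N}}$, the $L^{1}$-convergence $u_{k}\to u$ allows one to extract a further subsequence $\{u_{k_{j_{l}}}\}_{l\in\mathbbm{N}}$ converging to $u$ almost everywhere with respect to $\omega^{n}$, hence $e^{u_{k_{j_{l}}}}\to e^{u}$ a.e. The pointwise domination $|e^{u_{k_{j_{l}}}}-e^{u}|\,g\leq 2e^{C}g\in L^{1}$ together with the Dominated Convergence Theorem then gives $\int_{X}(e^{u_{k_{j_{l}}}}-e^{u})g\,\omega^{n}\to 0$. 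Since every subsequence of the original sequence admits a further subsequence converging to the same limit $0$, the full sequence converges, and the lemma follows.

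There is essentially no obstacle: the whole argument rests on the elementary observation that $e^{t}$ is bounded above on $(-\infty,C]$, so no integrability input of Skoda--Zeriahi type is required. The only mild subtlety is that a.e.\ convergence of $u_{k}$ is only guaranteed along a subsequence, which is handled by the standard ``every subsequence has a convergent sub-subsequence'' trick recalled above.
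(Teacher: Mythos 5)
Your argument is correct and follows essentially the same route as the paper: both hinge on the uniform bound $\sup_{X}u_{k}\leq C$ from Hartogs' Lemma, a triangle-inequality split isolating $g_{k}-g$, and the Dominated Convergence Theorem. The only cosmetic difference is that the paper obtains the lower bound via Fatou and the upper bound via the splitting, whereas you split symmetrically and handle both pieces directly; you are also more explicit about the sub-subsequence extraction needed to justify pointwise a.e.\ convergence before invoking dominated convergence, a step the paper leaves implicit.
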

\begin{proof}
By Fatou's Lemma $\liminf_{k\to +\infty}\int_{X}e^{u_{k}}g_{k}\omega^{n}\geq \int_{X}e^{u}g\omega^{n}$. Thus, as
$$
\int_{X}e^{u_{k}}g_{k}\omega^{n}\leq e^{\sup_{X}u_{k}}\int_{X}|g_{k}-g|\omega^{n}+\int_{X}e^{u_{k}}g\omega^{n},
$$
the result follows from $\sup_{X}u_{k}\leq C$ and Lebesgue's Dominated Convergence Theorem.
\end{proof}
We can now prove Theorem \ref{thmB}.
\begin{reptheorem}{thmB}
Assume
\begin{itemize}
\item[(i)] $\lambda<0$;
\item[(ii)] $f_{k},f\in L^{1}\setminus\{0\}$ non-negative functions such that $f_{k}\to f$ in $L^{1}$;
\item[(iii)] $\{\psi_{k}\}_{k\in \mathbbm{N}}\subset\mathcal{M}^{+}$ totally ordered such that $\psi_{k}\to\psi\in\mathcal{M}^{+}$ weakly.
\end{itemize}
Let $u_{k}\in\mathcal{E}^{1}(X,\omega,\psi_{k})$, $u\in\mathcal{E}^{1}(X,\omega,\psi)$ be the unique solutions respectively of
\begin{equation}
\begin{cases}
MA_{\omega}(u_{k})=e^{-\lambda u_{k}}f_{k}\omega^{n}\\
u_{k}\in\mathcal{E}^{1}(X,\omega,\psi_{k}),
\end{cases}
\begin{cases}
MA_{\omega}(u)=e^{-\lambda u}f\omega^{n}\\
u\in\mathcal{E}^{1}(X,\omega,\psi).
\end{cases}
\end{equation}
Then $u_{k}\to u$ strongly.
\end{reptheorem}
\begin{proof}
Assume $\lambda=-1$ for simplicity of notations.\newline 
Observe that by an easy contradiction argument it is enough to check that any subsequence $\{u_{k_{j}}\}_{j\in\mathbbm{N}}$ admits a further subsequence $\{u_{k_{j_{h}}}\}_{h\in\mathbbm{N}}$ converging strongly to $u$. Indeed, assuming that this happens, if by contradiction $u_{k}\nrightarrow u$ strongly then there exists a subsequence $\{u_{k_{j}}\}_{j\in\mathbbm{N}}$, and $\epsilon>0$ such that $d_{\mathcal{A}}(u_{k_{j}},u)\geq \epsilon$ for any $j\in\mathbbm{N}$ (by Proposition \ref{prop:PropAB}). Thus, the contradiction is given by extracting a subsequence $\{u_{k_{j_{h}}}\}_{h\in\mathbbm{N}}$ that strongly converges to $u$.\newline
So without loss of generality we may assume $u_{k_{j}}$ to be the whole sequence and we set $F_{k}:=F_{f_{k},\psi_{k},-1}$, $F:=F_{f,\psi,-1}$. By Theorem \ref{thm:PositiveCase} $u_{k}$ maximizes $F_{k}$ for any $k\in\mathbbm{N}$ while $u$ maximizes $F$. Thus, for any $\varphi\in\mathcal{H}$,
$$
\liminf_{k\to \infty} F_{k}(u_{k})\geq \liminf_{k\to \infty} F_{k}\big(P_{\omega}[\psi_{k}](\varphi)\big)=F\big(P_{\omega}[\psi](\varphi)\big)
$$
where the convergence follows from Lemmas \ref{lem:KeyConv} and \ref{lem:11.5GZ}. Hence, passing to the supremum over $\mathcal{H}$, combining \cite[Theorem 1]{BK07}, the continuity of $E_{\psi}$ along decreasing sequences and again Lemma \ref{lem:11.5GZ} we get
\begin{equation}
\label{eqn:LIMINF}
\liminf_{k\to \infty}F_{k}(u_{k})\geq \sup_{\varphi\in\mathcal{H}}F\big(P_{\omega}[\psi](\varphi)\big)=\sup_{v\in\mathcal{E}^{1}(X,\omega,\psi)}F(v)= F(u).
\end{equation}
Moreover, up to considering a subsequence, the sequence $v_{k}:=u_{k}-\sup_{X}u_{k}\leq \psi_{k}$ converges weakly to $v\in PSH(X,\omega), v\leq \psi$ and Lemma \ref{lem:11.5GZ} yields
$$
a_{k}:=\int_{X}e^{v_{k}}f_{k}\omega^{n}\to \int_{X}e^{v}f\omega^{n}\in \big(0,||f||_{L^{1}}\big].
$$
Hence, using the complex Monge-Ampère equations,
$$
\sup_{X}u_{k}=\log V_{\psi_{k}}-\log a_{k}
$$
is uniformly bounded (recall that $V_{\psi_{k}}\to V_{\psi}>0$ as a consequence of the monotonicity of $\{\psi_{k}\}_{k\in\mathbbm{N}}$, see section \S \ref{sec:Pre}), and $\{u_{k}\}_{k\in \mathbbm{N}}$, up to considering a subsequence, converges weakly to $\tilde{u}\in PSH(X,\omega), \tilde{u}\preccurlyeq \psi$. On the other hand from (\ref{eqn:LIMINF}) and the triangle inequality on $\big(\mathcal{E}^{1}(X,\omega,\psi_{k}),d\big)$, as $\sup_{X}u_{k}\leq A$ for $A\geq 0$, we have
\begin{multline*}
\limsup_{k\to \infty}d(\psi_{k},u_{k})\leq \limsup_{k\to\infty}\big(d(\psi_{k},u_{k}-A)+d(u_{k},u_{k}-A)\big)=2AV_{\psi}-\liminf_{k\to \infty}E_{\psi_{k}}(u_{k})\leq\\
\leq2AV_{\psi}-F(u)-\limsup_{k\to \infty}V_{\psi_{k}}\log\int_{X}e^{u_{k}}f_{k}\omega^{n}\leq -F(u)+C    
\end{multline*}
Therefore by Proposition \ref{prop:Usc} and again Lemma \ref{lem:11.5GZ} we obtain $\tilde{u}\in \mathcal{E}^{1}(X,\omega,\psi)$ and
$$
\limsup_{k\to \infty}F_{k}(u_{k})\leq F(\tilde{u}),
$$
which necessarily implies $\tilde{u}=u+B$ for a constant $B\in\mathbbm{R}$ (Theorem \ref{thm:PositiveCase}) as $F(u)\leq F(\tilde{u})$ by (\ref{eqn:LIMINF}). However from the Monge-Ampère equations it follows that
$$
e^{B}\int_{X}e^{u}f\omega^{n}=\int_{X}e^{\tilde{u}}f\omega^{n}=\lim_{k\to \infty}\int_{X}e^{u_{k}}f_{k}\omega^{n}=\lim_{k\to \infty}V_{\psi_{k}}=V_{\psi}=\int_{X}e^{u}f\omega^{n},
$$
i.e. $B=0$. In conclusion we have proved that $u_{k}\to u$ weakly, $F_{k}(u_{k})\to F(u)$ and $\int_{X}e^{u_{k}}f_{k}\omega^{n}\to \int_{X}e^{u}f\omega^{n}$. Hence $E_{\psi_{k}}(u_{k})\to E_{\psi}(u)$, which concludes the proof.
\end{proof}
\begin{rem}
\emph{Theorem \ref{thmB} extends to the general case of positive non-pluripolar measures $\mu$ of finite mass (instead of $\omega^{n}$), assuming $f_{k}\to f$ in $L^{1}(\mu)$, as the analogues of Theorem \ref{thm:PositiveCase} and Lemma \ref{lem:11.5GZ} hold.}
\end{rem}
\subsection{Case $\lambda>0$.}
If $\lambda>0$ then the study of (\ref{eqn:MAKE_}) is much more complex that the case $\lambda\leq 0$ even in the absolute setting $\psi=0$. As stated in the Introduction, for instance, if $\{\omega\}=-K_{X}$, i.e. $X$ is a \emph{Fano manifold}, $f\equiv 1$ and $\psi=0$, the existence of a solution is characterized by an algebro-geometric notion called \emph{K-stability} (see \cite{CDS15}). The uniqueness of solutions of (\ref{eqn:MAKE_}) is a hard problem as well (see \cite{BM86}). Note also that in this case $F_{1,0,1}$ (where we recall that $F_{f,\psi,\lambda}:=E_{\psi}-V_{\psi}L_{f,\lambda}$ for $f\in L^{1}$, $\lambda\in \mathbbm{R}\setminus \{0\}$, $\psi\in\mathcal{M}^{+}$) coincides with the Ding functional (\cite{Ding88}). We refer to the companion paper \cite{Tru20b} where we analyze this interesting situation more in detail.

To prove Theorems \ref{thmC} and \ref{thmD} we will use the functional $J_{\psi}:\mathcal{E}^{1}(X,\omega,\psi)\to \mathbbm{R}$,
$$
J_{\psi}(u):=-E_{\psi}(u)+\int_{X}(u-\psi)MA_{\omega}(\psi)
$$
for $\psi\in\mathcal{M}^{+}$ (see \cite{Tru20}, being aware that the notation is slightly different). It is immediate to check that $J_{\psi}$ is non-negative and translation invariant. Indeed it represents the translation invariant version of the distance $d$ as the following key lemma shows.
\begin{lem}
\label{lem:J}
Let $\psi\in\mathcal{M}^{+}$. Then there exists $C\in\mathbbm{R}_{\geq 0}$ depending only on $(X,\omega)$ such that
$$
d(u,\psi)-C\leq J_{\psi}(u)\leq d(u,\psi)
$$
for any $u\in\mathcal{E}^{1}_{norm}(X,\omega,\psi)$.
\end{lem}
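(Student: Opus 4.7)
The plan is to exploit the classical Pythagoras-type identity for the relative Monge--Amp\`ere energy together with a decomposition via $M := \max(u,\psi)$ and $\phi := P_\omega(u,\psi)$. One has $M \succcurlyeq \psi \succcurlyeq \phi$, both $M,\phi \in \mathcal{E}^1(X,\omega,\psi)$, and the pointwise identity $u + \psi = M + \phi$. The Cegrell--Darvas Pythagoras formula for the relative energy gives $E_\psi(u) + E_\psi(\psi) = E_\psi(M) + E_\psi(\phi)$, and since $E_\psi(\psi) = 0$ we get the key rewriting
\[
d(u,\psi) = E_\psi(u) - 2 E_\psi(\phi) = E_\psi(M) - E_\psi(\phi),
\]
both summands being non-negative (as $M \succcurlyeq \psi \succcurlyeq \phi$).

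For the upper bound $J_\psi(u) \le d(u,\psi)$, I would use $u - \psi = (M-\psi) + (\phi-\psi)$ to split
\[
J_\psi(u) = \Bigl[-E_\psi(M) + \int_X (M-\psi)\,MA_\omega(\psi)\Bigr] + \Bigl[-E_\psi(\phi) + \int_X (\phi-\psi)\,MA_\omega(\psi)\Bigr].
\]
The second bracket is $\le -E_\psi(\phi)$ since $\phi \le \psi$ forces $\int(\phi-\psi)\,MA_\omega(\psi) \le 0$. For the first bracket, I would expand $E_\psi(M)$ via the cocycle formula and integrate by parts on each difference $\omega_\psi^j - \omega_M^j$; this presents the bracket as a non-negative sum of Dirichlet-type integrals $\int d(M-\psi) \wedge d^c(M-\psi) \wedge \omega_M^k \wedge \omega_\psi^{n-1-k}$, which is $\le E_\psi(M)$ by the standard comparison between such energies and the cocycle terms (using $M$ has $\psi$-relative minimal singularities by Proposition \ref{prop:PropProie}). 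Summing gives $J_\psi(u) \le E_\psi(M) - E_\psi(\phi) = d(u,\psi)$.

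For the lower bound $d(u,\psi) - C \le J_\psi(u)$, combining the reformulation of $d$ with the Pythagoras identity yields
\[
d(u,\psi) - J_\psi(u) = 2 E_\psi(M) - \int_X (u-\psi)\,MA_\omega(\psi) = \bigl[E_\psi(M) - A\bigr] + \int_X (\psi-\phi)\,MA_\omega(\psi),
\]
where $A := \int(M-\psi)\,MA_\omega(\psi) - E_\psi(M) \ge 0$. Both terms on the right are controlled by $\int_X (M - \phi)\,MA_\omega(\psi) = \int_X |u-\psi|\,MA_\omega(\psi)$ via the cocycle monotonicity $E_\psi(M) \in [\tfrac{1}{n+1}\int(M-\psi)MA_\omega(\psi), \int(M-\psi)MA_\omega(\psi)]$. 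Since $\psi \le 0$ and $\sup_X u = 0$ (so $u \le 0$), one has $|u-\psi| \le -u + (-\psi)$, and the required uniform bound comes from the $L^1$-compactness of $PSH_{norm}(X,\omega)$ (Proposition 8.5 in \cite{GZ17}) applied against the fixed non-pluripolar measure $MA_\omega(\psi)$ of mass $V_\psi$, yielding a constant $C$ absorbing $\int_X (-\psi)\,MA_\omega(\psi)$ and a uniform upper bound for $\int_X (-u)\,MA_\omega(\psi)$.

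The main obstacle is the uniform control of $\int_X(-u)\,MA_\omega(\psi)$ on $\mathcal{E}^1_{norm}(X,\omega,\psi)$, since $MA_\omega(\psi)$ can be singular with respect to Lebesgue measure and $PSH$-compactness only directly controls integrals against $\omega^n$. This is handled either by invoking a domination principle in $\mathcal{E}(X,\omega,\psi)$ together with the $\psi$-relative Monge--Amp\`ere capacity, or by observing that the $\sup$-normalization $\sup_X u = 0$ forces a uniform $L^1(MA_\omega(\psi))$-bound on $u$ via a Hartogs-type argument combined with the finite total mass $V_\psi$.
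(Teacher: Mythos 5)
Your upper-bound argument works but is far more circuitous than it needs to be, and your lower-bound argument has a genuine gap.

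First, a simplification you missed: for $u\in\mathcal{E}^{1}_{norm}(X,\omega,\psi)$ one has $u\leq\psi$ pointwise, since $u\preccurlyeq\psi$ and (because $\psi\in\mathcal{M}$) $\sup_{X}(u-\psi)=\sup_{X}u=0$. This makes $M:=\max(u,\psi)=\psi$ and $\phi:=P_{\omega}(u,\psi)=u$, so the decomposition you set up (and the ``Pythagoras'' identity $E_{\psi}(u)+E_{\psi}(\psi)=E_{\psi}(M)+E_{\psi}(\phi)$, which you invoke without proof and whose general validity is not obvious) degenerates to trivialities. In particular $d(u,\psi)=-E_{\psi}(u)$ on the nose, and the upper bound $J_{\psi}(u)\leq d(u,\psi)$ is immediate from $\int_{X}(u-\psi)MA_{\omega}(\psi)\leq 0$, without any Dirichlet-energy expansion or cocycle estimates. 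The hand-waving about ``a non-negative sum of Dirichlet-type integrals\dots which is $\leq E_{\psi}(M)$ by the standard comparison'' is thus unnecessary, and you never actually carry it out; as written it would not pass as a proof.

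The real gap is in the lower bound, and you correctly identify it: one must show that $\int_{X}(\psi-u)\,MA_{\omega}(\psi)\leq C$ uniformly over $u\in\mathcal{E}^{1}_{norm}(X,\omega,\psi)$, which is not a consequence of $L^{1}(\omega^{n})$-compactness of $PSH_{norm}(X,\omega)$ alone, since $MA_{\omega}(\psi)$ need not be comparable to Lebesgue measure a priori. Your two proposed remedies do not close this gap: there is no ``Hartogs-type argument combined with the finite total mass $V_{\psi}$'' that upgrades a normalized sup to a uniform $L^{1}(MA_{\omega}(\psi))$ bound for an arbitrary non-pluripolar measure of finite mass, and invoking a domination principle or $\psi$-relative capacities is not spelled out and does not obviously lead anywhere. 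The missing ingredient is the structural fact, valid precisely because $\psi$ is a \emph{model type envelope}, that $MA_{\omega}(\psi)=\mathbbm{1}_{\{\psi=0\}}\omega^{n}$ (this is \cite[Corollary 3.4]{DNT19}). This gives $MA_{\omega}(\psi)\leq\omega^{n}$ and hence
$$
0\leq\int_{X}(\psi-u)\,MA_{\omega}(\psi)=\int_{\{\psi=0\}}(-u)\,\omega^{n}\leq\int_{X}|u|\,\omega^{n}=\|u\|_{L^{1}}\leq C,
$$
with $C$ from compactness of $PSH_{norm}(X,\omega)$ in $L^{1}(\omega^{n})$. Without this input, the lower bound is not established.
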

\begin{proof}
Since $u\leq \psi$ we have $d(u,\psi)=-E_{\psi}(u)\geq -E_{\psi}(u)+\int_{X}(u-\psi)MA_{\omega}(\psi)=J_{\psi}(u)$. \newline
On the other hand, using the fact that $MA_{\omega}(\psi)=\mathbbm{1}_{\{\psi=0\}}\omega^{n}$ (\cite[Corollary 3.4]{DNT19}), we get
$$
0\leq \int_{X}(\psi-u)MA_{\omega}(\psi)\leq \int_{X}|u|MA_{\omega}(0)=||u||_{L^{1}}\leq C,
$$
where the second inequality is given by the weak compactness of $\{u\in PSH(X,\omega)\, : \, \sup_{X}=0\}$. The inequality $d(u,\psi)-C\leq J_{\psi}(u)$ clearly follows.
\end{proof}
Similarly to the case $\lambda<0$, since the $\psi$-relative energy is upper semicontinuous with respect to the weak topology (Proposition \ref{prop:Usc}), the continuity properties of $L_{\cdot,\lambda}$ are crucial for the variational approach of Theorems \ref{thmC}, \ref{thmD}. Therefore we recall the following well-known and important quantity (see \cite{DK99}).
\begin{defn}
\label{defn:CSE}
Let $u\in PSH(X,\omega)$. The quantity
$$
c(u):=\sup\Big\{p\geq 0\, : \, \int_{X}e^{-p u}\omega^{n}<+\infty\Big\}
$$
is called the \emph{complex singularity exponent} of $u$.
\end{defn}
Clearly $c(\cdot)$ increases when the singularities decreases, and, by the resolution of the strong openness conjecture, the supremum in the definition is never achieved, i.e. $e^{-c(u)u}\notin L^{1}$. Moreover the following important result holds.
\begin{thm}[\cite{DK99}, Main Theorem]
\label{thm:DK99}
The map $PSH(X,\omega)\ni u\to c(u)$ is lower semicontinuos. In particular, if $u_{k}\to u$ weakly and $c(u)>a$, then $e^{-au_{k}}\to e^{-au}$ converge in $L^{1}$.
\end{thm}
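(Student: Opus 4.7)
The plan is to reduce everything to an effective, uniform lower bound $c(u_k) \geq b$ for $b < c(u)$ and $k$ large, and then run a standard uniform-integrability argument to deduce the ``in particular'' clause. Since weak convergence and the integrability of $e^{-au}$ are both local properties, I would first pass to a coordinate chart, where $\omega^n$ is comparable to Lebesgue measure and $\omega$-psh functions are plain psh up to a smooth correction. The sought statement is then: if $u_k \to u$ in $L^1_{\mathrm{loc}}$ and $\int_B e^{-bu}\,dV < \infty$ on a ball $B$, then on a smaller ball $B'$ the integrals $\int_{B'} e^{-bu_k}\,dV$ are uniformly bounded for $k$ large.

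The main step, lower semicontinuity of $c(\cdot)$, rests on the Ohsawa-Takegoshi $L^2$-extension theorem. Following Demailly's analytic approximation scheme, I would consider the Bergman kernel regularizations $u_m := \frac{1}{2m}\log \sum_j |\sigma_{m,j}|^2$, where $\{\sigma_{m,j}\}$ is an orthonormal basis of the Hilbert space of holomorphic functions $f$ on $B$ with $\int_B |f|^2 e^{-2mu}\,dV < \infty$. Ohsawa-Takegoshi extension, applied at each point, yields $u_m \geq u - C/m$ with $C$ independent of $m$, so the analytic approximants $u_m$ dominate $u$ and capture its singularity type. For the sequence $u_k$, the analogous Bergman approximants $u_{k,m}$ converge to $u_m$ as $k\to\infty$ for each fixed $m$, because the weighted Hilbert spaces vary continuously in $u$ under $L^1$-convergence. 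Since the singularities of $u_m$ are analytic and their complex singularity exponents can be read off a log-resolution, one obtains the uniform bound $\int_{B'} e^{-bu_k}\,dV \leq M$ from $\int_B e^{-bu}\,dV < \infty$ whenever $b < c(u)$.

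Once this effective LSC is in hand, the $L^1$-convergence $e^{-au_k} \to e^{-au}$ follows via Vitali. Indeed, pick $b \in (a, c(u))$; the uniform bound gives $\int_X e^{-bu_k}\omega^n \leq M$ for $k$ large, so passing to a subsequence $u_k \to u$ almost everywhere, hence $e^{-au_k} \to e^{-au}$ almost everywhere, and the uniform $L^{b/a}$-bound with $b/a > 1$ forces uniform integrability. Vitali's convergence theorem yields $L^1$-convergence on the subsequence, and the usual every-subsequence-has-a-subsequence argument promotes this to the full sequence. The hard part is unambiguously the first step: Ohsawa-Takegoshi is the deep analytic input, and without it one only has the easy upper semicontinuity of $c$ coming from Fatou, which goes in the wrong direction.
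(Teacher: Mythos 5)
The paper does not prove this statement: it is quoted directly from Demailly--Koll\'ar \cite{DK99} and used as a black box, so there is no in-paper proof to compare against. Your sketch follows essentially the route of the cited reference: the Bergman-kernel regularization $u_m=\frac{1}{2m}\log\sum_j|\sigma_{m,j}|^2$ together with the Ohsawa--Takegoshi extension theorem to get a uniform lower bound $u_m\geq u-C/m$, the reduction to analytic singularities via a log-resolution, the effective/uniform local bound $\int_{B'}e^{-bu_k}\,dV\leq M$ for $b<c(u)$, and finally the deduction of $L^{1}$-convergence from uniform integrability (what you call Vitali, obtained from the uniform $L^{b/a}$-bound and a.e.\ convergence along subsequences). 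This is consistent with the structure of the Demailly--Koll\'ar argument, with the one caveat that the step you gloss as ``the weighted Hilbert spaces vary continuously in $u$ under $L^1$-convergence'' hides the genuinely delicate part (in \cite{DK99} this rests on a Fatou-type lower semicontinuity of the weighted $L^2$-norms combined with an induction on dimension), but you explicitly flag that step as the deep input, which is accurate.
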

We also need to recall the definition of the \emph{Lelong numbers} and of the \emph{multiplier ideal sheaves}.\\
 Given $u\in PSH(X,\omega)$ and $x\in X$ the Lelong number $\nu(u,x)$ of $u$ at $x$ is given as
$$
\nu(u,x):=\sup\{\gamma\geq 0\, :\, u(z)\leq \gamma \log |z-x| +O(1) \, \mbox{on}\, U\}
$$
where $U\ni x$ is a holomorphic chart ($\nu(u,x)$ does not depend on the chart chosen). The Lelong number measures the logarithmic singularity of an $\omega$-psh function at a point $x$. \\
The multiplier ideal sheaf $\mathcal{I}(tu)$, $t\geq 0$, of $u\in PSH(X,\omega)$ is the analytic coherent sheaf whose germs are given as
$$
\mathcal{I}(tu,x):=\Big\{f\in\mathcal{O}_{X,x}\, : \, \int_{V}|f|^{2}e^{-tu}\omega^{n}<\infty\, \, \mbox{for some open set}\, \, x\in V\subset X \Big\}.
$$
\begin{prop}
\label{prop:Lelong}
Let $u\in PSH(X,\omega)$ and $\psi:=P_{\omega}[u]$. Then
\begin{equation}
\label{eqn:Show}
\nu(u,x)=\nu(\psi,x) \,\, \mathrm{and}\,\, \mathcal{I}(t u,x)=\mathcal{I}(t\psi,x)\, \, \mathrm{for}\, \mathrm{any} \,\, t>0, \, x\in X.
\end{equation}
In particular $c(u)=c(\psi)$ and $c(\cdot)$ is constant on any $\mathcal{E}(X,\omega,\psi)$ for $\psi\in\mathcal{M}^{+}$.
\end{prop}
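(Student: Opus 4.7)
The inequalities $\nu(\psi,x)\leq \nu(u,x)$ and $\mathcal{I}(tu,x)\subseteq \mathcal{I}(t\psi,x)$ come immediately from $u\preccurlyeq\psi$, which holds by construction of $\psi=P_{\omega}[u]$: writing $u\leq \psi+C$, the first is the monotonicity of Lelong numbers under $\preccurlyeq$, while the second follows from the pointwise bound $e^{-tu}\geq e^{-tC}e^{-t\psi}$. For the reverse directions my plan is to exploit the monotone envelope approximation $\varphi_{M}:=P_{\omega}(u+M,0)$, which by construction satisfies $\varphi_{M}\leq \min(u+M,0)$ and increases almost everywhere to $\psi$ as $M\to\infty$. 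The bound $\varphi_{M}\leq u+M$ delivers, for every $M$, both $\nu(\varphi_{M},x)\geq \nu(u,x)$ and $\mathcal{I}(t\varphi_{M},x)\subseteq \mathcal{I}(t(u+M),x)=\mathcal{I}(tu,x)$, so it will suffice to transfer these properties from $\varphi_{M}$ to $\psi$ in the limit.

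For the Lelong number I would invoke Demailly's upper semi-continuity of $\nu(\cdot,x)$ under weak convergence, applied to $\varphi_{M}\to\psi$ (convergence in $L^{1}$ is automatic since the $\varphi_{M}$ form a monotone, $L^{1}$-bounded family of $\omega$-psh functions). This yields $\nu(\psi,x)\geq \limsup_{M}\nu(\varphi_{M},x)\geq \nu(u,x)$ and hence equality. For the multiplier ideal I would rely on the standard consequence of the Guan--Zhou strong openness theorem that for any increasing $\omega$-psh sequence $\varphi_{M}\nearrow\psi$ one has $\mathcal{I}(t\psi,x)=\bigcup_{M}\mathcal{I}(t\varphi_{M},x)$. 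Concretely, given $f\in\mathcal{I}(t\psi,x)$, strong openness first produces $\epsilon>0$ and a neighborhood $V\ni x$ with $\int_{V}|f|^{2}e^{-(t+\epsilon)\psi}<\infty$; a H\"older decomposition of $|f|^{2}e^{-t\varphi_{M}}$ against $|f|^{2}e^{-(t+\epsilon)\psi}$, combined with $\varphi_{M}\nearrow\psi$ almost everywhere, would then yield $f\in\mathcal{I}(t\varphi_{M},x)$ for some $M$, which by the previous inclusion lies in $\mathcal{I}(tu,x)$.

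The remaining assertions then drop out. Using the characterization $c(u)=\inf_{x\in X}\sup\{t\geq 0 : 1\in\mathcal{I}(tu,x)\}$ (equivalent to Definition \ref{defn:CSE} by a compactness argument combined again with strong openness), the pointwise multiplier ideal equality forces $c(u)=c(\psi)$. Finally, for any $u\in\mathcal{E}(X,\omega,\psi)$ with $\psi\in\mathcal{M}^{+}$ the preliminary discussion (Theorem 1.3 of \cite{DDNL17b}) recalls $P_{\omega}[u]=\psi$, so $c(u)=c(P_{\omega}[u])=c(\psi)$, independent of the representative, which is precisely the constancy of $c(\cdot)$ on $\mathcal{E}(X,\omega,\psi)$. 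I expect the real technical obstacle to be the multiplier ideal step $\mathcal{I}(t\psi,x)\subseteq\bigcup_{M}\mathcal{I}(t\varphi_{M},x)$: the difference $\psi-\varphi_{M}$ need not be uniformly bounded anywhere, and it is the $\epsilon$-gain coming from strong openness that compensates for this defect and makes the passage to the limit go through.
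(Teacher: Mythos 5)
Your proof is correct and reaches the same conclusion, but the Lelong-number step goes by a genuinely different route than the paper's. Where you apply Demailly's upper semi-continuity of Lelong numbers under weak convergence of currents to the increasing family $\varphi_M\nearrow\psi$ (together with the sandwich $u\le\varphi_M\le u+M$, which forces $\nu(\varphi_M,x)=\nu(u,x)$ for every $M$), the paper instead works locally: it dominates $g+\psi$ by the pluricomplex Green function $G_{\mathbb{B}}(\cdot,0)$ with logarithmic pole of order $\gamma=\nu(u,x)$, using the envelope representation $\psi=\lim_C P_{\omega}(u+C,\psi)$ and \cite[Prop.~6.1]{Kli91}, and reads off $\nu(\psi,x)\ge\gamma$ from the pole order of $G_{\mathbb{B}}$. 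Your route is shorter and more black-box; the paper's is self-contained within extremal-function pluripotential theory and does not need the semi-continuity theorem. On the multiplier-ideal side the two proofs coincide: both use the increasing envelopes $\varphi_M$ (the paper takes $P_{\omega}(u+C,\psi)$, you take $P_{\omega}(u+M,0)$; both satisfy $u\le\cdot\le u+M$ and increase a.e.\ to $\psi$) and both invoke the Guan--Zhou strong openness theorem \cite[Theorem 1.1]{GZ14} to pass the multiplier ideal through the increasing limit. One caveat: your ``concrete'' explanation of that step does not actually close. After strong openness yields $\int_V|f|^2e^{-(t+\epsilon)\psi}<\infty$, the natural H\"older split of $|f|^2e^{-t\varphi_M}$ against $|f|^2e^{-(t+\epsilon)\psi}$ leaves a complementary factor of the form $\int_V|f|^2e^{c(\psi-\varphi_M)}$, and since $\psi-\varphi_M$ is unbounded (as you yourself note) there is no a priori dominating function that would let monotone or dominated convergence control it. The corollary for increasing sequences is genuinely the content of the Guan--Zhou theorem, not a soft consequence of the $\epsilon$-gain. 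Since the paper also simply cites \cite[Theorem 1.1]{GZ14} at that point, this is a flaw in your motivational sketch rather than a gap in the proof. The final deductions --- $c(u)=c(\psi)$ from the ideal-sheaf characterization of the complex singularity exponent, and constancy of $c(\cdot)$ on $\mathcal{E}(X,\omega,\psi)$ from \cite[Theorem 1.3]{DDNL17b} --- match the paper exactly.
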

\begin{proof}
We first observe that $c(u):=\sup\big\{p\geq 0\, : \, \mathcal{I}(pu)=\mathcal{O}_{X} \big\}$, thus (\ref{eqn:Show}) implies immediately $c(u)=c(\psi)$.  Moreover by \cite[Theorem 1.3]{DDNL17b}, letting $\psi\in\mathcal{M}^{+}$, $P_{\omega}[u]=\psi$ if and only if $u\in\mathcal{E}(X,\omega,\psi)$ as recalled in section \ref{sec:Pre}. The last assertion follows.\\
Next, we claim that $P_{\omega}[u](\psi)=\psi$. Indeed clearly $P_{\omega}[u](\psi)\leq \psi$, while, conversely, for any $C\in \mathbbm{R}_{\geq 0}$,
$$
P_{\omega}[u](\psi)\geq P_{\omega}\big(u+C,P_{\omega}[u]\big)\geq P_{\omega}\big(u+C,P_{\omega}(u+C,0)\big)=P_{\omega}(u+C,0),
$$
which implies $P_{\omega}[u](\psi)\geq P_{\omega}[u]=\psi$ as $P_{\omega}(u+C,0)\nearrow P_{\omega}[u]$.\\
Then the proof of (\ref{eqn:Show}) is similar to that of \cite[Theorem 1.1.(i)]{DDNL17a}, but we write the details as a courtesy to the reader.\\
Trivially $\gamma:=\nu(u,x)\geq\nu(\psi,x)$. Then $u(z)\leq \gamma\log|z|+O(1)$ locally in holomorphic coordinates centered at $x$ such that the unit ball $\mathbbm{B}\subset \mathbbm{C}^{n}$ is contained in the chart. Let $g$ be a smooth potential of $\omega$ such that $u+g,\psi+g\leq 0$ in $\mathbbm{B}$. Thus, locally
$$
g+\psi=g+P_{\omega}[u](\psi)\leq \sup\{v\in PSH(\mathbbm{B})\, :\, v\leq 0,v\leq \gamma \log|z|+O(1)\}
$$
where the inequality follows considering $P_{\omega}(u+C,\psi)$ for $C\to +\infty$ instead of $P_{\omega}[u](\psi)$ and noting that the right-hand is upper semicontinuous as it coincides with the pluricomplex Grenn function $G_{\mathbbm{B}}(z,0)$ of $\mathbbm{B}$ with a logarithmic pole at $0$ of order $\gamma$. Hence, \cite[Proposition 6.1]{Kli91} yields $\nu(\psi,x)\geq \gamma$ as $G_{\mathbbm{B}}(z,0)\sim \gamma\log|z| + O(1)$, and $\nu(u,x)=\nu(\psi,x)$ follows\\
For the second equality, letting $x\in X$ fixed, we observe that $\mathcal{I}(t\psi,x)=\mathcal{I}\big(tP_{\omega}(u+C,\psi),x\big)$ for $C\gg 0$ big enough as a consequence of the resolution of the strong openness conjecture (\cite[Theorem 1.1]{GZ14}) since $P_{\omega}(u+C,\psi)\nearrow \psi$ for $C\to +\infty$. Therefore to conclude the proof it is sufficient to note that $\mathcal{I}(tu,x)=\mathcal{I}\big(tP_{\omega}(u+C,\psi),x\big)$ for any $t,C>0$, $x\in X$ as $\psi$ is less singular than $u$.
\end{proof}
It is also possible to estimate the complex singularity exponent of $\psi$ in terms of the Lelong numbers by the following classical result.
\begin{prop}[\cite{Sko72}]
\label{prop:Alpha}
Let $\psi\in \mathcal{M}$ and set $\nu(\psi):=\sup_{x\in X}\nu(\psi,x)$.
Then
$$
\frac{2}{\nu(\psi)}\leq c(\psi)\leq \frac{2n}{\nu(\psi)},
$$
where we clearly mean $c(\psi)=+\infty$ if $\nu(\psi)=0$.
\end{prop}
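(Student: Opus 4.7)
The plan is to prove the two inequalities separately, both reducing to purely local statements in a holomorphic chart where $\omega^{n}$ is comparable to Lebesgue measure, and then invoking (a) a one-sided bound coming directly from the definition of the Lelong number and (b) Skoda's classical integrability theorem.

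For the upper bound $c(\psi)\leq 2n/\nu(\psi)$: if $\nu(\psi)=0$ there is nothing to prove, so assume $\nu:=\nu(\psi,x_{0})>0$ at some point $x_{0}\in X$. Pick a holomorphic chart $U\ni x_{0}$ with coordinates $z$ centered at $x_{0}$, and a smooth local potential $g$ of $\omega$. Then $g+\psi$ is psh on $U$, and by the very definition of the Lelong number
$$
g(z)+\psi(z)\leq \nu\log|z|+O(1)
$$
on a small ball $\mathbbm{B}\subset U$ around $0$. Hence $e^{-p\psi}\geq C|z|^{-p\nu}$ on $\mathbbm{B}$, which fails to be Lebesgue integrable on $\mathbbm{B}\subset \mathbbm{C}^{n}$ as soon as $p\nu\geq 2n$. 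Since $\omega^{n}$ and the Lebesgue measure are comparable on $\mathbbm{B}$, this forces $\int_{X}e^{-p\psi}\omega^{n}=+\infty$ for $p\geq 2n/\nu$, hence $c(\psi)\leq 2n/\nu(\psi)$.

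For the lower bound $c(\psi)\geq 2/\nu(\psi)$: this is where Skoda's theorem is doing the real work. Recall that Skoda's integrability result asserts that if $\varphi$ is plurisubharmonic on a neighborhood of a point $x$ with $\nu(\varphi,x)<1$, then $e^{-2\varphi}$ is integrable on some neighborhood of $x$. Fix $p<2/\nu(\psi)$ and, for each $x\in X$, choose a chart $U_{x}$ and a smooth local potential $g_{x}$ of $\omega$ on $U_{x}$, so that $\varphi_{x}:=\tfrac{p}{2}(g_{x}+\psi)$ is psh on $U_{x}$ with
$$
\nu(\varphi_{x},x)=\tfrac{p}{2}\,\nu(\psi,x)\leq \tfrac{p}{2}\,\nu(\psi)<1.
$$
Skoda's theorem then yields $\int_{V_{x}}e^{-2\varphi_{x}}\,dV<\infty$ on some neighborhood $V_{x}\subset U_{x}$ of $x$, which, absorbing the bounded factor $e^{-p g_{x}}$ and passing to $\omega^{n}$ via a smooth comparison, gives $\int_{V_{x}}e^{-p\psi}\omega^{n}<\infty$. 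By compactness of $X$ we cover it by finitely many such $V_{x}$, and summing we obtain $\int_{X}e^{-p\psi}\omega^{n}<\infty$. Taking the supremum over $p<2/\nu(\psi)$ yields $c(\psi)\geq 2/\nu(\psi)$, with the convention $c(\psi)=+\infty$ when $\nu(\psi)=0$ (in which case the same Skoda argument applies for every $p>0$).

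The only real obstacle is the lower bound, which is not elementary and genuinely uses Skoda's local integrability theorem from \cite{Sko72}; the upper bound, by contrast, is a direct consequence of the defining asymptotic of the Lelong number. Everything else is bookkeeping: the reduction from $X$ to local charts is standard, and the passage between $\omega^{n}$ and Lebesgue measure is harmless since we only care about finiteness of the integrals.
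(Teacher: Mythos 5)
Your proof is correct and follows exactly the classical route — the paper itself offers no proof and simply cites Skoda, so this is the intended argument. The lower bound is the genuine content: rescale $\psi$ locally by $p/2$, apply Skoda's local integrability criterion ($\nu(\varphi,x)<1 \Rightarrow e^{-2\varphi}\in L^1_{loc}$ near $x$), and use compactness of $X$ to patch the local estimates, which is precisely what $\nu(\psi)<2/p$ was designed to allow. The upper bound is elementary: the Lelong asymptotic gives $e^{-p\psi}\gtrsim |z|^{-p\nu}$ near a pole, and this fails radial integrability in $\mathbbm{R}^{2n}$ once $p\nu\geq 2n$.

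One small imprecision in the upper bound: you fix an arbitrary $x_0$ with $\nu(\psi,x_0)>0$ and conclude $c(\psi)\leq 2n/\nu(\psi,x_0)$, but to obtain the stated bound with $\nu(\psi)=\sup_x\nu(\psi,x)$ you need to choose $x_0$ where the Lelong number attains (or approaches) that supremum. This is harmless — the Lelong number function is upper semicontinuous, so on the compact $X$ the supremum is attained, and otherwise one takes a maximizing sequence — but it should be said. You might also note explicitly in the upper bound, as you did for the lower bound, that the smooth factor $e^{-pg}$ coming from the local potential is bounded above and below on the small ball and so does not affect integrability.
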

We can now introduce an integrability condition which will be sufficient for the purposes of this paper.
\begin{defn}
\label{defn:SIC}
Given $\psi\in \mathcal{M}$, $\lambda>0$ and $p\in (1,\infty]$. We say that $[\psi]$ satisfies the \emph{Strong Integrability Condition} (SIC) with respect to $\lambda,p$ if
$$
c(\psi)>\frac{\lambda p}{p-1},
$$
where we mean $c(\psi)>\lambda$ if $p=\infty$.
\end{defn}
When $p=\infty$ the SIC $c(\psi)>\lambda$ is a necessary condition to solve the Monge-Ampère equation $MA_{\omega}(u)=e^{-\lambda u}f\omega^{n}$ in the class $\mathcal{E}(X,\omega,\psi)$. In general if $\psi\in\mathcal{M}^{+}$ then, as a consequence of Proposition \ref{prop:Lelong}, the SIC gives $e^{-\lambda u}f\in L^{1}$ for any $u\in\mathcal{E}(X,\omega,\psi)$ and $f\in L^{p}$ through a clear Hölder's pairing.
\begin{prop}
\label{lem:NNPP}
Let $u_{k},u\in PSH(X,\omega)$ such that $u_{k}\to u$ weakly, $\lambda>0$ and let $f_{k},f\in L^{p}$ for $p\in (1,\infty]$ non-negative functions such that $f_{k}\to f$ in $L^{p}$. Assume also that $\psi:=P_{\omega}[u]$ satisfies the SIC with respect to $\lambda, p$. Then
$$
e^{-\lambda u_{k}}f_{k}\to e^{-\lambda u}f
$$
in $L^{1}$ as $k\to \infty$.
\end{prop}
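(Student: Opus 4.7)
The plan is to combine Hölder's inequality with the Demailly--Kollár semicontinuity result (Theorem~\ref{thm:DK99}) and a Scheffé-type argument. Set $q := p/(p-1)$ for the Hölder conjugate of $p$, with the convention $q=1$ when $p=\infty$; then in both regimes the SIC reads $c(\psi) > \lambda q$, and by Proposition~\ref{prop:Lelong} we have $c(u) = c(\psi) > \lambda q$.

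First I would decompose
\[
e^{-\lambda u_k}f_k - e^{-\lambda u}f \;=\; e^{-\lambda u_k}(f_k-f) \;+\; (e^{-\lambda u_k}-e^{-\lambda u})f
\]
and apply Hölder to obtain
\[
\|e^{-\lambda u_k}f_k - e^{-\lambda u}f\|_{L^1} \;\leq\; \|e^{-\lambda u_k}\|_{L^q}\,\|f_k-f\|_{L^p} \;+\; \|e^{-\lambda u_k}-e^{-\lambda u}\|_{L^q}\,\|f\|_{L^p}.
\]
For the first summand, applying Theorem~\ref{thm:DK99} at the exponent $\lambda q < c(u)$ yields $e^{-\lambda q u_k} \to e^{-\lambda q u}$ in $L^1$; in particular $\|e^{-\lambda u_k}\|_{L^q}$ is uniformly bounded, and since $\|f_k-f\|_{L^p} \to 0$ this term vanishes in the limit.

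For the second summand I must upgrade to $L^q$-convergence of $e^{-\lambda u_k}$ to $e^{-\lambda u}$. When $p=\infty$ (hence $q=1$) this is again exactly Theorem~\ref{thm:DK99}. When $p<\infty$, the $L^1$-convergence of $e^{-\lambda q u_k}$ just established gives convergence of the $L^q$-norms $\|e^{-\lambda u_k}\|_{L^q} \to \|e^{-\lambda u}\|_{L^q}$, while weak convergence $u_k \to u$ in $PSH(X,\omega)$ is by definition $L^1$-convergence and thus forces $e^{-\lambda u_k} \to e^{-\lambda u}$ in Lebesgue measure. A standard Scheffé-type statement (for non-negative functions, convergence in measure plus convergence of $L^q$-norms implies $L^q$-convergence) then closes the argument.

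The only delicate point is the use of Theorem~\ref{thm:DK99} at the \emph{critical} exponent $\lambda q$ rather than only at strictly smaller ones; this is precisely where the strict inequality built into the SIC, combined with the identity $c(u) = c(\psi)$ of Proposition~\ref{prop:Lelong}, is essential. No perturbation or exhaustion in the exponent is needed.
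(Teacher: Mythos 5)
Your proof is correct and follows essentially the same route as the paper: same decomposition of $e^{-\lambda u_k}f_k - e^{-\lambda u}f$, same Hölder pairings, and the same use of the Demailly--Kollár semicontinuity together with $c(u)=c(\psi)$ from Proposition~\ref{prop:Lelong} to get $L^q$-boundedness and then $L^q$-convergence of $e^{-\lambda u_k}$. The only cosmetic difference is that you invoke convergence in measure plus a Scheffé-type argument where the paper appeals to pointwise a.e.\ convergence plus convergence of $L^q$-norms; these are the same elementary upgrade, and the remark about the strict inequality in the SIC being exactly what avoids any exhaustion in the exponent matches the paper's logic.
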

\begin{proof}
We set $g_{k}:=e^{-\lambda u_{k}}f_{k}$, $g:=e^{-\lambda u}f$ and $q:=p/(p-1)$ for the Sobolev conjugate of $p$. Since $\sup u_{k}\to \sup u$ (by Hartogs' Lemma, see \cite[Proposition 8.4]{GZ17}), we can also assume $\sup_{X}u_{k}\leq 0$ for any $k\in\mathbbm{N}$ without loss of generality. By the triangle inequality 
\begin{equation}
\label{eqn:NNPP}
||g_{k}-g||_{L^{1}}\leq ||e^{-\lambda u_{k}}(f_{k}-f)||_{L^{1}}+ ||(e^{-\lambda u_{k}}-e^{-\lambda u})f||_{L^{1}},
\end{equation}
and the strategy is to prove that both terms in the right-hand side converge to $0$ as $k\to \infty$. As immediate consequence of the Hölder's inequality we obtain
$$
||e^{-\lambda u_{k}}(f_{k}-f)||_{L^{1}}\leq ||e^{-u_{k}}||_{L^{\lambda q}}^{\lambda}||f_{k}-f||_{p},
$$
which converges to $0$ since $f_{k}\to f$ in $L^{p}$ by assumption while $||e^{-u_{k}}||_{L^{q}}$ is uniformly bounded for $k\gg 1$ big enough by Theorem \ref{thm:DK99} given the hypothesis $c(u)=c(\psi)>\lambda q$ (see also Proposition \ref{prop:Lelong}).\\
Regarding the second term, we still use Hölder's inequality to get
$$
||(e^{-\lambda u_{k}}-e^{-\lambda u})f||_{L^{1}}\leq ||e^{-\lambda u_{k}}-e^{-\lambda u}||_{L^{q}}||f||_{L^{p}}.
$$
As $f\in L^{p}$, it is sufficient to prove that $ ||e^{-\lambda u_{k}}-e^{-\lambda u}||_{L^{q}}\rightarrow 0$. However, since $\psi$ satisfied the SIC with respect to $\lambda, p$, combining Theorem \ref{thm:DK99} with Proposition \ref{prop:Lelong} we know that $e^{-\lambda q u_{k}}\to e^{-\lambda q u}$ in $L^{1}$, i.e. that $||e^{-\lambda u_{k}}||_{L^{q}}\to ||e^{-\lambda u}||_{L^{q}}$. Since by assumption we also have $e^{-\lambda u_{k}}\to e^{-\lambda u}$ almost everywhere, it follows that $e^{-\lambda u_{k}}\to e^{-\lambda u}$ in $L^{q}$, which concludes the proof.
\end{proof}
\begin{lem}
\label{lem:NNPPb}
Let $K\subset PSH(X,\omega)$ be a compact set and let $p>0$ such that $c(u)>p$ for any $u\in K$. Then there exists a constant $C=C_{K,p}$ such that
$$
\sup_{u\in K}\int_{X}e^{-p u}\omega^{n}\leq C.
$$
\end{lem}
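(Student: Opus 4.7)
The plan is a straightforward contradiction argument combined with weak sequential compactness of $K$ and the semicontinuity result of Demailly–Kollár (Theorem \ref{thm:DK99}).

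First I would note that $PSH(X,\omega)$ is equipped with the weak ($L^1$) topology, in which $K$ is compact, hence weakly sequentially compact. So the plan is: suppose the conclusion fails, so that there exists a sequence $\{u_k\}_{k\in\mathbbm{N}}\subset K$ with
\[
\int_{X} e^{-p u_{k}}\,\omega^{n}\longrightarrow +\infty.
\]
By compactness of $K$, extract a subsequence (still denoted $\{u_k\}$) converging weakly to some $u_\infty\in K$.

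Next, since $u_\infty\in K$, the hypothesis gives $c(u_\infty)>p$. Apply Theorem \ref{thm:DK99} with $a=p$: from $u_k\to u_\infty$ weakly and $c(u_\infty)>p$, one obtains $e^{-p u_k}\to e^{-p u_\infty}$ in $L^{1}(\omega^{n})$. In particular the $L^1$-norms converge, so
\[
\int_{X} e^{-p u_{k}}\,\omega^{n}\longrightarrow \int_{X} e^{-p u_{\infty}}\,\omega^{n}<+\infty,
\]
which contradicts the assumption that the left-hand side diverges. This contradiction yields the required uniform constant $C=C_{K,p}$.

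No step here is really an obstacle: the only subtlety is being clear that compactness of $K\subset PSH(X,\omega)$ is understood in the weak topology (equivalently, the $L^{1}_{\mathrm{loc}}$ topology, in which $PSH$-cones are metrizable), which is exactly the hypothesis under which Theorem \ref{thm:DK99} applies. Everything else is a direct invocation of that theorem plus the trivial fact that $L^{1}$ convergence implies convergence of integrals.
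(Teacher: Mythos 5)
Your proof is correct and is essentially identical to the paper's: the same contradiction argument, extracting a weakly convergent subsequence by compactness of $K$, and applying Theorem \ref{thm:DK99} at the limit $u_\infty$ (using $c(u_\infty)>p$) to get $L^1$-convergence of $e^{-pu_k}$, contradicting divergence of the integrals.
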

\begin{proof}
Let us assume by contradiction that there exists a sequence $\{u_{j}\}_{j\in\mathbbm{N}}\subset K$ such that
\begin{equation}
\label{eqn:Finall}
\int_{X}e^{-p u_{j}}\omega^{n}\geq j
\end{equation}
for any $j\in\mathbbm{N}$. By compactness, up to considering a subsequence, we may also assume that $u_{j}\to u\in K$ weakly. In particular $\int_{X}e^{-p u}\omega^{n}<\infty$. Thus by Theorem \ref{thm:DK99} $e^{-pu_{k}}\to e^{-p u}$ in $L^{1}$, which contradicts (\ref{eqn:Finall}).
\end{proof}
\subsubsection{Proof of Theorem \ref{thmC}}
Together with the contraction property of Proposition \ref{prop:PropProie}, the key point to prove Theorem \ref{thmC} is to relate the coercivity of $F_{f,\psi,\lambda}$ to a \emph{Moser-Trudinger type of inequality} (see Proposition \ref{prop:MT} below).\newline

In this subsection we will use the following notation and assumptions:
\begin{itemize}
    \item by $\mathcal{P}(X)$ we will denote the set of all positive probability measures on $X$;
    \item $\mu:=f\omega^{n}$ for $0\leq f\in L^{p}\setminus \{0\}$, $p\in (1,+\infty]$;
    \item $\psi\in\mathcal{M}^{+}$ satisfies the SIC with respect to $\lambda, p$, where $\lambda>0$ is a fixed positive constant.
\end{itemize}
We need to recall the definition of \emph{entropy}.
\begin{defn}
Let $\nu_{1},\nu_{2}\in\mathcal{P}(X)$. The \emph{relative entropy} $H_{\nu_{1}}(\nu_{2})$ of $\nu_{2}$ with respect to $\nu_{1}$ is defined as follows. If $\nu_{2}$ is absolutely continuous with respect to $\nu_{1}$ with density $f=\frac{d\nu_{2}}{d\nu_{1}}$ satisfying $f\log f \in L^{1}(\nu_{1})$ then
$$
H_{\nu_{1}}(\nu_{2}):=\int_{X}f\log f d\nu_{1}=\int_{X}\log f d\nu_{2}.
$$
Otherwise $H_{\nu_{1}}(\nu_{2}):=+\infty$.
\end{defn}
We observe that probability measures $\nu$ with finite entropy with respect to $\mu$ have the following interesting property.
\begin{lem}
\label{lem:UsefulH}
Let $\nu\in\mathcal{P}(X)$ such that $H_{\mu}(\nu)<+\infty$. Then $PSH(X,\omega)\subset L^{1}(\nu)$ and $E_{\psi'}^{*}(\nu)<+\infty$, for any $\psi'\in \mathcal{M}^{+}$.
\end{lem}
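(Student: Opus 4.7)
The main tool will be the Legendre-Fenchel (Young) inequality for the relative entropy,
\[
\int g\, d\nu\leq H_{\mu}(\nu)+\log\int e^{g}\, d\mu,
\]
valid for any measurable $g$, combined with Hölder's inequality
\[
\int e^{-\alpha u}f\omega^{n}\leq \|e^{-\alpha u}\|_{L^{q}}\|f\|_{L^{p}}
\]
for the Sobolev conjugate $q=p/(p-1)$. The plan is to apply both with $g=-\alpha u$ for $\alpha>0$ chosen as small as needed.

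For the first claim, I fix $u\in PSH(X,\omega)$ and normalize $\sup_{X}u=0$. Skoda's theorem ensures $c(u)>0$, so I may pick $\alpha>0$ with $\alpha q<c(u)$, making $\|e^{-\alpha u}\|_{L^{q}}$ finite. Hölder then gives $\int e^{-\alpha u}\, d\mu<\infty$, and the entropy inequality yields
\[
\alpha\int(-u)\, d\nu\leq H_{\mu}(\nu)+\log\bigl(\|e^{-\alpha u}\|_{L^{q}}\|f\|_{L^{p}}\bigr)<\infty.
\]
As $-u\geq 0$, this forces $u\in L^{1}(\nu)$. Note also that $H_{\mu}(\nu)<\infty$ implies $\nu\ll\mu$, hence $\nu$ is non-pluripolar and in particular does not charge $\{\psi'=-\infty\}$ for any $\omega$-psh $\psi'$.

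For the second claim, fix $\psi'\in\mathcal{M}^{+}$. By translation invariance of $F_{\nu,\psi'}$ it is enough to bound this functional uniformly over $u\in\mathcal{E}^{1}_{norm}(X,\omega,\psi')$. Since $\psi'\in L^{1}(\nu)$ by the first claim and $\nu$ is non-pluripolar, $L_{\nu}(u)=\int(u-\psi')\, d\nu$. I would split the estimate: concavity of $E_{\psi'}$ at $\psi'$ gives $E_{\psi'}(u)\leq \int(u-\psi')\,MA_{\omega}(\psi')\leq -\int\psi'\,MA_{\omega}(\psi')=:C_{1}$ (finite, since $u\leq 0$), while the obvious inequality $-L_{\nu}(u)\leq \int(-u)\, d\nu+\int(-\psi')\, d\nu$ reduces the problem to a uniform bound on $\int(-u)\, d\nu$. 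Applying entropy again with $g=-\alpha u$, this time for a single $\alpha>0$ independent of $u$, gives
\[
\alpha\int(-u)\, d\nu\leq H_{\mu}(\nu)+\log\bigl(C\,\|f\|_{L^{p}}\bigr),
\]
where $C:=\sup_{u\in PSH_{norm}(X,\omega)}\|e^{-\alpha u}\|_{L^{q}}$.

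The main obstacle is the finiteness of $C$, i.e. producing a single $\alpha>0$ that works for every $u\in PSH_{norm}(X,\omega)$. This will follow from Skoda's estimate $c(u)\geq 2/\nu(u)$ (Proposition \ref{prop:Alpha}) together with the classical fact that Lelong numbers are uniformly bounded on the weakly compact set $PSH_{norm}(X,\omega)$, equivalently positivity of the $\alpha$-invariant $\alpha(\omega)>0$ of $(X,\omega)$. Once such an $\alpha$ is fixed, Lemma \ref{lem:NNPPb} applied to $K=PSH_{norm}(X,\omega)$ yields $C<\infty$, and combining all the estimates gives a uniform upper bound on $F_{\nu,\psi'}(u)$, proving $E_{\psi'}^{*}(\nu)<\infty$.
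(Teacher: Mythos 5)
Your proof is correct and follows essentially the same route as the paper: both parts hinge on the entropy--exponential duality (Lemma~\ref{lem:2.11}) together with Hölder and the uniform Skoda estimate coming from Proposition~\ref{prop:Alpha}, Lemma~\ref{lem:NNPPb} and the finiteness of the $\alpha$-invariant. The only genuine differences are organizational. For the first claim, the paper builds the probability measure $\mu_{cu_k}=e^{-c(u_k+a_k)}\mu$, invokes non-negativity of $H_{\mu_{cu_k}}(\nu)$ together with the chain rule for entropy, and passes to the limit by monotone convergence; you instead apply the Legendre--Fenchel inequality directly with $g=-\alpha u$, which is a bit cleaner and also makes explicit the $\alpha q<c(u)$ requirement (the paper writes $c<c(u)$, silently absorbing the Hölder conjugate). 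For the second claim, the paper packages $E_{\psi'}(u)$ and $-V_{\psi'}L_\nu(u)$ into a single inequality by lower-bounding $-\log\int e^{-\alpha u}d\mu$ via $\sup_X(u-\psi')\geq V_{\psi'}^{-1}E_{\psi'}(u)$, directly obtaining $H_\mu(\nu)\geq\tfrac{\alpha}{V_{\psi'}}F_{\nu,\psi'}(u)-C$; you normalize to $\mathcal{E}^1_{norm}(X,\omega,\psi')$, exploit translation-invariance, bound $E_{\psi'}(u)\leq 0$ by concavity, and bound $-L_\nu(u)$ via a uniform estimate on $\int(-u)\,d\nu$. Both yield the same uniform upper bound on $F_{\nu,\psi'}$. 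Your decomposition is slightly more redundant but trades elegance for transparency; in particular it isolates the fact that $\int(-u)\,d\nu$ is bounded uniformly over $PSH_{norm}(X,\omega)$, which is the geometric heart of the matter in either version.
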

\begin{proof}
Let $u\in PSH(X,\omega)$ and let $0<c<c(u)$. Assume also that $\sup_{X}u=0$ and let $a\in\mathbbm{R}$ such that $\mu_{cu}:=e^{-c(u+a)}\mu$ is a probability measure. Similarly, let $u_{k}:=\max(u,-k)$ and $a_{k}\in\mathbbm{R}$ such that $\mu_{cu_{k}}:=e^{-c(u_{k}+a_{k})}\mu\in \mathcal{P}(X)$.\newline
As $u_{k}$ is bounded, from the definition of entropy it is immediate to check that
\begin{equation}
    \label{eqn:M}
    0\leq H_{\mu_{cu_{k}}}(\nu)=H_{\mu}(\nu)+c\int_{X}(u_{k}+a_{k})d\nu\leq H_{\mu}(\nu)+c\int_{X}u_{k}d\nu+ca,
\end{equation}
where the last inequality follows observing that $a_{k}=\frac{\log \int_{X}e^{-cu_{k}}d\mu}{c}$ is an increasing sequence converging to $a$. Since $H_{\mu}(\nu)<+\infty$, (\ref{eqn:M}) clearly implies that
$$
\int_{X}(-u_{k})d\nu\leq C
$$
for a positive constant $C$ uniform in $k$. Hence, letting $k\to \infty$, by the Monotone Convergence Theorem ($u_{k}$ is decreasing and $\sup_{X}u_{k}$ is uniformly bounded by Hartogs' Lemma) we get
$$
\int_{X}(-u)d\nu\leq C,
$$
which concludes the first part of the proof.\newline
Next, we claim that there exists $\alpha>0$ such that $\sup_{\{u\in PSH(X,\omega)\}}\int_{X}e^{-\alpha(u-\sup_{X}u)}d\mu<+\infty$. Indeed, since $\mu=f\omega^{n}$ for $f\in L^{p}$, $p>1$, we can apply Hölder's inequality to reduce to the case $\mu=\omega^{n}$. Thus, the existence of such $\alpha$ follows combining Proposition \ref{prop:Alpha} and Lemma \ref{lem:NNPPb} with the fact that $\sup\{\nu(u,x)\,:\, u\in PSH(X,\omega),\, x\in X\}$ is finite (see \cite[Lemma 8.10]{GZ17}). In particular, letting $\psi'\in\mathcal{M}^{+}$, there exists a uniform constant $C$ such that for any $u\in \mathcal{E}^{1}(X,\omega,\psi')$ we have
$$
-\log\int_{X}e^{-\alpha u}d\mu\geq \alpha\sup_{X}u-C=\alpha\sup_{X}(u-\psi')-C\geq \frac{\alpha}{V_{\psi'}}E_{\psi'}(u)-C
$$
where the last inequality immediately follows from the definition of the Monge-Ampère energy. Thus, by Lemma \ref{lem:2.11} recalled below, we obtain
$$
H_{\mu}(\nu)\geq -\log\int_{X}e^{-\alpha u}d\mu-\alpha\int_{X}u d\nu\geq \frac{\alpha}{V_{\psi'}}\Big(E_{\psi'}(u)-\int_{X}(u-\psi')V_{\psi'}d\nu\Big)-C
$$
where we clearly also used $\psi'\leq 0$. Hence, taking the supremum over all $u\in\mathcal{E}^{1}(X,\omega,\psi')$, we get $H_{\nu}(\mu)\geq \frac{\alpha}{V_{\psi'}}E_{\psi'}^{*}(\nu)-C$, which concludes the proof.
\end{proof}
\begin{lem}[\cite{BBEGZ16}, Lemma $2.11$]
\label{lem:2.11}
For any lower semicontinuous function $g$ on $X$ and any $\nu_{1}\in\mathcal{P}(X)$,
$$
\log \int_{X}e^{g}d\nu_{1}=\sup_{\nu\in\mathcal{P}(X)}\Big(\int_{X}g d\nu_{2}-H_{\nu_{1}}(\nu_{2})\Big).
$$
\end{lem}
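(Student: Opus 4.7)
The plan is to establish this Gibbs variational principle by convex duality. The key analytic tool is Young's inequality $sb \le e^s + b\log b - b$ valid for all $s\in\mathbbm{R}$, $b\ge 0$ (with the convention $0\log 0 = 0$), which expresses the Legendre duality of $e^s$ and $b\log b - b$ and will reconcile the two sides of the claim. I would reduce to the case $c := \log\int_X e^g\, d\nu_1 \in \mathbbm{R}$; note that since $g$ is lower semicontinuous on a compact space it is bounded below, so $\int_X e^g\, d\nu_1 > 0$ and $c > -\infty$ automatically, while the case $c = +\infty$ will be treated separately at the end.

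For the $\geq$ direction (showing the supremum on the right is bounded by the left-hand side), fix $\nu_2 \in \mathcal{P}(X)$ with $H_{\nu_1}(\nu_2) < +\infty$; otherwise the bound is trivial. Then $\nu_2 \ll \nu_1$ with density $f$, and Young's inequality applied pointwise with $s=g$ and $b=f$ yields $gf \le e^g + f\log f - f$, so integrating against $\nu_1$ guarantees $\int_X g\, d\nu_2 < +\infty$ as soon as $c < +\infty$. Introducing the tilted probability measure $\tilde{\nu} := e^{g-c}\nu_1$, a direct manipulation of logarithms produces the identity
\begin{equation*}
H_{\nu_1}(\nu_2) \;=\; H_{\tilde{\nu}}(\nu_2) + \int_X g\, d\nu_2 - c.
\end{equation*}
Since relative entropy is non-negative (Jensen applied to the convex function $s\mapsto s\log s$), rearranging gives $\int_X g\, d\nu_2 - H_{\nu_1}(\nu_2) = c - H_{\tilde{\nu}}(\nu_2) \le c$, which is the desired bound after taking the supremum.

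For the reverse inequality, when $c < +\infty$ the choice $\nu_2 = \tilde{\nu}$ produces $H_{\tilde{\nu}}(\tilde{\nu}) = 0$, so the identity of the previous paragraph is saturated: equality holds and the supremum is realized by the tilted measure. When $c = +\infty$ I would exploit lower semicontinuity by truncating: the functions $g_N := \min(g,N)$ are bounded, $\log \int_X e^{g_N}\, d\nu_1 \nearrow +\infty$ by monotone convergence, and applying the already-proven finite case to each $g_N$ produces the probability measure $\nu_2^{(N)} := \bigl(\int e^{g_N}\, d\nu_1\bigr)^{-1} e^{g_N}\nu_1$ for which $\int_X g_N\, d\nu_2^{(N)} - H_{\nu_1}(\nu_2^{(N)}) = \log\int e^{g_N}\, d\nu_1$. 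Since $g \ge g_N$ and $g$ is bounded below, testing the supremum against $\nu_2^{(N)}$ forces it to be $+\infty$ as well.

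The main obstacle I foresee is not any single step but rather the bookkeeping of integrability across the three cases; the Young inequality does double duty, both driving the identity in the main computation and ensuring that $\int_X g\, d\nu_2 - H_{\nu_1}(\nu_2)$ is unambiguously defined as an element of $[-\infty, +\infty]$, with no $\infty - \infty$ ambiguity arising when $H_{\nu_1}(\nu_2)$ and $c$ are both finite.
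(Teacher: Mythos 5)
The paper states Lemma~\ref{lem:2.11} purely as a citation from \cite{BBEGZ16} and gives no proof of its own, so there is no internal argument to compare against; I will assess the proposal on its merits. Your tilting argument is the standard route to the Gibbs/Donsker--Varadhan variational principle, and the upper bound (that the supremum is $\le c$) is carried out correctly: Young's inequality $sb \le e^{s}+b\log b - b$ controls $\int_{X}g\,d\nu_{2}$ for any $\nu_{2}$ of finite entropy, and the tilting identity $\int_{X}g\,d\nu_{2}-H_{\nu_{1}}(\nu_{2}) = c - H_{\tilde\nu}(\nu_{2})$ together with $H_{\tilde\nu}(\nu_{2})\ge 0$ closes that half.

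The small gap is in the attainment step. You assert that when $c<+\infty$ the choice $\nu_{2}=\tilde\nu:=e^{g-c}\nu_{1}$ saturates the bound, but $\tilde\nu$ need not have finite entropy: one has $H_{\nu_{1}}(\tilde\nu)=\int_{X}(g-c)e^{g-c}\,d\nu_{1}=\int_{X}g\,d\tilde\nu-c$, and finiteness of $\int_{X}e^{g}\,d\nu_{1}$ does not by itself force $\int_{X}ge^{g}\,d\nu_{1}<\infty$ when $g$ is unbounded above --- which is exactly the situation in the paper's two applications of the lemma, where $g$ is a positive multiple of $-u$ for $u$ an $\omega$-psh function. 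In that event $\int_{X}g\,d\tilde\nu$ and $H_{\nu_{1}}(\tilde\nu)$ are simultaneously $+\infty$, and the formal cancellation $\int_{X}g\,d\tilde\nu-H_{\nu_{1}}(\tilde\nu)=c-H_{\tilde\nu}(\tilde\nu)=c$ is an $\infty-\infty$ statement; the value of the supremum is still $c$, but it need not be attained. The repair is precisely the truncation device you already deploy for $c=+\infty$: set $g_{N}:=\min(g,N)$, tilt by $\tilde\nu^{(N)}:=\big(\int_{X}e^{g_{N}}d\nu_{1}\big)^{-1}e^{g_{N}}\nu_{1}$ (which has finite entropy because $g_{N}$ is bounded and $\nu_{1}$-essentially bounded below), observe that $\int_{X}g\,d\tilde\nu^{(N)}-H_{\nu_{1}}(\tilde\nu^{(N)})\ge\int_{X}g_{N}\,d\tilde\nu^{(N)}-H_{\nu_{1}}(\tilde\nu^{(N)})=\log\int_{X}e^{g_{N}}d\nu_{1}$, and let $N\to\infty$ using monotone convergence. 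This handles $c<\infty$ and $c=\infty$ in one stroke and removes the need to distinguish the two cases; with that single adjustment the proof is complete.
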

We can now introduce the functional $M_{f,\psi,\lambda}:\mathcal{E}^{1}(X,\omega,\psi)\to \overline{\mathbbm{R}}$ as
$$
M_{f,\psi,\lambda}(u)=E_{\psi}(u)-\frac{V_{\psi}}{\lambda}H_{\mu}\big(MA_{\omega}(u)/V_{\psi}\big)-\int_{X}u MA_{\omega}(u)
$$
if $H_{\mu}\big(MA_{\omega}(u)/V_{\psi}\big)<+\infty$, and $M_{f,\psi,\lambda}(u)=-\infty$ otherwise.\newline
Observe that by Lemma \ref{lem:UsefulH}, $M_{f,\psi,\lambda}$ is well-defined and takes finite values at any $u\in\mathcal{E}^{1}(X,\omega,\psi)$ whose associated Monge-Ampère probability measure has finite entropy with respect to $\mu=f\omega^{n}$. Moreover, since $E_{\psi}^{*}\big(MA_{\omega}(u/V_{\psi})\big)=E_{\psi}(u)-\int_{X}(u-\psi)MA_{\omega}(u)$ (by Theorem \ref{thm:A}), letting $F_{\psi}:\mathcal{P}(X)\to \overline{R}$, $F_{\psi}(\nu):=||\psi||_{L^{1}(\nu)}$,
$$
M_{f,\psi,\lambda}(u)=\Big(E_{\psi}^{*}-\frac{V_{\psi}}{\lambda}H_{\mu}+V_{\psi}F_{\psi}\Big)\big(MA_{\omega}(u)/V_{\psi}\big)
$$
for any $u\in\mathcal{E}^{1}(X,\omega,\psi)$ such that $H_{\mu}\big(MA_{\omega}(u)/V_{\psi}\big)<+\infty$. In particular $M_{f,\psi,\lambda}$ is translation invariant. \newline
This functional is clearly reminiscent of the Mabuchi functional (\cite{Mab86}) and we refer to the companion paper \cite{Tru20b} for more details in the Fano case.
\begin{prop}
\label{prop:DM}
$F_{f,\psi,\lambda}(u)\geq M_{f,\psi,\lambda}(u)$ for any $u\in\mathcal{E}^{1}(X,\omega,\psi)$, with equality if and only if $MA_{\omega}(u)=e^{-\lambda u+C}d\mu$ for a constant $C\in \mathbbm{R}$.
\end{prop}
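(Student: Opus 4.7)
The plan is to recognize the desired inequality as an instance of the Gibbs variational principle recalled in Lemma \ref{lem:2.11}. Write out the two functionals explicitly: after subtracting the common term $E_\psi(u)$ and multiplying through by $\lambda/V_\psi>0$, the claimed inequality
$F_{f,\psi,\lambda}(u)\geq M_{f,\psi,\lambda}(u)$
is equivalent to
\begin{equation*}
\log \int_{X} e^{-\lambda u}\, d\mu \;\geq\; -\frac{\lambda}{V_{\psi}}\int_{X} u\, MA_{\omega}(u) \;-\; H_{\mu}\bigl(MA_{\omega}(u)/V_{\psi}\bigr).
\end{equation*}
If $H_{\mu}\bigl(MA_{\omega}(u)/V_{\psi}\bigr)=+\infty$ there is nothing to prove, so assume the entropy is finite; by Lemma \ref{lem:UsefulH} this guarantees $u\in L^{1}(MA_{\omega}(u)/V_{\psi})$, so the right-hand side makes sense.

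The main step is to apply Lemma \ref{lem:2.11} to the lower semicontinuous function $g=-\lambda u$ (which is lsc because $u$ is usc and $\lambda>0$) with test measure $\nu=MA_{\omega}(u)/V_{\psi}\in\mathcal{P}(X)$. Since $g$ may take the value $+\infty$, I would first truncate: set $u_{k}:=\max(u,-k)$ and apply the bounded version of the variational principle with $g_{k}=-\lambda u_{k}$, giving
\begin{equation*}
\log\int_{X}e^{-\lambda u_{k}}\,d\mu \;\geq\; -\lambda\int_{X}u_{k}\,d\nu \;-\; H_{\mu}(\nu).
\end{equation*}
Then let $k\to\infty$: monotone convergence handles the left-hand side (since $e^{-\lambda u_{k}}\nearrow e^{-\lambda u}$, and the limit is finite by the SIC together with Proposition \ref{lem:NNPP}), while dominated convergence handles $\int u_{k}\,d\nu\to\int u\,d\nu$ (using $u\leq u_{k}\leq 0$ up to translation and $u\in L^{1}(\nu)$). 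Substituting $\nu=MA_{\omega}(u)/V_{\psi}$ yields precisely the displayed inequality, proving $F_{f,\psi,\lambda}(u)\geq M_{f,\psi,\lambda}(u)$.

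For the equality case, recall that the supremum in Lemma \ref{lem:2.11} is attained uniquely at the Gibbs measure $\nu^{\star}:=e^{g}\,d\mu\big/\int_{X}e^{g}\,d\mu$, because relative entropy is strictly convex on its domain of finiteness. Hence equality in our inequality forces
\begin{equation*}
\frac{MA_{\omega}(u)}{V_{\psi}} \;=\; \frac{e^{-\lambda u}}{\int_{X}e^{-\lambda u}\,d\mu}\,d\mu,
\end{equation*}
which is exactly $MA_{\omega}(u)=e^{-\lambda u+C}\,d\mu$ with $C=\log\bigl(V_{\psi}\big/\int_{X}e^{-\lambda u}d\mu\bigr)$. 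Conversely, if $MA_{\omega}(u)=e^{-\lambda u+C}d\mu$, a direct computation of $H_{\mu}(MA_{\omega}(u)/V_{\psi})=-\lambda\int u\,\frac{MA_{\omega}(u)}{V_{\psi}}+(C-\log V_{\psi})$ and substitution into $M_{f,\psi,\lambda}$ gives exactly $F_{f,\psi,\lambda}(u)$.

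The main obstacle I anticipate is the truncation argument: one must verify that the various integrals behave well in the limit despite $u$ being unbounded below and $\mu$ having density only in $L^{p}$. The SIC is precisely what is needed to control $\int e^{-\lambda u}\,d\mu$, and Lemma \ref{lem:UsefulH} (which integrates $u$ against the finite-entropy measure $MA_{\omega}(u)/V_{\psi}$) handles the other side; so once one recognizes which results to invoke, the proof reduces to routine monotone/dominated convergence combined with the Gibbs principle.
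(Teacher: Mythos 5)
Your argument is correct, and in substance it rests on the same fact as the paper's proof: the nonnegativity of relative entropy with respect to the Gibbs measure $\mu_u := e^{-\lambda u}\mu\big/\int_X e^{-\lambda u}d\mu$, with equality iff the two measures coincide. The difference is one of packaging. The paper carries out a direct change-of-measure computation to establish the identity $F_{f,\psi,\lambda}(u)-M_{f,\psi,\lambda}(u)=\frac{V_\psi}{\lambda}H_{\mu_u}\big(MA_\omega(u)/V_\psi\big)$ and then invokes $H_{\mu_u}\geq 0$ (citing \cite[Prop. 2.10.(ii)]{BBEGZ16}) together with its equality case. You instead recognize the inequality, after clearing the common term and rescaling, as an instance of Lemma \ref{lem:2.11} (the Gibbs variational principle) applied to $g=-\lambda u$ and the test measure $\nu=MA_\omega(u)/V_\psi$; the equality case then comes from uniqueness of the Gibbs maximizer. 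These two routes are mathematically equivalent --- Lemma \ref{lem:2.11} is just the abstract form of the change-of-measure identity the paper writes out by hand --- so your proof buys some brevity by reusing an already-stated lemma, while the paper's version is more self-contained. Your truncation via $u_k=\max(u,-k)$ and the monotone/dominated convergence passage are sound, though arguably dispensable: Lemma \ref{lem:2.11} as stated already allows lower semicontinuous (hence possibly $+\infty$-valued) $g$, so one could apply it directly to $g=-\lambda u$ provided one notes that $\int_X u\, d\nu$ is finite by Lemma \ref{lem:UsefulH}, as you do.
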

\begin{proof}
Since we are assuming that $\psi\in\mathcal{M}^{+}$ satisfies the SIC with respect to $\lambda,p$, we have $F_{f,\psi,\lambda}(u)\in\mathbbm{R}$ (see also Proposition \ref{prop:Lelong}). Thus, we can suppose that $H_{\mu}\big(MA_{\omega}(u)/V_{\psi}\big)<+\infty$ (otherwise the inequality is trivial). By definition of the two functionals we clearly have
\begin{equation}
    \label{eqn:Z1}
    F_{f,\psi,\lambda}(u)-M_{f,\psi,\lambda}(u)=\frac{V_{\psi}}{\lambda}\log \int_{X}e^{-\lambda u}d\mu+\frac{V_{\psi}}{\lambda}H_{\mu}\big(MA_{\omega}(u)/V_{\psi}\big)+\int_{X}u MA_{\omega}(u).
\end{equation}
Next, setting $\mu_{u}:=\frac{e^{-\lambda u}\mu}{\int_{X}e^{-\lambda u}d\mu}\in\mathcal{P}(X)$ and letting $0\leq g\in L^{1}(\mu)$ such that $MA_{\omega}(u)/V_{\psi}=g\mu$, we observe that $MA_{\omega}(u)/V_{\psi}=g e^{\lambda u + \log \int_{X}e^{-\lambda u}d\mu} \mu_{u} $. Therefore, by definition of entropy we get
\begin{multline}
    \label{eqn:Z0}
    V_{\psi}H_{\mu_{u}}\big(MA_{\omega}(u)/V_{\psi}\big)=\int_{X}\log \big(g e^{\lambda u + \log \int_{X}e^{-\lambda u}d\mu}\big) MA_{\omega}(u)=\\
    =V_{\psi}H_{\mu}\big(MA_{\omega}(u)/V_{\psi}\big)+\lambda \int_{X}u MA_{\omega}(u)+V_{\psi}\log \int_{X}e^{-\lambda u}d\mu.
\end{multline}
Hence, combining (\ref{eqn:Z1}) and (\ref{eqn:Z0}), we obtain
$$
F_{f,\psi,\lambda}-M_{f,\psi,\lambda}=\frac{V_{\psi}}{\lambda}H_{\mu_{u}}\big(MA_{\omega}(u)/V_{\psi}\big),
$$
which concludes the proof since $H_{\mu_{u}}\big(MA_{\omega}(u)/V_{\psi}\big)\geq 0$ with equality if and only if $MA_{\omega}(u)=V_{\psi}\mu_{u}$ by \cite[Proposition 2.10.(ii)]{BBEGZ16}.
\end{proof}
In the following result we connect the coercivity of $F_{f,\psi,\lambda}$ (and of $M_{f,\psi,\lambda}$) to a Moser-Trudinger type of inequality.
\begin{prop}
\label{prop:MT}
Let $\psi\in\mathcal{M}^{+}$ and assume that $\psi$ satisfies the SIC with respect to $\lambda,p$, i.e. that $c(\psi)>\frac{\lambda p}{p-1}$. Then the followings are equivalent:
\begin{itemize}
    \item[i)] $F_{f,\psi,\lambda}$ is $d$-coercive over $\mathcal{E}^{1}_{norm}(X,\omega,\psi)$, i.e. there exist $A,B>0$ such that $F_{f,\psi,\lambda}(u)\leq -Ad(u,\psi)+B$ for any $u\in\mathcal{E}^{1}_{norm}(X,\omega,\psi)$;
    \item[ii)] $M_{f,\psi,\lambda}$ is $d$-coercive over $\mathcal{E}^{1}_{norm}(X,\omega,\psi)$;
    \item[iii)] there exist $q>1, C>0$ such that \begin{equation}
    \label{eqn:Moser}
    ||e^{\lambda(\psi-u)}||_{L^{q}(e^{-\lambda\psi}\mu)}\leq Ce^{-\frac{\lambda}{V_{\psi}}E_{\psi}(u)}
    \end{equation}
    for any $u\in\mathcal{E}^{1}(X,\omega,\psi)$.
\end{itemize}
\end{prop}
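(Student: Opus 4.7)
The plan is to close the cycle (i) $\Rightarrow$ (ii) $\Rightarrow$ (iii) $\Rightarrow$ (i). The first arrow is immediate from Proposition \ref{prop:DM}, which yields $F_{f,\psi,\lambda}(u) \geq M_{f,\psi,\lambda}(u)$ pointwise on $\mathcal{E}^{1}(X,\omega,\psi)$; any coercive upper bound for $F$ descends at once to $M$.

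For (iii) $\Rightarrow$ (i), I would exploit the algebraic identity $-\lambda u = \tfrac{1}{q}[(q-1)\lambda\psi - q\lambda u] - \tfrac{q-1}{q}\lambda\psi$ and apply H\"older with conjugate exponents $q$ and $q/(q-1)$:
\[
\int_X e^{-\lambda u}\, d\mu \;\leq\; \Bigl(\int_X e^{(q-1)\lambda\psi - q\lambda u}\, d\mu\Bigr)^{1/q}\Bigl(\int_X e^{-\lambda\psi}\, d\mu\Bigr)^{(q-1)/q}.
\]
The SIC $c(\psi) > \lambda p/(p-1)$, combined with Theorem \ref{thm:DK99}, Proposition \ref{prop:Lelong}, and a H\"older pairing against $f\in L^p$, makes the second factor finite, while (iii) bounds the first by $Ce^{-\lambda E_\psi(u)/V_\psi}$. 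Taking logs, multiplying by $V_\psi/\lambda$, and adding $E_\psi(u)$ produces a uniform upper bound $F_{f,\psi,\lambda}(u)\leq B$ on $\mathcal{E}^{1}_{\mathrm{norm}}$. To upgrade this to strict coercivity with $A>0$, I would rerun the H\"older pairing with a slightly weaker exponent $\tilde q\in(1,q)$ (allowed by Jensen on the probability measure $e^{-\lambda\psi}d\mu/Z$ with $Z = \int e^{-\lambda\psi}d\mu$), using the strict inequality $q>1$ to produce a positive slack in the $E_\psi(u)$-coefficient.

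The crux of the argument is (ii) $\Rightarrow$ (iii), which I would handle through the Gibbs-type variational principle of Lemma \ref{lem:2.11} applied to $g:=q\lambda(\psi-u)-\lambda\psi$: for every probability measure $\nu$,
\[
q\lambda\!\int(\psi-u)\,d\nu - \lambda\!\int\psi\,d\nu - H_\mu(\nu) \;\leq\; \log\!\int_X e^{q\lambda(\psi-u)-\lambda\psi}\,d\mu.
\]
Taking the supremum over $u\in\mathcal{E}^{1}(X,\omega,\psi)$ on the left and identifying $\sup_u[E_\psi(u)-V_\psi\!\int u\,d\nu]$ with $E^{*}_\psi(\nu)-V_\psi\!\int\psi\,d\nu$ via Theorem \ref{thm:A} rewrites (iii) as the dual uniform estimate
\[
qE^{*}_\psi(\nu)+V_\psi F_\psi(\nu)-(V_\psi/\lambda)H_\mu(\nu)\;\leq\;qV_\psi\log C/\lambda \qquad \forall\,\nu\in\mathcal{P}(X).
\]
Restricting to $\nu=\nu_u:=MA_\omega(u)/V_\psi$, the left-hand side equals $M_{f,\psi,\lambda}(u)+(q-1)E^{*}_\psi(\nu_u)\geq M_{f,\psi,\lambda}(u)$. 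The $d$-coercivity of $M$ — equivalently $J_\psi$-coercivity by Lemma \ref{lem:J} — together with the non-negativity of $E^{*}_\psi$ and a choice of $q>1$ sufficiently close to $1$ then provides the required uniform bound. Lemma \ref{lem:UsefulH} is invoked along the way to guarantee that $E^{*}_\psi(\nu_u)$ and $\int u\,d\nu$ are finite whenever $H_\mu(\nu_u)$ is.

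The main obstacle is the calibration of $q>1$ in step (ii) $\Rightarrow$ (iii): the parasitic contribution $(q-1)E^{*}_\psi(\nu_u)\geq 0$ must stay dominated by the linear decay provided by the coercivity of $M$. The careful interplay between $E^{*}_\psi$, $F_\psi$, $H_\mu$, and $J_\psi$, together with the SIC via Proposition \ref{lem:NNPP}, is the technical heart of the proof.
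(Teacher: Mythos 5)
Your architecture matches the paper's: the cycle $(i)\Rightarrow(ii)\Rightarrow(iii)\Rightarrow(i)$, with $(i)\Rightarrow(ii)$ via Proposition \ref{prop:DM}, $(ii)\Rightarrow(iii)$ through the Gibbs--entropy duality of Lemma \ref{lem:2.11}, and $(iii)\Rightarrow(i)$ via H\"older. However, both hard implications have genuine gaps where you gesture at the difficulty but supply the wrong mechanism.

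In $(ii)\Rightarrow(iii)$, after deriving that the dual estimate at $\nu_u := MA_\omega(u)/V_\psi$ equals $M_{f,\psi,\lambda}(u)+(q-1)E^*_\psi(\nu_u)$, you write that the coercivity of $M$ ``together with the non-negativity of $E^*_\psi$'' provides the bound. This is backwards: since $E^*_\psi\geq 0$ and $q>1$, the term $(q-1)E^*_\psi(\nu_u)$ is a \emph{positive} perturbation sitting on top of $M_{f,\psi,\lambda}(u)$, and the coercive upper bound on $M$ does nothing to control it; $E^*_\psi(\nu_u)$ can be arbitrarily large. The missing ingredient is the inequality
\[
J_\psi(u)\geq \tfrac{1}{n}\,E^*_\psi\bigl(MA_\omega(u)/V_\psi\bigr),
\]
which the paper derives from \cite[Lemma 3.1.(i)]{Tru20} (equation (\ref{eqn:ZZ}) in the text). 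Only with this in hand can you convert the $J_\psi$-coercivity $M_{f,\psi,\lambda}(u)\leq -AJ_\psi(u)+B$ into $M_{f,\psi,\lambda}(u)\leq -\tfrac{A}{n}E^*_\psi(\nu_u)+B$, and then the parasitic term cancels exactly when you set $q:=1+A/n$. Without this, the calibration you call ``the crux'' cannot be carried out.

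In $(iii)\Rightarrow(i)$, your H\"older decomposition $-\lambda u = \tfrac{1}{q}[(q-1)\lambda\psi-q\lambda u]-\tfrac{q-1}{q}\lambda\psi$ packs all of the $u$-dependence into the $L^q$ factor. After taking logs, multiplying by $V_\psi/\lambda$ and adding $E_\psi(u)$, the $E_\psi(u)$-terms cancel exactly, giving only $F_{f,\psi,\lambda}(u)\leq B$, with slope $A=0$. Your proposed fix (``rerun with a slightly weaker exponent $\tilde q\in(1,q)$'') does not help: on a finite measure space $L^q\hookrightarrow L^{\tilde q}$, so the $\tilde q$-estimate carries the \emph{same} coefficient $-\tfrac{\lambda}{V_\psi}$ in front of $E_\psi(u)$ and no slack is created. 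The working mechanism is different: write $(u-\psi)=(1-\epsilon)(1+\epsilon)(u-\psi)+\epsilon^2(u-\psi)$ with $q=1+\epsilon$, apply the convexity of $t\mapsto\log\int e^{-t}$ to the two pieces with weights $1-\epsilon,\,\epsilon$, bound the first piece by $(1-\epsilon^2)\bigl(\log C - \tfrac{\lambda}{V_\psi}E_\psi(u)\bigr)$ via (iii), and control the second piece $\int e^{-\lambda\epsilon u}e^{-\lambda\psi}d\mu$ uniformly by combining the SIC with the uniform Skoda integrability theorem (\cite[Corollary 3.2]{Zer01}). This leaves the strictly positive slope $A=\epsilon^2$, which is what $(i)$ requires.
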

\begin{rem}
\label{rem:ConstantA}
\emph{It is easy to check that the constant $A>0$ in the $d$-coercivity of $F_{f,\psi,\lambda}$ cannot be larger than $1$. Indeed it easily follows from $(A-1)E_{\psi}(u) +B\geq \frac{V_{\psi}}{\lambda}\log\int_{X}e^{-\lambda u}f\omega^{n}\geq \frac{V_{\psi}}{\lambda}\log||f||_{L^{1}}$ for any $u\in\mathcal{E}^{1}_{norm}(X,\omega,\psi)$ and the fact that $\inf_{u\in\mathcal{E}^{1}_{norm}(X,\omega,\psi)}E_{\psi}(u)=-\infty$ for any $\psi\in\mathcal{M}^{+}$.}
\end{rem}
\begin{proof}
The implication $(i)\Rightarrow (ii)$ immediately follows from Proposition \ref{prop:DM}. Then let assume $(ii)$ to hold, i.e. that there exists $A>0,B\geq 0$ such that
$$
M_{f,\psi,\lambda}(u)\leq -Ad(u,\psi)+B
$$
for any $u\in\mathcal{E}^{1}_{norm}(X,\omega,\psi)$. We recall that (see \cite[Lemma 3.1.(i)]{Tru20})
$$
J_{\psi}(u)\geq \frac{1}{n+1}\int_{X}(\psi-u)\big(MA_{\omega}(u)-MA_{\omega}(\psi)\big),
$$
which implies
\begin{multline}
\label{eqn:ZZ}
    J_{\psi}(u)=\frac{n+1}{n}J_{\psi}(u)-\frac{1}{n}J_{\psi}(u)\geq \frac{1}{n}\int_{X}(\psi-u)\big(MA_{\omega}(u)-MA_{\omega}(\psi)\big)-\frac{1}{n}J_{\psi}(u)=\\
    =\frac{1}{n}\Big(E_{\psi}(u)-\int_{X}(u-\psi)MA_{\omega}(u)\Big)=\frac{1}{n}E_{\psi}^{*}\big(MA_{\omega}(u)/V_{\psi}\big),
\end{multline}
where in the last equality we used Theorem \ref{thm:A}. Therefore, as $d(u-\sup_{X}u,\psi)\geq J_{\psi}(u-\sup_{X}u)=J_{\psi}(u) $ for any $u\in\mathcal{E}^{1}(X,\omega,\psi)$ (Lemma \ref{lem:J}) and $M_{f,\psi,\lambda}$ is translation invariant, we get
\begin{equation}
\label{eqn:Mabu}
M_{f,\psi,\lambda}(u)\leq -AJ_{\psi}(u)+B\leq -\frac{A}{n}E^{*}_{\psi}\big(MA_{\omega}(u)/V_{\psi}\big)+B
\end{equation}
for any $u\in\mathcal{E}^{1}(X,\omega,\psi)$, where we used (\ref{eqn:ZZ}) in the last inequality. By definition of $M_{f,\psi,\lambda}$, (\ref{eqn:Mabu}) is equivalent to
\begin{equation}
    \label{eqn:ZZZ}
    \frac{V_{\psi}}{\lambda}H_{\mu}\big(MA_{\omega}(u)/V_{\psi}\big)+\int_{X}\psi MA_{\omega}(u)\geq q E_{\psi}^{*}\big(MA_{\omega}(u)/V_{\psi}\big)-B
\end{equation}
for $q:=1+A/n>1$, which implies $\frac{V_{\psi}}{\lambda}H_{\mu}(\nu)+\int_{X}\psi\, V_{\psi}d\nu\geq qE_{\psi}^{*}(\nu)-B$ for any $\nu\in \mathcal{P}(X)$ such that $H_{\mu}(\nu)<+\infty$ since probability measures with finite entropy with respect to $\mu$ have finite energy by Lemma \ref{lem:UsefulH} and hence they belong to the image of the Monge-Ampère operator $MA_{\omega}/V_{\psi}:\mathcal{E}^{1}(X,\omega,\psi)\to \mathcal{P}(X)$ (by Theorem \ref{thm:A}).\newline
Next, suppose that $u$ has $\psi$-relative minimal singularities. In particular $\int_{X}e^{q\lambda(\psi-u)-\lambda \psi}d\mu<+\infty$, as $\mu=f\omega^{n}$ for $f\in L^{p}$ and $\psi$ satisfies the SIC with respect to $\lambda,p$. Letting $a\in \mathbbm{R}$ such that $e^{(q-1)\lambda\psi+a}\mu\in \mathcal{P}(X)$, by Lemma \ref{lem:2.11} it follows that
$$
+\infty>\log\int_{X}e^{q\lambda(\psi-u)-\lambda\psi+a}d\mu=\sup_{\nu\in \mathcal{P}(X)}\Big(-q\lambda\int_{X}ud\nu-H_{e^{(q-1)\lambda\psi+a}\mu}(\nu)\Big).
$$
Thus, for any $\epsilon>0$ fixed there exists $\nu_{\epsilon}\in \mathcal{P}(X)$ such that
$H_{e^{(q-1)\lambda\psi+a}\mu}(\nu_{\epsilon})<+\infty$ and
$$
\log\int_{X}e^{q\lambda(\psi-u)}e^{-\lambda\psi+a}d\mu\leq \epsilon-q\lambda\int_{X}u\,d\nu_{\epsilon}-H_{e^{(q-1)\lambda\psi+a}\mu}(\nu_{\epsilon}).
$$
Moreover, by the definition of entropy it is immediate to check that $H_{\mu}(\nu_{\epsilon})=H_{e^{(q-1)\lambda \psi+a}}(\nu_{\epsilon})+(q-1)\lambda \int_{X}\psi d\nu_{\epsilon} +a\leq H_{e^{(q-1)\lambda \psi+a}}(\nu_{\epsilon})+a$, which in particular yields $H_{\mu}(\nu_{\epsilon})<+\infty$. Therefore by an easy calculation, we obtain
\begin{multline}
    \label{eqn:ZZZZ}
    \frac{V_{\psi}}{\lambda}\log\int_{X}e^{q\lambda(\psi-u)}e^{-\lambda\psi+a}d\mu\leq \frac{V_{\psi}}{\lambda}\epsilon +q\int_{X}(\psi-u)V_{\psi}d\nu_{\epsilon}+\frac{V_{\psi}}{\lambda}a-\frac{V_{\psi}}{\lambda}H_{\mu}(\nu_{\epsilon})-\int_{X}\psi V_{\psi}d\nu_{\epsilon}\leq\\
    \leq \frac{V_{\psi}}{\lambda}(\epsilon+a)+B+q\Big(\int_{X}(\psi-u)V_{\psi}d\nu_{\epsilon}-E_{\psi}^{*}(\nu_{\epsilon})\Big)
\end{multline}
where in the last inequality we used (\ref{eqn:ZZZ}). Next, we observe that by \cite[Proposition 2.12.(iii)]{Tru19}, for any $v\in\mathcal{E}^{1}(X,\omega,\psi)$
\begin{multline*}
    E_{\psi}(u)\leq E_{\psi}(v)+\int_{X}(u-v)MA_{\omega}(v)=E_{\psi}(v)+\int_{X}(\psi-v)MA_{\omega}(v)+\int_{X}(u-\psi)MA_{\omega}(v)=\\
    =E_{\psi}^{*}\big(MA_{\omega}(v)/V_{\psi}\big)+\int_{X}(u-\psi)MA_{\omega}(v),
\end{multline*}
which for $v$ such that $MA_{\omega}(v)=V_{\psi}\nu_{\epsilon}$ gives
\begin{equation}
    \label{eqn:Y}
    \int_{X}(\psi-u)V_{\psi}\nu_{\epsilon}-E_{\psi}^{*}(\nu_{\epsilon})\leq -E_{\psi}(u). 
\end{equation}
Hence, combining (\ref{eqn:ZZZZ}) with (\ref{eqn:Y}) and letting $\epsilon\to 0$, we get
$$
\frac{V_{\psi}}{\lambda}\log\int_{X}e^{q\lambda(\psi-u)}e^{-\lambda\psi}d\mu\leq-qE_{\psi}(u)+B
$$
for any $u\in\mathcal{E}^{1}(X,\omega,\psi)$ with $\psi$-relative minimal singularities, which is equivalent to (\ref{eqn:Moser}) setting $C:=e^{\frac{\lambda B}{q V_{\psi}}}$. By continuity of the Monge-Ampère energy along decreasing sequences and by Monotone Convergence Theorem, we can extend the same inequality to all $\mathcal{E}^{1}(X,\omega,\psi)$ and getting $(iii)$. Indeed we can approximate $u\in\mathcal{E}^{1}(X,\omega,\psi)$ by the sequence of elements with $\psi$-relative minimal singularities $u_{k}:=\max(\psi-k,u)$.\newline
Finally, assuming $(iii)$, it remains to prove the $d$-coercivity of $F_{f,\psi,\lambda}$. Fix $\epsilon\in (0,1)$ small enough such that $q:=1+\epsilon$ satisfies (\ref{eqn:Moser}). Then for any $u\in\mathcal{E}^{1}_{norm}(X,\omega,\psi)$, combining the equality $(u-\psi)=(1+\epsilon)(1-\epsilon)(u-\psi)+\epsilon^{2}(u-\psi)$ with the convexity of $f\to \log \int_{X}e^{-f}d\nu$, we get
$$
\log \int_{X}e^{-\lambda(u-\psi)}e^{-\lambda\psi}d\mu\leq (1-\epsilon)\log\int_{X}e^{-\lambda(1+\epsilon)(u-\psi)}e^{-\lambda\psi}d\mu+\epsilon\log\int_{X}e^{-\lambda\epsilon(u-\psi)}e^{-\lambda\psi}d\mu,
$$
and the first term in the right-hand side is dominated by $(1-\epsilon)\big(-\frac{(1+\epsilon)\lambda}{V_{\psi}}E_{\psi}(u)+D\big)$ for a constant $D$ by the hypothesis $(iii)$. For the second term,
\begin{equation}
\label{eqn:Feb2021}
\int_{X}e^{-\epsilon\lambda (u-\psi)}e^{-\lambda\psi}d\mu\leq\int_{X}e^{-\lambda\epsilon u}e^{-\lambda\psi}d\mu,
\end{equation}
and we claim that $\int_{X}e^{-\lambda \epsilon u}e^{-\lambda \psi}d\mu$ is uniformly bounded if $\epsilon\ll 1 $ is small enough. Indeed, since by the SIC $c(\psi)>\frac{\lambda p}{p-1}$, applying Hölder's inequality it is clearly enough to prove that there exists $\delta>0$ very small such that
$$
\sup_{u\in\mathcal{E}^{1}_{norm}(X,\omega,\psi)}\int_{X}e^{-\delta u}\omega^{n}<+\infty.
$$
But this follows from the uniform version of the Skoda's Integrability Theorem (see \cite[Corollary 3.2]{Zer01}) since $\mathcal{E}^{1}_{norm}(X,\omega,\psi)\subset \{u\in PSH(X,\omega)\, : \, \sup_{X}u=0\}$ is a compact set.\newline
Therefore it follows that
$$
\frac{V_{\psi}}{\lambda}\log \int_{X}e^{-\lambda u}d\mu=\frac{V_{\psi}}{\lambda}\log\int_{X}e^{-\lambda(u-\psi)}e^{-\lambda\psi}d\mu\leq -(1-\epsilon^{2})E_{\psi}(u)+B
$$
for a constant $B$. Hence
$$
F_{f,\psi,\lambda}(u)\leq E_{\psi}(u)-(1-\epsilon^{2})E_{\psi}(u)+B=-\epsilon^{2}d(\psi,u)+B,
$$
for any $u\in\mathcal{E}^{1}_{norm}(X,\omega,\psi)$, which concludes the proof.
\end{proof}
We can now prove Theorem \ref{thmC}, which as said in the Introduction represents an openness result for a new continuity method with movable singularities.
\begin{reptheorem}{thmC}
Let $\psi\in\mathcal{M}^{+}$, $\lambda>0$ and let $0\leq f\in L^{p}\setminus\{0\}$ for $p\in(1,\infty]$. Assume also that $\psi$ satisfies the SIC with respect to $\lambda, p$, i.e. that $c(\psi)>\frac{\lambda p}{p-1}$. If $F_{\psi}:=F_{f,\psi,\lambda}$ is $d$-coercive over $\mathcal{E}^{1}_{norm}(X,\omega,\psi)$, then there exists $A>1$ such that $F_{\psi'}:=F_{f,\psi',\lambda}$ is $d$-coercive over $\mathcal{E}^{1}(X,\omega,\psi')$ for any $\psi'$ less singular than $\psi$ such that $V_{\psi'}< AV_{\psi}$.
In particular the complex Monge-Ampère equation
\begin{equation}
\label{eqn:ENNMAD}
\begin{cases}
MA_{\omega}(v)=e^{-\lambda v}f\omega^{n}\\
v\in\mathcal{E}^{1}(X,\omega,\psi')
\end{cases}
\end{equation}
admits a solution for any $\psi'$ less singular than $\psi$ such that $V_{\psi'}<AV_{\psi}$.
\end{reptheorem}
\begin{proof}
We divide the proof in two parts. We first prove that the $d$-coercivity of $F_{\psi'}$ over $\mathcal{E}^{1}(X,\omega,\psi')$ implies the existence of a solution of (\ref{eqn:ENNMAD}) for a fixed $\psi'\succcurlyeq \psi$, then we show that the $d$-coercivity of $F_{\psi}$ yields a constant $A>1$ such that $F_{\psi'}$ is $d$-coercivity for any $\psi'$ less singular than $\psi$ such that $V_{\psi'}<AV_{\psi}$.\newline
Let $\psi'\succcurlyeq \psi$ and assume that $F_{\psi'}$ is $d$-coercive over $\mathcal{E}^{1}_{norm}(X,\omega,\psi')$ with respect to constants $A>0,B\geq 0$. Observe also that $\psi'$ satisfies the SIC with respect to $\lambda, p$. Then letting $\{u_{k}\}_{k\in\mathbbm{N}}\subset \mathcal{E}^{1}_{norm}(X,\omega,\psi')$ be a maximizing sequence for $F_{\psi'}$, i.e. $F_{\psi'}(u_{k})\nearrow \sup_{\mathcal{E}^{1}_{norm}(X,\omega,\psi')}F_{\psi'}$, by the coercivity we immediately have
$$
d(\psi',u_{k})\leq D
$$
for a constant $D\in\mathbbm{R}_{\geq 0}$. The latter condition means that $E_{\psi'}(u_{k})\geq -C$ for some positive constant $C>0$ uniform in $k$. Therefore the compactness of Proposition \ref{prop:Usc} implies that, up to considering a subsequence, $u_{k}\to u\in \mathcal{E}^{1}_{norm}(X,\omega,\psi')$ weakly. Thus Proposition \ref{lem:NNPP} and again Proposition \ref{prop:Usc} give
$$
\sup_{\mathcal{E}^{1}_{norm}(X,\omega,\psi)}F_{\psi'}=\lim_{k\to \infty}F_{\psi'}(u_{k})\leq F_{\psi'}(u),
$$
i.e. $u$ is a maximizer of $F_{\psi'}$ over $\mathcal{E}^{1}_{norm}(X,\omega,\psi')$. Hence since $F_{\psi'}$ is translation invariant, by Theorem \ref{thm:Max} there exists a constant $B$ such that $u+B$ solves (\ref{eqn:ENNMAD}) which concludes the first part of the proof.\newline
Next, by Proposition \ref{prop:MT} the $d$-coercivity of $F_{\psi}$ implies that there exists $q>1,C>0$ such that
\begin{equation}
    \label{eqn:MT1}
    ||e^{\lambda(\psi-u)}||_{L^{q}(e^{-\lambda\psi}\mu)}\leq Ce^{-\frac{\lambda}{V_{\psi}}E_{\psi}(u)}
\end{equation}
for any $u\in\mathcal{E}^{1}(X,\omega,\psi)$. Moreover by a simple application of Hölder's inequality the inequality (\ref{eqn:MT1}) holds for any $q'\in(1,q]$. Thus, for $0<\epsilon<1$ small enough to be fixed later, we can assume that for $q=1+\epsilon$ the inequality (\ref{eqn:MT1}) holds. Let also $\psi'\succcurlyeq \psi$, $u'\in\mathcal{E}^{1}(X,\omega,\psi')$ and $u:=P_{\omega}[\psi](u')$ where $u'$ is chosen such that $\sup_{X}u'=0$. Then, similarly to the proof of Proposition \ref{prop:MT}, by the convexity of $f\to \log \int_{X}e^{-f}d\nu$ we have
\begin{equation}
    \label{eqn:MTZ1}
    \log \int_{X}e^{-\lambda u'}d\mu=\log \int_{X}e^{\lambda(\psi-u')}e^{-\lambda\psi}d\mu\leq (1-\epsilon)\log\int_{X}e^{\lambda(1+\epsilon)(\psi-u')}e^{-\lambda\psi}d\mu+\epsilon\log\int_{X}e^{\lambda\epsilon(\psi-u')}e^{-\lambda\psi}d\mu
\end{equation}
since clearly $\psi-u'=(1-\epsilon)(1+\epsilon)(\psi-u')+\epsilon^{2}(\psi-u')$.
As $u'\geq u$ and $q=1+\epsilon$, for the first term in (\ref{eqn:MTZ1}) we have
\begin{multline*}
    (1-\epsilon)\log\int_{X}e^{\lambda(1+\epsilon)(\psi-u')}e^{-\lambda\psi}d\mu\leq (1-\epsilon^{2})\log\Big( \int_{X}e^{\lambda (1+\epsilon)(\psi-u)}e^{-\lambda \psi}d\mu\Big)^{1/(1+\epsilon)}=\\
    =(1-\epsilon^{2})\log\lVert e^{\lambda(\psi-u)}\rVert_{L^{q}(e^{-\lambda\psi}\mu)}\leq -(1-\epsilon^{2})\frac{\lambda}{V_{\psi}}E_{\psi}(u)+C_{1}
\end{multline*}
for a constant $C_{1}$, where in the last inequality we clearly used (\ref{eqn:MT1}). Regarding the second term in (\ref{eqn:MTZ1}), applying Hölder's inequality, as $\psi$ satisfying the SIC with respect to $\lambda,p$, it is easy to see that there exist $q'>1, C_{2}>0$ such that
$$
\int_{X}e^{\lambda\epsilon(\psi-u')}e^{-\lambda\psi}d\mu\leq C_{2} \Big(\int_{X}e^{-q'\epsilon\lambda u'}\omega^{n}\Big)^{1/q'}.
$$
Moreover, we recall that there exists $\alpha>0$ such that $\int_{X}e^{-\alpha v}\omega^{n}$ is uniformly bounded varying $v\in \{u\in PSH(X,\omega)\, : \, \sup_{X}u=0\}$. Indeed, as already underlined during the proof of Lemma \ref{lem:UsefulH}, this well-known fact follows combining Proposition \ref{prop:Alpha}, Lemma \ref{lem:NNPPb} and \cite[Lemma 8.10]{GZ17}. Thus if $\epsilon>0$ is small enough, the second term in (\ref{eqn:MTZ1}) is uniformly bounded.\newline
Summarizing, from (\ref{eqn:MTZ1}) we obtain
$$
\log\int_{X}e^{-\lambda u'}d\mu\leq -(1-\epsilon^{2})\frac{\lambda}{V_{\psi}}E_{\psi}(u)+C_{3}\leq -(1-\epsilon^{2})\frac{\lambda}{V_{\psi}}E_{\psi'}(u')+C_{3}
$$
for a constant $C_{3}$, where the last inequality follows from Proposition \ref{prop:PropProie}. Indeed the latter yields $P_{\omega}[\psi](\psi')=P_{\omega}[\psi]\big(P_{\omega}[\psi'](0)\big)=P_{\omega}[\psi](0)=\psi$ ($\psi$ and $\psi'$ are model type envelopes) and $-E_{\psi}(u)=d(\psi,u)=d\big(P_{\omega}[\psi](\psi'),P_{\omega}[\psi](u')\big)\leq d(\psi',u')=-E_{\psi'}(u')$. Therefore,
\begin{multline*}
    F_{\psi'}(u')=E_{\psi'}(u')+\frac{V_{\psi'}}{\lambda}\log\int_{X}e^{-\lambda u'}d\mu\leq \Big(1-(1-\epsilon^{2})\frac{V_{\psi'}}{V_{\psi}}\Big)E_{\psi'}(u')+C_{4}=-\Big(1-(1-\epsilon^{2})\frac{V_{\psi'}}{V_{\psi}}\Big)d(\psi',u')+C_{4},
\end{multline*}
which is the $d-$coercivity requested if 
$$
V_{\psi'}< \frac{V_{\psi}}{1-\epsilon^{2}}.
$$
Hence, setting $A:=\frac{1}{1-\epsilon^{2}}>1$ concludes the proof.

\end{proof}
\begin{cor}
\label{cor:Open}
Let $0\leq f\in L^{p}\setminus \{0\}$, $p>1$ and let $[0,1]\ni t\to \psi_{t}\in\mathcal{M}^{+}$ be a increasing continuous path such that $\psi_{0}$ satisfies the SIC with respect to $\lambda,p$. Then the set
$$
S:=\big\{t\in[0,1]\, : \, F_{f,\psi_{t},\lambda}\mbox{ is }d\mbox{-coercive over } \mathcal{E}^{1}_{norm}(X,\omega,\psi_{t})\big\}
$$
is open with respect to the topology $\mathcal{T}$ generated by $\{[a,b)\}_{0\leq a\leq b\leq 1}$.
\end{cor}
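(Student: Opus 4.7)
The plan is to view Corollary \ref{cor:Open} as a direct topological repackaging of Theorem \ref{thmC}. A set is open in the topology $\mathcal{T}$ if and only if for every $t_{0}$ in it there exists $\delta>0$ with $[t_{0},t_{0}+\delta)$ contained in the set, so I fix $t_{0}\in S$ and must exhibit such a $\delta$.

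First I would verify that $\psi_{t_{0}}$ itself satisfies the SIC with respect to $\lambda,p$. Since the path is increasing, $\psi_{t_{0}}\succcurlyeq \psi_{0}$, and the remark following Definition \ref{defn:CSE} that $c(\cdot)$ increases when singularities decrease gives $c(\psi_{t_{0}})\geq c(\psi_{0})>\tfrac{\lambda p}{p-1}$. Combining $t_{0}\in S$ (i.e.\ $d$-coercivity of $F_{f,\psi_{t_{0}},\lambda}$ on $\mathcal{E}^{1}_{norm}(X,\omega,\psi_{t_{0}})$) with this SIC, I would apply Theorem \ref{thmC} to $\psi_{t_{0}}$ to obtain a constant $A>1$ such that $F_{f,\psi',\lambda}$ is $d$-coercive on $\mathcal{E}^{1}_{norm}(X,\omega,\psi')$ for every $\psi'\in\mathcal{M}^{+}$ with $\psi'\succcurlyeq \psi_{t_{0}}$ and $V_{\psi'}<A V_{\psi_{t_{0}}}$.

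Second, I would translate these two requirements into a bound on $s-t_{0}$. The comparison $\psi_{s}\succcurlyeq \psi_{t_{0}}$ is automatic for every $s\geq t_{0}$ because the path is increasing. For the volume inequality, monotonicity together with continuity of $t\mapsto \psi_{t}$ implies $\psi_{s}\searrow \psi_{t_{0}}$ as $s\searrow t_{0}$; then the weak continuity of the Monge-Ampère operator on totally ordered subsets of $\mathcal{M}^{+}$ recalled at the end of Section \ref{sec:Pre} (via \cite[Lemma 3.12]{Tru20}) yields $V_{\psi_{s}}\to V_{\psi_{t_{0}}}$. Picking $\delta>0$ so that $V_{\psi_{s}}<A V_{\psi_{t_{0}}}$ on $[t_{0},t_{0}+\delta)$ then places every such $s$ in $S$, which is precisely openness in $\mathcal{T}$.

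Since each ingredient (transmission of SIC along the path, Theorem \ref{thmC}, and weak continuity of the Monge-Ampère operator along monotone chains of model envelopes) is already proved in the paper, I do not anticipate any genuine obstacle. The only mild point to watch is that the continuity assumption on $t\mapsto \psi_{t}$ is strong enough to give $V_{\psi_{s}}\to V_{\psi_{t_{0}}}$ when $s\searrow t_{0}$; but since monotone weak ($L^{1}$) convergence of model envelopes already suffices for this, any reasonable interpretation of \emph{continuous path in $\mathcal{M}^{+}$} makes the argument go through.
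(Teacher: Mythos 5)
Your proof is correct and takes essentially the same route as the paper, which simply notes that continuity of $t\mapsto\psi_{t}$ gives continuity of $t\mapsto V_{\psi_{t}}$ (via the recalled homeomorphism property of the Monge-Ampère operator on totally ordered chains) and then invokes Theorem~\ref{thmC}. You merely spell out what the paper leaves implicit, in particular the propagation of the SIC from $\psi_{0}$ to $\psi_{t_{0}}$ along the increasing path (which is needed to invoke Theorem~\ref{thmC} at $t_{0}$ and is not stated explicitly in the paper's one-line proof) and the explicit selection of $\delta$ from the strict inequality $V_{\psi_{t_{0}}}<A V_{\psi_{t_{0}}}$ for $A>1$.
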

\begin{proof}
Since the path $t\to \psi_{t}$ is continuous by hypothesis, the continuity of $t\to V_{t}$ follows by what said in section \ref{sec:Pre}. Therefore the openness required is a consequence of Theorem \ref{thmC}.
\end{proof}
\subsubsection{Proof of Theorem \ref{thmD}}
Finally we give necessary conditions for the strong continuity of a sequence of solutions of $MA_{\omega}(u_{k})=e^{-\lambda u_{k}}f_{k}\omega^{n}$ with prescribed singularities, i.e. Theorem \ref{thmD}.
\begin{reptheorem}{thmD}
Let $\lambda>0$, $\{\psi_{k}\}_{k\in \mathbbm{N}}\subset \mathcal{M}^{+}$ be a totally ordered sequence converging to $\psi\in\mathcal{M}^{+}$, and $f_{k},f\geq 0$ non trivial such that $f_{k}\to f$ in $L^{p}$ as $k\to \infty$ for $p\in (1,\infty]$. Assume also the following conditions:
\begin{itemize}
\item[(i)] $\psi$ satisfies the SIC with respect to $\lambda,p$, i.e. $c(\psi)>\frac{\lambda p}{p-1}$;
\item[(ii)] the complex Monge-Ampère equations 
$$
\begin{cases}
MA_{\omega}(u_{k})=e^{-\lambda u_{k}}f_{k}\omega^{n}\\
u_{k}\in \mathcal{E}^{1}(X,\omega,\psi_{k});
\end{cases}
$$
admit solutions $u_{k}$ given as maximizers of $F_{f_{k},\psi_{k},\lambda}$;
\item[(iii)] $\sup_{X} u_{k}\leq C$ for a uniform constant $C$.
\end{itemize}
Then there exists a subsequence $\{u_{k_{h}}\}_{h\in\mathbbm{N}}$ that converges strongly to $u\in\mathcal{E}^{1}(X,\omega,\psi)$ solution of
$$
\begin{cases}
MA_{\omega}(u)=e^{-\lambda u}f\omega^{n}\\
u\in\mathcal{E}^{1}(X,\omega,\psi).
\end{cases}
$$
\end{reptheorem}
\begin{proof}
Without loss of generality we may assume that $\{\psi_{k}\}_{k\in\mathbbm{N}}$ is monotone.\newline
We set $F_{k}:=F_{f_{k},\psi_{k},\lambda}$ for any $k\in\mathbbm{N}$, $F:=F_{f,\psi,\lambda}$ and $v_{k}:=u_{k}-\sup_{X}u_{k}\in\mathcal{E}^{1}_{norm}(X,\omega,\psi_{k})$. In particular, $MA_{\omega}(v_{k})=e^{-\lambda (v_{k}+\sup_{X}u_{k})}f_{k}\omega^{n}$ for any $k\in\mathbbm{N}$ and, up to considering a subsequence, we may assume that $v_{k}$ converges weakly to a function $v\in PSH(X,\omega)$. Moreover, using the key assumption $(iii)$ and the clear equality $E_{\psi_{k}}(\psi_{k})=0$, we obtain
\begin{multline*}
C_{1}\leq \frac{V_{\psi_{k}}}{\lambda}\log ||f_{k}||_{L^{1}}\leq \frac{V_{\psi_{k}}}{\lambda}\log \int_{X}e^{-\lambda \psi_{k}}f_{k}\omega^{n}=F_{k}(\psi_{k})\leq\\
\leq F_{k}(v_{k})=E_{\psi_{k}}(v_{k})+\frac{V_{\psi_{k}}}{\lambda}\log\int_{X}e^{-\lambda v_{k}}f_{k}\omega^{n}=E_{\psi_{k}}(v_{k})+\frac{V_{\psi_{k}}}{\lambda}\log\int_{X}e^{-\lambda u_{k}}f_{k}\omega^{n}+V_{\psi_{k}}\sup_{X}u_{k}=\\
=E_{\psi_{k}}(v_{k})+\frac{V_{\psi_{k}}}{\lambda}\log\int_{X}MA_{\omega}(u_{k})+V_{\psi_{k}}\sup_{X}u_{k}=E_{\psi_{k}}(v_{k})+\frac{V_{\psi_{k}}}{\lambda}\log V_{\psi_{k}}+V_{\psi_{k}}\sup_{X}u_{k}\leq E_{\psi_{k}}(v_{k})+C_{2}.
\end{multline*}
for two uniform constants $C_{1},C_{2}$. Note that we also used $v_{k}=u_{k}-\sup_{X}u_{k}$ and $MA_{\omega}(u_{k})=e^{-\lambda u_{k}}f_{k}\omega^{n}$. Therefore by Proposition \ref{prop:Usc} we deduce $v\in\mathcal{E}^{1}(X,\omega,\psi)$ and $|\sup_{X}u_{k}|\leq C_{3}$ uniformly, which in turn yield $u_{k}\to u$ weakly for $u\in\mathcal{E}^{1}(X,\omega,\psi)$ and $\limsup_{k\to \infty}E_{\psi_{k}}(u_{k})\leq E_{\psi}(u)$. Moreover, as by Proposition \ref{lem:NNPP} $\int_{X}e^{-\lambda u_{k}}f_{k}\omega^{n}\to \int_{X}e^{-\lambda u}f\omega^{n}$, it follows that
$$
\limsup_{k\to \infty} F_{k}(u_{k})\leq F(u).
$$
On the other hand similarly to the proof of Theorem \ref{thmB}, letting $\varphi\in\mathcal{H}$ we obtain
$$
\liminf_{k\to \infty} F_{k}(u_{k})\geq \liminf_{k\to \infty}F_{k}\big(P_{\omega}[\psi_{k}](\varphi)\big)=F\big(P_{\omega}[\psi](\varphi)\big)
$$
combining Lemma \ref{lem:KeyConv} and Proposition \ref{lem:NNPP}, which in turn, together with \cite[Theorem 1]{BK07} and with the continuity of $F$ along decreasing sequences, implies
$$
\liminf_{k\to \infty}F_{k}(u_{k})\geq \sup_{\mathcal{E}^{1}(X,\omega,\psi)}F.
$$
Hence $u$ is a maximizer of $F$ over $\mathcal{E}^{1}(X,\omega,\psi)$ and $F_{k}(u_{k})\to F(u)$. In particular, $u_{k}\to u$ strongly and there exists a constant $C\in\mathbbm{R}$ such that $MA_{\omega}(u)=e^{-\lambda (u+C)} f\omega^{n}$ (Theorem \ref{thm:Max}). However
$$
e^{-\lambda C}\int_{X}e^{-\lambda u}f\omega^{n}=V_{\psi}=\lim_{k\to \infty}V_{k}=\lim_{k\to \infty}\int_{X}e^{-\lambda u_{k}}f_{k}\omega^{n}=\int_{X}e^{-\lambda u}f\omega^{n},
$$
which yields $C=0$ and concludes the proof.
\end{proof}
\begin{rem}
\emph{Observe that the assumption $(i)$ in Theorem \ref{thmD} is satisfied if all the Lelong numbers of $\psi_{k}$ are small enough (Proposition \ref{prop:Alpha}), while $(ii)$ is a natural hypothesis when all the solutions are given as maximizers (see also \cite[Theorem C]{Tru20b}). As stated in the Introduction the real big obstacle is the bound in $(iii)$, which is necessary even when $f_{k}\equiv f$ (Example \ref{esem:Necess} below, see also \cite{Tru20b} for a deeper discussion regarding $(iii)$ in the Fano case).}
\end{rem}
\section{Log semi-Kähler Einstein metrics with prescribed singularities.}
\label{sec:KE}
On a line bundle $L\to X$ any (smooth) hermitian metric $h$ can be described by its \emph{weight} $\phi=\{\phi_{\alpha}\}_{\alpha\in I}$ defined locally for a trivializing local section $s_{\alpha}$ of $L$ on a open set $U_{\alpha}$ as $\phi_{\alpha}:=-\log |s_{\alpha}|^{2}_{h}$. Observe that the current $dd^{c}\phi$ is globally well-defined and represents the curvature of $h$. In this section we identify the hermitian metrics with their weights, and we say \emph{metric} for simplicity.\\ 
Given a $\mathbbm{Q}$-divisor $D$ on $X$ we have the following key definition.
\begin{defn}[\cite{BBEGZ16}, Definition 3.1]
\label{defn:Adap}
Let $\phi$ be a metric on $-r(K_{X}+D)$ where $r\in\mathbbm{N}$ such that $rD$ is a divisor. The \emph{adapted measure} $\mu_{\phi}$ is locally defined by choosing a nowhere zero section $\sigma$ of $r(K_{X}+D)$ over a small open set $U$ and setting
$$
\mu_{\phi}:=(i^{rn^{2}}\sigma \wedge \bar{\sigma})^{1/r}/|\sigma|_{\phi}^{2/r}.
$$
\end{defn}
We observe that $\mu_{\phi}$ is globally defined since the definition does not depend on the choice of $\sigma$. Moreover $\mu_{\phi_{1}}=\mu_{\phi_{2}}$ if $\phi_{i}$ are metric on $-r_{i}(K_{X}+D)$ such that $r_{2}\phi_{1}=r_{1}\phi_{2}$. This property allows to enlarge the definition of adapted measures to $\mathbbm{Q}$-line bundles where $\phi$ is a metric on $-(K_{X}+D)$ if there exists $r\in\mathbbm{N}$ divisible enough such that $r\phi$ is a metric on $-r(K_{X}+D)$. \\
Note that if $D=0$ and $\phi$ is a metric on $-K_{X}$, then locally
$$
\mu_{\phi}=e^{-\phi} i^{n^{2}}dz_{1}\wedge \cdots \wedge dz_{n}\wedge d\bar{z}_{1}\wedge \cdots\wedge d\bar{z}_{n}.
$$
More generally, by the natural identification of $-(K_{X}+D)$ with $-K_{X}$ on the complement of the support of the divisor $D$, if $\phi$ is a metric on $-(K_{X}+D)$ then locally on $X\setminus \mbox{Supp}(D)$
$$
\mu_{\phi}=e^{-(\phi+\frac{2}{r}\log|s_{rD_{+}}|-\frac{2}{r}\log|s_{rD_{-}}|)}i^{n^{2}}\Omega\wedge \bar{\Omega}
$$
for $s_{rD_{+}},s_{rD_{-}}$ holomorphic sections cutting respectively the effective divisors $rD_{+},rD_{-}$ for $r\in \mathbbm{N}$ where $D=D_{+}-D_{-}$, and $\Omega$ is a nowhere zero local holomorphic section of $K_{X}$ (see also \cite{BBJ15}). Furthermore the adapted measures are compatible with respect to blow-ups of smooth centers. Indeed if $p:Y\to X$ is a morphism given by a sequence of blow-ups of smooth centers, letting $D'$ such that $p^{*}(K_{X}+D)=K_{Y}+D'$, $\mu_{p^{*}\phi}$ coincides with the \emph{lift} of $\mu_{\phi}$ (usually denoted by $\tilde{\mu}_{\phi}$), i.e. with the trivial extension of the push-forward by $p^{-1}$ of $\mu_{\phi}$ over the Zariski open set where $p$ is an isomorphism. Conversely, $p_{*}\mu_{p^{*}\phi}=\mu_{\phi}$.\\

Next, it is well-known that smooth positive volume forms $\mu$ are in one-one correspondence with metrics on the canonical line bundle $K_{X}$ and the relationship is given by
\begin{equation}
\label{eqn:Ricci}
\mu=e^{-f}i^{n^{2}}\Omega\wedge \bar{\Omega}
\end{equation}
where $f:=2\log|\Omega|_{\phi}$ for any nowhere zero local holomorphic section $\Omega$ of $K_{X}$. Thus, as in \cite{BBJ15}, being aware that our definition of $d^{c}$ differs from theirs of a multiplicative factor,
we say that a positive measure $\mu$ on $X$ has \emph{well-defined Ricci curvature} if it corresponds to a \emph{singular metric} on $K_{X}$, i.e. if locally it is of the form (\ref{eqn:Ricci}) with $f\in L^{1}_{loc}$, and in this case $Ric(\mu):=dd^{c}f$. Observe that if $\mu_{\phi}$ is the adapted measure of Definition \ref{defn:Adap} then $Ric(\mu_{\phi})=\omega+[D]$ where $\omega$ is the curvature form of $\phi$.

Then, letting $\eta$ be a \emph{semi-Kähler} form, i.e. a closed smooth semipositive $(1,1)$-form such that $\eta^{n}>0$, we set, for $u\in PSH(X,\eta)$, $Ric(\eta+dd^{c}u):=Ric\big(MA_{\eta}(u)\big)$ so that $Ric(\eta):=Ric(\eta^{n})$ is the usual Ricci curvature when $\eta$ is actually Kähler.
\begin{defn}
Let $D$ be a $\mathbbm{Q}$-divisor and $\eta$ a (semi-)Kähler form. A \emph{$D$-log (semi-)Kähler Einstein metric} on $X$ in the cohomology class $\{\eta\}$ is a positive current $\eta_{u}:=\eta+dd^{c}u$ with well-defined Ricci curvature such that
$$
Ric(\eta_{u})-[D]=\lambda \eta_{u}
$$
for $\lambda\in\mathbbm{R}$ where $[D]$ is the current of integration along the divisor $D$. Furthermore, when $\eta$ is Kähler, if $\eta_{u}$ is a $D$-log KE metric and $u\in\mathcal{E}^{1}(X,\eta,\psi)$ for $\psi\in\mathcal{M}$, then we say that $\eta_{u}$ is $(D,[\psi])$\emph{-log KE metric}.
\end{defn}
Note that a $(D,[0])$-log KE metric is called $[D]$-twisted KE in \cite{BBJ15}, and that the abuse of language is due to the fact that $(D,[\psi])$-log KE metrics define (classes of) singular $D$-log KE metrics. \\
When $D=0$ one recovers the definition of \emph{(semi-)KE metrics} (which coincides with the usual definition of Kähler-Einstein metrics in the Kähler case if the smoothness holds).

It is immediate to see that there is the topological obstruction 
\begin{equation}
\label{eqn:TO}
c_{1}(X)-\{[D]\}=\lambda\{\eta\}
\end{equation}
to the existence of $D$-log semi-KE metrics. However under the assumption (\ref{eqn:TO}), we recall the following pluripotential description of $D$-log semi-KE metrics.
\begin{prop}
\label{lem:BBJ}
Let $D$ be a $\mathbbm{Q}$-divisor such that $(\ref{eqn:TO})$ holds for $\lambda\in\mathbbm{Q}$ and $\eta$ semi-Kähler form. Let also $\phi$ be a metric on $\lambda \{\eta\}$ with curvature $\lambda \eta$, and let $u\in PSH(X,\eta)$. Then $\eta_{u}$ is a $D$-log semi-KE metric if and only if
\begin{equation}
\label{eqn:MARIC}
MA_{\eta}(u)=e^{-\lambda u+C}\mu_{\phi} 
\end{equation}
for a constant $C\in\mathbbm{R}$ where $\mu_{\phi}$ is the adapted measure associated to $\phi$.
\end{prop}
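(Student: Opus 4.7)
The plan is to reduce the statement to a single identity between Ricci currents and then exploit the fact that globally defined pluriharmonic distributions on a connected compact Kähler manifold are constant. The basic ingredient, already recorded in the text preceding the proposition, is that $Ric(\mu_\phi) = \lambda\eta + [D]$. Writing $\mu_\phi = e^{-f}i^{n^2}\Omega\wedge\bar\Omega$ locally, with $f$ the sum of a local weight of $\phi$ and the divisor logarithms $\pm\frac{2}{r}\log|s_{rD_\pm}|$, the Poincaré--Lelong formula combined with the hypothesis that $\phi$ has curvature $\lambda\eta$ yields $dd^c f = \lambda\eta + [D]$.

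For the forward implication, assuming $MA_\eta(u) = e^{-\lambda u + C}\mu_\phi$, locally this reads $MA_\eta(u) = e^{-(\lambda u + f - C)}i^{n^2}\Omega\wedge\bar\Omega$. Since $u\in PSH(X,\eta)$ and $f$ has only logarithmic singularities, $\lambda u + f$ is $L^1_{loc}$, so $MA_\eta(u)$ has well-defined Ricci curvature and
\[
Ric(MA_\eta(u)) \;=\; dd^c(\lambda u + f) \;=\; \lambda\, dd^c u + \lambda\eta + [D] \;=\; \lambda \eta_u + [D],
\]
which is exactly the log semi-KE equation.

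For the converse, I would take a $D$-log semi-KE metric $\eta_u$ and write $MA_\eta(u) = e^{-g}i^{n^2}\Omega\wedge\bar\Omega$ locally with $g\in L^1_{loc}$. Although $f$ and $g$ individually depend on the local frame $\Omega$ (and $f$ additionally on the sections $s_{rD_\pm}$), the difference $g - f$ is intrinsically the logarithm of the Radon--Nikodym density $dMA_\eta(u)/d\mu_\phi$, so it patches to a globally defined $L^1_{loc}$ function on $X$. The log KE equation and the identity for $dd^c f$ together give
\[
dd^c(g - f - \lambda u) \;=\; (\lambda\eta_u + [D]) - (\lambda\eta + [D]) - \lambda\, dd^c u \;=\; 0.
\]
Hence $g - f - \lambda u$ is a pluriharmonic distribution on $X$; by elliptic regularity it is smooth, and since $X$ is connected compact Kähler it is a constant, say $-C$. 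Rewriting the local expression with $g = f + \lambda u - C$ then yields $MA_\eta(u) = e^{-\lambda u + C}\mu_\phi$.

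The step to watch is the bookkeeping at the start of the converse: one must verify that although $f$ and $g$ depend on auxiliary local choices, their difference genuinely defines a global $L^1_{loc}$ function on $X$ to which the regularity of solutions of $dd^c v = 0$ and the maximum principle for pluriharmonic functions apply. Once this is in place, everything else is either a direct calculation (the forward direction) or the standard fact that real-valued pluriharmonic functions on a connected compact Kähler manifold are constant.
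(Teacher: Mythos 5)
Your proof is correct and follows essentially the same strategy as the paper's: compute the Ricci curvature locally in the forward direction, and in the converse direction identify a globally defined pluriharmonic function (the log Radon--Nikodym density minus $\lambda u$) which must be constant. The only cosmetic difference is that the paper introduces an auxiliary global smooth volume form $dV$ and explicit divisor sections $s_j$ to carry out the bookkeeping, whereas you phrase the same patching observation directly in terms of the Radon--Nikodym derivative of $MA_\eta(u)$ with respect to $\mu_\phi$; both routes lead to the identical pluriharmonicity argument.
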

\begin{proof}
The proof is similar to that of \cite[Lemma 2.2]{BBJ15}, but we write the details as a courtesy to the reader.\\
If $u\in PSH(X,\omega)$ solves (\ref{eqn:MARIC}) then $\eta_{u}$ has well-defined Ricci curvature and
$$
Ric(\eta_{u})=\lambda dd^{c}u + Ric(\mu_{\phi})=\lambda dd^{c}u+ \lambda \eta+ [D]=\lambda \eta_{u}+[D].
$$
Conversely, assume that $\eta_{u}$ has well-defined Ricci curvature and that $Ric(\eta_{u})-[D]=\lambda \eta_{u}$. Letting $D=\sum_{j=1}^{N}a_{j}D_{j}$ for $D_{j}$ prime divisors, $\{s_{j}\}_{j=1}^{N}$ holomorphic sections cutting the divisors $\{D_{j}\}_{j=1}^{N}$ and letting $\{\phi_{j}\}_{j=1}^{N}$ metrics on the associated line bundles, we obtain locally on $X\setminus \mathrm{Supp}(D)$
$$
\mu_{\phi}=e^{-2\sum_{j=1}^{N}a_{j}\log |s_{j}|_{\phi_{j}}}e^{-\tilde{\phi}}i^{n^{2}}\Omega\wedge \bar{\Omega}
$$
where $\tilde{\phi}:=\phi+\sum_{j=1}^{N}a_{j}\phi_{j}$ is a metric on $-K_{X}$. In particular, $\mu_{\phi}=e^{-2\sum_{j=1}^{N}a_{j}\log|s_{j}|_{\phi_{j}}}dV$ for a smooth volume form $dV$ such that $Ric(dV)=\lambda\eta+[D]-2\sum_{j=1}^{N}a_{j}dd^{c}\log |s_{j}|_{\phi_{j}}=\lambda \eta +\sum_{j=1}^{n}a_{j}dd^{c}\phi_{j}$. Therefore, letting $f\in L^{1}$ such that $MA_{\eta}(u)=e^{-f}dV$, it follows that
$$
Ric(\eta_{u})=dd^{c}f+Ric(dV)=dd^{c}f+\lambda \eta+[D]-2\sum_{j=1}^{N}a_{j}dd^{c}\log|s_{j}|_{\phi_{j}},
$$
which in turn implies that $f-\lambda u-2\sum_{j=1}^{N}a_{j}\log|s_{j}|_{\phi_{j}}$ is pluriharmonic as $Ric(\eta_{u})=\lambda\eta_{u}+[D]$. Hence there exists a constant $C\in\mathbbm{R}$ such that
$$
MA_{\eta}(u)=e^{-\lambda u+C}e^{-2\sum_{j=1}^{n}a_{j}\log|s_{j}|_{\phi_{j}}}dV=e^{-\lambda u+C}\mu_{\phi},
$$
which concludes the proof.
\end{proof}
\begin{rem}
\label{rem:RLine}
\emph{It is possible to extend the definition of $D$-log (semi-)KE metrics to $\lambda\in \mathbbm{R}$, $D$ $\mathbbm{R}$-divisor thanks to the pluripotential description of Proposition \ref{lem:BBJ}. Indeed in this case $\lambda \eta$ can be thought as the curvature of a metric $\phi$ on a $\mathbbm{R}$-line bundle, i.e. on a formal real combination of line bundles. More precisely if $\{\lambda \eta\}=\{\sum_{k=1}^{m}b_{k}L_{k}\}$ where $b_{k}\in\mathbbm{R}$ and $L_{k}$ line bundles, then there exist metrics $\phi'_{k}$ on $L_{k}$ such that $\phi:= \sum_{k=1}^{m}b_{k}\phi'_{k}$ satisfies $dd^{c}\phi=\lambda\eta$. Next if $D=\sum_{j=1}^{N}a_{j}D_{j}$ for $D_{j}$ prime divisors, we fix $\{s_{j}\}_{j=1}^{N}$ holomorphic sections cutting the divisors $D_{j}$ and metrics $\phi_{j}$ on the associated line bundle. Thus setting $\tilde{\phi}:=\phi+\sum_{j=1}^{N}a_{j}\phi_{j}$ the local volume forms $e^{-\tilde{\phi}}i^{n^{2}}\Omega\wedge \bar{\Omega}$ glue together to give a global volume form $dV$. Set $\mu_{\phi}:=e^{-2\sum_{j=1}^{N}a_{j}\log |s_{j}|_{\phi_{j}}}dV$, where we mean the trivial extension to $0$ of the measure of the right-hand side restricted to $X\setminus \mathrm{Supp}(D)$. We say that $\eta+dd^{c}u$ is a $D$-log (semi-)KE metric if $MA_{\eta}(u)=e^{-\lambda u+C}\mu_{\phi}$ for a constant $C\in\mathbbm{R}$, and if $\eta$ is Kähler we say that $\eta+dd^{c}u$ is a $(D,[\psi])$-log KE metric if we further have $u\in\mathcal{E}^{1}(X,\eta,\psi)$. Note that this definition of $D$-log KE metrics does not depend on the choice done on the metrics. Moreover if $p:Y\to X$ is given by a sequence of blow-ups of smooth centers $\tilde{\mu}_{\phi}=\mu_{p^{*}\phi}$ and $p_{*}\mu_{p^{*}\phi}=\mu_{\phi}$.}
\end{rem}
It is not difficult to check that the adapted measure $\mu_{\phi}$ has finite total mass if and only if $D$ is \emph{klt} (see \cite{KolPairs}), which reads as $a_{j}<1$ if $D=\sum_{j=1}^{N}a_{j}D_{j}$ for prime divisors $D_{j}$ with simple normal crossing. A similar condition holds when one considers $(D,[\psi])$-log KE metrics. Indeed letting $\{s_{j}\}_{j=1}^{N}$, $\{\phi_{j}\}_{j=1}^{N}$ and $dV$ as in proof of Proposition \ref{lem:BBJ}, i.e.
$$
\mu_{\phi}=e^{-2\sum_{j=1}^{N}a_{j}\log|s_{j}|_{\phi_{j}}}dV,
$$
the following necessary condition to the existence of $(D,[\psi])$-log KE metrics in terms of multiplier ideal sheaves can be easily deduced from Proposition \ref{lem:BBJ}.
\begin{cor}
\label{cor:KLT}
Let $\eta$ be a Kähler form such that (\ref{eqn:TO}) holds for $D=\sum_{j}a_{j}D_{j}$ $\mathbbm{R}$-divisor and $\lambda\in\mathbbm{R}$. If $\eta_{u}$ is a $(D,[\psi])$-log KE metric, then
\begin{gather}
\label{eqn:KLT}
\mathcal{I}\Big(\lambda\psi+2\sum_{\{j: a_{j}>0\}}a_{j}\log|s_{j}|_{\phi_{j}}-\sum_{\{j:a_{j}<0\}}\min\big\{\lambda\nu(\psi,D_{j}),-2a_{j}\big\}\log|s_{j}|_{\phi_{j}}\Big)=\mathcal{O}_{X}\, \quad \, \mbox{if}\, \,\,\lambda>0,\\
\label{eqn:KLT2}
\mathcal{I}\Big(2\sum_{\{j: a_{j}>0\}}a_{j}\log|s_{j}|_{\phi_{j}}\Big)=\mathcal{O}_{X}\, \quad \, \mbox{if}\,\,\, \lambda=0,\\
\label{eqn:KLT3}
\mathcal{I}\Big(\sum_{\{j: a_{j}>0\}}\sup\big\{2a_{j}-\lambda\nu(\psi,D_{j}),0\big\}\log|s_{j}|_{\phi_{j}}\Big)=\mathcal{O}_{X}\, \quad \, \mbox{if}\,\,\, \lambda<0,
\end{gather}
where $\nu(\psi,D_{j}):=\inf_{x\in D_{j}} \nu(\psi,x)$ is the Lelong number of $\psi$ along $D_{j}$.
\end{cor}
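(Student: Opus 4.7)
The plan is to derive all three multiplier ideal conditions from a single integrability statement coming from finite total Monge--Ampère mass. By Proposition \ref{lem:BBJ} (in the $\mathbbm{R}$-divisor version of Remark \ref{rem:RLine}), the $(D,[\psi])$-log KE assumption is equivalent to
$$
MA_\eta(u) = e^{-\lambda u + C}\,\mu_\phi, \qquad \mu_\phi = \prod_{j} |s_j|^{-2a_j}_{\phi_j}\, dV,
$$
for some constant $C \in \mathbbm{R}$ and smooth volume form $dV$. Since $u \in \mathcal{E}^1(X,\eta,\psi)$, the non-pluripolar mass of the left-hand side equals $V_\psi < +\infty$, so
\begin{equation*}
\int_X e^{-\lambda u}\prod_j |s_j|^{-2a_j}_{\phi_j}\, dV < +\infty. \qquad (\ast)
\end{equation*}
The envelope condition $u \preccurlyeq \psi$ gives a constant $C'$ with $u \leq \psi + C'$, and Proposition \ref{prop:Lelong} yields $\nu(u,\cdot) = \nu(\psi,\cdot)$ and $\mathcal{I}(tu) = \mathcal{I}(t\psi)$ for $t > 0$.

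For $\lambda > 0$, the pointwise bound $e^{-\lambda u} \geq e^{-\lambda C'} e^{-\lambda \psi}$ inserted in $(\ast)$ immediately produces the weaker form $\mathcal{I}(\lambda\psi + 2\sum_j a_j\log|s_j|_{\phi_j}) = \mathcal{O}_X$. To sharpen the $a_j < 0$ contribution to $-\min\{\lambda\nu(\psi, D_j), -2a_j\}\log|s_j|$, I would work locally on a sufficiently fine log-resolution of $(X,D)$ which also resolves the singular locus of $\psi$: near a generic point of $D_j$ the Lelong upper bound $\psi \leq \nu(\psi, D_j)\log|s_j|_{\phi_j} + O(1)$ shows that the vanishing factor $|s_j|^{2|a_j|}$ absorbs the $e^{-\lambda\psi}$-singularity only up to the effective cap $|s_j|^{\min\{\lambda\nu(\psi, D_j),\,2|a_j|\}}$, and a chart-by-chart verification at the crossing loci gives the equivalence of the two multiplier ideal conditions. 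For $\lambda = 0$, $(\ast)$ collapses to $\int_X \mu_\phi < +\infty$; since $|s_j|^{-2a_j}$ is locally bounded for $a_j < 0$, those factors do not affect local integrability, and (\ref{eqn:KLT2}) follows at once. For $\lambda < 0$, combining the envelope and Lelong bounds gives $e^{|\lambda|u} \leq C''|s_j|^{|\lambda|\nu(\psi, D_j)}_{\phi_j}$ near each $D_j$; again the $a_j < 0$ factors drop out, and the sharp two-sided asymptotics for the model-type envelope $\psi$ at generic points of $\bigcup_{a_j>0}D_j$ turn $(\ast)$ into local integrability of $\prod_{a_j>0}|s_j|^{|\lambda|\nu(\psi, D_j) - 2a_j}_{\phi_j}$, equivalent via Skoda--Demailly to (\ref{eqn:KLT3}).

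The main technical obstacle is the refinement step in the $\lambda > 0$ case: moving from the naive integrability $\int e^{-\lambda\psi}\mu_\phi < +\infty$ to the sharper $\mathcal{I}$-statement with $\min\{\lambda\nu(\psi,D_j),-2a_j\}$ requires that the cancellation between the vanishing of the divisor factor and the $e^{-\lambda\psi}$-singularity be genuinely controlled by the divisorial Lelong numbers of $\psi$, including at the crossings of several components of $D$ together with the singular locus of $\psi$; this is where the model-type envelope structure of $\psi$, the strong openness property \cite{GZ14} already used in Proposition \ref{prop:Lelong}, and a sufficiently fine log-resolution are essential.
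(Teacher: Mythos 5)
The starting point $(\ast)$ — finite non‑pluripolar mass of $MA_\eta(u)=e^{-\lambda u+C}\mu_\phi$ combined with $u\leq\psi+C'$ and Proposition \ref{prop:Lelong} — is exactly the intended ``easy deduction from Proposition \ref{lem:BBJ}.'' The cases $\lambda=0$ and the unsharpened $\lambda>0$ form $\mathcal{I}(\lambda\psi+2\sum_j a_j\log|s_j|_{\phi_j})=\mathcal{O}_X$ are handled correctly.

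However, your sharpening step for $\lambda>0$ does not work as written, and you yourself flag it as ``the main technical obstacle'' without actually closing it. First, the bound you invoke, $\psi\leq\nu(\psi,D_j)\log|s_j|_{\phi_j}+O(1)$ (which is the correct Siu‑type bound, since $\psi-\nu(\psi,D_j)\log|s_j|_{\phi_j}$ is quasi‑psh), gives a \emph{lower} bound $e^{-\lambda\psi}\gtrsim|s_j|^{-\lambda\nu(\psi,D_j)}_{\phi_j}$. A lower bound on the integrand cannot promote the weak integrability $\int e^{-\lambda\psi}\prod_j|s_j|^{-2a_j}_{\phi_j}\,dV<\infty$ to the stronger statement with $|s_j|^{\min\{\lambda\nu(\psi,D_j),-2a_j\}}_{\phi_j}$ in place of $|s_j|^{-2a_j}_{\phi_j}$ for $a_j<0$: since $\min\{\lambda\nu(\psi,D_j),-2a_j\}\leq-2a_j$, the target integrand is pointwise \emph{larger} than the one you control. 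The obstruction is genuine: writing $\psi=\nu(\psi,D_j)\log|s_j|_{\phi_j}+\psi_j$, the residual $\psi_j$ has vanishing \emph{generic} Lelong number along $D_j$ but may have strictly positive Lelong numbers at special points of $D_j$, and there the factor $|s_j|^{-2a_j-\lambda\nu(\psi,D_j)}_{\phi_j}$ (which you know vanishes) is genuinely helping integrability of $e^{-\lambda\psi_j}$ in a way you are not allowed to discard. Second, your proposed log‑resolution ``which also resolves the singular locus of $\psi$'' presupposes that $\psi$ has analytic singularities; the Corollary is stated for arbitrary $\psi\in\mathcal{M}$ with no such hypothesis, so this tool is not available.

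For $\lambda<0$ there are two further issues. Your Lelong upper bound $e^{|\lambda|u}\leq C''|s_j|^{|\lambda|\nu(\psi,D_j)}_{\phi_j}$ is valid (it is an upper bound on $u$, so this direction is fine), but you cannot simply ``drop'' the $a_j<0$ factors from $(\ast)$: for $a_j<0$, $|s_j|^{-2a_j}_{\phi_j}=|s_j|^{2|a_j|}_{\phi_j}$ vanishes on $D_j$ and hence \emph{improves} local integrability at crossings $D_j\cap D_i$ with $a_i>0$; removing it makes the integrand larger, so the finiteness of $(\ast)$ does not yield the finiteness you need. Also, your derived condition corresponds to $\mathcal{I}\big(\sum_{a_j>0}(2a_j+\lambda\nu(\psi,D_j))\log|s_j|_{\phi_j}\big)=\mathcal{O}_X$ (exponent $2a_j-|\lambda|\nu$), while the paper's (\ref{eqn:KLT3}) displays $2a_j-\lambda\nu(\psi,D_j)=2a_j+|\lambda|\nu(\psi,D_j)$; this sign discrepancy should be resolved rather than glossed over, since the two conditions are not equivalent.
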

We will say that $(D,[\psi])$ is \emph{klt} for $D$ $\mathbbm{R}-$divisor such that $c_{1}(X)-\{[D]\}=\lambda \{\eta\}$ when the associated condition based on the sign of $\lambda$ among (\ref{eqn:KLT}), (\ref{eqn:KLT2}), (\ref{eqn:KLT3}) holds. The definition does not depends on the metrics $\phi_{j}$ chosen and it is coherent with the usual definition (see for instance \cite[Proposition 8.2]{KolPairs}).\newline
Note that in the case $\lambda<0$, this extended klt condition can be satisfied for pairs $(X,D)$ that are not klt if the singularities of $\psi$ compensate those of $D$. However, we do not investigate further these situations as they are beyond the purpose of this article.\newline
\subsection{Analytical Singularities.}
In this subsection $\omega$ Kähler and $\psi:=P_{\omega}[\varphi]\in \mathcal{M}^{+}$ for $\varphi\in PSH(X,\omega)$ has \emph{analytical singularities}, i.e. locally $\varphi_{|U}:=g+c\log \big(|f_{1}|^{2}+\cdots +|f_{k}|^{2}\big)$ where $c\in\mathbbm{R}_{\geq 0}$, $g\in C^{\infty}$, and $\{f_{j}\}_{j}^{k}$ are local holomorphic functions. The coherent ideal sheaf $\mathcal{I}$ generated by these functions has integral closure globally defined, hence the singularities of $\varphi$ are formally encoded in $(\mathcal{I},c)$. Indeed, it is well-known that in this case there exists a smooth resolution $p: Y\to X$ given by a sequence of blow-ups of smooth centers such that $p^{*}\mathcal{I}=\mathcal{O}_{Y}(-D)$ for an effective divisor $D$. Moreover the Siu Decomposition (\cite{Siu74}) of $p^{*}(\omega_{\varphi})$ is given by
$$
p^{*}(\omega_{\varphi})=\eta+c[D]
$$
where $\eta$ is a smooth semipositive $(1,1)$-form on $Y$ that becomes semi-Kähler if $\int_{X}\eta^{n}>0$. Recall also that for semi-Kähler forms $\eta$ the sets $\mathcal{E}(Y,\eta)$, $\mathcal{E}^{1}(Y,\eta)$ are defined as in the Kähler case (see \cite{BEGZ10}).
\begin{lem}
\label{lem:Isom}
In the setting just described $\int_{X}\eta^{n}=\int_{X}MA_{\omega}(\varphi)$ and there is a bijective map $F:PSH(X,\omega,\psi)\to PSH(X,\eta)$ such that $F\big(\mathcal{E}(X,\omega,\psi)\big))=\mathcal{E}(Y,\eta)$ and $F\big(\mathcal{E}^{1}(X,\omega,\psi)\big)=\mathcal{E}^{1}(Y,\eta)$.
\end{lem}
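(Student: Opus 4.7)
My plan is to define $F$ via pullback and divisor-subtraction: for $u\in PSH(X,\omega,\psi)$ I set $F(u)$ to be the upper semicontinuous regularization of $p^{*}u-p^{*}\varphi$. This is bounded above on $Y\setminus\mathrm{Exc}(p)$ because $u\preccurlyeq\psi\preccurlyeq\varphi$; the second relation follows from $\psi=P_{\omega}[\varphi]$, since $\varphi-\sup_{X}\varphi$ is a competitor in the sup defining $P_{\omega}[\varphi]$, and the reverse inclusion is immediate from the definition, so $PSH(X,\omega,\psi)=PSH(X,\omega,\varphi)$. Using the Siu identity $p^{*}\omega_{\varphi}=\eta+c[D]$, one computes off the exceptional locus
\[\eta+dd^{c}F(u)=p^{*}\omega_{u}-c[D].\]
The right-hand side is positive because $u\leq\varphi+O(1)$ forces the Lelong number of $p^{*}\omega_{u}$ along $D$ to be at least $c$, so its Siu decomposition starts with at least $c[D]$; positivity extends across $\mathrm{Exc}(p)$ via the USC regularization, and $F(u)\in PSH(Y,\eta)$.

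\textbf{Bijectivity and the mass equality.} Injectivity is clear since $p$ is birational, so $F(u_{1})=F(u_{2})$ forces $u_{1}=u_{2}$ on a Zariski open set, hence everywhere. For surjectivity, given $v\in PSH(Y,\eta)$, the positive current $\eta_{v}+c[D]\in\{p^{*}\omega\}$ pushes forward to a positive current in $\{\omega\}$ on $X$, defining $u\in PSH(X,\omega)$ with $\omega+dd^{c}u=p_{*}(\eta_{v}+c[D])$. Since $v$ is bounded above on $Y$, this $u$ satisfies $u\leq\varphi+C$ off the exceptional set, so $u\preccurlyeq\varphi=\psi$ up to bounded, and applying $P_{\omega}[\varphi]=\psi$ to $u-\sup_{X}u$ yields $u\preccurlyeq\psi$. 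For the mass equality, $[D]$ is pluripolar and $\eta$ smooth, so the non-pluripolar calculus gives $\langle(\eta+c[D])^{n}\rangle=\eta^{n}$, whence $\int_{Y}\eta^{n}=\int_{X}MA_{\omega}(\varphi)=V_{\psi}$, using $V_{\varphi}=V_{P_{\omega}[\varphi]}=V_{\psi}$ recalled in section \ref{sec:Pre}. The same calculation applied to arbitrary $u\preccurlyeq\psi$ gives $V_{u}=\int_{Y}\eta_{F(u)}^{n}$, so $F(\mathcal{E}(X,\omega,\psi))=\mathcal{E}(Y,\eta)$.

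\textbf{Energies and main obstacle.} For $u$ with $\psi$-relative minimal singularities, $F(u)-F(\psi)=p^{*}(u-\psi)$ is bounded on $Y$, and $F(\psi)$ itself has $\eta$-minimal singularities since $|\psi-\varphi|$ is bounded (by the two-sided $\preccurlyeq$-equivalence above) and $F(\varphi)\equiv 0$. Pulling back the integration-by-parts formula defining $E_{\psi}(u)$ and substituting $p^{*}\omega_{u}=\eta_{F(u)}+c[D]$ and $p^{*}\omega_{\psi}=\eta_{F(\psi)}+c[D]$, every $[D]$-containing cross term vanishes in the non-pluripolar product, leaving
\[E_{\psi}(u)=\tfrac{1}{n+1}\sum_{j=0}^{n}\int_{Y}\bigl(F(u)-F(\psi)\bigr)\,\eta_{F(u)}^{j}\wedge\eta_{F(\psi)}^{n-j},\]
i.e.\ the $F(\psi)$-relative energy of $F(u)$ on $(Y,\eta)$. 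Since $F(\psi)$ has $\eta$-minimal singularities, this energy is finite iff the usual $\eta$-energy of $F(u)$ is, extending to non-minimal elements by the monotone definition of $E_{\psi}$ and $E_{\eta}$; hence $F(\mathcal{E}^{1}(X,\omega,\psi))=\mathcal{E}^{1}(Y,\eta)$. The hardest step will be the rigorous functoriality of the non-pluripolar Monge–Ampère products under $p$, namely the positivity of $p^{*}\omega_{u}-c[D]$ across the exceptional set and the cancellation of every $[D]$-contribution in the polarized energy formula. These points depend on the integration-by-parts machinery of \cite{X19a} and \cite{Lu20} already cited in section \ref{sec:Pre}, together with the pluripolarity of $[D]$ which makes it invisible to non-pluripolar products.
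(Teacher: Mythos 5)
Your proof takes essentially the same route as the paper: define $F(u)=(u-\varphi)\circ p$ (equivalently, the USC regularization of $p^{*}u-p^{*}\varphi$), use $p^{*}\omega_{\varphi}=\eta+c[D]$ to transport the Monge--Amp\`ere operator, and exploit the fact that non-pluripolar products do not see $[D]$ to conclude both the mass and the energy statements. The one place where your justification is genuinely off is the claim that $\psi=P_{\omega}[\varphi]\preccurlyeq\varphi$ is ``immediate from the definition.'' It is not: the competitors in $\sup\{w:w\preccurlyeq\varphi,\ w\leq 0\}$ satisfy $w\leq\varphi+C_{w}$ with a $w$-dependent constant, and there is no formal reason the supremum stays $\preccurlyeq\varphi$. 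For a general $\omega$-psh $\varphi$, $P_{\omega}[\varphi]$ is typically strictly less singular than $\varphi$; boundedness of $\psi-\varphi$ is a consequence of the \emph{analytic singularity} hypothesis, and the paper appeals to \cite[Remark 4.6]{RWN14} for exactly this. Your easy direction $\varphi\preccurlyeq\psi$ is correct; the hard direction needs the cited input, not the definition.

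Apart from that, your treatment of the energies is a valid alternative to the paper's. You pull back the integration-by-parts expression for $E_{\psi}(u)$ term by term and argue that the $[D]$-cross-terms vanish in the non-pluripolar product, arriving at $E_{\psi}(u)=E_{F(\psi)}(F(u))$ on $(Y,\eta)$, then use that $F(\psi)$ has $\eta$-minimal singularities. The paper takes the shorter path of proving a single identity
$\int_{Y}\tilde{u}\,MA_{\eta}(\tilde{u})=\int_{X}(u-\psi)\,MA_{\omega}(u)+\int_{X}(\varphi-\psi)\,MA_{\omega}(u)$
and then invoking comparability of $E_{\psi}(u)$ and $E(\tilde{u})$ with $\int(u-\psi)MA_{\omega}(u)$ and $\int\tilde{u}MA_{\eta}(\tilde{u})$ from \cite[Theorem 4.10]{DDNL17b}. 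Your version is more explicit but requires the full polarized functoriality of non-pluripolar products under $p$, which ultimately rests on the same local-nature and pluripolarity arguments; neither approach is circular, and both work. Finally, for bijectivity, the paper cites \cite[Proposition 1.2.7.(ii)]{BouTh}; your pushforward argument for surjectivity is in the right spirit but glosses over why $p^{*}p_{*}(\eta_{v}+c[D])=\eta_{v}+c[D]$, which is the content of the cited result. Overall: same idea, with one misattributed ``immediate'' step you should fix by citing the analytic-singularity fact.
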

Recall that $PSH(X,\omega,\psi)=\{u\in PSH(X,\omega)\,:\, u\preccurlyeq \psi\}$.
\begin{proof}
By \cite[Remark 4.6]{RWN14} $\psi-\varphi$ is globally bounded, so any $u\in PSH(X,\omega,\psi)$ satisfies $u\preccurlyeq \varphi$, which implies that $p^{*}(\omega_{u})-c[D]$ is a closed and positive current on $Y$ with cohomology class $\{\eta\}$. Therefore there exists a unique $\tilde{u}\in PSH(Y,\eta)$ such that $\sup_{Y}\tilde{u}=\sup_{X}(u-\varphi)$ and
$$
p^{*}(\omega_{u})=\eta_{\tilde{u}}+c[D].
$$
Thus we define $F:PSH(X,\omega,\psi)\to PSH(Y,\eta)$ as $F(u):=\tilde{u}$, noting that it is a bijection (see for instance \cite[Proposition 1.2.7.(ii)]{BouTh}). It is also easy to check that $\tilde{u}-(u-\varphi)\circ p $ is pluriharmonic on $Y$, which yields $F(u)=\tilde{u}=(u-\varphi)\circ p$.\\
Next, since $p$ is an isomorphism over $Y\setminus p^{-1}V(\mathcal{I})$ and $[D]$ has support in a pluripolar set, it is not difficult to check that
\begin{equation}
\label{eqn:TP}
p_{*}MA_{\eta}(\tilde{u})=MA_{\omega}(u)
\end{equation}
using the definition of non-pluripolar product. Thus (\ref{eqn:TP}) immediately gives $F\big(\mathcal{E}(X,\omega,\psi)\big)=\mathcal{E}(Y,\eta)$. Hence to conclude the proof it is enough to observe that the equalities
$$
\int_{Y}\tilde{u}MA_{\eta}(\tilde{u})=\int_{Y}p^{*}p_{*}\big((u-\varphi)\circ p\, MA_{\eta}(\tilde{u})\big)=\int_{Y}p^{*}\big((u-\varphi)MA_{\omega}(u)\big)=\int_{X}(u-\psi)MA_{\omega}(u)+\int_{X}(\varphi-\psi)MA_{\omega}(u)
$$
imply $F\big(\mathcal{E}^{1}(X,\omega,\psi)\big)=\mathcal{E}^{1}(Y,\eta)$ as $|\varphi-\psi|\leq C$ and the energies $E_{\psi}(u), E(\tilde{u})$ respectively on $(X,\omega), (Y,\eta)$ are comparable respectively with $\int_{X}(u-\psi)MA_{\omega}(u), \int_{X}\tilde{u}MA_{\eta}(\tilde{u})$ (see \cite[Theorem 4.10]{DDNL17b}).
\end{proof}
For completeness we also prove that in this setting the metric space $\big(\mathcal{E}^{1}(X,\omega,\psi),d\big)$ is isometric to the metric space $\big(\mathcal{E}^{1}(Y,\eta), d\big)$ studied in \cite{DDNL17b} where
$$
d(u,v)=E(u)+E(v)-2E\big(P_{\eta}(u,v)\big)
$$ 
for any $u,v\in\mathcal{E}^{1}(Y,\eta)$
recalling that $P_{\eta}(\cdot,\cdot), E(\cdot)$ are defined similarly to the Kähler case, i.e. for instance $E(u)=\frac{1}{n+1}\sum_{j=0}^{n}\int_{X}u(\eta+dd^{c}u)^{j}\wedge \eta^{n-j}$ if $u$ has minimal singularities and $E(u)=\lim_{k\to \infty}E\big(\max(u,-k)\big)$ otherwise.
\begin{prop}
\label{prop:Isom}
The metric space $\big(\mathcal{E}^{1}(X,\omega,\psi),d\big)$ is isometric to $\big(\mathcal{E}^{1}(Y,\eta),d\big)$ through the map of Lemma \ref{lem:Isom}
\end{prop}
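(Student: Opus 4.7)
The plan is to verify $d(u,v) = d(F(u), F(v))$ for all $u,v \in \mathcal{E}^{1}(X,\omega,\psi)$ directly from the definitions $d(u,v) = E_{\psi}(u) + E_{\psi}(v) - 2E_{\psi}(P_{\omega}(u,v))$ on the source and the absolute analog $d(\tilde u,\tilde v) = E(\tilde u) + E(\tilde v) - 2E(P_{\eta}(\tilde u,\tilde v))$ on the target, using that $F$ is already a bijection $\mathcal{E}^{1}(X,\omega,\psi) \to \mathcal{E}^{1}(Y,\eta)$ by Lemma \ref{lem:Isom}. The strategy is a two-step reduction: first show that $F$ commutes with the rooftop envelope $P$, and second establish an energy pullback identity relating $E_{\psi}$ on $X$ to $E$ on $Y$.

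For the first step, I claim that $F(P_{\omega}(u,v)) = P_{\eta}(F(u), F(v))$ for any $u,v \in PSH(X,\omega,\psi)$. Since $F(w) = (w-\varphi) \circ p$ off the exceptional locus, $F$ and its inverse are order-preserving on $PSH(X,\omega,\psi) \leftrightarrow PSH(Y,\eta)$: inequalities valid off a pluripolar analytic set extend globally by upper semicontinuity. Any $w$ appearing in the defining supremum of $P_{\omega}(u,v)$ satisfies $w \leq u \leq \psi + C$ and hence lies in $PSH(X,\omega,\psi)$, so $F$ bijectively matches the candidate families $\{w : w \leq \min(u,v)\}$ and $\{\tilde w : \tilde w \leq \min(F(u), F(v))\}$. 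One then gets $F(P_{\omega}(u,v)) \geq P_{\eta}(F(u), F(v))$ because $F(P_{\omega}(u,v))$ is $\eta$-psh (hence usc) and dominates every $\tilde w$ pointwise, while the reverse inequality follows by pulling $P_{\eta}(F(u), F(v))$ back through $F^{-1}$, using that $P_{\eta}(F(u), F(v)) \leq \min(F(u), F(v))$ (as $\min$ of usc functions is already usc) forces $F^{-1}(P_{\eta}(F(u),F(v))) \leq \min(u,v)$ and is therefore a candidate in the supremum defining $P_{\omega}(u,v)$.

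For the second step, I claim $E_{\psi}(u) = E(F(u)) - E(F(\psi))$ for every $u \in \mathcal{E}^{1}(X,\omega,\psi)$. The Siu decompositions $p^{*}\omega_{u} = \eta_{F(u)} + c[D]$ and $p^{*}\omega_{\psi} = \eta_{F(\psi)} + c[D]$, together with the birationality of $p$ and the pluripolar nature of both the exceptional locus and $[D]$, yield the mixed-measure identity
$$\int_{X}(u-\psi)\,\omega_{u}^{j}\wedge\omega_{\psi}^{n-j} \;=\; \int_{Y}(F(u)-F(\psi))\,\eta_{F(u)}^{j}\wedge\eta_{F(\psi)}^{n-j}$$
for each $j$, when $u$ has $\psi$-relative minimal singularities, since $(u-\psi)\circ p = F(u) - F(\psi)$ and non-pluripolar mixed products are insensitive to the $[D]$-summands. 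Summing over $j$ and invoking the standard mixed-energy identity on $(Y,\eta)$ (applicable because $F(\psi)$ is bounded and $F(u)$ has minimal singularities) gives the formula in this regularized case. Extending to all of $\mathcal{E}^{1}(X,\omega,\psi)$ by continuity of $E_{\psi}$ and $E$ along the decreasing approximation $u_{k} := \max(u,\psi-k) \searrow u$, observing that $F(u_{k}) = \max(F(u),F(\psi)-k) \searrow F(u)$, completes the identity.

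Combining the two steps, the $E(F(\psi))$ contributions cancel and
$$d(u,v) \;=\; E_{\psi}(u)+E_{\psi}(v)-2E_{\psi}(P_{\omega}(u,v)) \;=\; E(F(u))+E(F(v))-2E(P_{\eta}(F(u),F(v))) \;=\; d(F(u),F(v)).$$
The main technical difficulty is the envelope commutation of step one: one must show $F$ commutes with usc-regularization, not merely with pointwise suprema. This is what the two-sided order-reflection argument, coupled with the pluripolarity of the exceptional locus, is designed to handle. Everything else is a routine pullback calculation once the non-pluripolar products are correctly matched under $p$.
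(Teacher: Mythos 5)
Your proof takes essentially the same route as the paper's: an energy-pullback identity coming from the matching of non-pluripolar mixed products under $p$, combined with the commutation of $F$ with the rooftop envelope $P$, then assembled via the explicit formula for $d$. The energy bookkeeping is fine (your $E_{\psi}(u) = E(F(u)) - E(F(\psi))$ is the same as the paper's $E(\tilde u) = E_{\psi}(u) - E_{\psi}(\varphi)$ after noting $E(F(\psi)) = -E_{\psi}(\varphi)$), and the key idea for the envelope step — that $F$ bijectively and order-preservingly matches the candidate families — is exactly what the paper uses.

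There is, however, a local slip in how you write out the envelope commutation. Your two sub-arguments both establish $F\big(P_{\omega}(u,v)\big) \geq P_{\eta}\big(F(u),F(v)\big)$: the first shows $F(P_{\omega}(u,v))$ dominates every candidate $\tilde w$, and the second (labelled ``reverse inequality'') shows $F^{-1}\big(P_{\eta}(F(u),F(v))\big)$ is a candidate for $P_{\omega}(u,v)$, hence $\leq P_{\omega}(u,v)$, which after applying $F$ is again $P_{\eta}(F(u),F(v)) \leq F(P_{\omega}(u,v))$. The direction you never actually argue is $F\big(P_{\omega}(u,v)\big) \leq P_{\eta}\big(F(u),F(v)\big)$; it is the easy one: since $\min(u,v)$ is usc, $P_{\omega}(u,v) \leq \min(u,v)$, so $F(P_{\omega}(u,v)) \leq \min(F(u),F(v))$ by order-preservation, making $F(P_{\omega}(u,v))$ itself a candidate for $P_{\eta}(F(u),F(v))$. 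With that inserted, the argument closes and coincides with the paper's.
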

\begin{proof}
With the same notation of Lemma \ref{lem:Isom} we have $\tilde{u}:=F(u)=(u-\varphi)\circ p$ for any $u\in\mathcal{E}^{1}(X,\omega,\psi)$. Moreover similarly as in the proof on Lemma \ref{lem:Isom} we can show that $p_{*}\big(\eta_{\tilde{u}_{1}}^{k}\wedge \eta_{\tilde{u}_{2}}^{n-k}\big)=\omega_{u_{1}}^{k}\wedge \omega_{u_{2}}^{n-k}$ for any $k=0,\dots, n$, and that these equalities lead to $E(\tilde{u})=E_{\psi}(u)-E_{\psi}(\varphi)$ for any $u\in\mathcal{E}^{1}(X,\omega,\psi)$. Hence to conclude the proof it is enough to check that $F\big(P_{\omega}(u_{1},u_{2})\big)=P_{\omega}(\tilde{u}_{1},\tilde{u}_{2})$. By construction we easily have $\tilde{u}_{1}\leq \tilde{u}_{2}$ if and only if $u_{1}\leq u_{2}$. Therefore $F\big(P_{\omega}(u_{1},u_{2})\big)\leq P_{\omega}(	\tilde{u}_{1},\tilde{u}_{2})$ follows from $P_{\omega}(u_{1},u_{2})\leq u_{1},u_{2}$, while letting $\phi\in\mathcal{E}^{1}(X,\omega,\psi)$ such that $\tilde{\phi}=P_{\omega}(\tilde{u}_{1},\tilde{u}_{2})$ we have $\phi\leq u_{1},u_{2}$, i.e. $\phi\leq P_{\omega}(u_{1},u_{2})$, which gives the reverse inequality by composing with $F$.
\end{proof}
We can now relate the $(D,[\psi])$-log KE metrics on $X$ with the $D'$-log semi-KE metrics on $Y$. More precisely, let $D$ be a $\mathbbm{R}$-divisor on $X$ such that
$$
c_{1}(X)-\{[D]\}=\lambda \{\omega\}
$$
for $\lambda\in\mathbbm{R}$ and $\omega$ Kähler form. Let $\psi\in\mathcal{M}^{+}$ given as $P_{\omega}[\varphi]$ for a function $\varphi\in PSH(X,\omega)$ with analytic singularities encoded in $(\mathcal{I},c)$, and let $p: Y\to X$ be a smooth resolution of $\mathcal{I}$. Then $p^{*}\mathcal{I}=\mathcal{O}_{Y}(-D_{1})$ for an effective divisor $D_{1}$ and $p^{*}(K_{X}+D)=K_{Y}+D_{2}$ for a $\mathbbm{R}$-divisor $D_{2}$. We denote with $\eta$ the semi-Kähler part of the Siu Decomposition $p^{*}(\omega_{\varphi})=\eta+c[D_{1}]$.
\begin{prop}
\label{prop:Correspondence}
In the setting described above, there is a bijection beetwen the set of all $(D,[\psi])$-log KE metrics on $X$ in the cohomology class $\{\omega\}$ and the set of all $D'$-log semi-KE metrics on $Y$ in the cohomology class $\{\eta\}$ where $D':=\lambda c[D_{1}]+[D_{2}]$. More precisely letting $\phi_{\omega}$ and $\phi_{\eta}$ be metrics respectively on the $\mathbbm{R}$-line bundles $-(K_{X}+D), -(K_{Y}+D_{2}+\lambda c D_{1})$ with curvatures $\lambda \omega$ and $\lambda \eta$, a function $u\in\mathcal{E}^{1}(X,\omega,\psi)$ solves $MA_{\omega}(u)=e^{-\lambda u}\mu_{\phi_{\omega}}$ if and only if $\tilde{u}=(u-\varphi)\circ p\in\mathcal{E}^{1}(Y,\eta)$ solves $MA_{\eta}(\tilde{u})=e^{-\lambda\tilde{u}}\mu_{\phi_{\eta}}$.
\end{prop}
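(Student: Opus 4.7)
The plan is to transport the equation on $X$ to the one on $Y$ via the bijection $F:u\mapsto \tilde{u}=(u-\varphi)\circ p$ from Lemma \ref{lem:Isom}, combining the push-forward identity $p_{*}MA_{\eta}(\tilde{u})=MA_{\omega}(u)$ established there with the compatibility of adapted measures under the blow-up $p$. Let $U:=Y\setminus p^{-1}V(\mathcal{I})$: its complement is the support of the divisor $D_{1}$, hence pluripolar, and $p$ restricts to a biholomorphism $U\to p(U)$. Since the non-pluripolar Monge-Amp\`ere measures on both sides, and the adapted measures appearing on the right, do not charge pluripolar sets on $U$, it is enough to check that the two equations coincide on $U$. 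On $U$ the Siu decomposition gives $\eta_{\tilde{u}}=p^{*}\omega_{u}$ and hence $MA_{\eta}(\tilde{u})=p^{*}MA_{\omega}(u)$, while $\tilde{u}=(u-\varphi)\circ p$ gives $e^{-\lambda \tilde{u}}=e^{\lambda (\varphi\circ p)}p^{*}e^{-\lambda u}$. Thus, up to an overall multiplicative constant (absorbable into a translation of $\phi_{\eta}$), the equivalence reduces to the identity of measures
$$
\mu_{\phi_{\eta}}=c_{0}\,e^{-\lambda(\varphi\circ p)}\,p^{*}\mu_{\phi_{\omega}} \quad \text{on } U.
$$

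To prove this identity I would compare the two weights. Since $p^{*}\phi_{\omega}$ is a metric on $p^{*}(-(K_{X}+D))=-(K_{Y}+D_{2})$ with curvature $\lambda p^{*}\omega$, and $\phi_{\eta}$ is a metric on $-(K_{Y}+D_{2}+\lambda c D_{1})$ with curvature $\lambda \eta$, the Siu decomposition combined with Poincar\'e--Lelong (which gives $[D_{1}]=dd^{c}\phi_{D_{1}}^{Y}+dd^{c}\log|s_{D_{1}}|^{2}_{\phi_{D_{1}}^{Y}}$) yields
$$
\lambda p^{*}\omega = \lambda\eta+\lambda c\,dd^{c}\phi_{D_{1}}^{Y}+dd^{c}\bigl(\lambda c\log|s_{D_{1}}|^{2}_{\phi_{D_{1}}^{Y}}-\lambda(\varphi\circ p)\bigr).
$$
By the log-resolution property $\varphi\circ p=c\log|s_{D_{1}}|^{2}+\text{smooth}$, the last $dd^{c}$ is of a smooth function. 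Identifying $-(K_{Y}+D_{2})=-(K_{Y}+D_{2}+\lambda c D_{1})+\lambda c L_{D_{1}}$ via $\phi_{\eta}+\lambda c\phi_{D_{1}}^{Y}$, and noting that pluriharmonic functions on the compact connected $Y$ are constant, one obtains
$$
p^{*}\phi_{\omega}=\phi_{\eta}+\lambda c\,\phi_{D_{1}}^{Y}+\lambda c\log|s_{D_{1}}|^{2}_{\phi_{D_{1}}^{Y}}-\lambda(\varphi\circ p)+C_{0}.
$$
Plugging this into the local expressions for the adapted measures from Remark \ref{rem:RLine}, using that $p^{*}\Omega_{X}=J\,\Omega_{Y}$ contributes the factor $|J|^{2}$ whose zero divisor encodes $K_{Y/X}$ (and so, paired with $p^{*}D$, reassembles $D_{2}=p^{*}D-K_{Y/X}$), and that the extra weight $-2\lambda c\log|s_{D_{1}}|_{\phi_{D_{1}}^{Y}}$ featured in $\mu_{\phi_{\eta}}$ is exactly the contribution produced by the $\lambda c\log|s_{D_{1}}|^{2}$ term above, the desired identity follows by a direct cancellation.

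The main obstacle I foresee is the careful bookkeeping of local holomorphic sections, trivializations and the Jacobian $J$, needed to get the identity on the nose rather than only up to a smooth function; the compatibility $p_{*}\mu_{p^{*}\phi}=\mu_{\phi}$ recalled after Definition \ref{defn:Adap} provides a useful consistency check. Once this is done, Lemma \ref{lem:Isom} supplies the bijection between $\mathcal{E}^{1}(X,\omega,\psi)$ and $\mathcal{E}^{1}(Y,\eta)$, and combining with Remark \ref{rem:RLine} (which recasts both log-KE conditions as Monge--Amp\`ere equations of the required form) completes the proof.
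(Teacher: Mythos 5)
Your proposal follows essentially the same route as the paper's proof: you transfer via $F$ from Lemma~\ref{lem:Isom}, use the curvature identity $\lambda\eta = \lambda p^{*}\omega - dd^{c}(p^{*}\phi_{\omega}-\phi_{\eta})$ together with the Siu decomposition to derive (up to an additive constant, pinned down since $Y$ is compact) the weight relation linking $p^{*}\phi_{\omega}$, $\phi_{\eta}$ and $\lambda\varphi\circ p$, and then compute the adapted measures locally using Remark~\ref{rem:RLine} and the lift compatibility recorded after Definition~\ref{defn:Adap}. The only cosmetic difference is organizational: the paper writes the lift of $e^{-\lambda u}\mu_{\phi_{\omega}}$ directly and checks it equals $e^{-\lambda\tilde u}\mu_{\phi_{\eta}}$ on $Y\setminus(\mathrm{Supp}(D_{1})\cup\mathrm{Supp}(D_{2}))$, whereas you first restrict to $U=Y\setminus p^{-1}V(\mathcal{I})$ and reduce to the measure identity $\mu_{\phi_{\eta}}=c_{0}e^{-\lambda(\varphi\circ p)}p^{*}\mu_{\phi_{\omega}}$ there; both are justified because the non-pluripolar Monge--Amp\`ere products and the adapted measures put no mass on the exceptional pluripolar sets.
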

\begin{proof}
Let $\phi_{\omega},\phi_{\eta}$ as in the statement. Set also $\phi:=p^{*}\phi_{\omega}-\phi_{\eta}$ metric on $\lambda c D_{1}$ with curvature $\theta:=dd^{c}\phi$. Then for $r_{1}=\frac{1}{\lambda c}\in\mathbbm{R}_{>0}$, $r_{1}\lambda cD_{1}=D_{1}$ is an effective divisor and there exists a holomorphic section $s_{1}$ on the associate line bundle such that $r_{1}\theta+dd^{c}\log|s_{1}|^{2}_{r_{1}\phi}=r_{1}\lambda c[D_{1}]$. Thus, since by construction $\lambda\eta+\theta=p^{*}\lambda \omega$, it follows that
$$
dd^{c}\frac{2}{r_{1}}\log|s_{1}|_{r_{1}\phi}=dd^{c}\lambda \varphi\circ p,
$$
i.e. $\lambda\varphi\circ p= \frac{2}{r_{1}}\log |s_{1}|_{r_{1}\phi} +C$ for a constant $C\in\mathbbm{R}$ that, without loss of generality, we may suppose to be $0$. Therefore the lift of the measure $e^{-\lambda u}\mu_{\phi_{\omega}}=e^{-\lambda(u-\varphi)}e^{-\lambda\varphi}\mu_{\phi_{\omega}}$ becomes
$$
e^{-\lambda \tilde{u}-\frac{2}{r_{1}}\log|s_{1}|_{r_{1}\phi}}\mu_{p^{*}\phi_{\omega}}
$$
where $\tilde{u}=(u-\varphi)\circ p$. Next for $\{a_{j}\}_{j=1}^{N_{1}},\{b_{j}\}_{j=1}^{N_{2}}\subset\mathbbm{R}_{>0}$ and prime divisors $\{D_{2,+,j}\}_{j=1}^{N_{1}}$, $\{D_{2,-,j}\}_{j=1}^{N_{2}}$, we have $D_{2}=\sum_{j=1}^{N_{1}}a_{j}D_{2,+,j}-\sum_{j=1}^{N_{2}}b_{j}D_{2,-,j}$ as the difference of two effective $\mathbbm{R}$-divisors. 
Thus locally on $Y\setminus \big(\mbox{Supp}(D_{1})\cup \mbox{Supp}(D_{2})\big)$ by definition there exists $\Omega$ nowhere zero local holomorphic section of $K_{Y}$ such that
$$
\mu_{p^{*}\phi_{\omega}}=e^{-(p^{*}\phi_{\omega}+2\sum_{j=1}^{N_{1}}a_{j}\log|s_{2,+,j}|-2\sum_{j=1}^{N}b_{j}\log|s_{2,-,j}|)}i^{n^{2}}\Omega\wedge \bar{\Omega}
$$
where $\{s_{2,+,j}\}_{j=1}^{N_{1}}, \{s_{2,-,j}\}_{j=1}^{N_{2}}$ are holomorphic sections cutting respectively $\{D_{2,+,j}\}_{j=1}^{N_{1}}, \{D_{2,-,j}\}_{j=1}^{N_{2}}$. For simplicity of notations we set $\varphi_{2,+}:=2\sum_{j=1}^{N_{1}}a_{j}\log|s_{2,+,j}|$ and similarly for $\varphi_{2,-}$. Therefore locally on $Y\setminus \big(\mbox{Supp}(D_{1})\cup \mbox{Supp}(D_{2})\big)$
\begin{multline*}
e^{-\frac{2}{r_{1}}\log|s_{1}|_{r_{1}\phi}}\mu_{p^{*}\phi_{\omega}}=e^{-\big(\phi+\frac{2}{r_{1}}\log|s_{1}|_{r_{1}\phi}+\phi_{\eta}+\varphi_{2,+}-\varphi_{2,-}\big)}i^{n^{2}}\Omega\wedge\bar{\Omega}
=e^{-\big(\phi_{\eta}+\frac{2}{r_{1}}\log|s_{1}|+\varphi_{2,+}-\varphi_{2,-}\big)}i^{n^{2}}\Omega\wedge \bar{\Omega}=\mu_{\phi_{\eta}}.
\end{multline*}
In conclusion, for any $u\in\mathcal{E}^{1}(X,\omega,\psi)$, the measures $e^{-\lambda u}\mu_{\phi_{\omega}}$ and $e^{-\lambda \tilde{u}}\mu_{\phi_{\eta}}$ are related by lifting and by push-forward through $p_{*}$. Moreover, the same correspondence holds for holds for $MA_{\omega}(u)$ and $MA_{\eta}(\tilde{u})$ as seen during the proof of Lemma \ref{lem:Isom}. The Proposition follows.
\end{proof}
We can prove the following regularity result on $(D,[\psi])$-log semi-KE metrics in this situation. It represents the first part of Theorem \ref{thmE}.
\begin{thm}
\label{thm:Regularity}
Let $\omega_{u}$ be a $(D,[\psi])$-log KE metric where $D$ is a $\mathbbm{R}$-divisor and $\psi=P_{\omega}[\varphi]\in\mathcal{M}^{+}$ for $\varphi$ with analytic singularities formally encoded in $(\mathcal{I},c)$. Then $u\in C^{\infty}\big(X\setminus A\big)$ where $A=V(\mathcal{I})\cup \mbox{Supp}(D)$.
\end{thm}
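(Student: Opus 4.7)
Proof proposal. I will work directly on $X$ and prove the result by combining the vanishing of Lelong numbers of $u$ off $V(\mathcal{I})$ (which turns $e^{-\lambda u}$ into a function with arbitrarily high local $L^p$ integrability) with a standard Kołodziej plus Evans-Krylov bootstrap for the Monge-Ampère equation.

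\textbf{Local boundedness on $X\setminus A$.} Since $\varphi$ has analytic singularities formally encoded in $(\mathcal{I},c)$, $\varphi\in C^{\infty}(X\setminus V(\mathcal{I}))$. The global boundedness of $\psi-\varphi$ (which holds because $\psi=P_{\omega}[\varphi]$) shows $\psi$ is locally bounded on $X\setminus V(\mathcal{I})$ too, and the relation $u\preccurlyeq \psi$ from $u\in\mathcal{E}^{1}(X,\omega,\psi)$ gives a local upper bound for $u$ on $X\setminus V(\mathcal{I})\supset X\setminus A$. By Proposition \ref{prop:Lelong}, $\nu(u,x)=\nu(\psi,x)$, and the latter vanishes at every $x\in X\setminus V(\mathcal{I})$ because $\psi$ is locally bounded there. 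Skoda's local estimate $c(u,x)\geq 2/\nu(u,x)$ then yields $c(u,x)=+\infty$ for every $x\in X\setminus V(\mathcal{I})$, so $e^{-\lambda pu}\in L^{1}_{\mathrm{loc}}(X\setminus V(\mathcal{I}))$ for every $p\geq 1$. The adapted measure $\mu_{\phi_{\omega}}$ is a smooth strictly positive volume form on $X\setminus A$ (its logarithmic poles lie along $\mathrm{Supp}(D)\subset A$), so the density of $MA_{\omega}(u)=e^{-\lambda u}\mu_{\phi_{\omega}}$ with respect to $\omega^{n}$ lies in $L^{p}_{\mathrm{loc}}(X\setminus A)$ for every $p\in[1,\infty)$. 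A local version of Kołodziej's $L^{\infty}$ estimate (\cite{Kol98}), applied on chart balls $B\subset\subset X\setminus A$ to the local psh potential of $\omega+dd^{c}u$ that is already bounded from above, produces a local lower bound for $u$ on $B$. Hence $u\in L^{\infty}_{\mathrm{loc}}(X\setminus A)$.

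\textbf{Bootstrap to $C^{\infty}$.} Once $u$ is locally bounded on $X\setminus A$, the density $e^{-\lambda u}\mu_{\phi_{\omega}}/\omega^{n}$ is continuous and strictly positive on every relatively compact open subset of $X\setminus A$. In local holomorphic coordinates the equation reads
$$
\det\bigl(g_{i\bar{j}}+u_{i\bar{j}}\bigr)=e^{-\lambda u}\,h\,\det(g_{i\bar{j}})
$$
with $h$ smooth strictly positive, which is a uniformly elliptic complex Monge-Ampère equation with continuous right-hand side. The Evans-Krylov-Siu interior estimate together with Caffarelli's $C^{2,\alpha}$ theory yields $u\in C^{2,\alpha}_{\mathrm{loc}}$; the right-hand side is then $C^{2,\alpha}$, and a Schauder bootstrap iterates to $u\in C^{\infty}(X\setminus A)$.

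\textbf{Main obstacle.} The crucial step is the local $L^{\infty}$ estimate in the first paragraph, which hinges on converting the vanishing of Lelong numbers into arbitrarily high $L^{p}$-integrability of the Monge-Ampère density through Skoda. This is exactly where the analytic singularities hypothesis is used in an essential way. An alternative but essentially equivalent route is to pass to a smooth log-resolution $p:Y\to X$ via Proposition \ref{prop:Correspondence}, run the same Kołodziej/Evans-Krylov argument for $\tilde u:=(u-\varphi)\circ p$ on $Y\setminus p^{-1}(A)$ (where $\mu_{\phi_{\eta}}$ has smooth positive density), and transfer back through $u=\tilde u\circ p^{-1}+\varphi$, using that $\varphi\in C^{\infty}(X\setminus V(\mathcal{I}))$ and that $p$ is a biholomorphism over $X\setminus V(\mathcal{I})$.
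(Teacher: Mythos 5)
The bootstrap in your second paragraph is standard, but the local $L^{\infty}$ estimate in the first paragraph has a real gap. Ko\l odziej-type estimates bound $||u||_{L^{\infty}}$ from an $L^{p}$ bound on the Monge--Amp\`ere density \emph{together with} control of boundary data (in the local Dirichlet setting) or a global normalization on a compact manifold; they do not assert that an $\omega$-psh function with zero Lelong numbers, bounded above, and $L^{p}$ Monge--Amp\`ere density on a ball is locally bounded. On your chart ball $B\subset\subset X\setminus A$ you control the density (via vanishing Lelong numbers and Skoda) and the upper bound $u\leq\psi+C$, but nothing in the local problem controls $\inf_{\partial B}u$, which is exactly what a Ko\l odziej argument needs; for $\lambda\leq 0$ there is no local mechanism preventing $u$ from plunging to $-\infty$ on a compact subset while keeping zero Lelong numbers and arbitrarily nice Monge--Amp\`ere density, and for $\lambda>0$ the coupling $g=e^{-\lambda u}h$ is still not a substitute for boundary control. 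Local boundedness here is genuinely a global fact.

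The paper obtains it by passing to the resolution $(Y,\eta)$ via Proposition \ref{prop:Correspondence}, writing $\mu_{\phi_{\eta}}=e^{v_{1}-v_{2}}dV$ and noting that the density $e^{-\lambda\tilde u+v_{1}-v_{2}}$ of $MA_{\eta}(\tilde u)$ is \emph{globally} in $L^{p}(Y)$ for some $p>1$ (klt, vanishing Lelong numbers of $\mathcal{E}^{1}(Y,\eta)$ functions, and the strong openness theorem), then invoking the global $L^{\infty}$ bound and continuity on the ample locus from \cite[Theorem C]{EGZ10} on the compact semi-K\"ahler manifold $(Y,\eta)$, followed by the Laplacian estimate of \cite[Theorem 10.1]{BBEGZ16} on $\mbox{Amp}(\{\eta\})$, and only then Evans--Krylov and bootstrapping. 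Your alternative route at the end is morally the paper's argument, but as phrased it still runs a local Ko\l odziej argument on $Y\setminus p^{-1}(A)$ and so inherits the same issue: what is actually needed is the global estimate on all of $Y$, which is where the hypotheses $\tilde u\in\mathcal{E}^{1}(Y,\eta)$ and compactness of $Y$ enter essentially.
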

\begin{proof}
By Proposition \ref{prop:Correspondence} and with the same notations, $\tilde{u}:=(u-\varphi)\circ p$ is a solution of
\begin{equation*}
\begin{cases}
MA_{\eta}(\tilde{u})=e^{-\lambda\tilde{u}}\mu_{\phi_{\eta}}\\
\tilde{u}\in\mathcal{E}^{1}(Y,\eta)
\end{cases}
\end{equation*}
where $\eta$ is semi-Kähler form. 
Moreover, writing $\mu_{\phi_{\eta}}=e^{v_{1}-v_{2}}dV$ where $v_{1},v_{2}\in PSH(Y,\omega')$ for $\omega'$ Kähler form and $dV$ volume form on $Y$, by the Monge-Ampère equation and the resolution of the openness conjecture (\cite[Theorem 1.1]{GZ14}) we immediately obtain $e^{-\lambda \tilde{u}+v_{1}-v_{2}}\in L^{p}$ for $p>1$ (see also Corollary \ref{cor:KLT}).\\
Next, the proof is standard. Indeed by \cite[Theorem C]{EGZ10} we get that $\tilde{u}$ is bounded on $X$ and continuous on $\mbox{Amp}(\{\eta\})$ (see also \cite{Kol98}), where the latter is the \emph{ample locus} of $\eta$, i.e. the complementary of the non-Kähler locus (\cite[Definition 3.16]{Bou02}). Then, if $\lambda>0$, fix $C>0$ big enough such that $\sup_{X}v_{1}\leq C$, $C\omega'+dd^{c}v_{1}\geq 0, C\omega'+dd^{c}(v_{1}+\lambda \tilde{u})\geq 0$ and $||e^{-\lambda\tilde{u}-v_{2}}||_{L^{p}}\leq C$. Thus by \cite[Theorem 10.1]{BBEGZ16} for any relatively compact open set $U\Subset \mbox{Amp}(\{\eta\})$ there exists $A>0$ depending on $C, \eta, p, U$ such that
\begin{equation}
\label{eqn:Bound1}
0\leq \eta+dd^{c}\tilde{u}\leq A e^{-\lambda\tilde{u}-v_{2}}\omega'.    
\end{equation}
Similarly, if $\lambda\leq 0$, letting $C>0$ big enough such that $\sup_{X}(v_{1}-\lambda \tilde{u})\leq C$, $C\omega'+dd^{c}(v_{1}-\lambda \tilde{u})\geq 0, C\omega'+dd^{c}v_{2}\geq 0$ and $||e^{-v_{2}}||_{L^{p}}\leq C$, we obtain
\begin{equation}
\label{eqn:Bound2}
0\leq \eta+dd^{c}\tilde{u}\leq Ae^{-v_{2}}\omega'
\end{equation}
for any relatively compact open set $U\Subset \mbox{Amp}(\{\eta\})$.\\
Moreover by construction $v_{1},v_{2}$ are smooth outside the union of the supports of the divisors $D_{1}$, $D_{2}$ (with the notations used in Proposition \ref{prop:Correspondence}).
So, as $\tilde{u}$ is globally bounded, by (\ref{eqn:Bound1}), (\ref{eqn:Bound2}) it immediately follows that $\Delta_{\omega'}\tilde{u}$ is locally bounded over $\mbox{Amp}(\{\eta\})\cap \Big(Y\setminus \big(\mbox{Supp}(D_{1})\cup \mbox{Supp}(D_{2})\big)\Big)$. Thus, the Evans-Krylov Theorem and a classical bootstrap argument imply that $\tilde{u}$ is smooth over $\mbox{Amp}(\{\eta\})\cap\Big(Y\setminus \big(\mbox{Supp}(D_{1})\cup \mbox{Supp}(D_{2})\big)\Big)$. Moreover, the ample locus is a not-empty Zariski open set ($\{\eta\}$ is big, see \cite[Theorem 3.17]{Bou02}) and it includes $Y\setminus \big(\mbox{Supp}(D_{1})\cup \mbox{Supp}(D_{2})\big)$ as $\{\omega\}$ is Kähler and the support of the exceptional locus of $p:Y\to X$ is contained in the union of the supports of $D_{1}, D_{2}$. Hence, as $\tilde{u}=(u-\varphi)\circ p$, we get that $u\in\mathcal{C}^{\infty}(X\setminus B)$ for $B:=p_{*}\big(\mbox{Supp}(D_{1})\cup \mbox{Supp}(D_{2})\big)\subset V(\mathcal{I})\cup \mbox{Supp}(D)=A$, which concludes the proof. 
\end{proof}
\subsection{Theorem \ref{thmE}.}
In the subsection we conclude the proof of Theorem \ref{thmE}.\\

As shown in the previous subsection if $\psi\in\mathcal{M}^{+}$ has analytic singularities type, i.e. $\psi=P_{\omega}[\varphi]$ for $\varphi$ with analytic singularities formally encoded in $(\mathcal{I},c)$ where $\mathcal{I}$ is a integrally closed coherent ideal sheaf and $c\in\mathbbm{R}_{>0}$, then taking a resolution $p:Y\to X$ of $\mathcal{I}$ there exists a semi-Kähler form $\eta$ on $Y$ such that $p^{*}(\omega_{\varphi})=\eta+c[D]$ where $p^{*}\mathcal{I}=\mathcal{O}_{X}(-D)$ and $D$ is an effective divisor. Thus, we first set $\mathcal{M}_{an}^{+}:=\{\psi\in\mathcal{M}^{+}\, \mbox{with analytic singularities type}\}$ and we fix for any $\psi\in\mathcal{M}_{an}^{+}$ an element $\varphi$ with analytic singularities such that $\sup_{X}\varphi=0$ and $\psi=P_{\omega}[\varphi]$ (i.e. $\psi-\varphi$ globally bounded). Then setting $\mathcal{K}_{(X,\omega)}^{\,tot}:=\{(Y,\eta)\, : \, \omega-p_{*}\eta=[D] \, \mbox{for an effective}\, \mathbbm{R}\mbox{-divisor}\, D \, \mbox{where}\, \eta \, \mbox{is semi-Kähler} \, \mbox{and}\, p:Y\to X \, \mbox{is given by a sequence of blow-ups}\}$, the construction described above yields a natural map
$$
\Phi: \mathcal{M}_{an}^{+}\longrightarrow \mathcal{K}_{(X,\omega)}^{\,tot}/\sim
$$
where $(Y,\eta)\sim (Y',\eta')$ on $\mathcal{K}_{(X,\omega)}^{\,tot}$ if there exists $(Z,\tilde{\eta})\in \mathcal{K}_{(X,\omega)}^{\,tot}$ such that $Z$ dominates $Y,Y'$ through morphism $q:Z\to Y$, $q':Z\to Y'$ and $\tilde{\eta}=q^{*}\eta=q'^{*}\eta'$. Note that for a different choice of the elements $\varphi$ with analytic singularities, the forms $\eta$ in the representatives in $\mathcal{K}_{(X,\omega)}^{\,tot}$ may change but their cohomology classes $\{\eta\}$ would remain the same.\\
We also claim that $\Phi$ is injective. Indeed letting $\psi_{1},\psi_{2}\in\mathcal{M}_{an}^{+}$ and letting $(Y,\eta_{1}), (Y,\eta_{2})$ be representatives on the same manifold $Y$ (taking a common resolution), if $\Phi(\psi_{1})=\Phi(\psi_{2})$ then $\eta_{1}=\eta_{2}$. Thus, denoting with $\varphi_{1},\varphi_{2}$ the associated and fixed functions with analytic singularities, the equality $\eta_{1}=\eta_{2}$ and cohomological reasons easily imply that $(\varphi_{1}-\varphi_{2})\circ p$ is pluriharmonic. Hence $\varphi_{1}=\varphi_{2}+C$, which clearly gives $\psi_{1}=\psi_{2}$.\\
We can now define
$$
\mathcal{K}_{(X,\omega)}:=\mathrm{Im}(\Phi).
$$
It is worth to underline that in any cohomology class $\{\mu_{N}^{*}\omega-a_{1}[E_{1}]-a_{2}[E_{2}]+\dots-a_{N}[E_{N}])\}$ given by a \emph{small pertubation} for $\mu_{N}:Y\to X$ blow-up of $X$ at $N$ distinct points, $E_{i}$ exceptional divisors and $a_{i}>0$ small enough, there exists a smooth semi-Kähler form $\eta$ such that $[(Y,\eta)]\in \mathcal{K}_{(X,\omega)}$.\\
As an immediate consequence of the definition, the set $\mathcal{K}_{(X,\omega)}$ inherits a partial order. Indeed, for any $\alpha,\alpha'\in \mathcal{K}_{(X,\omega)}$ with associated model type envelopes $\psi,\psi'\in\mathcal{M}^{+}_{an}$, we will say that $\alpha$ is \emph{smaller} (resp. \emph{bigger}) than $\alpha'$ if $\psi\preccurlyeq \psi'$ (resp. $\psi\succcurlyeq \psi'$). Note that if $\alpha$ is smaller than $\alpha'$ then, taking representatives $(Z,\tilde{\eta}), (Z,\tilde{\eta'})$ on the same compact Kähler manifold $Z$, it follows that $\tilde{\eta'}-\tilde{\eta}=[F]$ for an effective $\mathbbm{R}$-divisor $F$. The volume $\mbox{Vol}(\alpha)$ is also well-defined for $\alpha\in\mathcal{K}_{(X,\omega)}$ as $\int_{Y}\eta^{n}=\int_{Y'}\eta'^{n}$ for any $(Y,\eta)\sim (Y',\eta')$, and in particular $\mbox{Vol}(\alpha)=V_{\psi}$ if $\Phi(\psi)=\alpha$ (see also Lemma \ref{lem:Isom}).\\
Moreover, the notion of log-KE metrics descend to the classes in $\mathcal{K}_{(X,\omega)}$ thanks to Proposition \ref{prop:Correspondence}. Indeed two log-KE metrics $\eta+dd^{c}\tilde{u}$, $\eta'+dd^{c}\tilde{u}'$, respectively on $(Y,\eta),(Y',\eta')$ representatives of the same class in $\mathcal{K}_{(X,\omega)}$, can be identified if $\tilde{u}=(u-\varphi)\circ p, \tilde{u}'=(u-\varphi)\circ p'+C$ for the same function $u\in\mathcal{E}^{1}(X,\omega,\psi)$ and $C\in\mathbbm{R}$. Thus a \emph{log-KE metric} in $\mathcal{K}_{(X,\omega)}$ is a family of log-KE metrics.\newline
We can then define a \emph{strong convergence} of log-KE metrics for totally ordered sequences in $\mathcal{K}_{(X,\omega)}$ using a suitable normalization for the associated quasi-plurisubharmonic functions. Namely, if $\lambda=0$, for any log-KE metric $\eta+dd^{c}\tilde{u}$ on $(Y,\eta)$, representative of a log-KE metric in $\mathcal{K}_{(X,\omega)}$, the function $\tilde{u}$ will be normalized so that the corresponding $\omega$-psh function $u$ through Lemma \ref{lem:Isom} satisfies $\sup_{X}u=0$. Similarly, if $\lambda\neq 0$, we will normalize $\tilde{u}$ so that the associated $u\in PSH(X,\omega)$ satisfies $MA_{\omega}(u)=e^{-\lambda u}\mu_{\phi_{\omega}}$ where $\phi_{\omega}$ is a fixed metric on $-(K_{X}+D)$ with curvature $\lambda \omega$ (see again Proposition \ref{prop:Correspondence}). In conclusion, given a totally ordered sequence $\{\alpha_{k}\}_{k\in\mathbbm{N}}\subset \mathcal{K}_{(X,\omega)}$ converging to $\alpha\in\mathcal{K}_{(X,\omega)}$, we will say that a sequence of log-KE metrics in $\alpha_{k}$, i.e. a sequence of families of log-KE metrics $\eta_{k}+dd^{c}\tilde{u}_{k}$, \emph{converges strongly} to a log-KE metric in $\alpha$ if $u_{k}\to u$ strongly for the associated functions in $PSH(X,\omega)$. In particular, when there exists a common compact Kähler manifold $Z$ such that the log-KE metrics in $\alpha_{k}, \alpha$ have representatives $\theta_{k}+dd^{c}u_{k}, \theta+dd^{c}v$ the strong convergence implies that $\theta_{k}+dd^{c}v_{k}$ converges weakly to $\theta+dd^{c}v$.\\

We can now prove the second part of Theorem \ref{thmE}.
\begin{thm}
\label{thm:E1}
Let $\omega$ be a Kähler form and let $D$ be a klt $\mathbbm{R}$-divisor such that $c_{1}(X)-\{[D]\}=\lambda \{\omega\}$ holds for $\lambda\leq 0$. Then any class in $\mathcal{K}_{(X,\omega)}$ admits a unique log-KE metric. Furthermore, these log-KE metrics are stable with respect to the strong convergence, i.e. if $\{\alpha_{k}\}_{k\in\mathbbm{N}}\subset \mathcal{K}_{(X,\omega)}$ is a totally ordered sequence converging to $\alpha\in\mathcal{K}_{(X,\omega)}$, then the sequence of log-KE metrics in $\alpha_{k}$ converges strongly to the log-KE metric in $\alpha$.
\end{thm}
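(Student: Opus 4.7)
The plan is to reduce the statement to the Monge-Ampère machinery on $(X,\omega)$ developed in Theorems \ref{thmA} and \ref{thmB}, via the correspondences established in Section \ref{sec:KE}.

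Fix $\alpha\in\mathcal{K}_{(X,\omega)}$ with associated $\psi\in\mathcal{M}^{+}_{an}$ and fix a metric $\phi_{\omega}$ on the $\mathbbm{R}$-line bundle $-(K_{X}+D)$ with curvature $\lambda\omega$. By Proposition \ref{prop:Correspondence} and the definition of a log-KE metric in $\alpha$, producing such a metric amounts to solving
\begin{equation*}
MA_{\omega}(u)=e^{-\lambda u+C}\mu_{\phi_{\omega}},\qquad u\in \mathcal{E}^{1}(X,\omega,\psi),
\end{equation*}
for some constant $C\in\mathbbm{R}$. Write $\mu_{\phi_{\omega}}=f\omega^{n}$; the klt assumption on $D$ is precisely the statement that $f\in L^{p}$ for some $p>1$, via the resolution of the strong openness conjecture (see the local description recalled in Remark \ref{rem:RLine} and Corollary \ref{cor:KLT}). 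Throughout I normalize representatives as in the paragraph preceding Theorem \ref{thm:E1}.

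\emph{Case $\lambda<0$.} Theorem \ref{thm:PositiveCase} applied to $f$ and $\psi$ gives a unique $u\in\mathcal{E}^{1}(X,\omega,\psi)$ with $MA_{\omega}(u)=e^{-\lambda u}f\omega^{n}$, which is the required (unique) log-KE metric in $\alpha$. For stability, let $\{\alpha_{k}\}_{k}\subset\mathcal{K}_{(X,\omega)}$ be a totally ordered sequence converging to $\alpha$; then the associated sequence $\{\psi_{k}\}_{k}\subset\mathcal{M}^{+}_{an}$ is totally ordered and converges weakly to $\psi\in\mathcal{M}^{+}_{an}\subset\mathcal{M}^{+}$. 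Theorem \ref{thmB} with $f_{k}\equiv f$ then gives the strong convergence $u_{k}\to u$, which by definition is the strong convergence of log-KE metrics in $\alpha_{k}\to \alpha$.

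\emph{Case $\lambda=0$.} The equation reduces to $MA_{\omega}(u)=e^{C}f\omega^{n}$, so $C$ is forced by the mass matching $e^{C}\int_{X}f\omega^{n}=V_{\psi}$. Since $f\in L^{p}$ with $p>1$, we have $f\omega^{n}\in\mathcal{M}^{1}(X,\omega,\psi)$ (as recalled after Theorem \ref{thm:A}), so after normalization $\sup_{X}u=0$, Theorem \ref{thm:A} provides a unique $u\in\mathcal{E}^{1}_{norm}(X,\omega,\psi)$ solving the equation, hence a unique log-KE metric in $\alpha$. For stability, given $\psi_{k}\to\psi$ as above, set $f_{k}:=e^{C_{k}}f$ with $e^{C_{k}}=V_{\psi_{k}}/\int_{X}f\omega^{n}$; since $V_{\psi_{k}}\to V_{\psi}$ by the weak continuity of the Monge-Ampère operator on totally ordered subsets of $\mathcal{M}^{+}$ (Section \ref{sec:Pre}), we have $C_{k}\to C$ and $f_{k}\to f\cdot e^{C}$ in $L^{p}$. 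Apply Theorem \ref{thmA}: the uniform $L^{p}$-bound on $f_{k}$ places us in the second situation of Remark \ref{rem:SomeCases}, which supplies both the uniform lower bound $E_{\psi_{k}}(u_{k})\geq -C$ and the limsup condition \eqref{eqn:WeirdH}. We conclude $u_{k}\to u$ strongly.

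The step I expect to be the main technical hurdle is the verification of the hypotheses of Theorem \ref{thmA} in the case $\lambda=0$: one must transfer the $L^{p}$-control of $\{f_{k}\}_{k}$ into an $L^{r}$-convergence $\psi_{k}-u_{k}\to \psi-u$ for all $r\in[1,\infty)$ and a uniform Monge-Ampère energy bound, which is exactly the content of the second bullet of Remark \ref{rem:SomeCases} via \cite[Theorem 1.48]{GZ17} and \cite[Theorem 4.10]{DDNL17b}. Once this is in place, the passage from the ``downstairs'' strong convergence of $u_{k}\to u$ on $X$ back to the strong convergence of log-KE metrics in $\mathcal{K}_{(X,\omega)}$ is immediate from the definition, since the normalization was set precisely so that strong convergence upstairs is equivalent to the strong convergence of the associated $\omega$-psh functions.
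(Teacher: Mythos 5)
Your proof is correct and follows essentially the same route as the paper: reduce to the Monge-Ampère equation on $(X,\omega)$ via Proposition \ref{prop:Correspondence}, use the klt hypothesis and openness to obtain $f\in L^{p}$, $p>1$, and then invoke Theorems \ref{thmA} (for $\lambda=0$, with the hypotheses supplied by the $L^{p}$-case of Remark \ref{rem:SomeCases}) and \ref{thmB} (for $\lambda<0$). The paper's own proof is terse — it simply cites Theorems \ref{thmA} and \ref{thmB} — and you have correctly filled in the details it leaves implicit.
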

\begin{proof}
By Proposition \ref{prop:Correspondence} and by definition, finding a log-KE metric on $\alpha\in\mathcal{K}_{(X,\omega)}$ is equivalent to solve
\begin{equation}
\begin{cases}
MA_{\omega}(u)=e^{-\lambda u}\mu_{\phi_{\omega}}\\
u\in\mathcal{E}^{1}(X,\omega,\psi),
\end{cases}
\end{equation}
where $\psi\in\mathcal{M}^{+}$ is the model type envelope with analytic singularities associated to $\alpha$. Moreover, as $D$ is klt, by the resolution of the openness conjecture (\cite[Theorem 1.1]{GZ14}) it follows that $\mu_{\phi_{\omega}}=fdV$ for $f\in L^{p}$ for $p>1$. Therefore Theorems \ref{thmA}, \ref{thmB} conclude the proof.
\end{proof}
Next it remains to treat the case $\lambda>0$.\\
We first note that in the case of $(D,[\psi])$-log KE metrics the density $f_{D}\in {L^{1}}\setminus\{0\}$ of the corresponding Monge-Ampère equation $MA_{\omega}(u)=e^{-\lambda u}f_{D}\omega^{n}$ is given as
\begin{equation}
\label{eqn:KLTDensity}
f_{D}=e^{-\sum_{j=1}^{N}a_{j}\log|s_{j}|^{2}_{\phi_{j}}+g}
\end{equation}
where $g$ is a smooth function, and as usual we fixed $\{s_{j}\}_{j=1}^{N}$ holomorphic sections cutting the prime divisors $D_{j}$ and metrics $\phi_{j}$ on the associated line bundle where $D=\sum_{j=1}^{N}a_{j}D_{j}$. \\
We then observe that in Theorem \ref{thmC} the assumption $c(\psi)>\frac{\lambda p}{p-1}$ was used two times. In the second part of the proof, to have $e^{-\lambda\psi}\in L^{1+\delta}(\mu)$ for $\delta>0$, condition that in the study of log-KE metrics immediately follows if $(D,[\psi])$ is klt. Moreover, $c(\psi)>\frac{\lambda p}{p-1}$ was used in the first part of the proof of Theorem \ref{thmC} to prove that the $d$-coercivity of $F_{f_{D},\psi,\lambda}$ implies the existence of a maximizer. Thus, as a consequence of the next result, this hypothesis is not longer necessary in the study of log-KE metrics in $\mathcal{K}_{(X,\omega)}$.
\begin{lem}
\label{lem:Ding}
Let $\omega$ be a Kähler form such that $c_{1}(X)-\{[D]\}=\lambda \{\omega\}$ holds for $\lambda>0$ and $D$ $\mathbbm{R}$-divisor. Let also $\psi\in\mathcal{M}^{+}_{an}$ and assume that $(D,[\psi])$ is klt. Then the $d$-coercivity of $F_{f_{D},\psi,\lambda}$ over $\mathcal{E}^{1}_{norm}(X,\omega,\psi)$ implies the existence of a maximizer of $F_{f_{D},\psi,\lambda}$.
\end{lem}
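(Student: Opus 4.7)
The strategy is to reduce the problem to the framework of Theorem~\ref{thmC} by passing to a log-resolution of the singularities of $\psi$, where the Strong Integrability Condition becomes automatic.

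Write $\psi=P_{\omega}[\varphi]$ for $\varphi\in PSH(X,\omega)$ with analytic singularities encoded in $(\mathcal{I},c)$, and take a smooth resolution $p\colon Y\to X$ of $\mathcal{I}$, so that $p^{*}\omega_{\varphi}=\eta+c[\widetilde D]$ with $\eta$ semi-Kähler on $Y$. By Lemma~\ref{lem:Isom} and Proposition~\ref{prop:Isom}, the map $u\mapsto \widetilde u:=(u-\varphi)\circ p$ is an isometry of $(\mathcal{E}^{1}_{norm}(X,\omega,\psi),d)$ onto $(\mathcal{E}^{1}_{norm}(Y,\eta),d)$. By Proposition~\ref{prop:Correspondence}, this isometry identifies, up to an additive constant, $F_{f_{D},\psi,\lambda}$ with the Ding-type functional
\[
\widetilde F(\widetilde v):=E(\widetilde v)+\frac{V_{\eta}}{\lambda}\log\int_{Y}e^{-\lambda\widetilde v}\,d\mu_{\phi_{\eta}}
\]
on $\mathcal{E}^{1}(Y,\eta)$. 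The klt hypothesis on $(D,[\psi])$ together with the strong openness theorem (\cite[Theorem 1.1]{GZ14}) guarantees that the density $f_{D'}$ of $\mu_{\phi_{\eta}}$ with respect to a smooth volume form on $Y$ lies in $L^{p}$ for some $p>1$. Consequently, the $d$-coercivity of $F_{f_{D},\psi,\lambda}$ transfers to a $d$-coercivity of $\widetilde F$ over $\mathcal{E}^{1}_{norm}(Y,\eta)$.

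One then reruns the first part of the proof of Theorem~\ref{thmC} on $(Y,\eta)$. A $\widetilde F$-maximizing sequence $\widetilde u_{k}\in\mathcal{E}^{1}_{norm}(Y,\eta)$ has $E(\widetilde u_{k})\geq -C$ uniformly by coercivity; extracting a weakly convergent subsequence $\widetilde u_{k}\to\widetilde u\in\mathcal{E}^{1}_{norm}(Y,\eta)$ via the semi-Kähler analogue of Proposition~\ref{prop:Usc} (available from the theory of finite-energy currents in big cohomology classes, \cite{BEGZ10}), we combine upper semicontinuity of $E$ with the $L^{1}$-convergence $e^{-\lambda\widetilde u_{k}}f_{D'}\to e^{-\lambda \widetilde u}f_{D'}$ to obtain $\limsup\widetilde F(\widetilde u_{k})\leq\widetilde F(\widetilde u)$, so that $\widetilde u$ maximizes $\widetilde F$. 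Pulling back through $p$ yields a maximizer of $F_{f_{D},\psi,\lambda}$ on $\mathcal{E}^{1}(X,\omega,\psi)$. The $L^{1}$-convergence step, which in Theorem~\ref{thmC} required the SIC $c(\psi)>\lambda p/(p-1)$ through Proposition~\ref{lem:NNPP}, is now essentially free: on $Y$ the ``prescribed singularity'' has been absorbed into the class $\{\eta\}$, leaving the trivial singularity $0$, whose complex singularity exponent is $+\infty$ and therefore satisfies every SIC.

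\textbf{Main obstacle.} The chief technical point is adapting the machinery of Subsection~\ref{ssec:Preli21}---compactness of sublevel sets of the energy, upper semicontinuity of $E$, and $L^{1}$-continuity of exponentials along weakly convergent sequences---from the Kähler setting $(X,\omega)$ of the paper to the semi-Kähler class $\{\eta\}$ on $Y$. While each of these extensions is standard via \cite{BEGZ10}, one must carefully track the normalizations in the bijection of Proposition~\ref{prop:Correspondence} and verify that the klt condition on $(D,[\psi])$ translates, through the sequence of blow-ups, into the required $L^{p}$-regularity of $f_{D'}$ on $Y$.
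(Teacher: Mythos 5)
Your proposal is correct and follows essentially the same route as the paper: reduce to the log-resolution $p\colon Y\to X$, identify $F_{f_{D},\psi,\lambda}$ with a log-Ding functional $D_{\eta}$ via Lemma~\ref{lem:Isom}, Propositions~\ref{prop:Isom} and~\ref{prop:Correspondence}, transfer coercivity through the isometry, and then run the variational argument on $(Y,\eta)$ where the SIC is automatic because finite-energy potentials for a semi-Kähler class have vanishing Lelong numbers. The only place the paper is more explicit is the $L^{1}$-convergence of $e^{-\lambda\tilde v_{k}}\mu_{\phi_{\eta}}$: rather than invoking an analogue of Proposition~\ref{lem:NNPP} wholesale, it does a direct Hölder estimate using the elementary bound $|e^{a}-e^{b}|\leq e^{a+b}|a-b|$ together with the uniform Skoda-type integrability coming from Proposition~\ref{prop:Alpha} and Lemma~\ref{lem:NNPPb}, but this is the same idea you flag under ``main obstacle.''
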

We recall that the definition of klt for $(D,[\psi])$ is given after Corollary \ref{cor:KLT}.
\begin{proof}
Fix $(Y,\eta)$ representative of $\alpha=\Phi(\psi)\in\mathcal{K}_{(X,\omega)}$, $\varphi\in PSH(X,\omega)$ with analytic singularities such that $\psi-\varphi$ is globally bounded.\newline 
Then, as shown in Proposition \ref{prop:Correspondence} and with the same notations, for any $v\in\mathcal{E}^{1}(X,\omega,\psi)$ by the lift of $e^{-\lambda v}f_{D}\omega^{n}=e^{-\lambda v}\mu_{\phi_{\omega}}$ to $Y$ is $e^{-\lambda \tilde{v}}\mu_{\phi_{\eta}}$ where $\tilde{v}=(v-\varphi)\circ p$. Thus, using also Proposition \ref{prop:Isom}, it follows that
\begin{equation}
\label{eqn:Corr}
F_{f_{D},\psi,\lambda}(v)-E_{\psi}(\varphi)=E(\tilde{v})+\frac{V_{\psi}}{\lambda}\log\int_{X}e^{-\lambda \tilde{v}}\mu_{\phi_{\eta}}=:D_{\eta}(\tilde{v})
\end{equation}
for any $v\in\mathcal{E}^{1}(X,\omega,\psi)$. Observe that, up to rescaling the class $\omega$, since $V_{\psi}=\int_{X}\eta^{n}$, the functional $D_{\eta}$ coincides with the \emph{log-Ding functional} in the class $\{\eta\}$ as described in \cite{BBEGZ16}. Moreover, as a consequence of the $d$-coercivity of $F_{f_{D},\psi,\lambda}$ and of the isometry $\big(\mathcal{E}^{1}(X,\omega,\psi),d\big)\ni u\to \tilde{u}\in\big(\mathcal{E}^{1}(Y,\eta),d\big)$ (Proposition \ref{prop:Isom}), it follows that $D_{\eta}$ is $d$-coercive over $\mathcal{E}^{1}_{norm}(Y,\eta)$. Thus we fix a maximizing sequence $\{\tilde{v}_{k}\}_{k\in\mathbbm{N}}\subset \mathcal{E}^{1}_{norm}(Y,\eta)$ that without loss of generality by the compactness of $\{\tilde{v}\in PSH(Y,\eta)\, : \, \sup_{Y}\tilde{v}=0\}$ we may assume to be weakly convergent to $\tilde{v}\in\mathcal{E}^{1}_{norm}(Y,\eta)$. Writing $\mu_{\phi_{\eta}}=g dV$ where $g\in L^{p}$ for $p>1$ and $dV$ is a smooth volume form, we also fix $a\in \mathbbm{R}$ such that $p>a>1$ and we denote by $q\in (1,+\infty)$ the Sobolev conjugate of $p/a$. Then, using the trivial inequality $|e^{a}-e^{b}|\leq e^{a+b}|a-b|$ for $a,b>0$ and applying twice the Hölder's inequality, we have
\begin{multline}
\label{eqn:LastLine}
\int_{X}|e^{-\lambda\tilde{v}_{k}}-e^{-\lambda\tilde{v}}|d\mu_{\phi_{\eta}}\leq\lambda \int_{X}e^{-\lambda(\tilde{v}_{k}+\tilde{v})}|\tilde{v}_{k}-\tilde{v}|d\mu_{\phi_{\eta}}\leq\lambda ||e^{-\lambda(\tilde{v}_{k}+\tilde{v})}||_{L^{q}}||(\tilde{v}_{k}-\tilde{v})g||_{L^{p/a}}\leq\\
\leq\lambda ||e^{-\lambda(\tilde{v}_{k}+\tilde{v})}||_{L^{q}}||g||_{L^{p}}||\tilde{v}_{k}-\tilde{v}||_{L^{p/(a-1)}}.
\end{multline}
Morevoer, as $\eta$ is semi-Kähler, any element in $\mathcal{E}^{1}(Y,\eta)$ has vanishing Lelong numbers (see \cite[Theorem 1.1]{DDNL17a}, Proposition \ref{prop:Lelong} is enough in the Kähler case). Thus, combining Proposition \ref{prop:Alpha} with Lemma \ref{lem:NNPPb} (see \cite{Zer01} for the general case), the first factor in the right side in (\ref{eqn:LastLine}) is uniformly bounded, and the convergence $e^{-\lambda\tilde{v}_{k}}\to e^{-\lambda\tilde{v}}$ in $L^{1}(\mu_{\phi_{\eta}})$ follows from $\tilde{v}_{k}\to \tilde{v}$ in $L^{p}$. Hence, by the upper semicontinuity of $E(\cdot)$ in $\mathcal{E}^{1}(Y,\eta)$ with respect to the weak topology (\cite[Proposition 2.10]{BEGZ10}) we obtain
$$
\sup_{\mathcal{E}^{1}(Y,\eta)}D_{\eta}=\lim_{k\to \infty}D_{\eta}(\tilde{v}_{k})\leq D_{\eta}(\tilde{v}),
$$
i.e. $\tilde{v}$ is a maximizer of $D_{\eta}$. Finally, the equality (\ref{eqn:Corr}) implies that the function $v\in \mathcal{E}^{1}(X,\omega,\psi)$ associated to $\tilde{v}$ (Lemma \ref{lem:Isom}) is a maximizer of $F_{f_{D},\psi,\lambda}$.
\end{proof}
\begin{rem}
\label{rem:ImpRem}
\emph{As seen during the proof of Lemma \ref{lem:Ding}, the $d$-coercivity of $F_{f_{D},\psi,\lambda}$ over $\mathcal{E}^{1}_{norm}(X,\omega,\psi)$ with respect to coefficients $A>0,B\geq 0$ (i.e. $F_{f_{D},\psi,\lambda}(u)\leq-Ad(\psi,u)+B$ for any $u\in\mathcal{E}^{1}_{norm}(X,\omega,\psi)$) is equivalent to the $d$-coercivity of the log-Ding functional $D_{\eta}$ over $\mathcal{E}^{1}_{norm}(Y,\eta)$ with respect coefficients $A>0,B_{\eta}\geq 0$ for any $(Y,\eta)$ representative of the class $\Phi(\psi)\in \mathcal{K}_{(X,\omega)}$. In particular $F_{f_{D},\psi,\lambda}$ and $D_{\eta}$ have the same \emph{slope} at infinity (i.e. the coefficient $A$ of the $d$-coercivity).}
\end{rem}
The following two results are related to Theorems \ref{thmC}, \ref{thmD}, and they conclude the proof of Theorem \ref{thmE}.
\begin{thm}
\label{thm:E2}
Let $\omega$ be a Kähler form such that $c_{1}(X)-\{[D]\}=\lambda \{\omega\}$ holds for $\lambda>0$ and $D$ $\mathbbm{R}$-divisor. Assume also that $(D,[\psi])$ is klt. If the log-Ding functional associated to a representative $(Y,\eta)$ of $\alpha\in\mathcal{K}_{(X,\omega)}$ is $d$-coercive over $\mathcal{E}^{1}_{norm}(Y,\eta)$, then there exists $A>1$ such that any $\alpha'\in\mathcal{K}_{(X,\omega)}$ bigger than $\alpha$ satisfying $\mbox{Vol}(\alpha')< A\mbox{Vol}(\alpha)$ admits a log-KE metric.
\end{thm}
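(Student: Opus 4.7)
The plan is to reduce the assertion to Theorem \ref{thmC} and Lemma \ref{lem:Ding} after translating it into the pluripotential language through the map $\Phi$. Set $\psi := \Phi^{-1}(\alpha) \in \mathcal{M}^{+}_{an}$ and fix the associated function $\varphi \in PSH(X,\omega)$ with analytic singularities such that $\sup_{X}\varphi = 0$ and $\psi = P_{\omega}[\varphi]$. By Propositions \ref{prop:Isom} and \ref{prop:Correspondence}, the log-Ding functional $D_{\eta}$ coincides up to the additive constant $E_{\psi}(\varphi)$ with $F_{f_D, \psi, \lambda}$ under the isometry $u \mapsto \tilde{u} := (u - \varphi) \circ p$; hence, as already observed in Remark \ref{rem:ImpRem}, the hypothesized $d$-coercivity of $D_{\eta}$ over $\mathcal{E}^{1}_{norm}(Y,\eta)$ is equivalent to the $d$-coercivity of $F_{f_D, \psi, \lambda}$ over $\mathcal{E}^{1}_{norm}(X,\omega,\psi)$ with the same slope.

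The key step is then to rerun the second half of the proof of Theorem \ref{thmC} with the measure $\mu := f_D \omega^{n}$. There the SIC hypothesis $c(\psi) > \lambda p/(p-1)$ is used only to produce (i) the Moser--Trudinger inequality of Proposition \ref{prop:MT}, which here is already guaranteed by the $d$-coercivity of $F_{f_D,\psi,\lambda}$ just established, and (ii) some $\delta > 0$ such that $e^{-\lambda\psi}f_D \in L^{1+\delta}$. For (ii), Corollary \ref{cor:KLT} together with the klt hypothesis on $(D,[\psi])$ asserts $\mathcal{I}(\lambda\psi+\cdots) = \mathcal{O}_{X}$, i.e.\ $e^{-\lambda\psi}f_D \in L^{1}_{loc}$, hence globally since $X$ is compact, and the strong openness theorem \cite[Theorem 1.1]{GZ14} upgrades this to $L^{1+\delta}$. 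Together with the uniform Skoda integrability of $e^{-\delta' u}$ on $\{u \in PSH(X,\omega) : \sup_{X} u = 0\}$ already recalled in the proof of Theorem \ref{thmC}, the convexity splitting there yields a constant $A > 1$ such that $F_{f_D,\psi',\lambda}$ is $d$-coercive on $\mathcal{E}^{1}_{norm}(X,\omega,\psi')$ for every $\psi' \in \mathcal{M}^{+}$ with $\psi' \succcurlyeq \psi$ and $V_{\psi'} < A V_{\psi}$.

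Now fix any $\alpha' \in \mathcal{K}_{(X,\omega)}$ bigger than $\alpha$ with $\mathrm{Vol}(\alpha') < A\,\mathrm{Vol}(\alpha)$ and set $\psi' := \Phi^{-1}(\alpha') \in \mathcal{M}^{+}_{an}$, so that $\psi' \succcurlyeq \psi$ and $V_{\psi'} < A V_{\psi}$ by construction. Since $\psi' \succcurlyeq \psi$ implies both $\psi' \geq \psi - C$ globally and $\nu(\psi',D_{j}) \leq \nu(\psi,D_{j})$ for every prime component $D_{j}$ of $D$, a direct inspection of the multiplier-ideal condition in Corollary \ref{cor:KLT} near each $D_{j}$ and off the support of $D$ shows that $(D,[\psi'])$ remains klt. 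Lemma \ref{lem:Ding} then produces a maximizer $v' \in \mathcal{E}^{1}_{norm}(X,\omega,\psi')$ of $F_{f_D,\psi',\lambda}$, and Theorem \ref{thm:Max} implies that a suitable constant shift $u' = v' + C'$ solves $MA_{\omega}(u') = e^{-\lambda u'}\mu_{\phi_{\omega}}$ in $\mathcal{E}^{1}(X,\omega,\psi')$. Transporting this solution back through Proposition \ref{prop:Correspondence} delivers a $D'$-log semi-KE metric on a representative of $\alpha'$, i.e.\ a log-KE metric in $\alpha'$.

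The main obstacle lies in the second paragraph: one has to verify that the convexity split $\psi - u' = (1-\epsilon)(1+\epsilon)(\psi - u') + \epsilon^{2}(\psi - u')$ and the Hölder pairings in the proof of Theorem \ref{thmC} only call on an $L^{1+\delta}$ bound for the twisted density $e^{-\lambda\psi}f_D$, so that the klt hypothesis on $(D,[\psi])$, which provides exactly such a bound via strong openness, can be substituted for the SIC. The auxiliary exponent $\epsilon > 0$ must then be chosen small enough to accommodate the $\delta$ furnished by strong openness, which is precisely the type of estimate already carried out in Theorem \ref{thmC}.
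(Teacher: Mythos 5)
Your proposal is correct and follows the same approach the paper intends: translate via $\Phi$ and Remark \ref{rem:ImpRem} to $d$-coercivity of $F_{f_D,\psi,\lambda}$, rerun the second half of Theorem \ref{thmC}'s proof with the SIC replaced by the klt hypothesis (which supplies $e^{-\lambda\psi}f_D\in L^{1+\delta}$ through Corollary \ref{cor:KLT} and strong openness), and then invoke Lemma \ref{lem:Ding} instead of the first half of Theorem \ref{thmC} to extract a maximizer. The paper's proof is just the one-line pointer to Theorem \ref{thmC}, Lemma \ref{lem:Ding}, and Remark \ref{rem:ImpRem}; your write-up fills in exactly the details the author relegated to the discussion preceding Lemma \ref{lem:Ding}.
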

\begin{proof}
It follows directly from Theorem \ref{thmC} thanks to Lemma \ref{lem:Ding} and Remark \ref{rem:ImpRem}.
\end{proof}
\begin{thm}
\label{thm:E3}
Let $\omega$ be a Kähler form, and $D$ be a $\mathbbm{R}$-divisor such that $c_{1}(X)-\{[D]\}=\lambda \{\omega\}$ holds for $\lambda>0$. Assume that
\begin{itemize}
\item[(i)] $\{\alpha_{k}\}_{k\in\mathbbm{N}}\subset \mathcal{K}_{(X,\omega)}$ is an increasing sequence converging to $\alpha\in\mathcal{K}_{(X,\omega)}$;
\item[(ii)] a sequence of log-KE metrics in $\alpha_{k}$ satisfies $\sup_{X}u_{k}\leq C$ for any $k\in\mathbbm{N}$.
\end{itemize}
Then there exists a subsequence of log-KE metrics in $\alpha_{k_{h}}$ converges strongly to a log-KE metric in $\alpha$.
\end{thm}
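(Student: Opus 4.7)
The plan is to reduce Theorem \ref{thm:E3} to Theorem \ref{thmD} via the dictionary between $\mathcal{K}_{(X,\omega)}$ and $\mathcal{M}^{+}_{an}$ given by $\Phi$. Let $\psi_{k}:=\Phi^{-1}(\alpha_{k})$, $\psi:=\Phi^{-1}(\alpha)\in\mathcal{M}^{+}_{an}$. Because $\alpha_{k}\nearrow\alpha$ in the order inherited from $PSH(X,\omega)$, the sequence $\{\psi_{k}\}_{k\in\mathbbm{N}}\subset \mathcal{M}^{+}$ is totally ordered, monotone, and weakly convergent to $\psi$. By Proposition \ref{prop:Correspondence} (and its $\mathbbm{R}$-divisor extension from Remark \ref{rem:RLine}), a log-KE metric in $\alpha_{k}$ corresponds to $u_{k}\in\mathcal{E}^{1}(X,\omega,\psi_{k})$ solving $MA_{\omega}(u_{k})=e^{-\lambda u_{k}}f_{D}\omega^{n}$, where $f_{D}$ is the $D$-dependent density from \eqref{eqn:KLTDensity}. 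The decisive feature is that $f_{D}$ does not depend on $k$, so the constant sequence $f_{k}\equiv f_{D}$ converges trivially in every $L^{p}$ in which $f_{D}$ sits.

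I would then verify the three hypotheses of Theorem \ref{thmD}. Condition (iii) is exactly the uniform upper bound (ii) assumed on $u_{k}$. Condition (ii) of Theorem \ref{thmD}---that $u_{k}$ is a maximizer of $F_{f_{D},\psi_{k},\lambda}$---follows from the convention on log-KE metrics in $\mathcal{K}_{(X,\omega)}$ established just before the statement of Theorem \ref{thm:E3}: a log-KE metric on a representative $(Y_{k},\eta_{k})$ is given as a maximizer of the log-Ding functional $D_{\eta_{k}}$, and via Lemma \ref{lem:Ding}, Remark \ref{rem:ImpRem}, and the isometry of Proposition \ref{prop:Isom} this is equivalent to $u_{k}$ being a maximizer of $F_{f_{D},\psi_{k},\lambda}$. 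The delicate condition is the SIC $c(\psi)>\lambda p/(p-1)$: it should be derived from the klt-ness of $(D,[\psi])$, which is implicit in the existence of the target log-KE metric. Specifically, klt of $(D,[\psi])$ plus the strong openness theorem (cf.\ \cite{GZ14} and Theorem \ref{thm:DK99}) produce $\epsilon>0$ with $e^{-(1+\epsilon)(\lambda\psi+2\sum_{j}a_{j}\log|s_{j}|_{\phi_{j}})}\in L^{1}$, from which a Hölder pairing selects $p>1$ for which simultaneously $f_{D}\in L^{p}$ and $c(\psi)>\lambda p/(p-1)$.

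Once Theorem \ref{thmD} applies, one extracts a subsequence $u_{k_{h}}\to u$ strongly with $u\in\mathcal{E}^{1}(X,\omega,\psi)$ solving $MA_{\omega}(u)=e^{-\lambda u}f_{D}\omega^{n}$. Applying Proposition \ref{prop:Correspondence} in the other direction, $u$ defines a log-KE metric in $\alpha$, and the strong convergence $u_{k_{h}}\to u$ in $X_{\mathcal{A}}$ for $\mathcal{A}=\{\psi_{k}\}$ is, by construction, the notion of strong convergence in $\mathcal{K}_{(X,\omega)}$ fixed in this section. The main obstacle is the SIC verification, since $c(\psi)$ can be small when $\psi$ is very singular; tracking the constants through the resolution $p\colon Y\to X$ and the decomposition $D'=\lambda cD_{1}+D_{2}$ of Proposition \ref{prop:Correspondence} is the technical heart of the argument. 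A secondary point, easier but worth care, is compatibility of the normalizations: the assumed $\sup_{X}u_{k}\le C$ matches, up to translation, the normalization $MA_{\omega}(u_{k})=e^{-\lambda u_{k}}\mu_{\phi_{\omega}}$ used to define strong convergence on $\mathcal{K}_{(X,\omega)}$, and together with Proposition \ref{prop:Usc} it provides the weak precompactness needed to run the limit argument.
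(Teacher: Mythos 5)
Your reduction of Theorem~\ref{thm:E3} to Theorem~\ref{thmD} via $\Phi$ and Proposition~\ref{prop:Correspondence} is the right starting point, and your verification of hypotheses~(ii) and~(iii) of Theorem~\ref{thmD} (the maximizer property via Lemma~\ref{lem:Ding}/Remark~\ref{rem:ImpRem}/Proposition~\ref{prop:Isom}, and the uniform bound $\sup_X u_k\le C$) is correct. The gap is in your treatment of hypothesis~(i): you propose to \emph{derive} the SIC $c(\psi)>\lambda p/(p-1)$ from klt-ness of $(D,[\psi])$ by a ``Hölder pairing.'' This does not work, and in fact cannot work in general. The klt condition from Corollary~\ref{cor:KLT} gives triviality of $\mathcal{I}\big((1+\epsilon)(\lambda\psi+2\sum_{a_j>0}a_j\log|s_j|_{\phi_j}-\cdots)\big)$ for some $\epsilon>0$, which constrains the \emph{joint} singularity of $\psi$ and the divisor potentials; Hölder's inequality yields only an upper bound on the joint integral from the separate $L^q$- and $L^p$-norms, never the reverse, so it cannot split this joint control into separate statements $e^{-\lambda q\psi}\in L^1$ and $f_D\in L^p$. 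Concretely, away from $\mathrm{Supp}(D)$ the klt condition only forces $c(\psi)$ at such a point to exceed $\lambda$ by a little; but if the coefficients $a_j$ of $D$ are close to $1$, then $f_D\in L^p$ forces $p$ close to $1$, hence $q=p/(p-1)$ is large, and one would need $c(\psi)>\lambda q\gg\lambda$ at a point where $\psi$ is allowed to be essentially as singular as klt permits. So the SIC can genuinely fail while klt holds, and the paper stresses that Theorem~\ref{thmE} carries no SIC assumption.

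The paper's actual proof takes a genuinely different route: it does not verify the SIC but shows it is \emph{superfluous} in this setting. The SIC was used in Theorem~\ref{thmD} only to establish the limit $\int_X e^{-\lambda v_k}f_k\omega^n\to\int_X e^{-\lambda v}f\omega^n$ under weak convergence. Since the $\alpha_k$ are increasing and the $\psi_k$ have analytic singularity types, the associated ideal sheaves $\mathcal{I}_k$ are increasing, and by the Strong Noetherian Property they stabilize; one can therefore choose representatives $(Y,\eta_k)$ of all $\alpha_k$ and $\alpha$ on one common resolution $Y$. Pulling back to $Y$, the functions $\tilde v_k\in\mathcal{E}^1(Y,\eta_k)$ and $\tilde v\in\mathcal{E}^1(Y,\eta)$ have \emph{vanishing Lelong numbers} (via \cite[Theorem 1.1]{DDNL17a}), so the uniform Skoda integrability (Proposition~\ref{prop:Alpha} and Lemma~\ref{lem:NNPPb}, or \cite{Zer01}) controls $\|e^{-\lambda\tilde v_k}\|_{L^q}$ uniformly for \emph{some} $q$ without any assumption on $c(\psi)$, and a two-term Hölder splitting then yields the required $L^1$-convergence. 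This geometric step — passage to a common $Y$ and the zero-Lelong-number observation — is the missing ingredient; without it, your approach would at best prove a strictly weaker statement (valid only when the SIC happens to hold) than what the theorem asserts.
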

\begin{proof}
Denote by $\psi_{k}, \psi\in\mathcal{M}^{+}_{an}$ the model type envelopes associated respectively to $\alpha_{k},\alpha$. The required strong convergence of log-KE metrics in $\alpha_{k}$ to a log-KE metric in $\alpha$ is equivalent (by the aforementioned discussions, and in particular by Proposition \ref{prop:Correspondence}) to prove the strong convergence of $u_{k}$ solutions of
$$
\begin{cases}
MA_{\omega}(u_{k})=e^{-\lambda u_{k}}\mu_{\phi_{\omega}}\\
u_{k}\in \mathcal{E}^{1}(X,\omega,\psi_{k});
\end{cases}
$$
to $u\in\mathcal{E}^{1}(X,\omega,\psi)$, proving also that $u$ solves
$$
\begin{cases}
MA_{\omega}(u)=e^{-\lambda u}\mu_{\phi_{\omega}}\\
u\in\mathcal{E}^{1}(X,\omega,\psi).
\end{cases}
$$
Observe that the adapted measure $\mu_{\phi_{\omega}}$ associated to a fixed metric $\phi_{\omega}$ of $-(K_{X}+D)$, satisfies $\mu_{\phi_{\omega}}=f_{D}\omega^{n}$ (\ref{eqn:KLTDensity}) for $f_{D}\in L^{p}$, $p>1$ because $(X,D)$ is klt as a consequence of the existence of log-KE metrics in $\alpha_{k}$ and of Corollary \ref{cor:KLT}.\newline
Therefore, the result clearly follows from Theorem \ref{thmD} showing that the assumption $c(\psi)>\frac{\lambda p}{p-1}$ is superfluous in this case. In proving Theorem \ref{thmD} this hypothesis has been applied to prove that
\begin{equation}
\label{eqn:BABABA}
\int_{X}e^{-\lambda v_{k}}f_{k}\omega^{n}\to \int_{X}e^{-\lambda v}f\omega^{n}
\end{equation}
assuming that $v_{k}\in\mathcal{E}^{1}(X,\omega,\psi_{k})$ converges weakly to $v\in\mathcal{E}^{1}(X,\omega,\psi)$ and that $f_{k}\to f$ in $L^{p}$ for $p>1$, for $f_{k},f$ densities of the associated complex Monge-Ampère equations (in our case $f_{k}\equiv f_{D}$).\newline
Then we claim that there exist representatives $\eta_{k}+dd^{c}\tilde{u}_{k}$ of the sequence of log-KE metrics in $(ii)$ on the same compact Kähler manifold $Y$. Indeed, as the sequence $\psi_{k}$ is increasing, the associated ideal sheaf $\mathcal{I}_{k}$ is also increasing. Therefore by the Strong Noetherian Property of coherent sheaves there exists $k_{0}\in\mathbbm{N}$ such that $\mathcal{I}_{k}=\mathcal{I}_{k_{0}}$ for any $k\geq k_{0}$, and the claim easily follows.\newline
Next, letting $\omega'$ be a Kähler form on $Y$, by construction the lift of $e^{-\lambda v_{k}}\mu_{\phi_{\omega}}$ is $e^{-\lambda \tilde{v}_{k}}g_{k}\omega'^{n}$ for a function $g_{k}=e^{w_{k}^{+}-w_{k}^{-}}$ such that $w_{k}^{+},w_{k}^{-}\in PSH(Y,C\omega')$ for a constant $C\in \mathbbm{R}$ uniform in $k$ (Proposition \ref{prop:Correspondence}). Similarly, the lift of $e^{-\lambda v}\mu_{\phi_{\omega}}$ is $e^{-\lambda \tilde{v}}g\omega'^{n}$ for a function $g=e^{w^{+}-w^{-}}$ such that $w^{+},w^{-}\in PSH(Y,C\omega')$. Observe that by construction $g_{k}\to g$ in $L^{1}$. Moreover, as the singularities are decreasing and there exist log-KE metrics in $\alpha_{k}$, by the resolution to the strong openness conjecture (\cite[Theorem 1.1]{GZ14}) there exists $p>1$ such that $g_{k},g\in L^{p}$. Thus, Proposition \ref{lem:NNPP} yields $g_{k}\to g$ in $L^{p}$.\newline
In conclusion, (\ref{eqn:BABABA}) is equivalent to
$$
\int_{Y}e^{-\lambda \tilde{v}_{k}}g_{k}\omega'^{n}\to \int_{Y}e^{-\lambda \tilde{v}}g\omega'^{n}
$$
where with obvious notations $v_{k}\in \mathcal{E}^{1}(Y,\eta_{k})$ converges weakly to $v\in\mathcal{E}^{1}(Y,\eta)$ and $g_{k}\to g$ in $L^{p}$ for $p>1$. Thus,
\begin{equation}
\label{eqn:SanPol}
\int_{Y}|e^{-\lambda \tilde{v}_{k}}g_{k}-e^{-\lambda \tilde{v}
}g|\omega'^{n}\leq \int_{Y}e^{-\lambda \tilde{v}_{k}}|g_{k}-g|\omega'^{n}+ \int_{Y}|e^{-\lambda \tilde{v}_{k}}-e^{-\lambda \tilde{v}
}|g\omega'^{n},
\end{equation}
and we claim that both the terms in the right-hand side of (\ref{eqn:SanPol}) converge to $0$ as $k\to \infty$. Indeed,
as $v_{k},v$ have vanishing Lelong numbers (\cite[Theorem 1.1]{DDNL17a}), combining Proposition \ref{prop:Alpha} with Lemma \ref{lem:NNPPb} (see \cite{Zer01} for the general case), the first term is dominated by $C'||g_{k}-g||_{L^{p}}$ applying Hölder's inequality, while the remaining part converges to $0$ exactly as in (\ref{eqn:LastLine}).
\end{proof}
We conclude the paper with the following example which shows that the assumption $(iii)$ in Theorem \ref{thmD} (and the assumption $(ii)$ in Theorem \ref{thm:E3}) is necessary.
\begin{esem}
\label{esem:Necess}
\emph{Let $\omega$ be a Kähler form on a Fano manifold $X$, $\{\omega\}=c_{1}(X)$, and let $D$ be a smooth divisor $\mathbbm{Q}$-linearly equivalent to $-K_{X}$, i.e. $D\in |-rK_{X}|$ for $r\in\mathbbm{N}$. \newline
Letting $\varphi_{D}\in PSH(X, \omega)$ such that $\omega+dd^{c}\varphi_{D}=\frac{1}{r}[D]$ and defining $\psi_{t}:=P_{\omega}[t\varphi_{D}]\in\mathcal{M}^{+}$ for any $t\in[0,1)$, by Proposition \ref{prop:Correspondence} there is a bijection between the set of solutions of
\begin{equation}
\label{eqn:Finally}
\begin{cases}
MA_{\omega}(u_{t})=e^{-u_{t}}\mu_{\phi_{\omega}}\\
u_{t}\in\mathcal{E}^{1}(X,\omega,\psi_{t}),
\end{cases}
\end{equation}
and the set of solutions of
\begin{equation}
\label{eqn:BABBAMIA}
\begin{cases}
MA_{(1-t)\omega}(v_{t})=e^{-v_{t}-\frac{t}{r}\varphi_{D}}\mu_{\phi_{\omega}}\\
v_{t}\in\mathcal{E}^{1}(X,(1-t)\omega).
\end{cases}
\end{equation}
where the correspondence is clearly given by $u_{t}=v_{t}+\frac{t}{r}\varphi_{D}$. Note that (\ref{eqn:BABBAMIA}) produces $\frac{t}{r}D$-log KE metrics in the cohomology class $\{(1-t)\omega\}$. Moreover, setting $w_{t}:=\frac{1}{(1-t)}v_{t}\in PSH(X,\omega)$, it easily follows that (\ref{eqn:BABBAMIA}) can be written as
$$
\begin{cases}
MA_{\omega}(w_{t})=(1-t)^{-n}e^{-(1-t)w_{t}-\frac{t}{r}\varphi_{D}}\mu_{\phi_{\omega}}\\
w_{t}\in\mathcal{E}^{1}(X,\omega),
\end{cases}
$$
which in turn is equivalent to the renowned path
\begin{equation}
\label{eqn:CDSPath}
Ric(\omega_{v_{t}})=(1-t)\omega_{v_{t}}+\frac{t}{r}[D]
\end{equation}
considered in \cite{CDS15}. Thus the set $S:=\{t\in[0,1)\, : \, (\ref{eqn:Finally})\, \mbox{admits a solution}\}$ is not empty (\cite[Theorem 1.5]{Berm13}) and open (by a classical Implicit Function Theorem, see \cite{Aub84}). However, if $X$ does not admit a KE metric (for instance if $X=\mathrm{Bl}_{p}\mathbbm{P}^{2}$) then there exists $t_{0}\in (0,1)$ such that $\liminf_{t\searrow t_{0}}\sup_{X}w_{t}=+\infty$, which clearly implies $\liminf_{t\searrow t_{0}}\sup_{X}u_{t}=+\infty$. In fact, otherwise $S$ would be closed and $S=[0,1)$, which would be clearly a contradiction.}
\end{esem}

{\footnotesize
\bibliographystyle{acm}
\bibliography{main}
}
\end{document}